\documentclass[11pt]{article}

\usepackage{bbm,verbatim,latexsym,amsfonts,amsmath,amssymb,graphicx,fancyhdr,array,booktabs, listings, makecell, cases, empheq, datetime2, arydshln 
}
\usepackage{latexsym,amsfonts,amsmath,amssymb,graphicx,amsthm,soul,verbatim,authblk,xcolor, enumitem, mathrsfs}
\usepackage[framemethod=tikz]{mdframed}
\allowdisplaybreaks[4]

\usepackage{xcolor}
\usepackage{listings}

\usepackage{xcolor}
\usepackage{listings}
\usepackage[colorlinks=true,linkcolor=blue,citecolor=blue,urlcolor=blue]{hyperref}
\usepackage[title]{appendix}

\lstdefinelanguage{Mathematica}{
  morekeywords={Module,Sum,Binomial,Pochhammer,FunctionExpand,Together,ClearAll,Assuming,Simplify,Factor,Min},
  sensitive=true,
  morecomment=[l]{(*},
  morecomment=[s]{(*}{*)},
  morestring=[b]"
}

\lstdefinestyle{mma}{
  language=Mathematica,
  basicstyle=\ttfamily\footnotesize,    
  keywordstyle=\bfseries\color{blue!70!black},
  commentstyle=\itshape\color{green!40!black},
  stringstyle=\color{orange!70!black},
  columns=fullflexible,
  keepspaces=true,
  showstringspaces=false,
  tabsize=2,
  frame=none,
  breaklines=true,
  breakatwhitespace=false,
  breakautoindent=true,
  breakindent=0pt,
  postbreak=\mbox{\textcolor{gray}{$\hookrightarrow$}\space},
  xleftmargin=0pt,
  xrightmargin=0pt,
  upquote=true
}

\newcommand{\EE}{\mathbb{ E}}
\newcommand{\PP}{\mathbb{P}}

\newcommand{\R}{\mathbb{R}}
\newcommand{\C}{\mathbb{C}}

\newcommand{\HH}{\mathbb{H}}
\newcommand{\N}{\mathbb{N}}

\newcommand{\Z}{\mathbb{Z}}

\newcommand{\A}{\mathcal{A}}

\newcommand{\pa}{\partial}

\newcommand{\F}{{\mathcal F}}
\newcommand{\G}{{\mathcal G}}
\newcommand{\HF}{\mathcal{F}^{[3]}}

\newcommand{\vt}{\vartheta}
\newcommand{\err}{\mathscr{E}}

\def\eps{\varepsilon}
\def\til{\widetilde}
\def\ha{\widehat}
\def\sem{\setminus}
\def\lin{\overline}
\def\ulin{\underline}
\def\u{\underline}
\def\vphi{\varphi}

\def\L{{\mathcal L}}

\newcommand{\ind}{\mathbbm{1}}

\DeclareMathOperator{\dist}{dist} 
 
\DeclareMathOperator{\Imm}{Im } \DeclareMathOperator{\Ree}{Re }

 \DeclareMathOperator{\diag}{diag}
 
 \DeclareMathOperator{\LP}{LP}
\DeclareMathOperator{\med}{med}

\newenvironment{subproof}[1][\proofname] {\begin{proof}[#1]} {\end{proof}}

\newtheorem{Theorem}{Theorem}[section]
\newtheorem{Lemma}[Theorem]{Lemma}
\newtheorem{Corollary}[Theorem]{Corollary}
\newtheorem{Proposition}[Theorem]{Proposition}
\newtheorem{Conjecture}[Theorem]{Conjecture}

\theoremstyle{definition}

\newtheorem{Construction}[Theorem]{Construction}

\theoremstyle{remark}
\newtheorem{Remark}[Theorem]{Remark}
\newtheorem{Example}[Theorem]{Example}

\numberwithin{equation}{section}
\newcommand{\BGE}{\begin{equation}}
\newcommand{\BGEN}{\begin{equation*}}
\newcommand{\EDE}{\end{equation}}
\newcommand{\EDEN}{\end{equation*}}
\newcommand{\hyper}[5]{%
  {}_{#1}F_{#2}\left(\genfrac{}{}{0pt}{}{#3}{#4};#5\right)%
}

\date{}
\title{Trivariate Hypergeometric Series Formulas\\ for Pure Partition Functions of Multiple $3$-SLE$_\kappa$}
\author{Dapeng Zhan}
\affil{Michigan State University}
\date{\empty}

\begin{document}
\maketitle

\begin{center}
\emph{Dedicated to Gregory F.\ Lawler on the occasion of his 70th birthday.}
\end{center}

\begin{abstract}
Pure partition functions of multiple SLE are characterized by null-state partial differential equations, M\"obius covariance, and boundary asymptotics. After quotienting by M\"obius covariance, the case of three curves is the first genuinely multivariable one: the moduli space has three independent variables, naturally represented by the three unoriented cross-ratios of the three pairs of links.

We solve this M\"obius-normalized three-variable problem for the two basic link-pattern types of multiple $3$-SLE$_\kappa$, namely the rainbow and neighbor patterns. Writing $\beta=4/\kappa$, we construct explicit trivariate hypergeometric-series normal forms and identify them with the corresponding pure partition functions  {for all $\beta>1/2$ in both cases, equivalently for all $\kappa\in(0,8)$}.

The proof is analytic. The null-state PDEs and M\"obius covariance yield recursion relations for the trivariate coefficient arrays. In the rainbow case, coefficient estimates give convergence and boundary regularity on the closed cube. In the neighbor case, Pfaff systems continue the local power series to a neighborhood of $[0,1)^3$, while  {boundary analysis and corner propagation} give continuity on $[0,1]^3$ {throughout the full range $\beta>1/2$}. The two-dimensional boundary degenerations are classical Appell $F_1$ and Horn $G_2$ functions.

The probabilistic identification uses SLE martingale arguments and It\^o calculus, together with positivity and boundary regularity. We also discuss boundary degenerations, including heuristic connections with boundary Green's functions.
\end{abstract}

\medskip
\noindent\textbf{2020 Mathematics Subject Classification.} 60J67, 33C65, 30C35, 34M03.

\medskip
\noindent\textbf{Keywords:} {multiple SLE, pure partition function,
hypergeometric function, Appell function, Horn function, Pfaff system,
Green's function}

\newpage
\tableofcontents

\section{Introduction}

Schramm--Loewner evolution (SLE) provides a canonical description of conformally invariant random planar interfaces and plays a central role in the scaling-limit theory of two-dimensional critical lattice models; see, e.g., \cite{Law-SLE,RS}. In models with alternating boundary conditions, multiple interfaces emerge simultaneously, which leads to the theory of multiple SLE. In the multiple-SLE framework, the law of a collection of noncrossing curves is encoded by a partition function satisfying a system of second-order partial differential equations together with M\"obius covariance and suitable asymptotic conditions. This point of view goes back to the work of Bauer, Bernard, and Kyt\"ol\"a \cite{BBK05} (rooted in the classical Belavin--Polyakov--Zamolodchikov (BPZ) equations of conformal field theory \cite{BPZ84}) and was developed further by Dub\'edat through commutation relations and related holonomic systems \cite{Julien-commu,D06}. Pure partition functions, indexed by link patterns, are the basic building blocks in this theory. For a prescribed pairing of the marked boundary points, they are expected to encode, and in several frameworks do encode, the extremal multiple-SLE measures connecting the paired points.

From the probabilistic point of view, these functions encode the interaction among the curves; in Loewner-type descriptions, they determine the drift terms of the driving processes. In some works they also arise through probabilistic weights---for instance, through constructions involving Brownian loop measure---rather than being introduced \emph{a priori} as smooth solutions to the PDE system. In the present paper, however, we mainly adopt the analytic point of view: we take the PDE, M\"obius covariance, and asymptotic conditions as the starting point and establish explicit formulas from them.

The existence problem for pure partition functions has been studied from several complementary perspectives. A quantum-group approach to pure partition functions was developed in \cite{KP16}. In a different direction, Peltola and Wu developed a probabilistic framework for local and global multiple SLEs for $\kappa\in(0,4]$ and studied the associated pure partition functions \cite{Global}. The subsequent uniqueness theory of global multiple SLEs was established in \cite{VPW21}. In the case of two curves, Wu proved a conformal Markov characterization of hypergeometric SLE and obtained explicit hypergeometric formulas for the corresponding partition functions (\cite{Wu-hyper}). Further explicit formulas are known in special cases: for $\kappa=4$, through the relation with Gaussian free field level lines \cite{Global}, and for $\kappa=2$, through boundary correlation functions for loop-erased random walk and uniform spanning tree \cite{LERW-partition}. Altogether, these results strongly suggest that explicit special-function formulas should exist well beyond the already understood cases.

There is also a closely related but methodologically quite different line of work based on
Coulomb gas integrals and contour-integral representations. Starting from commuting SLEs,
Dub\'edat obtained Euler integral representations for solutions of the associated holonomic
systems \cite{D06}. Flores and Kleban subsequently developed a systematic Coulomb gas
approach to the relevant null-state PDE systems and constructed explicit contour-integral
solutions, while Flores, Simmons, and Kleban later derived explicit connectivity weights for
several polygonal configurations, including the hexagon case \cite{FSK15a,FSK15b,FSK15c, FSK15d, FSK22}.
These results are explicit at the level of contour integrals, but they do not yield the M\"obius-normalized trivariate hypergeometric-series normal forms developed in the present paper. A related recent preprint is
\cite{FLPW24}, where Feng, Liu, Peltola, and Wu study SLE partition functions and pure
partition functions for $\kappa\in(0,8)$ via Coulomb gas integrals.

More generally, the partition-function viewpoint also appears in other SLE-type processes. For instance, in \cite{Reversal-kappa-rho}, multivariable Appell-Lauricella functions arise in the study of the time-reversal of multiple-force-point SLE$_\kappa(\u \rho)$ with all force points on the same side.


Against this background, the present paper studies the first genuinely multivariable instance of the pure partition-function problem from a direct analytic viewpoint. We consider multiple \(3\)-SLE\(_\kappa\), or equivalently multiple SLE with \(3\) curves. After quotienting by M\"obius covariance, the case \(N=2\) leaves one independent variable, while the case \(N=3\) leaves three. These three moduli are naturally encoded by the three unoriented cross-ratios associated with the three pairs of links in the link pattern. This exact match between the dimension of the M\"obius-quotient moduli space and the number of symmetric link cross-ratios is special to \(N=3\), and it is the mechanism that makes a symmetric trivariate hypergeometric normal form possible.

The novelty here is not merely the existence of explicit expressions.  Contour-integral and Coulomb-gas representations provide a powerful and important approach to the same class of null-state equations.  The present work develops a different, M\"obius-normalized three-variable framework.  Starting directly from the partition-function PDEs and covariance, we obtain recursion relations for the coefficient arrays, construct trivariate hypergeometric-series solutions, and then establish the analytic continuation and boundary regularity needed for the probabilistic identification.  Thus the probabilistic problem is reduced to, and solved through, a concrete singular analytic system.

Starting from a link pattern $\alpha\in \LP_3$, we perform an explicit normalization and a
conformally covariant change of variables to three unoriented cross-ratios
$(r_1,r_2,r_3)\in(0,1)^3$, thereby reducing the problem to the study of a trivariate power
series
\[
F(r_1,r_2,r_3)=\sum_{n_1,n_2,n_3\ge 0}
A^{(\beta)}_{n_1,n_2,n_3}\,r_1^{n_1}r_2^{n_2}r_3^{n_3},
\qquad
\beta=\frac{4}{\kappa}\in\Big(\frac12,\infty\Big).
\]
This reduction yields a M\"obius-normalized trivariate hypergeometric-series normal form in the
bulk. Unlike the Appell--Lauricella functions appearing in \cite{Reversal-kappa-rho}, the
coefficient array $A^{(\beta)}=(A^{(\beta)}_{n_1,n_2,n_3})$ is determined recursively by the
partition-function PDE rather than given in closed form. At the same time,
lower-dimensional boundary pieces lead to classical Appell and Horn functions. For $N=4$, by
contrast, quotienting by M\"obius covariance would leave five independent variables, whereas
the analogous pairwise construction produces six cross-ratios, leading to redundancy; retaining
only five would destroy the symmetry that is crucial in the present paper. Thus the coordinate mechanism exploited here is already special to $N=3$.  {The associated analysis remains delicate, and the boundary regularity arguments take rather different forms in the rainbow and neighbor cases, as described below.}

In the rainbow case, a hidden symmetry appears: although the planar configuration is not manifestly symmetric in the three curves, the reduced coefficient array is symmetric in its three indices.  This symmetry is accompanied by strong global estimates.  We prove absolute summability of the coefficient array for all \(\beta>1/2\), obtain a continuous extension to the closed cube, and verify the required boundary asymptotics.  After undoing the normalization, the resulting functions satisfy the null-state PDEs, M\"obius covariance, positivity, and the pure partition-function asymptotics.  {This supplies the analytic input needed for the probabilistic identification of the rainbow formulas.}

In the neighbor case, the local construction is only the beginning. The main difficulty is global boundary regularity near the singular side faces and the triple corner. We derive several Pfaff systems, use them to continue the local solution to a neighborhood of $[0,1)^3$, and then prove continuity on $[0,1]^3$ {throughout the full range $\beta>1/2$}, by side-face analysis, normal-direction estimates, and corner-propagation arguments.  {This supplies the boundary
regularity needed for the probabilistic identification of the neighbor
formulas.}

The final step returns to probability.  The analytic construction produces functions satisfying the null-state PDEs, M\"obius covariance, positivity, and the required boundary behavior.  These properties make the corresponding SLE observables local martingales; using It\^o calculus and the positivity result proved in Section~\ref{subsection-positivity}, we identify the normalized formulas with the desired pure partition functions  {for all $\kappa\in(0,8)$}.

\vspace{1em}
\noindent \textbf{Organization of the paper.}
The paper is organized as follows. In Section~\ref{Section:partition}, we introduce the  $3$-SLE$_\kappa$ partition functions, reduce the problem to the study of the auxiliary function \(F\), and prove positivity by probabilistic means under natural boundary assumptions on \(F\). In Section~\ref{section:recursions}, we derive the recursion relations for the coefficient array and construct formal power-series solutions. In Section~\ref{section:two-dim}, we analyze the two-variable degenerations leading to Appell and Horn functions; Section~\ref{section-degenerate} concludes with a heuristic discussion of degenerate partition functions and Green's functions. 
Section \ref{section:analytic} treats the rainbow case and develops the Pfaff-system continuation for the neighbor case. Section \ref{section:continuous} proves the neighbor boundary regularity on $[0,1]^3$.
In Section~\ref{section:integer}, we discuss special integer values of \(\beta\) and their algebraic structure. Finally, Appendix~\ref{section:appendix} records a technical ODE result used in the Pfaff-system analysis.

\section{Multiple SLE Partition Functions and Their Reduction} \label{Section:partition}
In this section, we recall the definition of multiple SLE partition functions, then specialize to the case $N=3$ and reduce the problem to an auxiliary function of three cross-ratios. We also formulate a positivity theorem under natural boundary assumptions on this auxiliary function. We largely adopt the setup and notation of Peltola and Wu \cite{Global}.

For $n\in\N$, we write $[n]:=\{j\in\N:j\le n\}$.
Let $\kappa\in(0,8)$ and $h=h(\kappa)=\frac{6-\kappa}{2\kappa}$. Let $N\in\N$. An $N$-SLE$_\kappa$ partition function is a positive smooth function $\mathcal Z$ on
$$\mathfrak{X}_{2N}:=\{(x_1,\dots,x_{2N})\in\R^{2N}:x_j<x_k\text{ if }j<k\}$$
satisfying the following two properties:
\begin{itemize}
  \item [(PDE)] {\textit{Partial differential equations of second order:}}  For any $i\in [2N]$,
  \BGE \Big[ \frac\kappa 2 \pa_i^2+\sum_{j\in [2N]\sem\{ i\}} \Big(\frac{2}{x_j-x_i}\pa_j-\frac{2h}{(x_j-x_i)^2}\Big)\Big]{\mathcal Z}=0 .\label{PDE}\EDE
\item [(COV)] {\textit{M\"obius covariance}}: For all M\"obius maps $\vphi$ of $\HH:=\{z\in\C:\Imm z>0\}$ such that $\vphi(x_1)<\vphi(x_2)<\cdots<\vphi(x_{2N})$,
\BGE {\mathcal Z}(x_1,\dots,x_{2N})=\prod_{i=1}^{2N} \vphi'(x_i)^h \cdot {\mathcal Z}(\vphi(x_1),\dots,\vphi(x_{2N})).\label{COV}\EDE
\end{itemize}
These governing PDEs are essentially the Belavin-Polyakov-Zamolodchikov (BPZ) equations arising from the null-state decoupling in Conformal Field Theory (\cite{BPZ84}).

For a set $S$, we denote by $\binom{S}{k}$  the set of all $k$-element subsets of $S$.
A link pattern of $N$ links is a partition $\{\{a_1,b_1\},\dots,\{a_N,b_N\}\}$ of $[2N]$ such that for any $\{j,k\}\in \binom{[N]}{2}$, $(a_j-a_k)(a_j-b_k)(b_j-a_k)(b_j-b_k)>0$, which is equivalent to the existence of mutually disjoint curves $\gamma_1,\dots,\gamma_N$ in $\HH$ such that each $\gamma_j$ joins $a_j$ to $b_j$.  The set of link patterns of N links is denoted by $\LP_N$. It has  $C_N$ elements, where $C_N$ is the $N$-th Catalan number: $\frac 1{N+1}\binom{2N}N$.

$\LP_1$ has one element: $\{\{1,2\}\}$; $\LP_2$ has two elements: $\{\{1,2\},\{3,4\}\}$ and $\{\{1,4\},\{2,3\}\}$; and $\LP_3$ has five elements: $$ \{\{1,6\},\{2,5\},\{3,4\}\},\quad \{\{1,4\},\{2,3\},\{5,6\}\},\quad \{\{1,2\},\{3,6\},\{4,5\}\},$$ $$\{\{1,2\},\{3,4\},\{5,6\}\},\quad\{\{1,6\},\{2,3\},\{4,5\}\}.$$
Among the five patterns, the first three are those for which one of the three links separates the other two links; these are called rainbow patterns. The remaining two are called neighbor patterns.

The \textit{pure partition functions} $Z_\alpha$ are indexed by link patterns $\alpha\in \bigcup_{N=0}^\infty \LP_N$. They are positive solutions to (PDE) (\ref{PDE}) and (COV) (\ref{COV}) singled out by boundary conditions given in terms of their asymptotic behavior:
\begin{itemize}
  \item [(ASY)] \textit{Asymptotics}: For all $N\in\N$, $\alpha\in \LP_N$, $j\in[ 2N-1]$, and $\xi\in\R$, if $x_1<\cdots<x_{j-1}<\xi<x_{j+2}<\cdots<x_{2N}$, then for $\ulin x=(x_1,\dots,x_{2N})\in \mathfrak{X}_{2N}$,
  \BGE \lim_{x_j,x_{j+1}\to\xi} \frac{{\mathcal Z}_\alpha(\u x)}{(x_{j+1}-x_j)^{-2h}}=\begin{cases} 0,&\text{if }\{j,j+1\}\not\in \alpha,\\
  {\mathcal Z}_{\alpha/\{j,j+1\}}(\ulin x/\{j,j+1\}),&\text{if }\{j,j+1\}\in \alpha,
  \end{cases}\label{ASY}\EDE
where $\alpha/\{j,j+1\}\in \LP_{N-1}$ is obtained from $\alpha$ by removing  $\{ j, j + 1\}$ and relabeling the remaining indices by $[2N-2]$ in increasing order; and $\ulin x{/\{j,j+1\}}\in \mathfrak{X}_{2(N-1)}$ is obtained by removing $x_j,x_{j+1}$ from $\ulin x$ and keeping the remaining numbers in increasing order.
\end{itemize}
Here we understand that $\emptyset\in \LP_0$ and require that ${\mathcal Z}_\emptyset=1$.

\subsection{Reduction of pure partition functions when $N=3$}

The pure partition functions ${\mathcal Z}_\alpha$ are known for   $N\le 2$. When $N=1$,
$${\mathcal Z}_{\{\{1,2\}\}}(x_1,x_2)=(x_2-x_1)^{-2h}.$$
When $N=2$, by the explicit hypergeometric formulas in \cite{Wu-hyper}, after normalization,  for $\alpha =\{\{a_1,b_1\},\{a_2,b_2\}\}\in \LP_2$ and $\ulin x=(x_1,x_2,x_3,x_4)\in \mathfrak{X}_4$,
\BGE {\mathcal Z}_{\alpha}(\ulin x)=\prod_{j=1}^2 |x_{a_j}-x_{b_j}|^{-2h}\cdot \frac{(C^{(\alpha)}_{\u x})^{\frac 2\kappa}\cdot {}_2F_1 (1-\frac 4\kappa,\frac 4\kappa;\frac 8\kappa; C^{(\alpha)}_{\u x} )}{{}_2F_1 (1-\frac 4\kappa,\frac 4\kappa;\frac 8\kappa; 1 )},\label{Z2}\EDE
where ${}_2F_1 (1-\frac 4\kappa,\frac 4\kappa;\frac 8\kappa; \cdot )$ is a Gaussian hypergeometric function, and
$C^{(\alpha)}_{\u x}=C_{\{x_{a_1},x_{b_1}\},\{x_{a_2},x_{b_2}\}}$ is the \textit{unoriented cross ratio} of $\{x_{a_1},x_{b_1}\}$ and $\{x_{a_2},x_{b_2}\}$, which is defined by
\BGE   C_{\{p_1,q_1\},\{p_2,q_2\}}:=\min\Big\{\frac{|p_1-p_2||q_1-q_2|}{|p_1-q_2||q_1-p_2|}, \frac{|p_1-q_2||q_1-p_2|}{|p_1-p_2||q_1-q_2|}\Big\}. \label{unCR}\EDE
Note that $C_{\{p_1,q_1\},\{p_2,q_2\}}=C_{\{p_2,q_2\},\{p_1,q_1\}}\in (0,1)$ if $\{\{p_1,q_1\},\{p_2,q_2\}\}$ forms a link pattern, i.e., there are two disjoint curves $\gamma_1,\gamma_2$ in $\HH$ such that $\gamma_j$ joins $p_j$ to $q_j$, $j=1,2$.

For $a,b,c\in\C$ and $c\not\in \Z_{\le 0}$, the Gaussian hypergeometric function  ${}_2F_1(a,b;c ;x)$  is defined (\cite[Chapter 15]{NIST:DLMF}) by
$${}_2F_1(a,b;c ;z)=\sum_{n=0}^\infty \frac{(a)_n(b)_n}{(1)_n(c)_n} \cdot z^n,\quad |z|<1,$$
where $(q)_n$ is the Pochhammer symbol:
$$(q)_n=\begin{cases}
  1,&\text{if }n=0;\\
  q(q+1)\cdots(q+n-1),&\text{if }n\in\N.
\end{cases}$$
Furthermore, if $\Ree(c-a-b)>0$, 
 Gauss's identity (\cite[Eq.~15.4.20]{NIST:DLMF}) states that the power series defining ${}_2F_1(a,b;c ;\cdot)$ also converges absolutely at $1$, and
\BGE {}_2F_1\Big(a,b;c; 1\Big)=\frac{\Gamma(c)\Gamma(c-a-b)}{\Gamma(c-a)\Gamma(c-b)}.\label{Gauss}\EDE

Besides the cases $N\le 2$,  explicit formulas for pure partition functions of $N$-SLE$_\kappa$ are known for $\kappa=4$ (\cite{Global}) and $\kappa=2$ (\cite{LERW-partition})  through the relations of the corresponding multiple SLE partition functions with GFF level lines and loop-erased walks, respectively.

We now specialize to the case $N=3$. Let $\alpha=\{\{a_{\ell},b_{\ell}\}:1\le {\ell}\le 3\}\in \LP_3$.  Let $I_\alpha(\ulin x)=\prod_{{\ell}=1}^3 |x_{a_{\ell}}-x_{b_{\ell}}|^{-2h}$. Suppose ${\mathcal Z}_\alpha= I_\alpha {\mathcal Y}_\alpha$.
Then ${\mathcal Z}_\alpha$ satisfies (COV) (\ref{COV}) iff ${\mathcal Y}_\alpha$ satisfies M\"obius invariance;
and  ${\mathcal Z}_\alpha$ satisfies (PDE) (\ref{PDE}) iff for all $i\in[6]$,   ${\mathcal M}^\alpha_i {\mathcal Y}_\alpha=0$, where
\BGE {\mathcal M}^\alpha_i:=\frac\kappa2 \pa_i^2+\frac{\kappa -6}{x_i-x_{\phi_\alpha(i)}} \pa_i+\sum_{j\in[6]\sem\{i\}} \frac 2{x_j-x_i} \pa_j-2h\sum_{{\ell}\in[3]:i\not\in \{a_{\ell},b_{\ell}\}} \Big(\frac 1{x_{a_{\ell}}-x_i}-\frac 1{x_{b_{\ell}}-x_i}\Big)^2,\label{Mi}\EDE
and $\phi_\alpha(i)$ is the unique $j$ such that $\{i,j\}\in \alpha$.

After factoring out $I_\alpha$, we now choose one of the three links as a reference and introduce coordinates adapted to that choice.
Fix $i_0\in [6]$. Let $i_\infty\in[6]$ and ${\ell}_0\in [3]$ be such that  $\{i_0,i_\infty\}=\{a_{{\ell}_0},b_{{\ell}_0}\}$. Then $i_\infty=\phi_\alpha(i_0)$.
Define   $y_j=\frac{1}{x_{i_\infty}-x_{j}}$ for $j\in [6]\sem \{i_\infty\}$, and $w=y_{i_0}$. Then for any ${\ell}\in [3]\sem \{{\ell}_0\}$, $y_{a_{\ell}}$ and $y_{b_{\ell}}$ lie on the same side of $w$.
Relabel $y_{a_{\ell}}$ and $y_{b_{\ell}}$ as $u_{\ell}$ and $v_{\ell}$ such that $u_{\ell}$ is the one closer to $w$, and $v_{\ell}$ is the other, i.e., $\{u_{\ell},v_{\ell}\}=\{y_{a_{\ell}},y_{b_{\ell}}\}$ and $|w-u_{\ell}|<|w-v_{\ell}|$. Recall the unoriented cross ratio defined by (\ref{unCR}). Then for ${\ell}\in [3]\sem\{{\ell}_0\}$,
\BGE C_{\{x_{a_{{\ell}_0}},x_{b_{{\ell}_0}}\},\{x_{a_{\ell}},x_{b_{\ell}}\}}=\frac{w-u_{\ell}}{w-v_{\ell}}.\label{CR1}\EDE
Let $\sigma  = 1$ or $-1$ depending on whether $\alpha$ is a rainbow pattern or neighbor pattern, respectively.  Let $\{{\ell}_1,{\ell}_2\}=[3]\sem \{{\ell}_0\}$. Then
\BGE C_{\{x_{a_{{\ell}_1}},x_{b_{{\ell}_1}}\},\{x_{a_{{\ell}_2}},x_{b_{{\ell}_2}}\}}=\Big(\frac{(u_{{\ell}_1}-u_{{\ell}_2})(v_{{\ell}_1}-v_{{\ell}_2}) }{(u_{{\ell}_1}-v_{{\ell}_2})(v_{{\ell}_1}-u_{{\ell}_2})} \Big)^{\sigma}. \label{CR2}\EDE

We now write $Y_\alpha$ in the coordinates $(w,u_{\ell},v_{\ell})_{{\ell}\in[3]\setminus\{{\ell}_0\}}$ and denote the resulting function by $Y$. Then ${\mathcal M}^\alpha_{i_0} {\mathcal Y}_\alpha=0$ iff ${\mathcal N} {\mathcal Y}=0$, where
\BGE {\mathcal N} := \pa_w^2+   \sum_{{\ell}\in \{{\ell}_1,{\ell}_2\}}\Big[ \frac {-\frac 4\kappa}{w-u_{\ell}} \pa_{u_{\ell}}+ \frac {-\frac 4\kappa}{w-v_{\ell}} \pa_{v_{\ell}} -\frac 4\kappa h  \Big(\frac{1}{w-u_{\ell}}-\frac{1}{w-v_{\ell}}\Big)^2\Big]. \label{N}\EDE
Define $${\mathcal X}={\mathcal Y}\cdot  \prod_{1\le j<k\le 3} C_{\{x_{a_{j}},x_{b_{j}}\},\{x_{a_{k}},x_{b_{k}}\}}^{-\frac 2\kappa}.$$
By (\ref{CR1}) and (\ref{CR2}), we have
$$\frac{\pa_w^2 {\mathcal Y}}{\mathcal Y}=\frac{\pa_w^2 {\mathcal X}}{\mathcal X}+2\sum_{{\ell}\in\{{\ell}_1,{\ell}_2\}} \Big(\frac{\frac 2\kappa}{w-u_{\ell}}+\frac{-\frac 2\kappa}{w-v_{\ell}}\Big)\frac{\pa_w{\mathcal X}}{\mathcal X}+\sum_{{\ell}\in \{{\ell}_1,{\ell}_2\}}\Big(\frac{-\frac 2\kappa}{(w-u_{\ell})^2}+\frac{\frac 2\kappa}{(w-v_{\ell})^2}\Big)$$
$$+\sum_{j\in\{{\ell}_1,{\ell}_2\}}\sum_{k\in\{{\ell}_1,{\ell}_2\}} \Big(\frac{\frac 2\kappa}{w-u_{j}}-\frac{\frac 2\kappa}{w-v_{j}}\Big)\Big(\frac{\frac 2\kappa}{w-u_{k}}-\frac{\frac 2\kappa}{w-v_{k}}\Big);$$
and for $\{j,k\}=\{{\ell}_1,{\ell}_2\}$,
$$\frac{\pa_{u_{j}} {\mathcal Y}}{\mathcal Y}=\frac{\pa_{u_{j}} {\mathcal X}}{\mathcal X}+\frac{-\frac 2\kappa}{w-u_{j}}+\sigma \Big(\frac{\frac 2\kappa }{u_{j}-u_{k}}-\frac{\frac 2\kappa }{u_{j}-v_{k}}\Big),$$
$$\frac{\pa_{v_{j}} {\mathcal Y}}{\mathcal Y}=\frac{\pa_{v_{j}} {\mathcal X}}{\mathcal X}+\frac{\frac 2\kappa}{w-v_{j}}+\sigma\Big(\frac{\frac 2\kappa  }{v_{j}-v_{k}}-\frac{\frac 2\kappa  }{v_{j}-u_{k}}\Big).$$

Introduce a new parameter $\beta=\frac 4\kappa$.
Since $\kappa\in (0,8)$, $\beta\in (\frac 12,\infty)$.
Substituting the above identities into $\mathcal N \mathcal Y=0$, we collect the mixed terms using the following partial fraction identity:
\begin{align}
  &\sum_{s=1}^2 \Big(\frac{1}{w-u_{{\ell}_s}}\Big(\frac{1}{u_{{\ell}_s}-u_{{\ell}_{3-s}}}- \frac{1}{u_{{\ell}_s}-v_{{\ell}_{3-s}}}\Big)+\frac{1}{w-v_{{\ell}_s}}\Big(\frac{1}{v_{{\ell}_s}-v_{{\ell}_{3-s}}}- \frac{1}{v_{{\ell}_s}-u_{{\ell}_{3-s}}}\Big)\Big)\nonumber\\
  =&  \Big(\frac 1{w-u_{{\ell}_1}}-\frac 1{w-v_{{\ell}_1}}\Big) \Big(\frac 1{w-u_{{\ell}_2}}-\frac 1{w-v_{{\ell}_2}}\Big).\label{combine}
\end{align}
It follows that ${\mathcal N} {\mathcal Y}=0$ iff ${\mathcal O}{\mathcal X}=0$, where
$${\mathcal O}=\pa_w^2+\beta ^2  \frac{1-\sigma }2 \Big(\frac 1{w-u_{{\ell}_1}}-\frac 1{w-v_{{\ell}_1}}\Big) \Big(\frac 1{w-u_{{\ell}_2}}-\frac 1{w-v_{{\ell}_2}}\Big)$$
\BGE
+\sum_{{\ell}\in \{{\ell}_1,{\ell}_2\}}\Big(\frac{\beta \pa_w}{w-u_{\ell}}+\frac{-\beta \pa_w }{w-v_{\ell}}+\frac{-\beta \pa_{u_{\ell}}}{w-u_{\ell}}+\frac{-\beta \pa_{v_{\ell}}}{w-v_{\ell}}\Big)-\beta (1-\beta )\sum_{{\ell}\in[3]\sem\{{\ell}_0\}} \frac 1{w-v_{\ell}}\cdot \Big(\frac 1{w-u_{{\ell} }}-\frac 1{w-v_{{\ell} }}\Big).\label{O}\EDE

We write $C_{{\ell}_1,{\ell}_2}$ for $C_{\{x_{a_{{\ell}_1}},x_{b_{{\ell}_1}}\},\{x_{a_{{\ell}_2}},x_{b_{{\ell}_2}}\}}$, and $C_{\ell}$ for $C_{\{x_{a_{{\ell}_0}},x_{b_{{\ell}_0}}\},\{x_{a_{{\ell}}},x_{b_{{\ell}}}\}}$, ${\ell}\in\{{\ell}_1,{\ell}_2\}$. By (\ref{CR1}), $C_{{\ell}}$ is a function of $w,u_{{\ell}},v_{{\ell}}$, ${\ell}\in\{{\ell}_1,{\ell}_2\}$; and by (\ref{CR2}), $C_{{\ell}_1,{\ell}_2}$ is a function of $u_{{\ell}_1},v_{{\ell}_1},u_{{\ell}_2},v_{{\ell}_2}$. Moreover, for $s=1,2$,
$$\pa_w C_{{\ell}_s}=\frac{C_{{\ell}_s}}{w-u_{{\ell}_s}}-\frac{C_{{\ell}_s}}{w-v_{{\ell}_s}},\quad \pa_{u_{{\ell}_s}} C_{{\ell}_s}=-\frac{C_{{\ell}_s}}{w-u_{{\ell}_s}},\quad \pa_{v_{{\ell}_s}} C_{{\ell}_s}=\frac{C_{{\ell}_s}}{w-v_{{\ell}_s}};$$
$$\pa_{u_{{\ell}_s}} C_{{\ell}_1,{\ell}_2}=\frac{\sigma  C_{{\ell}_1,{\ell}_2}}{u_{{\ell}_s}-u_{{\ell}_{3-s}}}- \frac{\sigma C_{{\ell}_1,{\ell}_2}}{u_{{\ell}_s}-v_{{\ell}_{3-s}}},\quad \pa_{v_{{\ell}_s}} C_{{\ell}_1,{\ell}_2}=\frac{\sigma C_{{\ell}_1,{\ell}_2}}{v_{{\ell}_s}-v_{{\ell}_{3-s}}}- \frac{\sigma  C_{{\ell}_1,{\ell}_2}}{v_{{\ell}_s}-u_{{\ell}_{3-s}}} . $$

Suppose ${\mathcal X}=F(C_{{\ell}_1,{\ell}_2},C_{{\ell}_1},C_{{\ell}_2})$. Then by the above formulas,
$$\pa_w {\mathcal X}=\sum_{{\ell} \in  \{{\ell}_1,{\ell}_2\}} \Big(\frac{1}{w-u_{{\ell} }}-\frac{1}{w-v_{{\ell} }}\Big)C_{{\ell} }  \pa_{C_{{\ell}}} F;$$
$$\pa_w^2 {\mathcal X}= \sum_{j\in\{{\ell}_1,{\ell}_2\}}\sum_{k \in \{{\ell}_1,{\ell}_2\}} \Big(\frac{1}{w-u_{j}}-\frac{1}{w-v_{j}}\Big) \Big(\frac{1}{w-u_{k}}-\frac{1}{w-v_{k}}\Big)C_{j} C_{k} \pa_{C_{j}}\pa_{C_{k}} F$$ $$+\sum_{{\ell}\in \{{\ell}_1,{\ell}_2\}}\Big[\Big(\frac{1}{w-u_{\ell}}-\frac{1}{w-v_{\ell}}\Big)^2-\frac {1}{(w-u_{\ell})^2}+\frac{1}{(w-v_{\ell})^2}\Big]C_{\ell} \pa_{C_{\ell}}  F;$$
and for  $\{j,k\}=\{{\ell}_1,{\ell}_2\}$,
$$\pa_{u_{j}}{\mathcal X}=  \Big(\frac{\sigma C_{{\ell}_1,{\ell}_2}  }{u_{j}-u_{k}}- \frac{\sigma C_{{\ell}_1,{\ell}_2}  }{u_{j}-v_{k}}\Big) \pa_{C_{{\ell}_1,{\ell}_2}} F-\frac{C_{j}}{w-u_{j}} \pa_{C_{j}}F;$$
$$\pa_{v_{j}}{\mathcal X}= \Big(\frac{\sigma  C_{{\ell}_1,{\ell}_2}}{v_{j}-v_{k}}- \frac{\sigma C_{{\ell}_1,{\ell}_2}}{v_{j}-u_{k}}\Big)\pa_{C_{{\ell}_1,{\ell}_2}} F+\frac{C_{j}}{w-v_{j}} \pa_{C_{j}}F.$$
Thus,
\begin{align}
  {\mathcal O}{\mathcal X}&=\sum_{j\in\{{\ell}_1,{\ell}_2\}}\sum_{k \in  \{{\ell}_1,{\ell}_2\}} \Big(\frac{1}{w-u_{j}}-\frac{1}{w-v_{j}}\Big) \Big(\frac{1}{w-u_{k}}-\frac{1}{w-v_{k}}\Big)C_{j} C_{k} \pa_{C_{j}}\pa_{C_{k}} F\label{Line1}\\
  &+\sum_{{\ell}\in \{{\ell}_1,{\ell}_2\}}\Big[\Big(\frac{1}{w-u_{\ell}}-\frac{1}{w-v_{\ell}}\Big)^2-\frac {1}{(w-u_{\ell})^2}+\frac{1}{(w-v_{\ell})^2}\Big]C_{{\ell}} \pa_{C_{{\ell}}}  F\label{Line2}\\
  &+\beta\sum_{j\in\{{\ell}_1,{\ell}_2\}}\sum_{k \in  \{{\ell}_1,{\ell}_2\}}  \Big(\frac{1}{w-u_{j}}-\frac{1}{w-v_{j}}\Big) \Big(\frac{1}{w-u_{k}}-\frac{1}{w-v_{k}}\Big) C_{k} \pa_{C_{k}} F\label{Line3}\\
  &+\sum_{(j,k)\in\{(1,2),(2,1)\}} \frac{-\sigma \beta }{w-v_{{\ell}_j}}  \Big(\frac 1{v_{{\ell}_j}-v_{{\ell}_{k}}}-\frac 1{v_{{\ell}_j}-u_{{\ell}_{k}}}\Big)   C_{{\ell}_1,{\ell}_2} \pa_{C_{{\ell}_1,{\ell}_2}} F\label{Line4}\\
 & +\sum_{(j,k)\in\{(1,2),(2,1)\}}  \frac{-\sigma \beta }{w-u_{{\ell}_j}}  \Big(\frac 1{u_{{\ell}_j}-u_{{\ell}_{k}}}-\frac 1{u_{{\ell}_j}-v_{{\ell}_{k}}}\Big)  C_{{\ell}_1,{\ell}_2} \pa_{C_{{\ell}_1,{\ell}_2}} F\label{Line5}\\
  &+ \sum_{{\ell} \in  \{{\ell}_1,{\ell}_2\}} \Big(\frac \beta{(w-u_{\ell})^2}-\frac \beta{(w-v_{\ell})^2}\Big) C_{{\ell}} \pa_{C_{{\ell}}} F\label{Line6}\\
  & -\sum_{{\ell}\in \{{\ell}_1,{\ell}_2\}} \frac {\beta (1-\beta )}{w-v_{\ell}}\cdot \Big(\frac 1{w-u_{{\ell} }}-\frac 1{w-v_{{\ell} }}\Big)F\label{Line7}\\
& +\beta ^2 \cdot \frac{1-\sigma }2 \Big(\frac 1{w-u_{{\ell}_1}}-\frac 1{w-v_{{\ell}_1}}\Big) \Big(\frac 1{w-u_{{\ell}_2}}-\frac 1{w-v_{{\ell}_2}}\Big)F.\label{Line8}
\end{align}
Decomposing the sums in (\ref{Line1}) and (\ref{Line3}) respectively into cases $j=k$ and $j\ne k$ and
using (\ref{CR1}) and (\ref{combine}), we find that ${\mathcal O}{\mathcal X}={\mathcal P}F$, where
$$ {\mathcal P}:=  \frac{(1-C_{{\ell}_1})(1-C_{{\ell}_2})}{(w-u_{{\ell}_1})(w-u_{{\ell}_2})} \Big(\underbrace{2C_{{\ell}_1} C_{{\ell}_2} \pa_{C_{{\ell}_1}}\pa_{C_{{\ell}_2}}}_{(\ref{Line1}),\, j\ne k}  \underbrace{+\beta C_{{\ell}_1} \pa_{C_{{\ell}_1}} +\beta C_{{\ell}_2} \pa_{C_{{\ell}_2}}}_{(\ref{Line3}),\,j\ne k} \underbrace{-\beta \sigma C_{{\ell}_1,{\ell}_2}\pa_{C_{{\ell}_1,{\ell}_2}}}_{(\ref{Line4})\,\&\,(\ref{Line5})}\underbrace{+\beta ^2\cdot \frac{1-\sigma }2}_{(\ref{Line8})} \Big)  $$
$$+\sum_{{\ell}\in \{{\ell}_1,{\ell}_2\}} \frac{1-C_{{\ell}}}{(w-u_{\ell})(w-v_{\ell})} \Big( \underbrace{C_{{\ell}}(1-C_{{\ell}}) \pa_{C_{{\ell}}}^2}_{(\ref{Line1}),\,j=k} \underbrace{+\beta (1-C_{\ell}) \pa_{C_{\ell}}}_{(\ref{Line3}),\,j= k}\underbrace{+\beta(1+C_{\ell})\pa_{C_{\ell}}}_{(\ref{Line6})}\underbrace{ -2C_{{\ell}}  \pa_{C_{{\ell}}}}_{(\ref{Line2})} \underbrace{ -\beta (1-\beta )}_{(\ref{Line7})}\Big) .$$
The underbraces show the contribution of (\ref{Line1})-(\ref{Line8}) to each term of ${\mathcal P}$. The computation of the term contributed by (\ref{Line4}) and (\ref{Line5}) involves the equality (\ref{combine}).


Thus,  ${\mathcal O} {\mathcal X}=0$ is equivalent to
\begin{align}
&\Big(2C_{{\ell}_1} C_{{\ell}_2} \pa_{C_{{\ell}_1}}\pa_{C_{{\ell}_2}} +\beta C_{{\ell}_1} \pa_{C_{{\ell}_1}} +\beta C_{{\ell}_2} \pa_{C_{{\ell}_2}} -\beta  \sigma C_{{\ell}_1,{\ell}_2}\pa_{C_{{\ell}_1,{\ell}_2}}+\beta ^2\cdot \frac{1-\sigma }2\Big)F \nonumber \\
+& \sum_{s=1}^2  \frac 1{1-C_{{\ell}_{3-s}}} \frac {w-u_{{\ell}_{3-s}}}{w-v_{{\ell}_s}} \Big( C_{{\ell}_s}(1-C_{{\ell}_s}) \pa_{C_{{\ell}_s}}^2 +(2\beta -2C_{{\ell}_s}) \pa_{C_{{\ell}_s}} -\beta (1-\beta ) \Big)F=0 
\label{l1l2}
\end{align}

By (\ref{CR1}) and (\ref{CR2}), the latter yielding $C_{{\ell}_1,{\ell}_2}^{\sigma }=\frac{(u_{{\ell}_1}-u_{{\ell}_2})(v_{{\ell}_1}-v_{{\ell}_2}) }{(u_{{\ell}_1}-v_{{\ell}_2})(v_{{\ell}_1}-u_{{\ell}_2})}$, we get
$$\frac{w-u_{{\ell}_1}}{w-v_{{\ell}_2}}+\frac{w-u_{{\ell}_2}}{w-v_{{\ell}_1}} =\frac{(C_{{\ell}_1}+C_{{\ell}_2})-C_{{\ell}_1,{\ell}_2}^{\sigma }(1+C_{{\ell}_1}C_{{\ell}_2})} {1-C_{{\ell}_1,{\ell}_2}^{\sigma }}.$$
Thus, a sufficient condition for (\ref{l1l2}) to hold is that there exists a function $G=G(C_{{\ell}_1,{\ell}_2},C_{{\ell}_1},C_{{\ell}_2})$ such that, for $j\in[2]$,
$$ ( C_{{\ell}_j}(1-C_{{\ell}_j}) \pa_{C_{{\ell}_j}}^2 +(2\beta -2C_{{\ell}_j}) \pa_{C_{{\ell}_j}} -\beta (1-\beta )  )F = (1-C_{{\ell}_{3-j}})(1-C_{{\ell}_1,{\ell}_2}) G;$$
and
$$ (2C_{{\ell}_1} C_{{\ell}_2} \pa_{C_{{\ell}_1}}\pa_{C_{{\ell}_2}} +\beta C_{{\ell}_1} \pa_{C_{{\ell}_1}} +\beta C_{{\ell}_2} \pa_{C_{{\ell}_2}} -\beta \sigma C_{{\ell}_1,{\ell}_2}\pa_{C_{{\ell}_1,{\ell}_2}}+\beta ^2\cdot \frac{1-\sigma }2 )F$$
$$=\begin{cases}
   ({C_{{\ell}_1,{\ell}_2} (1+C_{{\ell}_1}C_{{\ell}_2})-(C_{{\ell}_1}+C_{{\ell}_2})})   G,&\text{if }\sigma =1;\\
   (C_{{\ell}_1,{\ell}_2} (C_{{\ell}_1}+C_{{\ell}_2})-(1+C_{{\ell}_1}C_{{\ell}_2}))  G,&\text{if }\sigma =-1.
\end{cases}$$

In the theorem below we relabel the three cross-ratios by
$$r_{\ell}=C_{\{x_{a_j},x_{b_j}\},\{x_{a_k},x_{b_k}\}},\quad \{j,k,{\ell}\}=[3].$$

\begin{Theorem}
Let $\kappa\in (0,8)$, and $\beta=4/\kappa\in (1/2,\infty)$.
  \begin{enumerate}
    \item [(i)] Let $\alpha=\{\{a_1,b_1\},\{a_2,b_2\},\{a_3,b_3\}\}\in \LP_3$ be a rainbow pattern. Let $F,G\in C^\infty((0,1)^3)$. Suppose that for any $j,k,{\ell}$ such that $\{j,k,{\ell}\}=[3]$, we have
        \BGE (r_{\ell}(1-r_{\ell})\pa_{r_{\ell}}^2+(2\beta -2r_{\ell})\pa_{r_{\ell}}-\beta (1-\beta ))F=(1-r_j)(1-r_k) G;\label{rainbow1}\EDE
        \BGE (2r_jr_k\pa_{r_j}\pa_{r_k}+\beta r_j\pa_{r_j}+\beta r_k\pa_{r_k}-\beta r_{\ell}\pa_{r_{\ell}}) F =(r_{\ell}(1+r_jr_k)-(r_j+r_k))G.\label{rainbow2}\EDE
        Let $H(r_1,r_2,r_3)=(r_1r_2r_3)^{\frac 2\kappa} F(r_1,r_2,r_3)$.
        Let $c_\alpha \in (0,\infty)$. Define ${\mathcal Z}_\alpha$ on $\mathfrak{X}_6$   by
        \BGE {\mathcal Z}_\alpha(\u x)=c_\alpha \prod_{j=1}^3 |x_{a_j}-x_{b_j}|^{-2h}   H\Big(C_{\{x_{a_1},x_{b_1}\},\{x_{a_2},x_{b_2}\}}, C_{\{x_{a_2},x_{b_2}\},\{x_{a_3},x_{b_3}\}}, C_{\{x_{a_3},x_{b_3}\},\{x_{a_1},x_{b_1}\}}\Big).\label{Z3}\EDE
        Then ${\mathcal Z}_\alpha$ satisfies (PDE) (\ref{PDE}) and (COV) (\ref{COV}).
    \item [(ii)] Let $\alpha=\{\{a_1,b_1\},\{a_2,b_2\},\{a_3,b_3\}\}\in \LP_3$ be a neighbor pattern. Let $F$ and $G$ be smooth functions of $(r_1,r_2,r_3)\in(0,1)^3$. Suppose that for any $j,k,{\ell}$ such that $\{j,k,{\ell}\}=[3]$, we have (\ref{rainbow1}) and
        \BGE (2r_jr_k\pa_{r_j}\pa_{r_k}+\beta r_j\pa_{r_j}+\beta r_k\pa_{r_k}+\beta r_{\ell}\pa_{r_{\ell}}+\beta ^2) F =(r_{\ell}(r_j+r_k)-(1+r_jr_k))G.\label{neighbor2}\EDE
        Let
\(
H(r_1,r_2,r_3)=(r_1r_2r_3)^{2/\kappa}F(r_1,r_2,r_3)
\).
Let $c_\alpha\in(0,\infty)$.
        Then the  ${\mathcal Z}_\alpha$ defined by (\ref{Z3})
        satisfies (PDE) (\ref{PDE}) and (COV) (\ref{COV}).
    \item [(iii)] Suppose the $F$ in (i) (resp.\ (ii)) extends continuously to $[0,1]^3$ and satisfies \BGE F(r,1,1)=F(1,r,1)=F(1,1,r)=c_F\cdot {}_2F_1 (1-\beta,\beta;2\beta; r ),\quad r\in (0,1),\label{boundary-F}\EDE
for some $c_F\in (0,\infty)$.
Then the ${\mathcal Z}_\alpha$ defined in (i) (resp.\ (ii)) with  $c_\alpha:=  c_F^{-1}\cdot {}_2F_1 (1-\beta,\beta;2\beta;1)^{-1}$ satisfies (ASY) (\ref{ASY}) for $N=3$.
\item [(iv)] Suppose the $F$ in (i) (resp.\ (ii))  extends continuously to $[0,1]^3$ and is positive on $\pa [0,1]^3$. Then the ${\mathcal Z}_\alpha$ defined in (i) (resp.\ (ii)) is positive for all $\kappa\in (0,8)$ (i.e., $\beta>1/2$).
  \end{enumerate}
\label{Prop1}
\end{Theorem}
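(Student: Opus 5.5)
Parts (i) and (ii) follow from the reduction carried out before the statement, so the plan is to retrace it backward. Write $\mathcal Z_\alpha=c_\alpha I_\alpha\mathcal Y_\alpha$ with $\mathcal Y_\alpha=\prod C^{2/\kappa}\cdot F$. Then (COV) is immediate: $I_\alpha$ carries exactly the weights $\vphi'(x_i)^h$ under Möbius maps, since $|\vphi(x)-\vphi(y)|^2=\vphi'(x)\vphi'(y)|x-y|^2$. The factor $\mathcal Y_\alpha$ depends only on unoriented cross-ratios, and these are Möbius invariant provided the cyclic order is preserved.

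For (PDE), I fix $i\in[6]$ and take $i_0=i$, so that $\ell_0$ is the link containing $i$. The computation above shows that $\mathcal M^\alpha_{i}\mathcal Y_\alpha=0$ is equivalent to \eqref{l1l2} with $\{\ell_1,\ell_2\}=[3]\sem\{\ell_0\}$, where $C_{\ell_1,\ell_2}=r_{\ell_0}$ and $C_{\ell_s}=r_{\ell_{3-s}}$ under the relabeling. Assume \eqref{rainbow1} for both $\ell\in\{\ell_1,\ell_2\}$, together with \eqref{rainbow2} or \eqref{neighbor2} with index $\ell=\ell_0$. Substituting these into \eqref{l1l2} gives $G$ times
$$\frac{w-u_{\ell_1}}{w-v_{\ell_2}}(1-C_{\ell_1,\ell_2})\cdot\frac{1-C_{\ell_1}}{1-C_{\ell_1}}+\cdots,$$
that is, $G(1-C_{\ell_1,\ell_2})\big(\frac{w-u_{\ell_1}}{w-v_{\ell_2}}+\frac{w-u_{\ell_2}}{w-v_{\ell_1}}\big)$ plus the right-hand side of the mixed equation. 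By the displayed identity for that sum of ratios, this vanishes; the sign conventions for $\sigma=\pm1$ have to be checked, using $1-C^{-1}=-(1-C)/C$ in the neighbor case. Since the hypotheses hold for every choice of $\ell_0$, and every $i$ belongs to some link, all six equations follow. The only point needing care is matching the labels $r_\ell$ with $C_{\ell_1,\ell_2}$ and $C_{\ell_s}$.

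For (iii), I take $\{j,j+1\}$. If it is a link $\ell$, then as $x_j,x_{j+1}\to\xi$ the cross-ratios between link $\ell$ and the other two links tend to $0$, while the cross-ratio of the other two links stays fixed at some value $c$. I then need $H\sim r_a^{\beta/2}r_b^{\beta/2}c^{\beta/2}F(0,0,c)$ to combine with $|x_j-x_{j+1}|^{-2h}$ and yield the $N=2$ formula \eqref{Z2}. Checking the powers: $C\approx$ (distance)$\times$const, so $C^{\beta/2}$ does not cancel cleanly. I therefore expect the convention to be that neighboring points approaching correspond to the cross-ratio tending to $1$, in the orientation where nested links give small $C$. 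Concretely, $C_{\ell,m}\to 1$ for the two cross-ratios involving the collapsing link, because unoriented cross-ratios of a tiny link against another link tend to $1$ (both ratios in \eqref{unCR} tend to $1$). Hence $H\to F(1,1,r)\,r^{\beta/2}=c_F\,r^{2/\kappa}{}_2F_1(\cdot;r)$ by \eqref{boundary-F}, and with the choice of $c_\alpha$ this reproduces \eqref{Z2}. Continuity of $F$ on $[0,1]^3$ justifies the limit. If $\{j,j+1\}$ is not a link, then the two points belong to distinct links, whose cross-ratio tends to $0$. The factor $r^{\beta/2}\to0$ together with bounded $F$, and the fact that the product of $|x_a-x_b|^{-2h}$ stays bounded relative to $(x_{j+1}-x_j)^{-2h}$, gives the limit $0$ when $h$ has the appropriate sign. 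When $h<0$ ($\kappa>6$) I need to compare exponents: $r\asymp(x_{j+1}-x_j)$, so the ratio is $\asymp (x_{j+1}-x_j)^{\beta/2+2h}$, and $\beta/2+2h=(8-\kappa)/\kappa>0$ handles all $\kappa<8$.

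For (iv), which I expect to be the main obstacle, the argument is probabilistic. Suppose $\mathcal Z_\alpha$ vanishes or is negative somewhere. I run an SLE$_\kappa$ from $x_1$ weighted by the martingale $\mathcal Z_\alpha/(\text{one-curve partition function})$, which is a local martingale by (PDE) and Itô's formula. Stopping at the exit of regions where it remains positive, I use the boundary positivity of $F$ on $\pa[0,1]^3$, which describes configurations where some cross-ratio degenerates as the curve swallows or approaches points, to show that the optional stopping representation writes $\mathcal Z_\alpha$ as an expectation of positive terminal values. Controlling integrability up to the terminal time, especially near $\kappa\ge4$ when swallowing occurs, is the delicate step.
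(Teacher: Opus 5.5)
Your arguments for (i)--(iii) are essentially the paper's. You retrace the reduction with $C_{\ell_1,\ell_2}=r_{\ell_0}$ and $C_{\ell_s}=r_{\ell_{3-s}}$, and the sign check for $\sigma=-1$ does close. For (iii) you use that both cross-ratios involving a collapsing link tend to $1$, while for a non-link pair one cross-ratio is $O(x_{j+1}-x_j)$ and $\frac2\kappa+2h=\frac{8-\kappa}{\kappa}>0$.

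Part (iv), however, is not yet a proof; what is missing is the substance of the argument. You cannot weight SLE by $\mathcal Z_\alpha/(\cdots)$ before knowing it is positive. Stopping where positivity fails gives terminal value $0$, which contradicts nothing. Instead you must run a fixed process, use the solution only as an observable, and exhibit an explicit local martingale that is provably bounded. The paper takes an endpoint $x_{i_0}$ of a link with adjacent endpoints (not $x_1$ in general). It sends the partner of $x_{i_0}$ to $\infty$, so that the other four points lie on one side of $w$, runs chordal SLE$_\kappa$ from $w$, and uses
\[
M_t=\mathcal Y(W_t,U^1_t,V^1_t,U^2_t,V^2_t)\exp\Big(-2h\sum_{j=1}^2\int_0^t\Big(\frac1{W_s-U^j_s}-\frac1{W_s-V^j_s}\Big)^2ds\Big).
\]
This is bounded because $h\ge0$ when $\kappa\le4$. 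The curve then meets $\mathbb R$ only at $w$. Extremal length gives $\frac{W_t-U^j_t}{W_t-V^j_t}\to1$ and convergence of the third cross-ratio to a value in $(0,1)$, so the limit point lies on $\partial[0,1]^3$, where $F>0$. The exponent stays finite because it equals $\frac12\log\frac{(U^j_t-V^j_t)^2}{g_t'(u_j)g_t'(v_j)(u_j-v_j)^2}$, which converges.

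For $\kappa\in(4,8)$ this scheme does not go through as stated. Marked points are swallowed, and on the event that $u_j$ is swallowed before $v_j$ the factor $\big(\frac{W-U^j}{W-V^j}\big)^{2/\kappa}$ in $\mathcal Y$ tends to $0$, so terminal values are not a.s.\ positive. For $\kappa>6$ one even has $h<0$, so the compensator is $\ge1$ and unbounded.

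This is not merely an integrability detail to be controlled. The paper's remedy, which your proposal lacks, is to change both the observable and the process. It uses $\mathcal X=\mathcal Y\prod C^{-2/\kappa}$ (i.e.\ $F$ rather than $H$) under SLE$_\kappa(2,-2,2,-2)$ with force points $u_1,v_1,u_2,v_2$. The compensator then has exponent $(\frac8\kappa-2)\sum_j\int\frac{1}{W-V^j}\big(\frac1{W-U^j}-\frac1{W-V^j}\big)\le0$. In the neighbor case the extra cross term equals $(\widehat R_t/\widehat R_0)^{4/\kappa}\le\widehat R_0^{-4/\kappa}$, where $\widehat R_t\in(0,1)$ is an increasing cross-ratio of the four force points.

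The terminal analysis has two ingredients. The continuation threshold of SLE$_\kappa(\underline\rho)$ gives $\tau<\infty$ with $\eta(\tau)$ in specified intervals. Lemmas~\ref{lemma-middle-1} and~\ref{lemma-middle-2} show the cross-ratio triple converges to a point of $\partial[0,1]^3$. This yields only $N_\tau\ge0$ a.s. Positivity then follows from $N_\tau>0$ on the positive-probability event $\{\eta(\tau)<v_1\wedge v_2\}$, where the exponent is finite. So for $\kappa>4$ one does not get ``an expectation of positive terminal values.''
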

\begin{proof}
(i) and (ii) follow from the computation before this theorem.

(iii) Let $j\in [5]$. Fix $x_1<\cdots<x_{j-1}<\xi<x_{j+1}<\cdots<x_6$ and let $x_j,x_{j+1}\to \xi$ in $(x_{j-1},x_{j+2})$ with $x_j<x_{j+1}$, where $x_0:=-\infty$ and $x_7:=+\infty$. First, suppose $\{j,j+1\}\not\in\alpha$. Then for each $k\in[3]$, $|x_{a_k}-x_{b_k}|$ tends to a finite positive number. Let $k, {\ell}\in[3]$ be distinct such that $j\in \{a_k,b_k\}$ and $j+1\in \{a_{\ell},b_{\ell}\}$. Then as $x_j,x_{j+1}\to \xi$,
\(
C_{\{x_{a_k},x_{b_k}\},\{x_{a_{\ell}},x_{b_{\ell}}\}} =O(x_{j+1}-x_j),
\)
and if $m\in [3]\setminus \{k,{\ell}\}$, then both $C_{\{x_{a_k},x_{b_k}\},\{x_{a_m},x_{b_m}\}}$ and $C_{\{x_{a_{\ell}},x_{b_{\ell}}\},\{x_{a_m},x_{b_m}\}}$ converge to numbers in $(0,1)$.  Since $F$ extends continuously to $[0,1]^3$, it follows that
\[
F\big(C_{\{x_{a_1},x_{b_1}\},\{x_{a_2},x_{b_2}\}},
C_{\{x_{a_2},x_{b_2}\},\{x_{a_3},x_{b_3}\}},
C_{\{x_{a_3},x_{b_3}\},\{x_{a_1},x_{b_1}\}}\big)\text{ is bounded}.
\]
Recalling that
\(
H(r_1,r_2,r_3)=(r_1r_2r_3)^{2/\kappa}F(r_1,r_2,r_3)
\),
we obtain
\[
H\big(C_{\{x_{a_1},x_{b_1}\},\{x_{a_2},x_{b_2}\}},
C_{\{x_{a_2},x_{b_2}\},\{x_{a_3},x_{b_3}\}},
C_{\{x_{a_3},x_{b_3}\},\{x_{a_1},x_{b_1}\}}\big)
=O\big((x_{j+1}-x_j)^{2/\kappa}\big).
\]
Therefore,
\(
{\mathcal Z}_{\alpha}(\u x)=O\big((x_{j+1}-x_j)^{ \frac 2\kappa}\big)
\).
Hence, since  $\kappa\in (0,8)$,
\[
\frac{{\mathcal Z}_{\alpha}(\u x)}{(x_{j+1}-x_j)^{-2h}}
=O\big((x_{j+1}-x_j)^{\frac 2\kappa+2h}\big)
=O\big((x_{j+1}-x_j)^{\frac{8-\kappa}\kappa}\big)\to 0.
\]

Second, suppose $\{j,j+1\}=\{a_{{\ell}_0},b_{{\ell}_0}\}\in\alpha$. Let $\{{\ell}_1,{\ell}_2\}=[3]\sem \{{\ell}_0\}$. Then, as $x_j,x_{j+1}\to \xi$, $C_{\{x_{a_{{\ell}_1}},x_{b_{{\ell}_1}}\},\{x_{a_{{\ell}_2}},x_{b_{{\ell}_2}}\}}$ and $|x_{a_{{\ell}_s}}-x_{b_{{\ell}_s}}|$, $s=1,2$, remain unchanged, and $C_{\{x_{a_{{\ell}_s}},x_{b_{{\ell}_s}}\},\{x_{a_{{\ell}_0}},x_{b_{{\ell}_0}}\}}$, $s=1,2$, both tend to $1$. Since $F$ has a continuous extension to $[0,1]^3$ and satisfies (\ref{boundary-F}), from (\ref{Z2}) we see that  $\frac{{\mathcal Z}_\alpha(\u x)}{|x_{j+1}-x_j|^{-2h}}\to {\mathcal Z}_{\alpha/\{j,j+1\}}(\u x/\{j,j+1\})$. Thus, ${\mathcal Z}_\alpha$ satisfies (ASY) (\ref{ASY}).

(iv) We postpone the proof of this part to the next subsection.
\end{proof}

 {
\begin{Remark}
For the functions $F$ constructed later, Corollaries~\ref{victory@R}
and~\ref{victory@N-full} show that the relevant hypotheses of
Theorem~\ref{Prop1} are satisfied for every $\beta>1/2$ in both the
rainbow and neighbor cases. Equivalently, the corresponding
identifications hold for every $\kappa\in(0,8)$.
\end{Remark}}

\subsection{Proof of  positivity} \label{subsection-positivity}
In this subsection, we prove  Theorem \ref{Prop1} (iv), thereby completing the proof of Theorem~2.1. 

We denote the \( F \), \( H \), and \( {\mathcal Z}_\alpha \) from (i) and (ii) respectively by
\( \bigl( F^{[\mathrm{R}]}, H^{[\text{R}]}, {\mathcal Z}_\alpha^{[\text{R}]}  \bigr)\)
and
\(\bigl( F^{[\mathrm{N}]}, H^{[\text{N}]}, {\mathcal Z}_\alpha^{[\text{N}]} \bigr)\).
Let $\LP^{[\text{R}]}_3$ (resp. $\LP^{[\text{N}]}_3$) denote the set of rainbow (resp. neighbor) patterns in $\LP_3$.
Let $\sigma^{[\text{R}]}=1$ and $\sigma^{[\text{N}]}=-1$. We need to show that, for $q\in\{\text{R},\text{N}\}$, if $\alpha=\{\{a_j,b_j\}:j\in[3]\}\in \LP^{[q]}_3$, then ${\mathcal Z}^{[q]}_\alpha >0$ on $\mathfrak{X}_6$.

We may pick $j_0\in [3]$ such that $|a_{j_0}-b_{j_0}|=1$ since every link pattern contains at least one adjacent link. By relabeling elements of $\alpha$ we may assume that $j_0=3$ and $a_3<b_3$. Let $i_0=a_3$ and $i_\infty=b_3$. Let $\u x=(x_1,\dots,x_6)\in \mathfrak{X}_6$. Let $w=\frac1{x_{i_\infty}-x_{i_0}}$ and $y_{a_j}=\frac1{x_{i_\infty}-x_{a_j}}$, $y_{b_j}=\frac1{x_{i_\infty}-x_{b_j}}$, $j=1,2$. Then all $y_{a_j}$ and $y_{b_j}$ lie on the left side of $w$. By relabeling the two remaining links, and if necessary exchanging the two endpoints within each unordered pair, we further assume that $0<w-y_{a_j}<w-y_{b_j}$, $j=1,2$, and $w-y_{a_1}<w-y_{a_2}$. This relabeling does not change the type \(q\in\{\text{R},\text{N}\}\). Let $u_j=y_{a_j}$ and $v_j=y_{b_j}$, $j=1,2$. Note that if $q=\text{R}$, $w>u_1>u_2>v_2>v_1$, and if $q=\text{N}$, $w>u_1>v_1>u_2>v_2$. So we define $$\mathfrak{E}^{[\text{R}]}_5=\{(w,u_1,v_1,u_2,v_2)\in \R^5:w>u_1>u_2>v_2>v_1\};$$
   $$\mathfrak{E}^{[\text{N}]}_5=\{(w,u_1,v_1,u_2,v_2)\in \R^5:w>u_1>v_1>u_2>v_2\}.$$
For $q\in\{\text{R},\text{N}\}$, let
\BGE {\mathcal X}^{[q]}(w,u_1,v_1,u_2,v_2)=F^{[q]}\Big(\frac{w-u_1}{w-v_1}, \frac{w-u_2}{w-v_2}, \Big(\frac{(u_1-u_2)(v_1-v_2)}{(u_1-v_2)(v_1-u_2)}\Big)^{\sigma^{[q]}}\Big);\label{X}\EDE
\BGE {\mathcal Y}^{[q]}(w,u_1,v_1,u_2,v_2)=H^{[q]}\Big(\frac{w-u_1}{w-v_1},\frac{w-u_2}{w-v_2}, \Big(\frac{(u_1-u_2)(v_1-v_2)}{(u_1-v_2)(v_1-u_2)}\Big)^{\sigma^{[q]}}\Big).\label{Y}\EDE
To prove that ${\mathcal Z}^{[q]}_\alpha>0$ on $\mathfrak{X}_6$, it suffices to show that, equivalently, ${\mathcal X}^{[q]}>0$ or ${\mathcal Y}^{[q]}>0$ on $\mathfrak{E}^{[q]}_5$.


Since ${\mathcal Z}^{[q]}_\alpha$ satisfies (PDE) (\ref{PDE}), ${\mathcal X}^{[q]}$ satisfies ${\mathcal O}{\mathcal X}^{[q]}=0$, where $\mathcal O$ is as in (\ref{O}), and $\mathcal Y^{[q]}$ satisfies $\mathcal N \mathcal Y^{[q]}=0$, where $\mathcal N$ is the operator defined in (\ref{N}).
Equivalently, after multiplying by \(\kappa/2\), these equations read
$$\frac\kappa 2 \pa_w^2 {\mathcal X}^{[q]}+\sum_{j=1}^2 \Big(\frac {2\pa_{w} {\mathcal X}^{[q]}} {w-u_j} -\frac {2\pa_{w} {\mathcal X}^{[q]}} {w-v_j}+\frac {2\pa_{u_j} {\mathcal X}^{[q]}} {u_j-w} +\frac {2\pa_{v_j} {\mathcal X}^{[q]}} {v_j-w}-\Big(2-\frac 8\kappa\Big)\frac { {\mathcal X}^{[q]}}{w-v_j}\Big(\frac 1{w-u_j}-\frac 1{w-v_j}\Big)\Big)$$
\BGE +\frac 4\kappa(1-\sigma^{[q]})\Big(\frac 1{w-u_1}-\frac 1{w-v_1}\Big)\Big(\frac 1{w-u_2}-\frac 1{w-v_2}\Big){\mathcal X}^{[q]} =0;\label{X-PDE}\EDE
   \BGE \frac\kappa 2 \pa_w^2 {\mathcal Y}^{[q]}+\sum_{j=1}^2 \Big(\frac {2\pa_{u_j} {\mathcal Y}^{[q]}} {u_j-w} +\frac {2\pa_{v_j} {\mathcal Y}^{[q]}} {v_j-w}-2h\Big(\frac 1{w-u_j}-\frac 1{w-v_j}\Big)^2 {\mathcal Y}^{[q]}\Big)=0.\label{Y-PDE}\EDE

To complete the proof, we now use a probabilistic input. For the reader's convenience, we briefly recall the facts about chordal SLE that will be used below.  We use the overline to denote the complex conjugate.  Let $W_t$, $0\le t<T$, be a real valued continuous function, where $T\in (0,\infty]$. For each $z\in\C$, let $g_t(z)$  be the solution of the Loewner equation:
 \BGE \pa_t g_t(z)=\frac 2{g_t(z)-W_t}, \quad g_0(z)=z.\label{Loewner}\EDE
The solution exists for $t\in [0,\tau_z)$ for some time $\tau_z\in [0,T]$. So for each $t\in [0,T)$, $g_t$ is defined on $\{z\in \C:\tau_z>t\}$. It turns out that each $g_t$ is a conformal map from a domain symmetric about the real axis onto the complement of a real interval
and satisfies  $g_t(\lin z)=\lin{g_t(z)}$ and $g_t(z)=z+\frac {2t}z+O(\frac 1{z^2})$ as $z\to \infty$. When $\tau_z<\infty$, $g_t(z)\to W_{\tau_z}$ as $t\to \tau_z^-$. Let $K_t=\{z\in\HH:\tau_z\le t\}$. Then $g_t$ maps $\HH\sem K_t$ conformally onto $\HH$. The $g_t$ and $K_t$ are called chordal Loewner maps and hulls, respectively, driven by $W_t$.

If for every $0\le t<T$, $g_t^{-1}|_{\HH}$ extends continuously to $\HH\cup\R$, and $\eta(t):= g_t^{-1}(W_t)$, $0\le t<T$, is continuous, then we call $\eta$ the chordal Loewner curve driven by $W$. When such $\eta$ exists, it is a continuous curve in $\HH\cup\R$ started from $W_0$, and for each $t\in [0,T)$,   $\HH\sem K_t=\{z\in\HH:\tau_z>t\}$ is the unbounded component of $\HH\sem \eta([0,t])$, and $\{z\in\C:\tau_z>t\}$ is the unbounded component of $\C\sem (\eta([0,t])\cup \lin{\eta([0,t])})$.
In the case that $W_t=w+\sqrt\kappa B_t$, where $w\in \R$, $\kappa>0$, and $B_t$ is a standard Brownian motion, then the chordal Loewner curve $\eta$ driven by $W_t$ exists and satisfies $\lim_{t\to\infty} \eta(t)=\infty$ (\cite{RS}), and  is called a chordal SLE$_\kappa$ curve in $\HH$ from $w$ to $\infty$. 

We now begin the positivity proof. We first treat the range $\kappa\in(0,4]$. 
Let $(w,u_1,v_1,u_2,v_2)\in \mathfrak{E}^{[q]}_5$. Let $W_t=w+\sqrt\kappa B_t$. Let $g_t$ and $K_t$ be the chordal Loewner maps and hulls, respectively, driven by $W_t$, and let $\eta$ be the corresponding curve.  Then $\eta$ is a simple curve and intersects $\R$ only at $w$.  
So $U^j_t:=g_t(u_j)$ and $V^j_t:=g_t(v_j)$, $j=1,2$, are well defined for all $t\ge 0$ and satisfy the ODEs: $dU^j_t=\frac {2dt}{U^j_t-W_t}$ and $dV^j_t=\frac{2dt}{V^j_t-W_t}$. Moreover,  $(W_t,U^1_t,V^1_t,U^2_t,V^2_t)\in \mathfrak{E}^{[q]}_5$ for all $t\ge 0$.  Let
\BGE M^{[q]}_t={\mathcal Y}^{[q]}(W_t,U^1_t,V^1_t,U^2_t,V^2_t)\exp\Big(-2h\sum_{j=1}^2\int_0^t \Big(\frac 1{W_s-U^j_s} -\frac 1{W_s-V^j_s}\Big)^2 ds\Big).\label{Mt}\EDE
Since the generator of \((W_t,U_t^1,V_t^1,U_t^2,V_t^2)\) is
\(
\frac{\kappa}{2}\partial_w^2
+\sum_{j=1}^2\left(\frac{2}{u_j-w}\partial_{u_j}
+\frac{2}{v_j-w}\partial_{v_j}\right)
\),
equation (\ref{Y-PDE})  and It\^s formula imply that \(M^{[q]}\) is a local martingale. Since $F$ extends continuously to $[0,1]^3$, $F$ is bounded, and so is $H$ as $|H|\le |F|$. From $\kappa\le 4$ we get $h\ge 0$. So $M^{[q]}$ is uniformly bounded, which implies that $M^{[q]}$ is a closable true martingale, i.e., $M^{[q]}_\infty:=\lim_{t\to \infty} M^{[q]}_t$ almost surely exists, and $\EE[M^{[q]}_\infty]=M^{[q]}_0={\mathcal Y}^{[q]}(w,u_1,v_1,u_2,v_2)$. Thus, in order to show that ${\mathcal Y}^{[q]}(w,u_1,v_1,u_2,v_2)>0$, it suffices to show that a.s.\ $M^{[q]}_\infty> 0$.

Let $r=\max\{w-v_1,w-v_2\}>0$. Let $T=\sup\{t\ge 0:|\eta(t)-w|\le 2r\}<\infty$, and $R_t=|\eta(t)-w|$. Then  $R_t\to\infty$ as $t\to \infty$.  For $j=1,2$ and $t\ge T$, the semi-annulus $A_{r,R_t}:=\{z\in\HH:r<|z-w|<R_t\}$ separates the real interval $[v_j,u_j]$ from the union of the right side of $\eta([0,t])$ and $[w,\infty)$ in $\HH\sem K_{t}$, namely, any curve in $\HH\sem K_{t}$ that joins a point in $[v_j,u_j]$ to a point that lies either on the right side of $\eta([0,t])$ or on $[w,\infty)$ must cross this semi-annulus. By the comparison principle of extremal length (\cite{Ahl}), the extremal distance between $[v_j,u_j]$ and the union of the right side of $\eta([0,t])$ and $[w,\infty)$ in $\HH\sem K_{t}$ is at least the extremal distance between the two semi-circles: $\{z\in\HH:|z-w|=r\}$ and  $\{z\in\HH:|z-w|=R_t\}$ in $A_{r,R_t}$, which is known to be $\log(R_t/r)/\pi$. Note that $g_{t}$ maps $\HH\sem K_{t}$ conformally onto $\HH$, and by the continuous extension, sends $[v_j,u_j]$ to $[V^j_{t}, U^j_{t}]$, and sends  the union of the right side of $\eta([0,t])$ and $[w,\infty)$ to $[W_{t},\infty)$. By conformal invariance of the extremal length, the extremal distance between $[V^j_{t}, U^j_{t}]$ and $[W_{t},\infty)$ in $\HH$, denoted by $L_t$, is at least $\log(R_t/r)/\pi$. Thus, $L_t\to\infty$ as $t\to\infty$. On the other hand, $L_t$ is the unique positive number such that there is a conformal map from $\HH$ onto the rectangle $(0,L_t)\times (0,1)$, which sends $(V^j_t, U^j_t)$ and $[W_t,\infty)$ setwise to the two vertical sides: $\{0\}\times (0,1)$ and $\{L\}\times (0,1)$, respectively. For two boundary intervals in \(\mathbb H\), their extremal distance tends to
\(\infty\) only when the corresponding cross-ratio degenerates. In the present
normalization, from $L_t\to \infty$ we get
\BGE \lim_{t\to\infty} \frac{W_{t}-U^j_{t}}{W_{t}-V^j_{t}}=1,\quad  j=1,2.\label{limi-Rj}\EDE

Next, we consider the limit of $C_t:=\Big(\frac{(U^1_{t}-U^2_{t})(V^1_{t}-V^2_{t})} {(U^1_{t}-V^2_{t})(V^1_{t}-U^2_{t})} \Big)^{\sigma^{[q]}}$ as $t\to \infty $. 
Define
\BGE \til g_t(z)=\frac{g_t(z)-g_t(u_1)}{g_t'(u_1)},\quad t\ge 0.\label{til-gt}\EDE
Then $\til g_t$ is the conformal map from $\Omega_t:=\C\sem (\eta([0,t])\cup \lin{\eta([0,t])})$ onto $\C\sem \til I_t$ for some compact real interval $\til I_t$, which satisfies $\til g_t(u_1)=0$ and $\til g_t'(u_1)=1$. Let $\Omega_\infty$ denote the  connected component of $\C\sem (\eta\cup \overline \eta)$ that contains $u_1$. As $t\to \infty$, $\til g_t$ converges locally uniformly in $\Omega_\infty$ to the conformal map $\til g_\infty$ from $\Omega_\infty$ onto $\C\sem [a,\infty)$ for some $a>0$ that satisfies $\til g_\infty(u_1)=0$ and $\til g_\infty'(u_1)=1$. Let $\til U^j_t=\til g_t(u_j)$ and $\til V^j_t=\til g_t(v_j)$, $t\ge 0$, $j\in[2]$. Then, as $t\to\infty$, $\til U^j_t\to \til U^j_\infty$ and $\til V^j_t\to \til V^j_\infty$, and so \BGE C_t = \til C_t:=\Big(\frac{(\til U^1_{t}-\til U^2_{t})(\til V^1_{t}-\til V^2_{t})} {(\til U^1_{t}-\til V^2_{t})(\til V^1_{t}-\til U^2_{t})} \Big)^{\sigma^{[q]}}\to \Big(\frac{(\til U^1_{\infty}-\til U^2_{\infty})(\til V^1_{\infty}-\til V^2_{\infty})} {(\til U^1_{\infty}-\til V^2_{\infty})(\til V^1_{\infty}-\til U^2_{\infty})} \Big)^{\sigma^{[q]}}=:\til C_\infty .\label{limi-R3}\EDE
The four boundary points remain distinct in the limiting component
\(\Omega_\infty\), and their images under \(\widetilde g_\infty\) are therefore
four distinct real points. Thus,  $\til C_\infty\in (0,1)$.

Combining the results in the above two paragraphs with  (\ref{Y}) and the positivity of $F$ on $\pa [0,1]^3$, we get
$$\lim_{t\to \infty} {\mathcal Y}^{[q]}(W_t,U^1_t,V^1_t,U^2_t,V^2_t)=\til C_\infty^{\frac 2\kappa} F(1,1,\til C_\infty )>0.$$
In order to show that a.s.\ $M^{[q]}_\infty>0$, we need to show that, for $j\in [2]$, a.s.\
\BGE \int_0^\infty \Big(\frac 1{W_s-U^j_s} -\frac 1{W_s-V^j_s}\Big)^2 ds<\infty.\label{boundary-cr}\EDE
We derive the inequality by understanding the geometric meaning of the integral.

Since $U^j_t=g_t(u_j)$ and $V^j_t=g_t(v_j)$, from (\ref{Loewner}) we get
$$ \frac{d}{dt} \log(U^j_t-V^j_t)= \frac{-2 }{(W_t-U^j_t)(W_t-V^j_t)},\quad j\in [2].$$ 
Differentiating (\ref{Loewner}) w.r.t.\ $z$ and then substituting $z=u_j$ and $z=v_j$, we get
$$ \frac d{dt} \log(g_t'(u_j))=\frac{-2 }{(W_t-U^j_t)^2},\quad \frac d{dt} \log(g_t'(v_j))=\frac{-2 }{(W_t-V^j_t)^2},\quad j\in [2].$$ 
Combining the above two displayed formulas, we get, for $t\ge 0$,
$$\int_0^t \Big(\frac 1{W_s-U^j_s} -\frac 1{W_s-V^j_s}\Big)^2 ds =\frac 12 \log\Big(\frac{(U^j_t-V^j_t)^2} {g_t'(u_j)g_t'(v_j) }\Big)\Big|_0^t =\frac 12 \log\Big(\frac{(  U^j_t-  V^j_t)^2}{ g_t'(u_j) g_t'(v_j)(u_j-v_j)^2 }\Big) $$
$$=\frac 12 \log\Big(\frac{(\til U^j_t-\til V^j_t)^2}{\til g_t'(u_j)\til g_t'(v_j)(u_j-v_j)^2 }\Big) \to  \frac 12 \log\Big(\frac{(\til U^j_\infty-\til V^j_\infty)^2}{\til g_\infty'(u_j)\til g_\infty'(v_j)(u_j-v_j)^2 }\Big)\in\R,\quad \text{as }t\to \infty .$$
This proves the desired (\ref{boundary-cr}).

Finally, assume $\kappa\in (4,8)$. 
We work on a different Loewner process. Let $W_t$ and $U^j_t,V^j_t$, $j=1,2$, $0\le t<\tau$, be the solution, defined on its maximal interval \([0,\tau)\), of the following SDE/ODE system:
\BGE dW_t=\sqrt\kappa dB_t+\sum_{j=1}^2 \Big(\frac 2{W_t-U^j_t}-\frac 2{W_t-V^j_t}\Big)dt,\quad W_0=w;\label{dWt-2}\EDE
\BGE dU^j_t=\frac {2dt}{U^j_t-W_t},\quad U^j_0=u_j;\quad dV^j_t=\frac {2dt}{V^j_t-W_t},\quad V^j_0=v_j.\label{dUV}\EDE
Let $g_t$ and $K_t$ be the chordal Loewner maps and hulls, respectively, driven by $W_t$. Then $U^j_t=g_t(u_j)$ and $V^j_t=g_t(v_j)$, $j\in [2]$. The corresponding Loewner curve, denoted by $\eta$, exists, which is called a chordal SLE$_\kappa(2,-2,2,-2)$ curve in $\HH$ started from $w$ with the corresponding force points $u_1,v_1,u_2,v_2$ (\cite{LSW-8/3,MS1}).

We denote $\rho_{u_j}=2$ and $\rho_{v_j}=-2$, $j\in [2]$. In the rainbow case, $W_t>U^1_t>U^2_t>V^2_t>V^1_t$ on $[0,\tau)$. Since $\rho_{u_1}=2>\frac\kappa 2-2$, $\rho_{u_1}+\rho_{u_2}=4>\frac\kappa 2-2$, $\rho_{u_1}+\rho_{u_2}+\rho_{v_2}=2>\frac\kappa2-2$, and $\rho_{u_1}+\rho_{u_2}+\rho_{v_2}+\rho_{v_1}=0\in (\frac\kappa 2-4,\frac\kappa 2-2)$, by the continuation-threshold description for
SLE\(_\kappa(\underline\rho)\) processes; see \cite[Section 2]{MS1}, a.s.\ $\eta(\tau):=\lim_{t\to \tau^-} \eta(t)\in (-\infty,v_1)$. In the neighbor case, $W_t>U^1_t>V^1_t>U^2_t>V^2_t$ on $[0,\tau)$. Since $\rho_{u_1}=2>\frac\kappa 2-2$, $\rho_{u_1}+\rho_{v_1}=0>\frac\kappa 2-4$, $\rho_{u_1}+\rho_{v_1}+\rho_{u_2}=2>\frac\kappa2-2$, and $\rho_{u_1}+\rho_{v_1}+\rho_{u_2}+\rho_{v_2}=0\in (\frac\kappa 2-4,\frac\kappa 2-2)$, by \cite[Section 2]{MS1}, a.s.\ $\eta(\tau ):=\lim_{t\to \tau^-} \eta(t)\in (-\infty,v_2)\cup (u_2,v_1)$, and $\PP[\eta(\tau)\in (-\infty,v_2)]>0$. In both cases, $\tau<\infty$, and  \(W_t\) and the quantities \(g_t(y)\), for
\(y\in\{u_j,v_j:j=1,2\}\), have finite limits as \(t\uparrow\tau\).
We denote these limits by \(W_\tau\) and \(g_\tau(y)\). We need a couple of lemmas.

\begin{Lemma}
For any $j\in [2]$,
\BGE \lim_{t\to \tau^-} \frac{W_t-U^j_t}{W_t-V^j_t}=\begin{cases}
  1,&\text{if }\eta(\tau)\in (-\infty,v_j);\\
  0,&\text{if }\eta(\tau)\in (v_j,u_j).
\end{cases}\label{cases}\EDE \label{lemma-middle-1}
\end{Lemma}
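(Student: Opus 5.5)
Since cross-ratios are conformally invariant, I would express the quantity through extremal distance, as in the $\kappa\le4$ argument above. The ratio $\frac{W_t-U^j_t}{W_t-V^j_t}$ is a cross-ratio of the four boundary points $W_t,U^j_t,V^j_t,\infty$ of $\HH$. Its value tends to $1$ exactly when the extremal distance $L_t$ in $\HH$ between $[V^j_t,U^j_t]$ and $[W_t,\infty)$ tends to $\infty$. It tends to $0$ exactly when that extremal distance tends to $0$, equivalently when the extremal distance between $[U^j_t,W_t]$ and $(-\infty,V^j_t]$ tends to $\infty$. Pulling back by $g_t$ (conformal invariance of extremal length), the case analysis becomes a statement about the domains $\HH\setminus K_t$ as $t\uparrow\tau$, using that $\eta(\tau)=\lim_{t\to\tau^-}\eta(t)$ exists and lies in a given real interval.

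\emph{Case $\eta(\tau)\in(-\infty,v_j)$.} I would first argue that $\eta(\tau)$ is not $v_j$ (a.s., from the continuation-threshold description) and lies at positive distance $\delta$ from $[v_j,u_j]$. Near time $\tau$ the curve closes off $[v_j,u_j]$ by approaching the real line to the left of $v_j$. Hence for $t$ close to $\tau$, any path in $\HH\setminus K_t$ from $[v_j,u_j]$ to the right side of $\eta([0,t])\cup[w,\infty)$ must pass through the small gap between $\eta(t)$ and $\eta(\tau)$. Concretely, I would take a small semi-annulus centered at $\eta(\tau)$ with inner radius $|\eta(t)-\eta(\tau)|\to0$ and outer radius $\delta'<\delta$. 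The portion of $\eta$ already drawn, together with the real segment between $\eta(\tau)$ and $v_j$, separates $[v_j,u_j]$ from the target, so every such path must cross this semi-annulus radially. Comparison gives $L_t\ge \log(\delta'/|\eta(t)-\eta(\tau)|)/\pi\to\infty$, and therefore the ratio tends to $1$.

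\emph{Case $\eta(\tau)\in(v_j,u_j)$.} The roles swap. Now $(-\infty,v_j]$ together with the left side of $\eta$ is separated from $[u_j,w]$ together with the right side of $\eta$ (the relevant image arcs are $[U^j_t,W_t]$ and $(-\infty,V^j_t]$). The same shrinking-gap semi-annulus argument centered at $\eta(\tau)$ shows that the extremal distance between $(-\infty,V^j_t]$ and $[U^j_t,W_t]$ in $\HH$ tends to $\infty$, so the ratio tends to $0$.

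The main obstacle is the topological step. One must check that the drawn curve, which for $\kappa\in(4,8)$ is non-simple and may touch $\R$ elsewhere (for instance swallowing or hitting points between the force points earlier), really forces all relevant crossings through the shrinking neighborhood of $\eta(\tau)$. One must also check that $[v_j,u_j]$ (respectively the complementary arc) has not already been swallowed before $\tau$. For the latter I would use the fact, from the SLE$_\kappa(\underline\rho)$ description in \cite{MS1}, that the force points keep $W_t>U^1_t>\cdots$ strictly ordered on $[0,\tau)$, so none of $u_j,v_j$ is swallowed before $\tau$.
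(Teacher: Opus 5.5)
Your Case 1 is the paper's argument: a semi-annulus centred at $\eta(\tau)$ with shrinking inner radius, comparison of extremal lengths, and conformal invariance under $g_t$. The preliminary step excluding $\eta(\tau)=v_j$ is not needed, since both cases of the lemma concern open intervals.

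In Case 2 you depart from the paper, and the separation you state has the two sides of $\eta$ swapped. Under $g_t$, the right side of $K_t$ together with the real points to the right of the hull is mapped onto $[W_t,\infty)$. The left side of $K_t$ is mapped onto a subinterval of $(U^1_t,W_t]$ ending at $W_t$. Moreover, for $t<\tau$ no force point has been swallowed, so $\overline{K_t}\cap\R\subset(u_1,\infty)$. Hence the preimage of $(-\infty,V^j_t]$ is just $(-\infty,v_j]$. The preimage of $[U^j_t,W_t]$ is $[u_j,a_t]$ together with the \emph{left} side of $K_t$, where $a_t:=\min(\overline{K_t}\cap\R)$.

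Your grouping (``$(-\infty,v_j]$ together with the left side'' versus ``$[u_j,w]$ together with the right side'') is not a separated pair. The interval $[u_j,a_t]$ reaches the left side inside the region being closed off. The half-line $(-\infty,v_j]$ reaches the right side through the unbounded region. Neither path needs to come near $\eta(\tau)$.

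With the correct grouping, Case 2 has the same structure as Case 1. The set $[u_j,a_t]$ and the left side of $K_t$ bound the pocket that $\eta$ closes off at time $\tau$, and $(-\infty,v_j]$ lies outside it. The same semi-annulus comparison then shows that the extremal distance between $(-\infty,V^j_t]$ and $[U^j_t,W_t]$ tends to $\infty$, so the ratio tends to $0$.

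The paper handles Case 2 without extremal length. Because $\eta(\tau)\in(v_j,u_j)$, the point $u_j$ is swallowed exactly at time $\tau$, so $W_t-g_t(u_j)\to 0$ by the basic Loewner fact $g_t(z)\to W_{\tau_z}$. Meanwhile $v_j\notin\overline{K_\tau}$, so $W_t-g_t(v_j)\to W_\tau-g_\tau(v_j)\neq 0$. That is shorter. Your route has the merit of treating both cases by one mechanism, at the price of the extra topology.

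For that topology, which is the obstacle you flag, it helps in both cases to take the inner radius $\sup_{s\in[t,\tau)}|\eta(s)-\eta(\tau)|$ rather than $|\eta(t)-\eta(\tau)|$; it still tends to $0$. Then all of $\eta([t,\tau])$ lies in the inner disk. After mapping by $g_t$, the required separation becomes a statement about the image curve, i.e.\ the Loewner curve driven by $W_{t+\cdot}$, running from $W_t$ to $g_t(\eta(\tau))$. Any path in $\HH$ from a real point strictly between $g_t(\eta(\tau))$ and $W_t$ to a real point outside $[g_t(\eta(\tau)),W_t]$ must meet this curve.
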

\begin{proof}
  First, suppose $\eta(\tau)\in (-\infty,v_j)$. The proof follows a similar extremal length argument. Let $r=v_j-\eta(\tau)>0$. Let $T=\sup\{t\in [0,\tau]: |\eta(t)-\eta(\tau)|\ge r/2\}$. Let $\eps_t=|\eta(t)-\eta(\tau)|$. Then  $\eps_t\to 0$ as $t\to \tau^-$. For $t\in [T,\tau)$, the semi-annulus $A_{\eps_t}:=\{z\in\HH:\eps_t<|z-\eta(\tau)|<r\}$ separates the real interval $[v_j,u_j]$ from the union of the right side of $K_t$ and $[w,\infty)$ in $\HH\sem K_{t}$. So the extremal distance between $[v_j,u_j]$ and the union of the right side of $\eta([0,t])$ and $[w,\infty)$ in $\HH\sem K_{t}$ is at least  $\log(r/\eps_t)/\pi$, which tends to $\infty$ as $t\to\tau^-$. By conformal invariance, the extremal distance between $[g_t(v_j),g_t(u_j)]=[V^j_t,U^j_t]$  and $[W_t,\infty)$ in $\HH$ tends to $\infty$, as $t\to \tau^-$. This implies the limit in first case of (\ref{cases}).

  Second, suppose $\eta(\tau)\in (v_j,u_j)$. In this case, as $t\to\tau^-$, $W_t-g_t(u_j)\to 0$   and $ W_t-g_t(v_j)$ converges to a nonzero real number. 
This implies the limit in the second case of (\ref{cases}).
\end{proof}

\begin{Lemma}
Suppose $\{y_1,y_2,y_3,y_4\}=\{u_1,v_1,u_2,v_2\}$ and   $y_4<y_3<y_2<y_1$.  Then
  $$\lim_{t\to \tau^-}\frac{(g_t(y_1)-g_t(y_2))(g_t(y_3)-g_t(y_4))} {(g_t(y_1)-g_t(y_3))(g_t(y_2)-g_t(y_4))}\begin{cases}
    \in (0,1) &\text{if } \eta(\tau)\in(-\infty,y_4);\\
    \in (0,1) &\text{if } \eta(\tau)\in(y_4,y_3);\\
    =0 &\text{if } \eta(\tau)\in(y_3,y_2);\\
    \in (0,1) &\text{if } \eta(\tau)\in(y_2,y_1).\\
  \end{cases}$$
  By the elementary cross-ratio identity
\(
\frac{(a-b)(c-d)}{(a-c)(b-d)}
+
\frac{(a-d)(b-c)}{(a-c)(b-d)}
=1
\),
the complementary cross-ratio has limit equal to one minus the limit above.
\label{lemma-middle-2}
\end{Lemma}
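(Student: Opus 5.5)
The plan is to track the four real points $g_t(y_1)>g_t(y_2)>g_t(y_3)>g_t(y_4)$ as $t\to\tau^-$. They all have finite limits $g_\tau(y_k)$, so the cross-ratio converges to the corresponding expression in these limits, provided the denominator stays nonzero. The argument therefore reduces to deciding, in each case, which of the limits coincide. Recall that $\eta$ approaches $\eta(\tau)\in\R$, and when $\tau$ is reached the curve swallows the real interval between $\eta(\tau)$ and the rightmost point of $K_t\cap\R$ on that side. Concretely, for $y,y'$ both strictly on the same side, a pair of real points collapses in the limit exactly when $\eta(\tau)$ separates neither of them from $w$ while the hull closes around them, or more precisely:
\begin{itemize}
\item[(a)] points $y>\eta(\tau)$ (lying between $\eta(\tau)$ and $w$) are swallowed, and all their images converge to the common value $W_\tau$;
\item[(b)] points $y<\eta(\tau)$ are not swallowed, and the map $y\mapsto g_\tau(y)$ is strictly increasing on them with all values strictly less than $W_\tau$.
\end{itemize}
To justify (a), I would use the extremal-length argument of Lemma~\ref{lemma-middle-1}. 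For $y'<y$ both in $(\eta(\tau),w)$, the semi-annulus around $\eta(\tau)$ separates $[y',y]$ together with the right side of $K_t$ from $(-\infty,\eta(\tau)-r]$. Hence the harmonic measure from $\infty$ of $[y',y]\cup(\text{right side of }K_t)$ in $\HH\sem K_t$ tends to $0$, which by conformal invariance means $W_t-g_t(y')\to 0$. In particular $g_t(y)-g_t(y')\to0$. For (b), I would note that the points $y<\eta(\tau)$ stay at positive distance from the closure of $\eta([0,\tau))$, so $g_t$ converges to a conformal map near them and keeps them distinct. The point at $\eta(\tau)$ receives its limit image $W_\tau$, so these points stay strictly below $W_\tau$.

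Then I would go through the cases.
\begin{itemize}
\item If $\eta(\tau)<y_4$, all four points are swallowed and every difference tends to $0$. Here I would normalize as in \eqref{til-gt}, with $\til g_t=(g_t-g_t(y_1))/g_t'(y_1)$. Exactly as in the $\kappa\le4$ argument, $\til g_t$ converges to a conformal map on the component of $\C\sem(\overline{\eta}\cup\lin{\overline\eta})$ containing $y_1,\dots,y_4$. This is the bounded component enclosed by $\eta$ and its reflection, and its boundary contains $[y_4,y_1]$, so the four images are distinct and the limit cross-ratio lies in $(0,1)$.
\item If $\eta(\tau)\in(y_4,y_3)$, then $y_1,y_2,y_3$ collapse to $W_\tau$ while $y_4$ does not. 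Both numerator and denominator tend to $0$. Using the same local normalization at $y_1$, I would argue that the ratio tends to the cross-ratio of $\til g_\infty(y_1),\til g_\infty(y_2),\til g_\infty(y_3)$ and the point $\infty$, namely $(\til y_1-\til y_2)/(\til y_1-\til y_3)\in(0,1)$. The point $y_4$ is sent to $\infty$ because it lies outside the pinched component.
\item If $\eta(\tau)\in(y_3,y_2)$, then $g_t(y_1)-g_t(y_2)\to0$, while $g_\tau(y_3)>g_\tau(y_4)$ and $g_\tau(y_3)<W_\tau$. So the numerator tends to $0$ and the denominator tends to $(W_\tau-g_\tau(y_3))(W_\tau-g_\tau(y_4))>0$, giving the limit $0$.
\item If $\eta(\tau)\in(y_2,y_1)$, only $y_1$ is swallowed. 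The three limits $W_\tau>g_\tau(y_2)>g_\tau(y_3)>g_\tau(y_4)$ are distinct, so the limit cross-ratio lies in $(0,1)$.
\end{itemize}
The complementary statement follows from the displayed identity.

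I expect the main obstacle to be the first two cases, where all or three points collapse. There one must make the normalized limit $\til g_\infty$ rigorous for $\kappa\in(4,8)$, when $\eta$ hits $\R$ at time $\tau$ and pinches off a bounded region. Here I would invoke Carathéodory kernel convergence for the increasing family of domains $\Omega_t$ around $y_1$, viewed as a decreasing family of complements of hulls. The kernel is the pinched component, and a Möbius renormalization sends the escaping point to $\infty$.
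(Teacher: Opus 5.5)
Your proposal is correct and follows essentially the paper's route. You normalize at $y_1$ by $\widetilde g_t=(g_t-g_t(y_1))/g_t'(y_1)$, use the limiting conformal map on the component containing $y_1$ when the differences degenerate, and read off the limits directly otherwise; the only variation is in case 3, where you use the swallowed/unswallowed dichotomy (your facts (a)--(b)) instead of the paper's extremal-distance argument. There are two small repairs: the boundary arc of $\mathbb H\setminus K_t$ whose image is $[g_t(y'),W_t]$ is made of real points together with the \emph{left} side of $K_t$, since the right side together with $[w,\infty)$ maps to $[W_t,\infty)$; and in case 2 you only assert that $y_4$ goes to $\infty$, but you do not need this, because writing the cross-ratio as $\frac{\widetilde g_t(y_1)-\widetilde g_t(y_2)}{\widetilde g_t(y_1)-\widetilde g_t(y_3)}\cdot\frac{g_t(y_3)-g_t(y_4)}{g_t(y_2)-g_t(y_4)}$, the second factor tends to $1$ by (a)--(b).
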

\begin{proof}
For $0\le t<\tau$, let $\til g_t(z)=\frac{g_t(z)-g_t(y_1)}{g_t'(y_1)}$. Let $\Omega$ denote the connected component of $\C\sem (\eta([0,\tau])\cup \lin{\eta([0,\tau])})$ that contains $y_1$. Then as $t\to \tau^-$, $\til g_t$ converges locally uniformly in $\Omega$ to the conformal map $\til g_\tau$ from $\Omega$ onto $\C\sem [a,\infty)$ for some $a>0$ that sends the prime ends $y_1$ and $\eta(\tau)$ respectively to $0$ and $\infty$ and satisfies $\til g_\tau'(y_1)=1$.

If $\eta(\tau)\in(-\infty,y_4)$, then as $t\to \tau^-$, $\til g_t(y_j)\to \til g_\tau(y_j)$, $1\le j\le 4$, and so
 $$ \frac{(g_t(y_1)-g_t(y_2))(g_t(y_3)-g_t(y_4))} {(g_t(y_1)-g_t(y_3))(g_t(y_2)-g_t(y_4))} =  \frac{(\til g_t(y_1)-\til g_t(y_2))(\til g_t(y_3)-\til g_t(y_4))} {(\til g_t(y_1)-\til g_t(y_3))(\til g_t(y_2)-\til g_t(y_4))}$$
\BGE \to  \frac{(\til g_\tau(y_1)-\til g_\tau(y_2))(\til g_\tau(y_3)-\til g_\tau(y_4))} {(\til g_\tau(y_1)-\til g_\tau(y_3))(\til g_\tau(y_2)-\til g_\tau(y_4))}\in (0,1).\label{repeated1}\EDE

If $\eta(\tau)\in (y_4, y_3)$, then as $t\to \tau^-$, $\til g_t(y_j)\to \til g_\tau(y_j)$, $1\le j\le 3$, and $\til g_t(y_4)\to -\infty$. To see that $\til g_t(y_4)$ diverges to $-\infty$, we just need to note that the extremal distance between $(-\infty,y_4]$ and $[y_2,y_1]$ in $\HH\sem K_t$ tends to $\infty$ as $t\to \tau^-$, which by conformal invariance equals the extremal distance between $(-\infty,\til g_t(y_4)]$ and $[\til g_t(y_2),\til g_t(y_1)]$ in $\HH$. Since the extremal distance tends to $\infty$, it follows that $(\til g_t(y_1)-\til g_t(y_2))/(\til g_t(y_1)-\til g_t(y_4))\to 0$. Also recall that $\til g_t(y_1)$ and $\til g_t(y_2)$ converge to two distinct numbers.  Thus, (\ref{repeated1}) holds with the limit replaced by $ \frac{\til g_\tau(y_1)-\til g_\tau(y_2) } {\til g_\tau(y_1)-\til g_\tau(y_3) }$, which again lies in $(0,1)$.

If $\eta(\tau)\in (y_3, y_2)$, then as $t\to \tau^-$, the extremal distance between $(y_4,y_3)$ and $(y_2,y_1)$ in $\HH\sem K_t$ tends to $\infty$, which implies that the extremal distance between $(g_t(y_4),g_t(y_3))$ and $(g_t(y_2),g_t(y_1))$ in $\HH$ tends to $\infty$. So in this case the cross ratio tends to $0$.

If $\eta(\tau)\in (y_2,y_1)$, then as $t\to \tau^-$, $g_t(y_1)\to W_\tau>g_\tau(y_2)>g_\tau(y_3)>g_\tau(y_4)$, and so
$$ \frac{(g_t(y_1)-g_t(y_2))(g_t(y_3)-g_t(y_4))} {(g_t(y_1)-g_t(y_3))(g_t(y_2)-g_t(y_4))} \to  \frac{(W_\tau-g_\tau(y_2))(g_\tau(y_3)-g_\tau(y_4))} {(W_\tau-g_\tau(y_3))(g_\tau(y_2)-g_\tau(y_4))}\in (0,1).$$
\end{proof}

Now we continue the proof of positivity. For $0\le t<\tau$ and $q\in \{\text{R},\text{N}\}$, define
$$ N^{[q]}_t={\mathcal X}^{[q]}(W_t,U^1_t,V^1_t,U^2_t,V^2_t)\exp\Big(\sum_{j=1}^2 \Big(\frac 8\kappa-2\Big) \int_0^t \frac 1{W_s-V^j_s}\Big(\frac 1{W_s-U^j_s}-\frac 1{W_s-V^j_s}\Big)ds $$
\BGE +\frac 4\kappa(1-\sigma^{[q]})\int_0^t\Big(\frac 1{W_s-U^1_s}-\frac 1{W_s-V^1_s}\Big) \Big(\frac 1{W_s-U^2_s}-\frac 1{W_s-V^2_s}\Big)ds\Big).\label{Nt}\EDE
By (\ref{dWt-2}) , (\ref{dUV}, (\ref{X-PDE}) and It\^o's formula, $N^{[q]}_t$ is a local martingale. Since $F^{[q]}$ is bounded on $[0,1]^3$, ${\mathcal X}^{[q]}(\cdot)$ is uniformly bounded. If $q=\text{R}$, the exponent in (\ref{Nt}) is nonpositive since  $\frac 8\kappa-2<0$ and $\frac 1{W_s-U^j_s}>\frac 1{W_s-V^j_s}>0$, and $1-\sigma^{[q]}=0$. Thus, $N^{[q]}$ is uniformly bounded. In the case $q=\text{N}$, we define $\ha R_t=\frac{(U^1_t-V^2_t)(V^1_t-U^2_t)}{(U^1_t-U^2_t)(V^1_t-V^2_t)}$. Then $\ha R_t\in (0,1)$ on $[0,\tau)$. Using (\ref{dUV}) we find that
$$\frac d{dt} \log(\ha R_t)=2\Big(\frac 1{W_t-U^1_t}-\frac 1{W_t-V^1_t}\Big)\Big(\frac 1{W_t-U^2_t}-\frac 1{W_t-V^2_t}\Big)>0.$$
Thus, since $\sigma^{[\text{N}]}=-1$,
$$\exp\Big(\frac 4\kappa(1-\sigma^{[\text{N}]})\int_0^t\Big(\frac 1{W_s-U^1_s}-\frac 1{W_s-V^1_s}\Big) \Big(\frac 1{W_s-U^2_s}-\frac 1{W_s-V^2_s}\Big)ds\Big)=\Big(\frac{\ha R_t}{\ha R_0}\Big)^{\frac 4\kappa}\le \ha R_0^{-\frac 4\kappa}.$$
Thus,  $N^{[\text{N}]}$ is also uniformly bounded. So $N^{[q]}$ is a closable true martingale, i.e., $N^{[q]}_\tau:=\lim_{t\to \tau^-} N^{[q]}_t$ a.s.\ exists, and $\EE[N^{[q]}_\tau]=N^{[q]}_0 ={\mathcal X}^{[q]}(w,u_1,v_1,u_2,v_2)$.

 By Lemmas \ref{lemma-middle-1}  and   \ref{lemma-middle-2}, as $t\to\tau^-$, the triple
\BGE \Big(\frac{W_t-U^1_t}{W_t-V^1_t}, \frac{W_t-U^2_t}{W_t-V^2_t}, \Big(\frac{(U^1_t-U^2_t)(V^1_t-V^2_t)}{(U^1_t-V^2_t)(V^1_t-U^2_t)}\Big)^{\sigma^{[q]}}\Big)\label{triple}\EDE
converges to a boundary point of $[0,1]^3$, since at least one of its three components tends to $1$ or $0$.  Since $F^{[q]}$ extends continuously to $[0,1]^3$ and is positive on $\pa [0,1]^3$, ${\mathcal X}^{[q]}(W_t,U^1_t,V^1_t,U^2_t,V^2_t)$ tends to a positive number. Thus, $N^{[q]}_\tau\ge 0$.  It remains to show that $\PP[N^{[q]}_\tau>0]>0$. For this purpose, it suffices to show that
\BGE\PP\left[\forall j\in[2], \int_0^\tau \frac 1{W_s-V^j_s}\Big(\frac 1{W_s-U^j_s}-\frac 1{W_s-V^j_s}\Big)ds<\infty \right]>0 . \label{boundary-cr*}\EDE

Using an argument similar to that used in the proof of (\ref{boundary-cr}), we obtain that, on the positive probability event that $E:=\{\eta(\tau)\in (-\infty,v_1\wedge v_2)\}$, for $j\in[2]$,
$$\int_0^\tau \frac 1{W_s-V^j_s} \Big(\frac 1{W_s-U^j_s} -\frac 1{W_s-V^j_s}\Big)ds=\frac 12\log\Big(\frac{\til g_\tau'(v_j)(u_j-v_j) }{ \til U^j_\tau-\til V^j_\tau }\Big) \in\R, $$
where $\til g_\tau$ is the conformal map from the connected component of $\C\sem (\eta([0,\tau])\cup \lin{\eta([0,\tau])})$, which contains $u_1$ and hence, on this event, contains all four points
\(u_1,v_1,u_2,v_2\), onto $\C\sem [a,\infty)$ for some $a>0$, such that $\til g_\tau(u_1)=0$ and  $\til g_\tau'(u_1)=1$. So we get (\ref{boundary-cr*}) and thus complete the proof of Theorem \ref{Prop1} (iv).

\section{Recursions and Formal Power Series Solutions} \label{section:recursions}
\subsection{Recursion relations for the coefficient array}
In this section we apply the power-series method to the two PDE systems arising from
Theorem~\ref{Prop1}: the rainbow system  (\ref{rainbow1}) and (\ref{rainbow2}) and the neighbor system
(\ref{rainbow1}) and (\ref{neighbor2}). This yields formal power-series solutions in the two
cases. Although the SLE range corresponds to \(\beta>1/2\), the coefficient
recursions below are algebraic and will be written for all \(\beta>0\).

Assume that $F$ and $G$ are analytic on $[0,1)^3$ and their power series expansions at the origin are
\BGE F(\u r)=\sum_{\u n\in\N_0^3}  A_{\u n} \u r^{\u n},\quad G(\u r)=\sum_{\u n\in\N_0^3} B_{\u n} \u r^{\u n},\label{FG} \EDE
where $\N_0:=\N\cup\{0\}$, and $\u r^{\u n}:=r_1^{n_1}r_2^{n_2}r_3^{n_3}$.
We view $A$ and $B$ as functions on $\N_0^3$ and extend them to $\Z^3$ by zero. Let $e_j$ be the $j$-th unit vector in $\Z^3$.  Then the PDE (\ref{rainbow1}), (\ref{rainbow2}) and (\ref{neighbor2}) are respectively equivalent to the following linear equations for the coefficients $A$ and $B$:
\BGE (n_{\ell}+1)(n_{\ell}+2\beta ) A_{\u n+e_{\ell}}-(n_{\ell}+\beta )(n_{\ell}+1-\beta ) A_{\u n}=B_{\u n}-B_{\u n-e_j}-B_{\u n-e_k} +B_{\u n-e_j-e_k},\quad \u n\in \Z^3;\label{rainbow1'}\EDE
\BGE (2n_jn_k+\beta n_j+\beta n_k-\beta n_{\ell})A_{\u n}=B_{\u n-e_{\ell}}+B_{\u n-e_{\ell}-e_j-e_k}-B_{\u n-e_j} -B_{\u n-e_k},\quad \u n\in \Z^3;\label{rainbow2'}\EDE
\BGE (2n_jn_k+\beta n_j+\beta n_k+\beta n_{\ell}+\beta ^2)A_{\u n}=B_{\u n-e_{\ell}-e_j} +B_{\u n-e_{\ell}-e_k}-B_{\u n}- B_{\u n-e_j-e_k},\quad \u n\in \Z^3.\label{neighbor2'}\EDE

The rest of this section is devoted to solving two different systems of linear equations for the coefficients. The first consists of (\ref{rainbow1'}) and (\ref{rainbow2'}); and the second consists of (\ref{rainbow1'}) and (\ref{neighbor2'}).

We identify  $\R^{\N_0^3}$ with the subset of $\R^{\Z^3}$ consisting of functions that vanish on $\Z^3\sem \N_0^3$. The value of a function $A\in \R^{\Z^3}$ at $\u n=(n_1,n_2,n_3)\in\Z^3$ will be written as  $A_{\u n}$ or $A_{(n_1,n_2,n_3)}$.
A function  $A\in\R^{\Z^3}$  is called symmetric if, for any $(n_1,n_2,n_3)\in\Z^3$ and permutation $\sigma\in {\mathfrak S}_{[3]}$, $A_{(n_1,n_2,n_3)}=A_{(n_{\sigma(1)},n_{\sigma(2)},n_{\sigma(3)})}$.

For $j\in [3]$, define
\BGE P^j_{\u n}=n_j\quad \text{and}\quad Q^j_{\u n}=n_j+\beta,\label{PQ}\EDE
when $\u n=(n_1,n_2,n_3)$.  Define $f_\pm: \Z\to \R$ by
\BGE f_+(n)=n(n-1+2\beta ),\quad f_-(n)=(n+\beta )(n+1-\beta ).\label{fpm}\EDE
For $j\in[3]$, define operators $\nabla_j$, $\L _j$ and ${\mathcal H}_j$ on $\R^{\Z^3}$ such that for $\u n=(n_1,n_2,n_3)\in\Z^3$,
\BGE (\nabla_j  A)_{\u n}=A_{\u n}-A_{\u n-e_j},\quad (\L _j A)_{\u n}=f_+(n_j+1) A_{\u n+e_j}-f_-(n_j) A_{\u n};\label{paDj}\EDE
\BGE ({\mathcal H}_j A)_{\u n}= (\nabla_j  \L _j A)_{\u n}=f_+(n_j+1) A_{\u n+e_j}-(f_+(n_j)+f_-(n_j))A_{\u n}+f_-(n_j-1) A_{\u n-e_j}.\label{L=}\EDE

We observe that $\nabla_j  $ and $\L _j$ are operators on $\R^{\N_0^3}$, i.e., if $A\in \R^{\N_0^3}$, then $\nabla_j  A,\L _j A\in \R^{\N_0^3}$. To see this, let $A\in \R^{\N_0^3}$. If $\u n\in\Z^3\sem \N_0^3$, then $\u n-e_j\in \Z^3\sem \N_0^3$, and so $(\nabla_j  A)_{\u n}=A_{\u n}-A_{\u n-e_j}=0$. This shows that $\nabla_j  A \in \R^{\N_0^3}$. For $\L _j A$, we note that if $\u n\in\Z^3\sem \N_0^3$,  either $\u n+e_j\in \Z^3\sem \N_0^3$ or $n_j=-1$. In the first case,  $(\L _j A)_{\u n}$ equals $0$ as it is a linear combination of $A_{\u n+e_j}$ and $A_{\u n}$, which are both $0$. In the second case, since $P^j_{\u n+e_j}=0$, we get $(\L _j A)_{\u n}=0$ using $(f_+(P^j))_{\u n+e_j}=f_+(0)=0$. This shows that $\L _j A\in \R^{\N_0^3}$. So ${\mathcal H}_j=\nabla_j  \L _j$ is also an operator on $\R^{\N_0^3}$.

Using the symbols $\nabla_j  $ and $\L _j$ we may rewrite (\ref{rainbow1'}) as
 \BGE (\L _lA)_{\u n}=(\nabla_j \nabla_k  B)_{\u n},\quad \u n\in \Z^3,\quad \{j,k,{\ell}\}=[3]. \label{AB1@R}\EDE
It follows directly from the definition that, for any distinct $j, k\in[3]$, $\nabla_j $ and $\L _j$ commute with $\nabla_k $ and $\L _k$, and so ${\mathcal H}_j=\nabla_j \L _j$ commutes with ${\mathcal H}_k=\nabla_k \L _k$.
Thus, (\ref{AB1@R}) implies that, for all $j,k\in [3]$,
\BGE  ({\mathcal H}_j A)_{\u n}=({\mathcal H}_k A)_{\u n},\quad \u n\in\Z^3,\quad j,k\in [3]. \label{Ljk=}\EDE

\begin{Lemma}
\begin{itemize}
  \item [(i)]   For any sequence $(a_m)_{m=0}^\infty$, there is a unique $A\in \R^{\N_0^2}$ that satisfies the
two-dimensional analogue of (\ref{Ljk=}) for all  $j,k\in [2]$, and  $A_{m e_1}=a_m$ for all $m\in\N_0$. Moreover, such $A$ is symmetric, and each $A_{\u n}$ is a linear combination of the $a_m$'s. In other words, there are $C_{\u n}^{(m)}\in\R$, $\u n\in\N_0^2$, $m\in\N_0$, depending only on $\beta$ such that, for each $\u n\in\N_0^2$, there are only finitely many $m$ such that $C_{\u n}^{(m)}\ne 0$, and $A_{\u n}=\sum_{m=0}^\infty C_{\u n}^{(m)} a_m$.
  \item [(ii)] The above statement holds with $ {\N_0^3}$ and $[3]$ in place of $ {\N_0^2}$ and $[2]$, respectively.
\end{itemize}
\label{construction}
\end{Lemma}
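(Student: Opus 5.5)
Write the two-dimensional relation $\mathcal H_1A=\mathcal H_2A$ explicitly at $\u n=(n_1,n_2)$:
\[
f_+(n_1+1)A_{\u n+e_1}-(f_+(n_1)+f_-(n_1))A_{\u n}+f_-(n_1-1)A_{\u n-e_1}
= f_+(n_2+1)A_{\u n+e_2}-(f_+(n_2)+f_-(n_2))A_{\u n}+f_-(n_2-1)A_{\u n-e_2}.
\]
Since $f_+(m)=m(m-1+2\beta)\ne0$ for $m\ge1$ and $\beta>0$, this can be solved for $A_{\u n+e_2}$ in terms of $A_{\u n+e_1}$, $A_{\u n}$, $A_{\u n-e_1}$, $A_{\u n-e_2}$.

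\textbf{Existence, uniqueness and linearity.} Induct on the second coordinate. Suppose all $A_{(m,p)}$ with $p\le n_2$ are known. The relation at $(m,n_2)$ then determines $A_{(m,n_2+1)}$ for every $m\ge0$; the only inputs with second coordinate $\le n_2+1$ are already known. Starting from $A_{(m,0)}=a_m$, this defines $A$ uniquely. Each $A_{\u n}$ is a finite linear combination of the $a_m$ with $m\le n_1+n_2$ and coefficients rational in $\beta$, which gives the $C^{(m)}_{\u n}$.

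Consistency at negative indices needs checking. When $n_1=-1$ or $n_2=-1$, both sides must vanish automatically for $A\in\R^{\N_0^2}$. This holds because, as shown before the lemma, $\mathcal H_j$ maps $\R^{\N_0^3}$ to itself. Hence the relations with $\u n\notin\N_0^2$ impose nothing, and the relations with $\u n\in\N_0^2$ are exactly the ones used in the recursion.

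\textbf{Symmetry.} Define $A'_{(n_1,n_2)}=A_{(n_2,n_1)}$. It satisfies the same relation, since swapping indices exchanges the two sides. By uniqueness, $A=A'$ provided $A'_{(m,0)}=a_m$, that is, provided $A_{(0,m)}=a_m$. This is the main point to verify.

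To prove it, I would show by induction on $m$ that the recursion forces $A_{(0,m)}=A_{(m,0)}$. A cleaner route is to define $D=A-A'$, which satisfies the same linear relation with $D_{(m,0)}=a_m-A_{(0,m)}$. Evaluating the relation at $\u n=(0,n_2)$ gives
\[
f_+(1)A_{(1,n_2)}-f_-(0)A_{(0,n_2)}
= f_+(n_2+1)A_{(0,n_2+1)}-(f_+(n_2)+f_-(n_2))A_{(0,n_2)}+f_-(n_2-1)A_{(0,n_2-1)},
\]
using $f_+(0)=0$ and that $A$ vanishes at negative indices. I expect this to link the boundary column to the boundary row, but I am not sure it yields symmetry by itself.

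If it does not, I would reinterpret: the $\mathcal H$-relation corresponds to the PDE $(D_1-D_2)F=0$, where $D_j=r_j(1-r_j)\pa_j^2+(2\beta-2r_j)\pa_j-\beta(1-\beta)$ acts after multiplying by $(1-r_j)$. Concretely, $\mathcal H_j$ corresponds to $(1-r_j)D_j$ up to the $\nabla$ shift, so the relation reads $(1-r_1)D_1F=(1-r_2)D_2F$. I would then prove symmetry through an explicit ansatz, e.g.\ $F=\sum_m c_m\phi_m(r_1)\phi_m(r_2)$ with $\phi_m$ eigenfunctions, or by a direct induction on $n_1+n_2$ applied to the antisymmetric part $D$.

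I flag symmetry as the main obstacle. Failing both routes, I would read the statement as requiring symmetry of the data $A_{(0,m)}=a_m$, and derive that this condition is forced by the relation at $n_1=0$.

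\textbf{Three dimensions.} For (ii), apply (i) on each coordinate plane first, with data $a_m$ on the $e_1$ axis. This fixes $A$ on the planes $n_3=0$, $n_2=0$ and $n_1=0$. Then determine the interior by recursing in $n_3$ via $\mathcal H_1A=\mathcal H_3A$. The extra difficulty is compatibility: $A$ must also satisfy $\mathcal H_2A=\mathcal H_3A$. I would prove it from the commutativity of the $\mathcal H_j$. The difference $E=\mathcal H_2A-\mathcal H_3A$ vanishes on the boundary planes and satisfies a recursion of the same type, hence vanishes identically by the uniqueness argument. Symmetry in three dimensions then follows from uniqueness as before.
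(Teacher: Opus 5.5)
Your existence, uniqueness and linearity argument for (i) is correct and is the paper's argument, as is the compatibility step in (ii): $E=\mathcal H_2A-\mathcal H_3A$ satisfies $\mathcal H_3E=\mathcal H_1E$ by commutativity and vanishes for $n_3\le 0$, hence vanishes identically. (In (ii) you should run the $n_3$-recursion on all of $\N_0^2\times\{N+1\}$. Prescribing the planes $n_1=0$ and $n_2=0$ separately is redundant. If you recurse only in the interior, the relation $\mathcal H_1A=\mathcal H_3A$ at boundary points is never checked.)

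\textbf{The gap is symmetry, which you never prove.} You correctly reduce it to $A_{(0,m)}=a_m$, but none of your routes establishes it. The relation at $(0,n_2)$ is just one of the recursion equations already used to build $A$, so it carries no extra information. The eigenfunction ansatz would need a completeness statement that you do not supply. The fallback of ``requiring symmetry of the data'' is circular, since data are prescribed only on the $e_1$-axis. Your 3D claim ``from uniqueness as before'' inherits this gap: uniqueness from the $e_1$-axis alone yields invariance only under the transposition $(2\,3)$, which fixes that axis.

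\textbf{The missing idea is how the paper proceeds: prove symmetry in 3D first.} A solution is also determined by its values on the $e_2$-axis: conjugate by the swap $1\leftrightarrow 2$, which preserves the system, and apply uniqueness from the $e_1$-axis. Since $(1\,3)$ fixes the $e_2$-axis pointwise, $A(n_3,n_2,n_1)$ and $A(n_1,n_2,n_3)$ are two solutions that agree there, hence they are equal. Together with $(2\,3)$ this generates $S_3$, and restricting to $n_3=0$ gives the 2D symmetry. In 2D alone no transposition fixes the data axis, which is why uniqueness cannot be applied directly.

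Alternatively, your antisymmetric-part idea can be completed without passing to 3D. The array $D=A-A'$ solves $\mathcal H_1D=\mathcal H_2D$ and is antisymmetric, so $D_{(0,0)}=0$. Suppose $D$ vanishes on $\{|\u n|\le N\}$. Then the relation at any $\u n$ with $|\u n|=N$ reduces to $f_+(n_1+1)D_{\u n+e_1}=f_+(n_2+1)D_{\u n+e_2}$. Equivalently, $D_{(p,q)}\prod_{i=1}^{p}f_+(i)\prod_{i=1}^{q}f_+(i)$ is constant along $p+q=N+1$. The weight is symmetric and $D$ is antisymmetric, so this constant equals its own negative and hence is $0$. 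This gives an elementary self-contained proof of the 2D symmetry.
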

\begin{proof}
 (i) We construct $A_{(n_1,n_2)}$ using an induction on $n_2$. We always extend \(A\) by zero outside \(\mathbb N_0^2\). When \(n_2=0\),
we define \(A_{(n_1,0)}=a_{n_1}\) for \(n_1\in\mathbb N_0\), and set
\(A_n=0\) for \(n\notin\mathbb N_0^2\).
For $N\in \N_0$, let $P_N^2$ denote the proposition that $A_{\u n}$ is defined for every $\u n\in \Z\times \Z_{\le N}$, that each such $A_{\u n}$ is a linear combination of the
$a_m$'s, and that ${\mathcal H}_1A={\mathcal H}_2A$ holds on $\Z\times \Z_{\le N-1}$, where
$\Z_{\le m}:=\{n\in\Z:n\le m\}$. At the beginning, $\text{P}^2_0$ holds since ${\mathcal H}_1 A={\mathcal H}_2 A=0$ on $\Z^2\sem \N_0^2$.
  Suppose at some step $\text{P}^2_N$ holds. In order to show $\text{P}^2_{N+1}$, we construct $A_{(n_1,N+1)}$ for $n_1\in\Z$. Expanding $({\mathcal H}_1 A)_{\u n}=({\mathcal H}_2 A)_{\u n}$ for $\u n=(n_1,N)$, we get a linear equation involving only $A_{(n_1,N+1)}$, $A_{(n_1,N)}$, $A_{(n_1-1,N)}$, $A_{(n_1+1,N)}$, $A_{(n_1,N-1)}$, which have been constructed at this step except for $A_{(n_1,N+1)}$. We also see that the coefficient of $A_{(n_1,N+1)}$ is the positive number $f_+(N+1)=(N+1)(N+2\beta)$. Thus, there is a unique way to define $A_{(n_1,N+1)}$ such that $({\mathcal H}_1 A)_{\u n}=({\mathcal H}_2 A)_{\u n}$ holds at $(n_1,N)$. Moreover, since $A_{(n_1,N)}$, $A_{(n_1-1,N)}$, $A_{(n_1+1,N)}$, $A_{(n_1,N-1)}$ are all linear combinations of $a_n$'s, so is $A_{(n_1,N+1)}$.
  This finishes the induction step. By induction, we can construct $A_{\u n}$ for all $\u n\in\N^2$, such that ${\mathcal H}_1 A={\mathcal H}_2 A$, and each $A_{\u n}$ is a linear combination of $a_m$'s. We have the uniqueness of $A$ since the construction in each step is uniquely determined. The symmetry of this two-dimensional array will follow by restricting the symmetric three-dimensional array constructed in part (ii) to the plane \(n_3=0\).

  (ii) We construct $A_{(n_1,n_2,n_3)}$ using an induction on $n_3$. When $n_3=0$, we define  $A_{(n_1,n_2,0)}$ to be the $A_{(n_1,n_2)}$ from (i),  and set
\(A_n=0\) for \(n\notin\mathbb N_0^3\). Then $A_{n_1 e_1}=a_{n_1}$ for each $n_1\in\N_0$, each $A_{(n_1,n_2,0)}$ is a linear combination of $a_m$'s;  ${\mathcal H}_1 A={\mathcal H}_2 A={\mathcal H}_3 A$ hold on $\Z^2\times \Z_{\le -1}$; and ${\mathcal H}_1 A={\mathcal H}_2 A$ holds on $\Z^2\times \{0\}$. For $N\in\N_0$, let $\text{P}^3_N$ denote the proposition that $A_{\u n}$ have been constructed as linear combinations of $a_n$'s for all $\u n\in \Z^2\times \Z_{\le N}$ such that ${\mathcal H}_3 A={\mathcal H}_1 A = {\mathcal H}_2 A $ on $\Z^2\times \Z_{\le N-1}$ and  ${\mathcal H}_1 A={\mathcal H}_2 A$ on $\Z^2\times \{N\}$. Now we know that $\text{P}^3_0$ holds. Suppose $\text{P}^3_N$ holds for some $N\in\N_0$. In order to show $\text{P}^3_{N+1}$, we construct $A_{(n_1,n_2,N+1)}$ for $(n_1,n_2)\in\N_0^2$. Expanding $({\mathcal H}_3 A)_{\u n}=({\mathcal H}_1 A)_{\u n}$ for $\u n=(n_1,n_2,N)$, we get a linear equation involving only $A_{(n_1,n_2,N+1)}$, $A_{(n_1,n_2,N)}$, $A_{(n_1-1,n_2,N)}$, $A_{(n_1+1,n_2,N)}$, $A_{(n_1,n_2,N-1)}$, which have been constructed as linear combinations of $a_n$'s at this step except for $A_{(n_1,n_2,N+1)}$. We also see that the coefficient of $A_{(n_1,n_2,N+1)}$ is the positive number $f_+(N+1)$. Thus, there is a unique way to define $A_{(n_1,n_2,N+1)}$ as a linear combination of $a_m$'s such that $({\mathcal H}_3 A)_{\u n}=({\mathcal H}_1 A)_{\u n}$ holds at $\u n=(n_1,n_2,N)$. Combining with the induction hypothesis, we see that   $({\mathcal H}_3 A)_{\u n}=({\mathcal H}_1 A)_{\u n}=({\mathcal H}_2 A)_{\u n}$ at such $\u n$. To complete the induction step, it remains to show that ${\mathcal H}_1 A={\mathcal H}_2 A$ on $\N_0^2\times \{N+1\}$. Let $\u n=(n_1,n_2,N)\in\N_0^2\times\{N\}$. By the  commutation relations between ${\mathcal H}_j$'s,
  $$({\mathcal H}_3{\mathcal H}_1 A)_{\u n}=({\mathcal H}_1{\mathcal H}_3 A)_{\u n}=({\mathcal H}_1{\mathcal H}_2 A)_{\u n}=({\mathcal H}_2{\mathcal H}_1 A)_{\u n}=({\mathcal H}_2{\mathcal H}_3 A)_{\u n}=({\mathcal H}_3{\mathcal H}_2 A)_{\u n}.$$
On the other hand, for $j=1,2$, by (\ref{L=}),
$$({\mathcal H}_3{\mathcal H}_j A)_{\u n}=f_+(N+1) ({\mathcal H}_j A)_{\u n+e_3}-(f_+(N)+f_-(N))({\mathcal H}_j A)_{\u n}+f_-(N-1) ({\mathcal H}_j A)_{\u n-e_3}.$$
Since $({\mathcal H}_1 A)_{\u n}=({\mathcal H}_2 A)_{\u n}$ and $({\mathcal H}_1 A)_{\u n-e_3}=({\mathcal H}_2 A)_{\u n-e_3}$, by the above two displayed formulas and induction hypothesis, we get $({\mathcal H}_1 A)_{(n_1,n_2,N+1)}=({\mathcal H}_2 A)_{(n_1,n_2,N+1)}$. This  finishes the induction step. So we get the existence of $A$. Since the construction of $A$ in every step is determined, the uniqueness of $A$ follows.

Define $A'$ on $\N_0^3$ by $A'(n_1,n_2,n_3)=A(n_1,n_3,n_2)$, $n_1,n_2,n_3\in\N_0$. Since the system (\ref{Ljk=})  is invariant under permutations of the coordinates, $A'$ also satisfies (\ref{Ljk=}) for all $j,k\in[3]$. Moreover, $A'(n,0,0)=A(n,0,0)=a_n$ for all $n\in\N_0$. By the uniqueness of $A$ we get $A'=A$. Define $A''$ on $\N_0^3$ by $A''(n_1,n_2,n_3)=A(n_3,n_2,n_1)$, $n_1,n_2,n_3\in\N_0$. Then $A''$ also satisfies (\ref{Ljk=}) for all  $j,k\in[3]$, and agrees with $A$ on $\{(0,n,0):n\in\N_0\}$.

We claim that a solution to (\ref{Ljk=}) is also determined by its values on
$\{(0,n,0):n\in\N_0\}$. Indeed, if $U$ and $V$ both satisfy (\ref{Ljk=}) and
$U_{(0,n,0)}=V_{(0,n,0)}$ for all $n\in\N_0$, define
\[
\widetilde U_{(n_1,n_2,n_3)}:=U_{(n_2,n_1,n_3)},\qquad
\widetilde V_{(n_1,n_2,n_3)}:=V_{(n_2,n_1,n_3)},\qquad n_1,n_2,n_3\in\N_0.
\]
Then $\widetilde U$ and $\widetilde V$ also satisfy (\ref{Ljk=}), since this system is
invariant under swapping the first two coordinates, and they agree on
$\{(n,0,0):n\in\N_0\}$. By the uniqueness proved above, $\widetilde U=\widetilde V$,
and hence $U=V$. Applying this to $U=A''$ and $V=A$, we obtain $A''=A$. Since
$A'=A$ and $A''=A$, the array $A$ is invariant under the transpositions $(23)$ and
$(13)$, which generate $S_3$. Therefore $A$ is symmetric. Finally, the symmetry
statement in \textup{(i)} follows by restricting this three-dimensional array to
$\N_0^2\times\{0\}$.
\end{proof}

\subsection{Formal solutions for the rainbow patterns}
We first treat the rainbow patterns. More precisely, we solve the recursion system
corresponding to (\ref{rainbow1'}) and (\ref{rainbow2'}) under the normalization
$A_{\u 0}=1$. The goal is to construct the unique symmetric coefficient array and to
record the boundary data that will later be identified with an Appell $F_1$ function.

For $\u n=(n_1,n_2,n_3)$, write $|\u n|=n_1+n_2+n_3$. We continue to use the quantities
$P^j$ and $Q^j$ from (\ref{PQ}), and we now define $M^j,T:\Z^3\to\R$ by
\BGE
M^j_{\u n}=|\u n|-2n_j+1-\beta,\quad T_{\u n}=|\u n|-1+3\beta.
\label{DT@R}
\EDE

\begin{Theorem}
Let $\beta>0$.  There exists a unique pair of $A,B\in\R^{\N_0^3}$ such that $A_{\u 0}=1$, and, for any $\u n=(n_1,n_2,n_3)\in\Z^3$ and   $\{j,k,{\ell}\}=[3]$, equations (\ref{AB1@R}) and (\ref{rainbow2'}) hold. As before, \(A\) and \(B\) are extended by zero outside \(\mathbb N_0^3\).
Moreover, $A$ and $B$ satisfy the following additional properties.
\begin{enumerate} [label=\textup{(\roman*)},ref=\textup{\roman*} ]
  \item \label{ABR:i}  $A$ and $B$ are symmetric.
  \item \label{ABR:ii}  Formula (\ref{Ljk=}) holds for any $j,k\in[3]$.
  \item  \label{ABR:iii} The boundary coefficients on the face \(n_3=0\) are given by: for any $n_1,n_2\in\N_0$,
  \BGE A_{(n_1,n_2,0)}=\frac{(\beta)_{n_1}(\beta)_{n_2}(1-\beta )_{n_1+n_2}}{(1)_{n_1}(1)_{n_2}(3\beta )_{n_1+n_2}},\quad B_{(n_1,n_2,0)}=-\frac{(\beta )_{n_1+1}(\beta )_{n_2+1}(1-\beta )_{n_1+n_2+1}}{(1)_{n_1}(1)_{n_2}(3\beta )_{n_1+n_2+1}}.
  \label{AB=@R}\EDE
 \item  \label{ABR:iv}
For any $\u n\in\Z^3$, if $\{j,k,{\ell}\}= [3]$, we have
\begin{align}
  (P^jM^j A)_{\u n+e_j}-(P^kM^k A)_{\u n+e_k}&= -((P^j-P^k)M^{\ell} A)_{\u n};\label{++@R}\\
  (P^jT A)_{\u n+e_j}-(Q^kM^{\ell} A)_{\u n-e_k}&=((P^k+Q^j)M^k A)_{\u n}  ; \label{+-@R}\\
 (Q^jM^k A)_{\u n-e_j}-(Q^kM^j A)_{\u n-e_k}&=  ((P^j-P^k)T A)_{\u n}.\label{--@R}
\end{align}
\item \label{ABR:v} If $\{j,k,{\ell}\}= [3]$, using the zero-extension convention, define functions $X^{j,\pm} $  on $\N_0^3$ by
 \begin{align}
   X^{j,+}_{\ulin n}:= &(P^j M^j T A)_{\ulin n+e_j} + ((P^j+2\beta -\frac 12)   M^kM^{\ell} A )_{\ulin n},\label{Xj+@R}\\
   X^{j,-}_{\ulin n}:= & (Q^j   M^k M^lA )_{\ulin n-e_j} + ((P^j-\beta +\frac 12) M^j T A)_{\ulin n}.\label{Xj-@R}
 \end{align}
 Then the six functions $X^{j,+}$ and $X^{j,-}$, $j\in [3]$, are all equal.
 \item \label{ABR:vi} For any $\u n\in\Z^3$, if $\{j, k, {\ell}\}= [3]$, $$2(\nabla_{\ell} B)_{\u n} =(P^jM^j A)_{\u n+e_j}+ (Q^jM^{\ell} A)_{\u n}=(P^kM^k A)_{\u n+e_k}+ (Q^kM^{\ell} A)_{\u n}$$
\BGE =(P^jT A)_{\u n+e_j+e_k}-(Q^jM^k A)_{\u n+e_k}= (P^kT A)_{\u n+e_j+e_k}-(Q^kM^j A)_{\u n+e_j}.\label{all=B@R}\EDE
\item \label{ABR:vii}  If $\beta \in\N$, then $A_{\u n}=0$ when $\max\{M^j_{\u n}:j\in[3]\}\ge 1$, and $B_{\u n}=0$ when $\max\{M^j_{\u n}:j\in[3]\}\ge 0$. So there are only finitely many $\u n$ such that $A_{\u n}\ne 0$ or $B_{\u n}\ne 0$.
\end{enumerate}
\label{Theorem-AB@R}
\end{Theorem}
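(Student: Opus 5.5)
The plan is to build $A$ from Lemma~\ref{construction}(ii), define $B$ from $A$, and then check all the equations. The boundary data will be fixed by restricting to the edge $\mathbb N_0 e_1$. On the face $n_2=n_3=0$, equation (\ref{AB1@R}) with $\ell=1$ reads $(\L_1A)_{(n,0,0)}=(\nabla_2\nabla_3B)_{(n,0,0)}=B_{(n,0,0)}$. Combining this with (\ref{rainbow2'}) at $\u n=(n,0,0)$ and at nearby points gives a closed recursion for the edge sequence $a_n=A_{(n,0,0)}$, together with $A_{\u 0}=1$. Solving it, I expect $a_n=(\beta)_n(1-\beta)_n/((1)_n(3\beta)_n)$, which is the $n_2=0$ case of (\ref{AB=@R}). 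Lemma~\ref{construction}(ii) then gives a unique symmetric $A$ satisfying (\ref{Ljk=}), with this edge. This settles (\ref{ABR:ii}) and the $A$-part of (\ref{ABR:i}).

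\textbf{Uniqueness.} Any solution pair satisfies (\ref{Ljk=}), because the $\mathcal H_j$ commute. So $A$ is determined by its edge. The edge is forced by the face recursion above. Finally, $B$ is forced by $A$: from (\ref{AB1@R}), $\nabla_j\nabla_kB$ is known, and $B$ vanishes off $\N_0^3$, so inverting $\nabla_j\nabla_k$ by summation recovers $B$.

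\textbf{Existence.} Define $B$ by $B=\nabla_j^{-1}\nabla_k^{-1}\L_\ell A$, where $\nabla_j^{-1}$ denotes the partial sum in the $j$-th coordinate. Because $\nabla_\ell\L_\ell A=\mathcal H_\ell A$ is independent of $\ell$, applying $\nabla_1\nabla_2\nabla_3$ to the three candidate definitions gives the same array. Since all three vanish off $\N_0^3$, they coincide, so $B$ is well defined and symmetric. What remains is (\ref{rainbow2'}), which is not implied by (\ref{Ljk=}) alone.

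My strategy for this step is to prove the first-order identities (\ref{ABR:iv}) first. Each one is a relation of the form $R=0$ with $R$ a finite-difference expression in $A$. I will show that $R$ satisfies a homogeneous recursion of $\mathcal H$-type. One way to see this is to commute $\mathcal H_m$ past the multipliers $P,Q,M,T$: the quantities $M^j$ and $T$ are affine in $\u n$, and the commutators should close up, essentially because each identity is a discrete form of a contiguity relation. If that holds, $R$ vanishes everywhere as soon as it vanishes on a coordinate face. On the face $n_3=0$ the explicit formula (\ref{AB=@R}) for $A$ is available. For this I must first show that the face array $A_{(n_1,n_2,0)}$ is given by the Appell-type product formula. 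I would verify directly that this formula satisfies $\mathcal H_1=\mathcal H_2$, which reduces to a rational identity in Pochhammer ratios, and then invoke the uniqueness in Lemma~\ref{construction}(i). On that face the identities become rational identities and are routine to check.

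Once (\ref{ABR:iv}) holds, (\ref{ABR:v}) follows by taking linear combinations of the three identities in (\ref{ABR:iv}) with affine multipliers. The chain (\ref{all=B@R}) then follows: I define $2\nabla_\ell B$ by the first expression, check that it is consistent with the definition of $B$ via $\L_\ell$, and use (\ref{ABR:iv}) to pass between the four expressions. Substituting (\ref{all=B@R}) into the right side of (\ref{rainbow2'}) should make it collapse to $(2n_jn_k+\beta n_j+\beta n_k-\beta n_\ell)A_{\u n}$. The formula for $B$ in (\ref{AB=@R}) then comes from (\ref{all=B@R}) on the face.

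\textbf{Integer $\beta$.} For (\ref{ABR:vii}) with $\beta\in\N$, I would argue by induction along the recursion. The factor $(1-\beta)_{n_1+n_2}$ kills the face coefficients once $n_1+n_2\ge\beta$. The identities (\ref{++@R})--(\ref{--@R}), whose coefficients involve $M^j$, propagate the vanishing from the region $\max_j M^j\ge1$ to neighbouring points. This requires nonvanishing coefficients such as $P^j$ or $T$ at the relevant points.

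\textbf{Main obstacle.} I expect the hardest step to be (\ref{ABR:iv}), specifically showing that the defect arrays satisfy a closed homogeneous system so that face vanishing propagates. If the commutator closure fails, the fallback is induction on $n_3$: use (\ref{Ljk=}) to express $A_{\u n+e_3}$ in terms of lower layers, and verify the identities layer by layer.
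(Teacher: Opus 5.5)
Most of your framework works. The edge recursion, the appeal to Lemma~\ref{construction}, and the uniqueness argument are fine. Defining $B$ by partial sums of $\mathcal L_\ell A$ is also fine: it is consistent because $\nabla_\ell\mathcal L_\ell A=\mathcal H_\ell A$ does not depend on $\ell$, and it even yields (\ref{AB1@R}) without the paper's flow construction. Once (\ref{ABR:iv}) is known, the remaining items follow much as in the paper, with (\ref{rainbow2'}) coming from the diagonal identities $2B_{\u n-e_j-e_k-e_\ell}-2B_{\u n-e_\ell}=n_\ell(2n_j+2n_k+2\beta)A_{\u n}$ and their permutations.

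The genuine gap is (\ref{ABR:iv}) itself, which carries all the nontrivial content of existence. You assert that each defect $R$ satisfies a homogeneous $\mathcal H$-type recursion because commutators of $\mathcal H_m$ with the affine multipliers ``should close up.'' No such identity is written down or checked, and nothing indicates that a single defect obeys a second-order equation of its own. Your fallback, ``verify the identities layer by layer,'' names an induction on $n_3$ but omits exactly what needs proof: why the identities on layer $N+1$ follow from those on lower layers together with (\ref{Ljk=}). ``Vanishing on a face'' also needs care. For the pair $(j,k)=(1,3)$, only the relations of sign type $(\sigma,-)$ at $n_3=0$ involve face values alone. Those of type $(\sigma,+)$ reach layer $1$, so the induction must produce them rather than take them as input.

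The missing idea is that the defects do satisfy a closed homogeneous system, but it is first order and couples the four sign types at neighbouring points. This is the content of the paper's Lemmas~\ref{cross@R} and~\ref{square@R}.

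\textbf{Cross rule.} At $\u n\in\N_0^3$, $E_{\u n}\bigl(\begin{smallmatrix}-&-\\ j&k\end{smallmatrix}\bigr)$ and $E_{\u n}\bigl(\begin{smallmatrix}+&-\\ j&k\end{smallmatrix}\bigr)$ imply $E_{\u n}\bigl(\begin{smallmatrix}-&+\\ j&k\end{smallmatrix}\bigr)$. To see this, multiply the three relations by $n_j+n_k+\beta$, $n_j+2\beta$, $n_k+2\beta$, and multiply $(\mathcal H_jA-\mathcal H_kA)_{\u n}=0$ by $|\u n|+3\beta$; the signed sum is a trivial identity. Then $E_{\u n}\bigl(\begin{smallmatrix}+&+\\ j&k\end{smallmatrix}\bigr)$ follows. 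The three relations force $X^{j,+}_{\u n}=X^{k,-}_{\u n}=X^{j,-}_{\u n}=X^{k,+}_{\u n}$, and $X^{j,+}_{\u n}=X^{k,+}_{\u n}$ is equivalent to the $(+,+)$ relation because $T_{\u n}+1>0$.

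\textbf{Square rule.} $E_{\u n}\bigl(\begin{smallmatrix}+&+\\ j&k\end{smallmatrix}\bigr)$ and $E_{\u n+e_j}\bigl(\begin{smallmatrix}-&+\\ j&k\end{smallmatrix}\bigr)$ imply $E_{\u n+e_k}\bigl(\begin{smallmatrix}+&-\\ j&k\end{smallmatrix}\bigr)$ and $E_{\u n+e_j+e_k}\bigl(\begin{smallmatrix}-&-\\ j&k\end{smallmatrix}\bigr)$. This is pure algebra followed by cancelling the factor $n_k+1$.

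With $(j,k)=(1,3)$, these two rules drive the induction on $n_3$. Its only inputs are the $(\sigma,-)$ relations on $n_3=0$, which are the neighbouring-ratio identities of (\ref{AB=@R}), together with exterior triviality and the boundary triviality on $\{n_k=0\}$ supplied by symmetry. Without this mechanism, or an explicitly verified substitute, neither (\ref{all=B@R}) nor (\ref{rainbow2'}) is established.

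A smaller omission: for (\ref{ABR:vii}) you never say how $B$ vanishes. It follows from (\ref{all=B@R}) by telescoping along the lines $\u n-me_3$.
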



\begin{Remark} 
We call the restriction of $A$ to $\N_0^2\times\{0\}$ its initial values.
We may use (\ref{ABR:ii}) to derive recursive formulas for $A$  and combine them with the initial values (\ref{AB=@R}) of $A$ and an induction on $n_3$  to compute all values of $A$.

Since $T_{\u n+e_j}>0$ for all $\u n\in\N_0^3$, we may use (\ref{+-@R}) to get another set of recursive formulas for $A$. In the case $\beta\not\in\Z$, $M^j_{\u n}$ never vanish, and we may also use (\ref{++@R}) and the equality $X^{3,+}=X^{3,-}$ respectively to derive   two further sets of recursive formulas. 

We do not have a closed-form formula for $A$. The following natural guess, however, is not correct:
$$A_{(n_1,n_2,n_3)}=\frac{(\beta)_{n_1}(\beta)_{n_2}(\beta)_{n_3}(1-\beta)_{n_1+n_2+n_3}}{(1)_{n_1}(1)_{n_2}(1)_{n_3} (3\beta)_{n_1+n_2+n_3}}.$$
\end{Remark}

\begin{Example} 
When $\beta \in \N$, the formal power series $F$ and $G$ in (\ref{FG}) are polynomials by Theorem \ref{Theorem-AB@R} (vii). We list below the polynomial expressions for $F$ in the cases $\beta=1$ and $\beta=2$:
\[
F_{\beta=1}=1,\qquad
F_{\beta=2}=1-\frac13(r_1+r_2+r_3)+\frac16 r_1r_2r_3.
\]
After choosing the normalization as in Theorem~\ref{Prop1}(iii), these polynomial
solutions give, via Theorem~\ref{Prop1}, the \(N=3\) rainbow pure partition functions
for \(\kappa=4\) and \(\kappa=2\), up to multiplicative constants, coincide with the $N=3$ rainbow pure partition functions in \cite{Global} and \cite{LERW-partition} for $\kappa=4$ and $2$, respectively.
%
\end{Example}

\medskip
\noindent\textit{Proof of Theorem~\ref{Theorem-AB@R}.}
We organize the proof as follows. We first construct the array $A$ and verify some of its
properties. We then introduce the cross rule and square rule for the recurrence relations.
After establishing further properties of $A$, we construct the array $B$ using flows on
$\Z^3$. Finally, we treat the case $\beta\in\N$.

\begin{subproof}[Construction  and initial properties of $A$]
We first derive the values of $A_{(n,0,0)}$ assuming that  $A$ and $B$ satisfy (\ref{AB1@R}), (\ref{rainbow2'}) and $A_{\u 0}=1$. 
Setting ${\ell}=1$, $j=2$, $k=3$, and $\u n=(n,0,0)$ in (\ref{AB1@R}) and  ${\ell}=1$, $j=2$, $k=3$, and $\u n=(n+1,0,0)$ in (\ref{rainbow2'}), we get $$f_+(n+1)A_{(n+1,0,0)}-f_-(n) A_{(n,0,0)}=B_{(n,0,0)},\quad -\beta (n+1)A_{(n+1,0,0)}=B_{(n,0,0)},$$
which together imply $A_{(n+1,0,0)}=\frac{(n+\beta )(n+1-\beta )}{(n+1)(n+3\beta )}\cdot A_{(n,0,0)}$.
Since $A_{\u 0}=1$, we get
\BGE A_{(n,0,0)}=\frac{(\beta )_n(1-\beta )_n}{(1)_n (3\beta )_n},\quad n\in \N_0.\label{ABn00@R}\EDE

Since $A$ satisfies (\ref{AB1@R}), it also satisfies (\ref{Ljk=}). By Lemma \ref{construction}, there is a unique $A\in\R^{\N_0^3}$ that satisfies  (\ref{ABn00@R}) and (\ref{Ljk=}) for all $j,k\in [3]$, and such $A$ is symmetric.
This gives the uniqueness of $A$ and properties (\ref{ABR:i}) and (\ref{ABR:ii}) for $A$.  Since (\ref{AB=@R}) contains (\ref{ABn00@R}), by the uniqueness part of Lemma \ref{construction}, to prove that $A$ satisfies (\ref{ABR:iii}) it suffices to show that the $A$ defined by (\ref{AB=@R}) satisfies (\ref{Ljk=}) for all $\u n\in\N_0^2\times \{0\}$ and $j,k\in[2]$.  Let $A^0$ on $\N_0^2$ be defined by (\ref{AB=@R}). Then for $\u n=(n_1,n_2)$ and distinct $j,k\in[2]$,
$$f_+(n_j+1) A^0_{\u n+e_j}=\frac{(2\beta +n_j)(\beta +n_j)(1-\beta +n_j+n_k)}{(3\beta +n_j+n_k)} A^0_{\u n};$$
and if, in addition, $n_j+n_k-\beta \ne 0$,
 $$f_-(n_j-1) A^0_{\u n-e_j}=\frac{(n_j-\beta )n_j(3\beta -1+n_j+n_k)}{(-\beta +n_j+n_k)} A^0_{\u n}.$$
 The two formulas respectively imply
\BGE f_+(n_1+1)A^0_{\u n+e_1}-f_+(n_2+1)A^0_{\u n+e_2}=(n_1-n_2)(1-\beta +n_1+n_2) A^0_{\u n};\label{f+f+@R}\EDE
\BGE f_-(n_1-1)A^0_{\u n-e_1}-f_-(n_2-1)A^0_{\u n-e_2}=(n_1-n_2)(3\beta -1+n_1+n_2)A^0_{\u n}.\label{f-f-@R}\EDE
We now prove (\ref{f-f-@R}) assuming $n_1+n_2-\beta =0$. In this case, $\beta\in\N$, and so $(1-\beta )_{n_1+n_2}=0$, which implies that $A^0_{\u n}=0$. If $n_1,n_2\ge 1$, then
$$f_-(n_j-1)A^0_{\u n-e_j}=(n_j-\beta )(n_j-1+\beta ) \frac{(\beta )_{n_j-1}(\beta )_{n_k}(1-\beta )_{n_j+n_k-1}}{(1)_{n_j-1}(1)_{n_k}(3\beta )_{n_j+n_k-1}}$$ $$=- \frac{(\beta )_{n_j}(\beta )_{n_k}(1-\beta )_{n_j+n_k-1}}{(1)_{n_j-1}(1)_{n_k-1}(3\beta )_{n_j+n_k-1}},$$
where we used $n_j-\beta=-n_k$. The last expression is symmetric in \(j\) and \(k\). Hence $f_-(n_1-1)A^0_{\u n-e_1}=f_-(n_2-1) A^0_{\u n-e_2}$. So (\ref{f-f-@R}) holds. If $n_1=0$, the two terms on the LHS of (\ref{f-f-@R}) are both $0$ since (a)  $A^0_{\u n-e_1}=0$ because $\u n-e_1\not\in\N_0^2$; (b) $f_-(n_2-1)=0$ because $n_2=\beta$. The same holds if $n_2=0$ by symmetry. Thus,   (\ref{f-f-@R}) holds if any of $n_1,n_2$ is $0$. Combining (\ref{f+f+@R}), (\ref{f-f-@R}) and (\ref{L=}), we get $({\mathcal H}_1 A)_{\u n}-({\mathcal H}_2 A)_{\u n}=0$, as desired. So $A$ satisfies (\ref{ABR:iii}).
\end{subproof}

Now we use \(E_{\u n} (\begin{smallmatrix}
  + & + \\ j & k
\end{smallmatrix} )\) or \(E_{\u n} (\begin{smallmatrix}
  + & + \\ k & j
\end{smallmatrix} )\) to denote (\ref{++@R}); use \(E_{\u n}(\begin{smallmatrix}
  + & - \\ j & k
\end{smallmatrix} )\) or \(E_{\u n} (\begin{smallmatrix}
  - & + \\ k & j
\end{smallmatrix} )\) to denote (\ref{+-@R}); and use  \(E_{\u n}(\begin{smallmatrix}
  - & - \\ j & k
\end{smallmatrix} )\) or \(E_{\u n} (\begin{smallmatrix}
  - & - \\ k & j
\end{smallmatrix} )\) to denote (\ref{--@R}).

We observe that  \(E_{\u n}(\begin{smallmatrix}  \sigma & \tau \\ j & k \end{smallmatrix} )\) is a linear equation involving $A_{\u m}$, $\u m\in S:=\{\u n,\u n+\sigma e_j,\u n +\tau e_k\}$. Let $A\in \R^{\N_0^3}$. If $S\subset\Z^3\sem \N_0^3$, then $A_{\u m}=0$ for all $\u m\in S$ by the zero-extension convention, and so \(E_{\u n}(\begin{smallmatrix}  \sigma & \tau \\ j & k \end{smallmatrix} )\) trivially holds.  If  $S\cap \N_0^3=\{\u n'\}$,  then either $\u n'=\u n$, $n_j=n_k=0$, and $\sigma=\tau=-$, or $\u n'=\u n+e_j$, $\sigma=+$, and $n'_j=0$, or $\u n'=\u n+e_k$, $\tau=+$, and $n'_k=0$. In either case the coefficient of $A_{\u n'}$ is $0$, and so \(E_{\u n}(\begin{smallmatrix}\sigma & \tau \\ j & k \end{smallmatrix} )\) also holds. We call the property that \(E_{\u n}(\begin{smallmatrix} \sigma & \tau \\ j & k \end{smallmatrix} )\) holds in the case that $S$ intersects $\N_0^3$ at zero or one element the \textit{exterior triviality} of $E$.

\begin{Lemma}
Let $\u n=(n_1,n_2,n_3)\in \N_0^3$ and $\sigma,\tau\in \{+,-\}$. Let $\{j, k, {\ell}\}=[3]$. If $E_{\u n}(\begin{smallmatrix} \sigma & \tau \\ j & k \end{smallmatrix} ) $ holds, then $X^{j,\sigma}_{\u n}=X^{k,\tau}_{\u n}$. Conversely, if $X^{j,+}_{\u n}=X^{k,+}_{\u n}$, then $E_{\u n}(\begin{smallmatrix}  + & + \\ j & k \end{smallmatrix} ) $ holds; if $X^{j,-}_{\u n}=X^{k,-}_{\u n}$ and $M^{\ell}_{\u n}\ne 1$, then $E_{\u n}(\begin{smallmatrix} - & - \\ j & k \end{smallmatrix} ) $ holds; if $X^{j,+}_{\u n}=X^{k,-}_{\u n}$ and $M^j_{\u n}\ne 1$, then $E_{\u n}(\begin{smallmatrix} + & - \\ j & k \end{smallmatrix} ) $ holds.
\label{RX@R}
\end{Lemma}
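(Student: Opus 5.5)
The plan is a direct algebraic verification: for each sign pair $(\sigma,\tau)$, express the difference $X^{j,\sigma}_{\u n}-X^{k,\tau}_{\u n}$ as a scalar multiple $c_{\u n}\cdot\Delta_{\u n}$ of the defect $\Delta_{\u n}:=\text{LHS}-\text{RHS}$ of the corresponding equation $E_{\u n}(\begin{smallmatrix}\sigma&\tau\\ j&k\end{smallmatrix})$. Here $c_{\u n}$ will be an explicit affine function of $|\u n|$ and $\beta$. The forward implication then follows at once, since $\Delta_{\u n}=0$ forces $X^{j,\sigma}_{\u n}=X^{k,\tau}_{\u n}$. For the converse, we only need $c_{\u n}\ne0$; the hypotheses $M^{\ell}_{\u n}\ne1$ and $M^j_{\u n}\ne 1$ in the statement should be exactly the nonvanishing of $c_{\u n}$ in the $(-,-)$ and $(+,-)$ cases.

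I would use the following bookkeeping throughout. The value of each of $P^i,Q^i,M^i,T$ at a shifted point $\u n\pm e_i$ is written in terms of its value at $\u n$. For example, $T_{\u n\pm e_i}=T_{\u n}\pm1$, $M^i_{\u n\pm e_i}=M^i_{\u n}\mp1$, $M^{i'}_{\u n\pm e_i}=M^{i'}_{\u n}\pm1$ for $i'\ne i$, $P^i_{\u n+e_i}=n_i+1$ and $Q^i_{\u n-e_i}=n_i-1+\beta$. I would also set $s:=|\u n|+1-\beta$, so that $M^i_{\u n}=s-2n_i$ and $T_{\u n}=s+4\beta-2$. Each $X$ involves $A$ at $\u n$ and at one shifted point, and each $E$ involves $A$ at $\u n$ and at two shifted points. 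So I would take the $E$-defect, multiply it by the factor that makes its shifted terms match those in $X^{j,\sigma}-X^{k,\tau}$, subtract, and check that the coefficient of $A_{\u n}$ in the remainder vanishes identically as a polynomial in $n_j,n_k,n_{\ell},\beta$.

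\emph{Case $(+,+)$.} Note that $T_{\u n+e_j}=T_{\u n+e_k}=|\u n|+3\beta=:t$. I claim $X^{j,+}_{\u n}-X^{k,+}_{\u n}=t\,\Delta_{\u n}$. Indeed, after subtracting $t\Delta_{\u n}$, what remains is $A_{\u n}M^{\ell}_{\u n}$ times
\[
(n_j+2\beta-\tfrac12)(s-2n_k)-(n_k+2\beta-\tfrac12)(s-2n_j)-t(n_j-n_k).
\]
This bracket equals $(n_j-n_k)(s+4\beta-1-t)=0$. Since $t>0$ on $\N_0^3$ when $\beta>0$, the converse holds with no extra hypothesis, which matches the statement.

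\emph{Cases $(-,-)$ and $(+,-)$.} Here I would repeat the same procedure. In $(-,-)$ the shifted terms are $(Q^jM^kM^{\ell}A)_{\u n-e_j}$ and $(Q^kM^jM^{\ell}A)_{\u n-e_k}$. In the defect of \eqref{--@R} these appear without the factor $M^{\ell}$, evaluated at $\u n-e_j$ and $\u n-e_k$ respectively. At both points $M^{\ell}$ takes the same value $M^{\ell}_{\u n}-1$, so the multiplier should be $c_{\u n}=M^{\ell}_{\u n}-1$. The remaining $A_{\u n}$-coefficient involves $(n_j-\beta+\tfrac12)M^jT-(n_k-\beta+\tfrac12)M^kT+(M^{\ell}-1)(n_j-n_k)T$ at $\u n$, and I expect it to cancel just as in the $(+,+)$ case. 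In $(+,-)$ the shifted points are $\u n+e_j$ and $\u n-e_k$, where $M^j$ takes the common value $M^j_{\u n}-1$ ($M^j$ drops by $1$ at $\u n+e_j$ and also at $\u n-e_k$). So the multiplier should be $M^j_{\u n}-1$, up to sign. This again matches the hypothesis $M^j_{\u n}\ne1$.

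The only real work is the residual $A_{\u n}$-coefficient identity in these two cases, and in particular in the mixed case, where terms of both $X^{+}$ and $X^{-}$ type interact. I expect it to reduce, after substituting $M^i=s-2n_i$ and $T=s+4\beta-2$, to a polynomial identity that is linear in $n_j-n_k$ or in $n_j+n_k$; I would verify it by direct expansion. The exterior-triviality remark is not needed here, because we work pointwise at $\u n\in\N_0^3$ with the zero-extension convention, and all identities are polynomial in the indices.
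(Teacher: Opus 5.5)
Your proposal is correct and is essentially the paper's proof: the paper multiplies (\ref{++@R}), (\ref{--@R}) and (\ref{+-@R}) by exactly your factors $T_{\u n}+1$, $M^{\ell}_{\u n}-1$ and $M^j_{\u n}-1$. It then reduces each case to a polynomial identity for the $A_{\u n}$-coefficient, and these identities do hold in the two cases you left unexpanded; for example, in the mixed case $(n_j+2\beta-\tfrac12)M^{\ell}-(n_k-\beta+\tfrac12)T+(M^j-1)(n_j+n_k+\beta)\equiv 0$. The case $(\sigma,\tau)=(-,+)$, which you did not mention, follows from $(+,-)$ by interchanging $j$ and $k$, as the paper does.
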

\begin{proof}
   Case 1. $\sigma=\tau=+$. Multiplying (\ref{++@R}) ($E_{\u n}(\begin{smallmatrix}
  + & + \\ j & k
\end{smallmatrix})$)  by $T_{\u n+e_j}=T_{\u n+e_k}=T_{\u n}+1$, we get
$$(TP^jM^jA)_{\u n+e_j}-(TP^kM^k  A)_{\u n+e_k} = - ((T+1)(P^j-P^k)M^{\ell} A)_{\u n}.$$
A direct calculation gives
$$(P^j+2\beta -\frac 12) M^k-(P^k+2\beta -\frac 12) M^j= (T+1)(P^j-P^k) .$$
Combining the above two formulas, we get  $X^{j,+}_{\u n}=X^{k,+}_{\u n}$. On the other hand, if  $X^{j,+}_{\u n}=X^{k,+}_{\u n}$, we may reverse the above argument to get (\ref{++@R}) since $T_{\u n}+1>0$.

Case 2. $\sigma=\tau=-$.
Multiplying (\ref{--@R}) ($E_{\u n}(\begin{smallmatrix}
  - & - \\ j & k
\end{smallmatrix})$) by $M^{\ell}_{\u n-e_j}=M^{\ell}_{\u n-e_k} =M^{\ell}_{\u n}-1$, we get
$$(M^lQ^jM^k A)_{\u n-e_j}-(M^lQ^kM^j  A)_{\u n-e_k} =((M^{\ell}-1)(P^j-P^k)T A)_{\u n}.$$
A direct calculation gives
$$(P^j-\beta +\frac 12)M^j-(P^k-\beta +\frac 12)M^k=-(M^{\ell}-1)(P^j-P^k).$$
Combining the above two formulas, we get $X^{j,-}_{\u n}=X^{k,-}_{\u n}$. Conversely, if  $X^{j,-}_{\u n}=X^{k,-}_{\u n}$, we may reverse the above argument to get (\ref{--@R}) when $M^{\ell}_{\u n}\ne 1$.

Case 3. $\sigma=+$ and $\tau=-$. Multiplying (\ref{+-@R}) ($E_{\u n}(\begin{smallmatrix}
  + & - \\ j & k
\end{smallmatrix})$) by $M^j_{\u n+e_j}=M^j_{\u n-e_k}=M^j_{\u n}-1$, we get
$$(M^jP^jT A)_{\u n+e_j}-(M^jQ^kM^{\ell} A)_{\u n-e_k} =((M^j-1)(P^k+Q^j) M^k A)_{\u n} . $$
A direct calculation gives
$$(P^j+2\beta -\frac 12)M^{\ell}-(P^k-\beta +\frac 12)T=-(M^j-1)(P^k+Q^j).$$
Combining the above two formulas, we get $X^{j,+}_{\u n}=X^{k,-}_{\u n}$. Conversely, if  $X^{j,+}_{\u n}=X^{k,-}_{\u n}$, we may reverse the above argument to get (\ref{+-@R}) when $M^j_{\u n}\ne 1$.

Case 4. $\sigma=-$ and $\tau=+$. This follows from Case 3 after interchanging \(j\) and \(k\).
\end{proof}

\begin{Lemma} [Cross Rule]
  For any $\u n=(n_1,n_2,n_3)\in \N_0^3$ and distinct $j,k\in [3]$,  $E_{\u n}(\begin{smallmatrix}
  - & - \\ j & k
\end{smallmatrix} ) $ and $E_{\u n}(\begin{smallmatrix}
  +& - \\ j & k
\end{smallmatrix} ) $ together imply $E_{\u n}(\begin{smallmatrix}
  -& + \\ j & k
\end{smallmatrix} ) $ and $E_{\u n}(\begin{smallmatrix}
  + & + \\ j & k
\end{smallmatrix} ) $.
\label{cross@R}
\end{Lemma}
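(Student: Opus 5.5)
The plan is to work at the level of the five values $A_{\u n}$, $A_{\u n\pm e_j}$, $A_{\u n\pm e_k}$. The four equations $E_{\u n}(\begin{smallmatrix}\sigma&\tau\\ j&k\end{smallmatrix})$, $\sigma,\tau\in\{+,-\}$, are all linear relations among these five values. Two hypotheses alone cannot force two new conclusions, so an extra relation is needed. I would take it from property (\ref{ABR:ii}) of Theorem~\ref{Theorem-AB@R}, which is already established for $A$ by Lemma~\ref{construction}, namely $({\mathcal H}_jA)_{\u n}=({\mathcal H}_kA)_{\u n}$. By (\ref{L=}) this reads
\[
f_+(n_j+1)A_{\u n+e_j}-(f_+(n_j)+f_-(n_j))A_{\u n}+f_-(n_j-1)A_{\u n-e_j}
=f_+(n_k+1)A_{\u n+e_k}-(f_+(n_k)+f_-(n_k))A_{\u n}+f_-(n_k-1)A_{\u n-e_k}.
\]
This again involves exactly the same five values. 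The lemma should therefore reduce to a polynomial identity: the linear forms defining the two target equations lie in the span of the linear forms of $E_{\u n}(\begin{smallmatrix}-&-\\ j&k\end{smallmatrix})$, $E_{\u n}(\begin{smallmatrix}+&-\\ j&k\end{smallmatrix})$ and ${\mathcal H}_j-{\mathcal H}_k$ at $\u n$.

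Concretely, I would write each equation as a vector of coefficients, which are polynomials in $(n_j,n_k,n_\ell,\beta)$, in the basis $(A_{\u n+e_j},A_{\u n+e_k},A_{\u n-e_j},A_{\u n-e_k},A_{\u n})$. The coefficients are read off from (\ref{++@R})--(\ref{--@R}) using $P^j,Q^j,M^j,T$. For the $(-,+)$ target, the form $(P^kTA)_{\u n+e_k}-(Q^jM^\ell A)_{\u n-e_j}-((P^j+Q^k)M^jA)_{\u n}$ has no $A_{\u n+e_j}$ or $A_{\u n-e_k}$ entry. So I would first eliminate those two entries using the two hypotheses. The $(+,-)$ equation supplies $A_{\u n+e_j}$ with coefficient $(P^jT)_{\u n+e_j}$, and the $(-,-)$ equation supplies $A_{\u n-e_k}$ with coefficient $-(Q^kM^j)_{\u n-e_k}$. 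I would then match the remaining three coefficients against a multiple of ${\mathcal H}_j-{\mathcal H}_k$, i.e.\ find rational multipliers $\lambda_1,\lambda_2,\lambda_3$ making
\[
E(-+)=\lambda_1E(--)+\lambda_2E(+-)+\lambda_3({\mathcal H}_j-{\mathcal H}_k)
\]
an identity. The $(+,+)$ target is handled the same way, or more cheaply as follows. Once $(-,+)$ is known, Lemma~\ref{RX@R} gives $X^{j,-}_{\u n}=X^{k,+}_{\u n}$. Combined with $X^{j,-}=X^{k,-}=X^{j,+}$ from the hypotheses, this yields $X^{j,+}_{\u n}=X^{k,+}_{\u n}$, and the converse in Lemma~\ref{RX@R} (which needs only $T_{\u n}+1>0$) gives $E_{\u n}(\begin{smallmatrix}+&+\\ j&k\end{smallmatrix})$ with no exceptional case.

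The main obstacle is the identity computation and its degenerate cases. The multipliers will have denominators such as $(P^kT)_{\u n+e_k}$, $M^\ell_{\u n}-1$ or $M^j_{\u n}-1$. These can vanish when $\beta\in\Z$ or on the coordinate faces $n_j=0$ or $n_k=0$. I would first verify the identity in cleared-denominator form, with polynomial multipliers, using a computer algebra check in the spirit of the paper's Mathematica listings; in that form it holds for every $\u n$ and $\beta$ with no division. The only remaining issue is then whether the coefficient of the target form is nonzero. Where it vanishes (for instance $n_k=-1$-type boundary effects, or $M^k_{\u n}=1$), I would fall back on the exterior triviality of $E$ for points outside $\N_0^3$, and on a separate direct check where $M$ vanishes. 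In those cases the relevant $A$-values vanish by the same Pochhammer-type factors as in part (\ref{ABR:vii}).
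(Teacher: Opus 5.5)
This is exactly the paper's argument. Writing each equation as left side minus right side, so that it is a linear form in the five values of $A$, the paper verifies the polynomial identity
\[
(n_k+2\beta)\,E_{\u n}(\begin{smallmatrix}-&+\\ j&k\end{smallmatrix})=(n_j+n_k+\beta)\,E_{\u n}(\begin{smallmatrix}-&-\\ j&k\end{smallmatrix})+(n_j+2\beta)\,E_{\u n}(\begin{smallmatrix}+&-\\ j&k\end{smallmatrix})-(|\u n|+3\beta)\bigl((\mathcal H_jA)_{\u n}-(\mathcal H_kA)_{\u n}\bigr),
\]
and then obtains $E_{\u n}(\begin{smallmatrix}+&+\\ j&k\end{smallmatrix})$ from Lemma~\ref{RX@R} precisely as you do.

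The multiplier $n_k+2\beta$ on the target is positive for all $\u n\in\N_0^3$ and $\beta>0$. So none of the degenerate cases you anticipated arise, and your fallback (vanishing of $A$-values as in part (vii)) is never needed. That is fortunate, since it would be circular at this stage: (vii) is proved from (iv), and the proof of (iv) uses this lemma.
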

\begin{proof}
We first show that $E_{\u n}(\begin{smallmatrix}
  - & - \\ j & k
\end{smallmatrix} ) $ and $E_{\u n}(\begin{smallmatrix}
  +& - \\ j & k
\end{smallmatrix} ) $ together imply $E_{\u n}(\begin{smallmatrix}
  -& + \\ j & k
\end{smallmatrix} ) $. Let ${\ell}\in [3]\sem \{j,k\}$. Swapping $j$ and $k$ in (\ref{+-@R}) ($E_{\u n}(\begin{smallmatrix}
  + & - \\ j & k
\end{smallmatrix})$), we see that $E_{\u n}(\begin{smallmatrix}
  - & + \\ j & k
\end{smallmatrix})$ is equivalent to
\BGE (P^kT A)_{\u n+e_k}-(Q^jM^{\ell} A)_{\u n-e_j}=((P^j+Q^k)M^jA)_{\u n}.\label{target@R}\EDE 
Multiplying (\ref{--@R}) ($E_{\u n}(\begin{smallmatrix}
  - & - \\ j & k
\end{smallmatrix})$), (\ref{+-@R}) ($E_{\u n}(\begin{smallmatrix}
  + & - \\ j & k
\end{smallmatrix})$) and (\ref{target@R}) respectively by $n_j+n_k+\beta $, $n_j+2\beta $ and $n_k+2\beta $, we get
$$ ((P^j+P^k+1+\beta )Q^jM^k  A)_{\u n-e_j}-((P^j+P^k+1+\beta )Q^kM^j  A)_{\u n-e_k}$$ \BGE =((P^j+P^k+\beta )(P^j-P^k)TA)_{\u n},\label{--change@R}\EDE
\BGE (f_+(P^j)T A)_{\u n+e_j}-((P^j+2\beta )Q^kM^{\ell}  A)_{\u n-e_k}=( (P^j+2\beta )(P^k+Q^j)M^kA)_{\u n},\label{+-change@R}\EDE
\BGE (f_+(P^k)T A)_{\u n+e_k}-((P^k+2\beta )Q^jM^lA)_{\u n-e_j}=((P^k+2\beta )(P^j+Q^k)M^jA)_{\u n}.\label{-+change@R}\EDE

Multiplying the equation $({\mathcal H}_j A)_{\u n}-({\mathcal H}_k A)_{\u n}=0$ by $n_1+n_2+n_3+3\beta $, we get
$$ (Tf_+(P^j)  A)_{\u n+e_j}-( T f_+(P^k) A)_{\u n+e_k}+((T+2)f_-(P^j)A)_{\u n-e_j}-((T+2)f_-(P^k)A)_{\u n-e_k}$$
\BGE = ((T+1)2(P^j-P^k)(P^j+P^k+\beta )A)_{\u n}. \label{fjk@R}\EDE

We claim that the linear combination (\ref{--change@R}) $+$ (\ref{+-change@R}) $-$ (\ref{-+change@R}) $-$ (\ref{fjk@R}) reduces to a trivial identity. First, the coefficients of $A_{\u n+e_j}$ and $A_{\u n+e_k}$ are zero. Second, the coefficient of $A_{\u n-e_j}$ is
$$(n_j-1+\beta )[(n_j+n_k+\beta )(n_j+n_{\ell}-n_k-\beta )+(n_k+2\beta )(n_j+n_k-n_{\ell}-\beta )-(n_j+n_k+n_{\ell}+3\beta )(n_j-\beta )]=0;$$
and symmetrically, the coefficient of $A_{\u n-e_k}$ is also $0$. Third, the coefficient of $A_{\u n}$ is
$$(n_j+n_k+\beta )[(n_j-n_k)(n_j+n_k+n_{\ell}-1+3\beta )+(n_j+2\beta ) (n_j+n_{\ell}-n_k+1-\beta )$$
$$-(n_k+2\beta )(n_k+n_{\ell}-n_j+1-\beta )-
2(n_j-n_k) (n_j+n_k+n_{\ell}+3\beta )]=0.$$
This proves the claim. Since the above linear combination is a trivial identity,  equation (\ref{-+change@R}) is a consequence of (\ref{--change@R}), (\ref{+-change@R}) and (\ref{fjk@R}). Since (\ref{--change@R}) and (\ref{+-change@R}) follow from $E_{\u n}(\begin{smallmatrix}
  - & - \\ j & k
\end{smallmatrix} ) $ and $E_{\u n}(\begin{smallmatrix}
  +& - \\ j & k
\end{smallmatrix} ) $,  and (\ref{fjk@R}) holds by (\ref{Ljk=}), we see that $E_{\u n}(\begin{smallmatrix}
  - & - \\ j & k
\end{smallmatrix} ) $ and $E_{\u n}(\begin{smallmatrix}
  +& - \\ j & k
\end{smallmatrix} ) $ together imply (\ref{-+change@R}), which further implies (\ref{target@R}), i.e., $E_{\u n}(\begin{smallmatrix}
  -& + \\ j & k
\end{smallmatrix} ) $, since  the factor $n_k+2\beta $ by which (\ref{target@R}) is multiplied  is positive.

It remains to show that $E_{\u n}(\begin{smallmatrix}
  + & - \\ j & k
\end{smallmatrix} ) $, $E_{\u n}(\begin{smallmatrix}
  - & - \\ j & k
\end{smallmatrix} ) $ and $E_{\u n}(\begin{smallmatrix}
  - & + \\ j & k
\end{smallmatrix} ) $ together imply $E_{\u n}(\begin{smallmatrix}
  +& + \\ j & k
\end{smallmatrix} ) $. By Lemma \ref{RX@R}, the three conditions imply that $X^{j,+}_{\u n}=X^{k,-}_{\u n}=X^{j,-}_{\u n}=X^{k,+}_{\u n}$. So we have $X^{j,+}_{\u n}=X^{k,+}_{\u n}$. By Lemma \ref{RX@R} again we get $E_{\u n}(\begin{smallmatrix}
  +& + \\ j & k
\end{smallmatrix} ) $, as desired.
\end{proof}

\begin{Lemma} [Square Rule]
   For any $\u n=(n_1,n_2,n_3)\in \N_0^3$ and distinct $j, k\in [3]$,  $E_{\u n}(\begin{smallmatrix}
  + & + \\ j & k
\end{smallmatrix} ) $ and $E_{\u n+e_j}(\begin{smallmatrix}
  - & + \\ j & k
\end{smallmatrix} ) $ together imply $E_{\u n+e_k}(\begin{smallmatrix}
  + & - \\ j & k
\end{smallmatrix} ) $ and $E_{\u n+e_j+e_k}(\begin{smallmatrix}
  - & - \\ j & k
\end{smallmatrix} ) $.
\label{square@R}
\end{Lemma}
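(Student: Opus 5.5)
The plan is to work entirely on the unit square
\[
S_{\u n}:=\{\u n,\ \u n+e_j,\ \u n+e_k,\ \u n+e_j+e_k\}.
\]
Each of the four equations in the statement is a linear relation among exactly three of the four values $A_{\u m}$, $\u m\in S_{\u n}$.
\begin{itemize}
\item[--] $E_{\u n}(\begin{smallmatrix} + & + \\ j & k\end{smallmatrix})$ omits $\u n+e_j+e_k$.
\item[--] $E_{\u n+e_j}(\begin{smallmatrix} - & + \\ j & k\end{smallmatrix})$ omits $\u n+e_k$.
\item[--] $E_{\u n+e_k}(\begin{smallmatrix} + & - \\ j & k\end{smallmatrix})$ omits $\u n+e_j$.
\item[--] $E_{\u n+e_j+e_k}(\begin{smallmatrix} - & - \\ j & k\end{smallmatrix})$ omits $\u n$.
\end{itemize}
The two hypotheses are generically independent, so they cut the four values down to a two-dimensional family. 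Each target equation should therefore be a linear combination of the two hypotheses, with coefficients polynomial in $n_j,n_k,n_{\ell},\beta$. Unlike the Cross Rule, I do not expect to need (\ref{Ljk=}) here.

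\textbf{Step 1: write the equations out.} Write all four equations explicitly on $S_{\u n}$, using (\ref{++@R}), (\ref{+-@R}) with $j,k$ swapped where needed, and (\ref{--@R}). Evaluate the weights $P^j,Q^j,M^j,M^k,M^{\ell},T$ at the shifted points via the shift rules:
\begin{itemize}
\item[--] $M^j_{\u m\pm e_j}=M^j_{\u m}\mp 1$ and $M^j_{\u m\pm e_k}=M^j_{\u m}\pm 1$;
\item[--] $T_{\u m\pm e_i}=T_{\u m}\pm 1$;
\item[--] $P^j$ and $Q^j$ shift only under $e_j$.
\end{itemize}

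\textbf{Step 2: derive $E_{\u n+e_k}(\begin{smallmatrix} + & - \\ j & k\end{smallmatrix})$.} Eliminate $A_{\u n+e_j}$ between the two hypotheses. In $E_{\u n}(\begin{smallmatrix} + & + \\ j & k\end{smallmatrix})$ the coefficient of $A_{\u n+e_j}$ is $(n_j+1)(M^j_{\u n}-1)$. In $E_{\u n+e_j}(\begin{smallmatrix} - & + \\ j & k\end{smallmatrix})$ it is $-(n_j+1+n_k+\beta)M^j_{\u n+e_j}$. Cross-multiplying produces a relation among $A_{\u n}$, $A_{\u n+e_k}$, $A_{\u n+e_j+e_k}$. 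I then verify by direct polynomial identities that this relation equals a nonzero multiple of $E_{\u n+e_k}(\begin{smallmatrix} + & - \\ j & k\end{smallmatrix})$. The three coefficient comparisons should be checked in the style of the Cross Rule computation, and I expect each side to factor through $M^j_{\u n}-1$ or a similar linear factor.

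\textbf{Step 3: derive $E_{\u n+e_j+e_k}(\begin{smallmatrix} - & - \\ j & k\end{smallmatrix})$.} Once Step 2 is done, eliminate $A_{\u n}$ between $E_{\u n+e_j}(\begin{smallmatrix} - & + \\ j & k\end{smallmatrix})$ and the newly obtained $E_{\u n+e_k}(\begin{smallmatrix} + & - \\ j & k\end{smallmatrix})$. Alternatively, combine the hypotheses directly. This gives a relation among $A_{\u n+e_j}$, $A_{\u n+e_k}$, $A_{\u n+e_j+e_k}$, which should match the target up to a nonzero factor. Where convenient I will shortcut the algebra with Lemma~\ref{RX@R}:
\begin{itemize}
\item[--] the hypotheses give $X^{j,+}_{\u n}=X^{k,+}_{\u n}$ and $X^{j,-}_{\u n+e_j}=X^{k,+}_{\u n+e_j}$;
\item[--] relations (\ref{Xj+@R}) and (\ref{Xj-@R}) link $X^{\cdot,+}$ at $\u m$ with $X^{\cdot,-}$ at $\u m+e_\cdot$, since both contain the term $(P^jM^jTA)_{\u m+e_j}$;
\item[--] after this bookkeeping, the conclusions reduce to $X^{j,+}_{\u n+e_k}=X^{k,-}_{\u n+e_k}$ and $X^{j,-}_{\u n+e_j+e_k}=X^{k,-}_{\u n+e_j+e_k}$, which Lemma~\ref{RX@R} converts back into the target equations.
\end{itemize}

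\textbf{Main obstacle: degenerate multipliers.} The hard part is when an elimination multiplier vanishes. For integer $\beta$, factors such as $M^j_{\u n}-1$ or $M^{\ell}-1$ can be zero; these are exactly the exceptions in the converse parts of Lemma~\ref{RX@R}. Boundary cases with $n_j=0$ or $n_k=0$ also arise.
\begin{itemize}
\item[--] Boundary cases are handled by the exterior triviality observation.
\item[--] For a vanishing multiplier, my first attempt is to eliminate a different variable, for instance $A_{\u n+e_k}$ instead of $A_{\u n+e_j}$, so that a different linear factor appears.
\item[--] If both multipliers vanish simultaneously, I would check the target directly: the factor that vanishes should also annihilate the corresponding coefficients in the target equation, reducing it to a two-term identity that follows from one hypothesis alone.
\end{itemize}
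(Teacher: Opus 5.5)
Your overall strategy is the paper's. Each conclusion is a linear combination of the two hypotheses (in your Step 3, of $E_{\u n+e_j}(\begin{smallmatrix} - & + \\ j & k\end{smallmatrix})$ and the freshly derived $E_{\u n+e_k}(\begin{smallmatrix} + & - \\ j & k\end{smallmatrix})$), and (\ref{Ljk=}) is not needed. The gap is in your treatment of ``degenerate multipliers'', which is unnecessary and, as proposed, incorrect.

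That obstacle is an artifact of cross-multiplying by the full coefficients. The coefficients of $A_{\u n+e_j}$ you computed, $(n_j+1)M^j_{\u n+e_j}$ and $-(n_j+n_k+\beta+1)M^j_{\u n+e_j}$, share the factor $M^j_{\u n+e_j}$, so strip it first. Combine $E_{\u n}(\begin{smallmatrix} + & + \\ j & k\end{smallmatrix})$ and $E_{\u n+e_j}(\begin{smallmatrix} - & + \\ j & k\end{smallmatrix})$ with multipliers $n_j+n_k+\beta+1$ and $n_j+1$, respectively. Then $A_{\u n+e_j}$ cancels identically, and by $(n_j+1)(n_j+\beta)-(n_j+n_k+\beta+1)(n_j-n_k)=(n_k+1)(n_k+\beta)$ what remains is exactly $(n_k+1)$ times $E_{\u n+e_k}(\begin{smallmatrix} + & - \\ j & k\end{smallmatrix})$. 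This is a formal identity in the values $A_{\u m}$. It holds for every $\beta$ and every $\u n$, under the zero extension, and whether or not $M^j_{\u n+e_j}=0$; since $n_k+1>0$, you can divide. This is precisely the paper's argument.

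Step 3 works the same way. The coefficients of $A_{\u n}$ share $M^{\ell}_{\u n}$. Multipliers $n_k+\beta$ on $E_{\u n+e_j}(\begin{smallmatrix} - & + \\ j & k\end{smallmatrix})$ and $n_j+\beta$ on the derived equation give $(n_j+n_k+\beta+1)$ times $E_{\u n+e_j+e_k}(\begin{smallmatrix} - & - \\ j & k\end{smallmatrix})$. The paper instead eliminates $A_{\u n}$ between the two hypotheses directly, with multipliers $n_j+\beta$ and $n_j-n_k$, and again gets the factor $n_k+1$. Either way no case distinctions arise, and no boundary cases need exterior triviality.

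Drop the fallbacks; they would not have worked.
\begin{itemize}
\item To reach the first target you must eliminate $A_{\u n+e_j}$. Moreover, $A_{\u n+e_k}$ does not occur in $E_{\u n+e_j}(\begin{smallmatrix} - & + \\ j & k\end{smallmatrix})$, so ``eliminating $A_{\u n+e_k}$ instead'' is meaningless.
\item A vanishing factor such as $M^j_{\u n+e_j}$ does not in general annihilate any coefficient of the target. The target therefore does not reduce to a consequence of a single hypothesis.
\item The Lemma~\ref{RX@R} shortcut fails for two reasons. That lemma compares $X$-values at one lattice point only, and $X^{j,+}_{\u m}$ and $X^{j,-}_{\u m+e_j}$ weight the common terms $A_{\u m}$, $A_{\u m+e_j}$ differently. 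So no bookkeeping transports equalities at $\u n,\u n+e_j$ to $\u n+e_k,\u n+e_j+e_k$. Its converse directions also carry exactly the exceptions $M^j\ne 1$, $M^{\ell}\ne 1$ you were trying to avoid.
\end{itemize}
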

\begin{proof}
Replacing $\u n$ by $\u n+e_j$ in (\ref{target@R}) ($E_{\u n}(\begin{smallmatrix}
  - & + \\ j & k
\end{smallmatrix})$), we see that $E_{\u n+e_j}(\begin{smallmatrix}
  - & + \\ j & k
\end{smallmatrix} ) $ is equivalent to
\BGE(P^kT A)_{\u n+e_j+e_k}- (Q^jM^{\ell} A)_{\u n}=((P^j+Q^k)M^j A)_{\u n+e_j}. \label{square2@R}\EDE
Swapping $j$ and $k$ in (\ref{square2@R}), we see that $E_{\u n+e_k}(\begin{smallmatrix}
  + & - \\ j & k
\end{smallmatrix} ) $ is equivalent to
\BGE (P^jT A)_{\u n+e_j+e_k}-(Q^kM^{\ell} A)_{\u n}=((P^k+Q^j)M^k A)_{\u n+e_k}. \label{square1-@R}\EDE
Replacing $\u n$ by $\u n+e_j+e_k$ in (\ref{--@R}) ($E_{\u n}(\begin{smallmatrix}
  - & - \\ j & k
\end{smallmatrix})$), we see that $E_{\u n+e_j+e_k}(\begin{smallmatrix}
  - & - \\ j & k
\end{smallmatrix} ) $ is equivalent to
\BGE (Q^jM^k A)_{\u n+e_k}-(Q^kM^j A)_{\u n+e_j}=  ((P^j-P^k)T A)_{\u n+e_j+e_k}. \label{square2-@R}\EDE

Multiplying (\ref{++@R}) ($E_{\u n}(\begin{smallmatrix}
  + & + \\ j & k
\end{smallmatrix} ) $) and (\ref{square2@R}) ($E_{\u n+e_j}(\begin{smallmatrix}
  - & + \\ j & k
\end{smallmatrix} ) $) respectively by $n_j+n_k+\beta +1$ and $n_j+1$, and arranging terms,
we get
\BGE ((Q^j+P^k)P^jM^j  A)_{\u n+e_j}-((Q^j+P^k)P^kM^k  A)_{\u n+e_k}=-((Q^j+P^k+1)(P^j-P^k)M^{\ell} A)_{\u n},\label{square3@R}\EDE
\BGE( P^jP^kTA)_{\u n+e_j+e_k}-( P^j(P^j+Q^k)M^j A)_{\u n+e_j} = ((P^j+1)Q^jM^{\ell}  A)_{\u n}. \label{square4@R}\EDE
Summing (\ref{square3@R}) and (\ref{square4@R}) and observing that $Q^j+P^k=P^j+Q^k$ and
$$-(n_j+n_k+1+\beta )(n_j-n_k)+(n_j+1)(n_j+\beta )=(n_k+1)(n_k+\beta ),$$
we get
$$(n_k+1)(P^jT A)_{\u n+e_j+e_k}-(n_k+1)((Q^j+P^k)M^k A)_{\u n+e_k}=(n_k+1)(Q^kM^{\ell} A)_{\u n}.$$
Since \(n_k+1>0\) for \(n\in\mathbb N_0^3\), we may cancel the common factor
\(n_k+1\) to get (\ref{square1-@R}), i.e., $E_{\u n+e_k}(\begin{smallmatrix}
  + & - \\ j & k
\end{smallmatrix} ) $.

Multiplying (\ref{++@R}) ($E_{\u n}(\begin{smallmatrix}
  + & + \\ j & k
\end{smallmatrix} ) $) and (\ref{square2@R})  ($E_{\u n+e_j}(\begin{smallmatrix}
  - & + \\ j & k
\end{smallmatrix} ) $) respectively by $n_j+\beta $ and $n_j-n_k$, and arranging terms,
we get
\BGE ((Q^j-1) P^jM^j A)_{\u n+e_j}-(Q^j  P^kM^k A)_{\u n+e_k}+(Q^j(P^j-P^k)M^{\ell}  A)_{\u n}=0,\label{square5@R}\EDE
\BGE((P^j-P^k) Q^jM^{\ell} A)_{\u n}+((P^j-P^k-1)(P^j+Q^k)M^j A)_{\u n+e_j}=((P^j-P^k)P^k  TA)_{\u n+e_j+e_k}. \label{square6@R}\EDE
Subtracting (\ref{square5@R}) from (\ref{square6@R}) and observing that
$$(n_j+n_k+1+\beta )(n_j-n_k)-(n_j+1)(n_j+\beta )=-(n_k+1)(n_k+\beta ),$$
we get
$$(n_k+1)(Q^jM^k A)_{\u n +e_k}-(n_k+1)(Q^kM^j A)_{\u n+e_j }=(n_k+1)((P^j-P^k)TA)_{\u n+e_j+e_k}.$$
We may then cancel the common nonzero factor $(n_k+1)$ to get (\ref{square2-@R}), i.e.,  $E_{\u n+e_j+e_k}(\begin{smallmatrix}
  - & - \\ j & k
\end{smallmatrix} ) $.
\end{proof}

\begin{Remark}
We use the names ``cross rule''   and ``square rule''  because the indices that appear in Lemma \ref{cross@R} are $\u n$, $\u n\pm e_j$, and $\u n\pm e_k$, which form a cross, and the indices that appear in Lemma \ref{square@R}  are $\u n$, $\u n+e_j$, $\u n+e_k$, and $\u n+e_j+e_k$, which form a square.
\end{Remark}

\begin{subproof} [Further properties of $A$]
In the \(E\)-notation, Theorem~\ref{Theorem-AB@R} (\ref{ABR:iv}) is the assertion that, for any
\(n\in\mathbb Z^3\), distinct \(j,k\in[3]\), and
\(\sigma,\tau\in\{+,-\}\), the relation
\(E_n({}^{\sigma}_{j}{}^{\tau}_{k})\) holds.
We prove (\ref{ABR:iv}) using an induction on $n_3$.
For $N\in\N_0$, let $\text{P}^E_N$ denote the proposition that for all $\u n\in \Z^2\times \Z_{\le N-1}$ and $\sigma,\tau\in \{+,-\}$,  $E_{\u n}(\begin{smallmatrix}
  \sigma & \tau \\ 1 & 3
\end{smallmatrix} ) $ holds, and for all $\u n\in \Z^2\times \{N\}$ and $\sigma\in \{+,-\}$,   $E_{\u n}(\begin{smallmatrix}
  \sigma & - \\ 1 & 3
\end{smallmatrix} ) $ holds.

We first prove $\text{P}^E_0$. By the exterior  triviality of $E$, it suffices to show that $E_{\u n}(\begin{smallmatrix}
  \sigma & - \\ 1 & 3
\end{smallmatrix} ) $ holds for all $\u n\in \N_0^2\times \{0\}$  and $\sigma\in \{+,-\}$. For $\u n=(n_1,n_2,0)$, $E_{\u n}(\begin{smallmatrix}
  + & - \\ 1 & 3
\end{smallmatrix} ) $ (\ref{+-@R})  and $E_{\u n}(\begin{smallmatrix}
  - & - \\ 1 & 3
\end{smallmatrix} ) $ (\ref{--@R})  are respectively equivalent to
\begin{align*}
    (n_1+1)(n_1+n_2+3\beta ) A_{\u n+e_1}&=(n_1+\beta )(n_1+n_2+1-\beta ) A_{\u n}  ; \\
 (n_1-1+\beta )(n_1+n_2-\beta ) A_{\u n-e_1}&=  n_1(n_1+n_2-1+3\beta ) A_{\u n}.
\end{align*}
By (\ref{AB=@R}), the above formulas hold for all $\u n\in \N_0^2\times \{0\}$. Thus, $\text{P}^E_0$ holds.

By the symmetry of $A$, we also know that, for any distinct $j, k\in [3]$ and $\sigma\in\{+,-\}$, $E_{\u n}(\begin{smallmatrix}
  \sigma & - \\ j & k
\end{smallmatrix} ) $ holds for all $\u n\in\Z^3$ with $n_k=0$. We call this property the \textit{boundary triviality} of $E$.

Suppose we have proved $\text{P}^E_N$ for some $N\in\N_0$. By the cross rule applied to $\u n\in \N_0^2\times \{N\}$ and $(j,k)=(1,3)$ and the exterior  triviality of $E$ applied to $\u n\in (\Z^2\sem \N_0^2)\times \{N\}$, we see that for all $\u n\in \Z^2\times \{N\}$ and $\sigma\in \{+,-\}$,  $E_{\u n}(\begin{smallmatrix}  \sigma & + \\ 1 & 3 \end{smallmatrix} ) $ holds.  By the square rule applied to $\u n\in \N_0^2\times \{N\}$ and $(j,k)=(1,3)$  and the exterior triviality and boundary triviality of $E$ applied to $\u n\in (\Z^2\sem \N_0^2)\times \{N\}$, we then know that for all $\u n\in \Z^2\times \{N+1\}$  and $\sigma\in \{+,-\}$,  $E_{\u n}(\begin{smallmatrix} \sigma & - \\ 1 & 3 \end{smallmatrix} ) $ holds. Here the cases in which the relevant set of indices meets \(\mathbb N_0^3\)
in at most one point are covered by exterior triviality, while the remaining
boundary cases reduce to \(n_k=0\) and are covered by boundary triviality. Thus, $\text{P}^E_{N+1}$ holds.
By induction, $\text{P}^E_N$ holds for all $N\in\N_0$, which implies that $E_{\u n}(\begin{smallmatrix}
  \sigma & \tau \\ 1 & 3
\end{smallmatrix} ) $ holds for all $\u n\in \Z^3$ and $\sigma,\tau\in \{+,-\}$. Finally, by the symmetry of $A$, the above statement holds with $1,3$ replaced by $j$ and $k$ respectively, for  any distinct $j, k\in [3]$. This completes the proof of (\ref{ABR:iv}). By Lemma \ref{RX@R}, we then get (\ref{ABR:v}).
\end{subproof}

\begin{subproof}[Construction  and main properties of $B$]
Fix $\u n\in\Z^3$ and $\{j, k, {\ell}\}= [3]$. From  \(E_{\u n}(\begin{smallmatrix} + & + \\ j & k \end{smallmatrix} )\) (\ref{++@R}), \(E_{\u n+e_k}(\begin{smallmatrix}  + & - \\ j & k\end{smallmatrix} )\) (\ref{square1-@R}) and \(E_{\u n+e_j+e_k}(\begin{smallmatrix}  - & - \\ j & k\end{smallmatrix} )\) (\ref{square2-@R}), we get
$$-(P^jM^j A)_{\u n+e_j}- (Q^jM^{\ell} A)_{\u n}=-(P^kM^k A)_{\u n+e_k}- (Q^kM^{\ell} A)_{\u n}$$
\BGE =(Q^jM^k A)_{\u n+e_k}-(P^jT A)_{\u n+e_j+e_k}= (Q^kM^j A)_{\u n+e_j}-(P^kT A)_{\u n+e_j+e_k}.\label{all=@R}\EDE
Now we define a flow $\phi$ on $\Z^3$ such that for any $\u n\in\Z^3$ and ${\ell}\in[3]$, $\phi(\u n-e_{\ell},\u n)$ equals   the common value in (\ref{all=@R}), and $\phi(\u n,\u n-e_{\ell})=-\phi(\u n-e_{\ell},\u n)$.

Consider a square with vertices $\u n$, $\u n-e_j$, $\u n-e_j-e_k$, $\u n-e_k$, where $\u n\in\Z^3$ and $j, k\in [3]$ are distinct. Let ${\ell}\in [3]\sem \{j,k\}$. Using the definition of \(\phi\) and applying (\ref{all=@R}) to the four directed edges of this square, we obtain
  \begin{align}
 \phi(\u n,\u n-e_j)&=  (P^lM^{\ell} A)_{\u n+e_{\ell}}+ (Q^lM^j A)_{\u n};\label{phi1@R}\\
 \phi(\u n-e_j,\u n-e_j-e_k)&=  (P^lT A)_{\u n+e_{\ell}}-  (Q^lM^j A)_{\u n};\label{phi2@R}\\
 \phi(\u n-e_j-e_k,\u n-e_k)&= - (P^lT A)_{\u n+e_{\ell}}+  (Q^lM^k A)_{\u n};\label{phi3@R}\\
 \phi(\u n-e_k,\u n)&=- (P^lM^{\ell} A)_{\u n+e_{\ell}}- (Q^lM^k A)_{\u n}\nonumber.
  \end{align}
 Summing the four displayed formulas, we get
$$ \phi(\u n,\u n-e_j)+\phi(\u n-e_j,\u n-e_j-e_k)+\phi(\u n-e_j-e_k,\u n-e_k)+\phi(\u n-e_k,\u n)=0.$$ 
Since every finite nearest-neighbor closed lattice loop in \(\mathbb Z^3\)
is generated by elementary squares. Thus \(\phi\) is conservative. Hence there exists a potential
\(B:\mathbb Z^3\to\mathbb R\), unique up to an additive constant, such that
\(
\frac12\phi(u,v)=B(v)-B(u)
\)
for every nearest-neighbor edge \((u,v)\). Note that $\phi(u,v)=0$ when $u,v\in\Z^3\sem \N_0^3$. Indeed, if an edge lies entirely outside \(\mathbb N_0^3\), then in the
appropriate expression in (\ref{all=@R}) all \(A\)-terms either have indices outside
\(\mathbb N_0^3\), or are multiplied by a factor \(P^m=0\). Since \(\mathbb Z^3\setminus\mathbb N_0^3\) is connected and \(\phi=0\) on
all edges contained in this set, \(B\) is constant there; we choose the
additive constant so that this constant is \(0\), and so $B\in\R^{\N_0^3}$. Then we get (\ref{ABR:vi})  using (\ref{all=@R}) and the definitions of $\phi$ and $B$.

  Summing  (\ref{phi1@R}) and (\ref{phi3@R}) and using the relation between $\phi$ and $B$, we get
 $$-2(\nabla_j\nabla_k B)_{\u n}=-(P^{\ell}(T-M^{\ell})A)_{\u n+e_{\ell}}+(Q^{\ell}(M^j+M^k)A)_{\u n}$$
 $$=-(P^{\ell}(2P^{\ell}-2+4\beta)A)_{\u n+e_{\ell}}+((P^{\ell}+\beta)(2 P^{\ell}+2-2\beta)A)_{\u n}=-2 (\L_{\ell} A)_{\u n},$$
 which implies (\ref{AB1@R}).
Summing (\ref{phi1@R}) and (\ref{phi2@R}) and then replacing $\u n$ by $\u n-e_{\ell}$, we get
\BGE 2B_{\u n-e_j-e_k-e_{\ell}}-2B_{\u n-e_{\ell}}=(P^{\ell}(M^{\ell}+T)A)_{\u n}=n_{\ell}(2n_j+2n_k+2\beta ) A_{\u n}.\label{diag1@R}\EDE
Swapping ${\ell}$ with $j$ and ${\ell}$ with $k$, respectively, we get
\BGE 2B_{\u n-e_j-e_k-e_{\ell}}-2B_{\u n-e_j} =n_j(2n_{\ell}+2n_k+2\beta ) A_{\u n}.\label{diag2@R}\EDE
\BGE 2B_{\u n-e_j-e_k-e_{\ell}}-2B_{\u n-e_k} =n_k(2n_{\ell}+2n_j+2\beta) A_{\u n}.\label{diag3@R}\EDE
Applying $($(\ref{diag2@R})$+$(\ref{diag3@R})$-$(\ref{diag1@R})$)/2$, we get (\ref{rainbow2'}).

For each fixed value of the third coordinate, (\ref{AB1@R}) determines
\(\nabla_1\nabla_2B\). Since \(B=0\) outside \(\mathbb N_0^3\), iterated
summation in the first two coordinates then determines \(B\) uniquely.  This shows the uniqueness of $B$.
Since the defining formula (\ref{all=@R}) for \(\phi\) is invariant under simultaneous
permutations of the coordinate labels, the symmetry of \(A\) implies the
corresponding symmetry of \(\phi\), and hence of \(B\).
Thus, $B$ satisfies (\ref{ABR:i}).

To check the boundary formula for \(B\) in (\ref{ABR:iii}), it suffices to verify that
the expression for \(B\) in (\ref{AB=@R}), with zero extension outside
\(\mathbb N_0^2\times\{0\}\), satisfies the first-order relations obtained
from (\ref{AB1@R}) on this face; these relations determine \(B\) uniquely by summation.
We need to show that, for any $\u n=(n_1,n_2,0)$ and distinct $j, k\in [2]$, $(\L _j A)_{\u n}=(\nabla_k  B)_{\u n}$. Indeed, using (\ref{AB=@R}) and the definitions of \(\mathcal L_j\) and \(\nabla_k\),
we compute
$$(\L _j A)_{\u n}=(n_j+2\beta )\cdot \frac{(\beta )_{n_j+1}(\beta )_{n_k} (1-\beta )_{n_j+n_k+1}}{(1)_{n_j}(1)_{n_k}(3\beta )_{n_j+n_k+1}}-(n_j+1-\beta )\cdot \frac{(\beta )_{n_j+1}(\beta )_{n_k} (1-\beta )_{n_j+n_k}}{(1)_{n_j}(1)_{n_k}(3\beta )_{n_j+n_k}}$$
$$=\frac{(\beta )_{n_j+1}(\beta )_{n_k} (1-\beta )_{n_j+n_k}}{(1)_{n_j}(1)_{n_k}(3\beta )_{n_j+n_k+1}}\cdot ((n_j+2\beta )(1-\beta +n_j+n_k)-(n_j+1-\beta )(3\beta +n_j+n_k));$$
$$(\nabla_k  B)_{\u n}=-\frac{(\beta )_{n_j+1}(\beta )_{n_k+1}(1-\beta )_{n_j+n_k+1}} {(1)_{n_j}(1)_{n_k}(3\beta )_{n_j+n_k+1}}+ \frac{n_k(\beta )_{n_j+1}(\beta )_{n_k}(1-\beta )_{n_j+n_k}} {(1)_{n_j}(1)_{n_k}(3\beta )_{n_j+n_k}}$$
$$=\frac{(\beta )_{n_j+1}(\beta )_{n_k} (1-\beta )_{n_j+n_k}}{(1)_{n_j}(1)_{n_k}(3\beta )_{n_j+n_k+1}}\cdot (-(\beta +n_k)(1-\beta +n_j+n_k)+n_k(3\beta +n_j+n_k)). $$
The two bracketed factors are equal after simplification, and hence $(\L _j A)_{\u n}=(\nabla_k  B)_{\u n}$.
\end{subproof}

\begin{subproof}[Integer values of $\beta$]
Finally, suppose $\beta \in\N$.
Using the zero-extension convention, we prove by induction on \(n_3\) that
for \(n\in\mathbb Z^3\),  $M^3_{\u n}\ge 1$, equivalently, $n_1+n_2\ge n_3+\beta$, implies that $A_{\u n}=0$. The proposition trivially holds if $n_3<0$. In the case $n_3=0$, the proposition follows from (\ref{ABn00@R}) and the fact that $(1-\beta)_{n_1+n_2}=0$ when $n_1+n_2\ge \beta$. Suppose for some $N\in\N_0$, the proposition holds for $n_3=N$. Suppose $M^3_{(n_1,n_2,N+1)}\ge 1$, i.e., $n_1+n_2\ge N+1+\beta$. By the induction hypothesis, $A_{(n_1-1,n_2,N)}=A_{(n_1,n_2,N)}=0$. Applying (\ref{+-@R}) with $\u n=(n_1,n_2,N)$, $n_1,n_2\in\N_0$, $j=3$, $k=1$, and ${\ell}=2$, we get
$$ (N+1)(n_1+n_2+N+3\beta) A_{(n_1,n_2,N+1)}=(Q^1M^2 A)_{(n_1-1,n_2,N)}+((P^1+Q^3)M^1A)_{(n_1,n_2,N)}=0.$$
Since $N+1>0$ and $n_1+n_2+N+3\beta>0$, we get $A_{(n_1,n_2,N+1)}=0$ if $(n_1,n_2)\in\N_0^2$. When $(n_1,n_2)\not\in\N_0^2$, $A_{(n_1,n_2,N+1)}=0$ by the zero-extension convention.  Thus, \(M_n^3\ge1\) implies \(A_n=0\). By the symmetry of $A$, $M^1_{\u n}\ge 1$ or $M^2_{\u n}\ge 1$ also implies that $A_{\u n}=0$.

Now we prove that, for any $j\in[3]$, $M^j_{\u n}\ge 0$ implies that $B_{\u n}=0$. By symmetry we assume $j=3$. Suppose $\u n\in\Z^3$ is such that $M^3_{\u n}\ge 0$. Applying (\ref{all=B@R}) with ${\ell}=3$ and $j=1$, we get
$$2B_{\u n}-2B_{\u n-e_3}=(P^1M^1 A)_{\u n+e_1}+(Q^1M^3 A)_{\u n}.$$
The first term on the right-hand side equals $0$ because $M^3_{\u n+e_1}=M^3_{\u n}+1\ge 1$ implies $A_{\u n+e_1}=0$. The second term on the right-hand side equals $0$ because either $M^3_{\u n}=0$ or $M^3_{\u n}\ge 1$, and the latter implies $A_{\u n}=0$. So we get $B_{\u n}=B_{\u n-e_3}$. Since $M^3_{\u n-e_3}=M^3_{\u n}+1\ge 0$, we further get $B_{\u n-e_3}=B_{\u n-2e_3}$.
Iterating the same argument for $m$ steps such that $n_3-m\not\in \N_0$ and using the zero-extension convention, we get $B_{\u n}=B_{\u n-m e_3}=0$.

If $n_1\ge \beta$ and $n_2\ge n_3$, then $M^3_{(n_1,n_2,n_3)}=n_1+n_2-n_3+1-\beta\ge 1$, and so $A_{\u n}=0$. Thus, by the symmetry of $A$, if $\max\{n_1,n_2,n_3\}\ge \beta$, then   \(A_n=0\).
The same argument, with the threshold \(M_n^j\ge0\), gives
\(B_n=0\) whenever \(\max\{n_1,n_2,n_3\}\ge\beta-1\).  Together with the zero-extension convention, this implies that there are only finitely many $\u n$ such that $A_{\u n}$ or $B_{\u n}$ is not zero. This finishes the proof of (\ref{ABR:vii}).
\end{subproof}


\subsection{Formal solutions for the neighbor patterns}
We now turn to the neighbor patterns. The strategy is parallel to that of the rainbow case, but the relevant auxiliary quantities and boundary values are different. We continue to use $P^j$ and $Q^j$ from (\ref{PQ}), while redefining $M^j$ and $T$ by
\BGE M^j_{\u n}= |\u n|-2n_j-1+2\beta ,\quad T_{\u n}=|\u n|+1.\label{DT@N}\EDE

\begin{Theorem}
Let $\beta>0$.  There exists a unique pair of $A,B\in\R^{\N_0^3}$ such that $A_{\u 0}=1$, where $\u 0=(0,0,0)$, and for any $\u n=(n_1,n_2,n_3)\in\Z^3$ and $\{j, k, {\ell}\}=[3]$, (\ref{AB1@R}) and (\ref{neighbor2'}) hold.
Moreover, $A$ and $B$ satisfy the following additional properties.
\begin{enumerate} [label=\textup{(\roman*)},ref=\textup{\roman*} ]
  \item \label{ABN:i} $A$ and $B$ are symmetric.
  \item \label{ABN:ii}  Formula (\ref{Ljk=}) holds for any $j,k\in[3]$.
  \item \label{ABN:iii}  For any $n_1,n_2\in\N_0$,
  \BGE A_{(n_1,n_2,0)}=\frac{(\beta )_{n_1}(\beta )_{n_2}(1-2\beta )_{|n_1-n_2|}}{(1)_{n_1}(1)_{n_2}(2\beta )_{|n_1-n_2|}},\quad B_{(n_1,n_2,0)}=-\frac{(\beta )_{n_1+1}(\beta )_{n_2+1}(1-2\beta )_{|n_1-n_2|}}{(1)_{n_1}(1)_{n_2}(2\beta )_{|n_1-n_2|}}.
  \label{AB=@N}\EDE
 \item \label{ABN:iv}  For any $\u n\in\Z^3$, if $\{j,k,{\ell}\}= [3]$, we have
\begin{align}
  (P^jM^kA)_{\u n+e_j}-(P^kM^jA)_{\u n+e_k}&=((P^j-P^k)TA)_{\u n};\label{++@N}\\
  (P^jM^{\ell} A)_{\u n+e_j}-(Q^kTA)_{\u n-e_k} &=-((P^k+Q^j)M^jA)_{\u n}; \label{+-@N}\\
  (Q^jM^j A)_{\u n-e_j}-(Q^kM^k A)_{\u n-e_k} &= -((P^j-P^k)M^lA)_{\u n}.\label{--@N}
\end{align}
\item \label{ABN:v} If $\{j, k, {\ell}\}=[3]$, define functions $X^{j,\pm} $  on $\N_0^3$ by
 \begin{align}
   X^{j,+}_{\u n}&=(P^jM^kM^lA)_{\u n+e_j}+((P^j+2\beta -\frac 12)M^j TA)_{\u n};\label{Xj+@N}\\
  X^{j,-}_{\u n}&=(Q^jM^j TA)_{\u n-e_j}+((P^j-\beta +\frac 12)M^kM^{\ell} A)_{\u n};\label{Xj-@N}
 \end{align}
 Then the six functions $X^{j,+}$ and $X^{j,-}$, $j\in[3]$, are all equal.
 \item \label{ABN:vi} For any $\u n\in\Z^3$ and $\{j, k, {\ell}\}=[3]$,
 $$2(\nabla_{\ell} B)_{\u n}=(P^jM^k A)_{\u n+e_j}-(Q^j TA)_{\u n}=(P^kM^j A)_{\u n+e_k}-(Q^k TA)_{\u n}$$
\BGE=-(P^jM^{\ell} A)_{\u n+e_j+e_k}-(Q^j M^j A)_{\u n+e_k} =-(P^kM^{\ell} A)_{\u n+e_j+e_k}-(Q^k M^k A)_{\u n+e_j}.\label{all=B@N}\EDE
\item \label{ABN:vii} If $2\beta \in\N$, then $A_{\u n}=B_{\u n}=0$ when $\min\{M^j_{\u n}:j\in[3]\}\le -1$.
\end{enumerate}
\label{Theorem-AB@N}
\end{Theorem}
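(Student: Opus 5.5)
I will mirror the proof of Theorem~\ref{Theorem-AB@R} step by step, replacing the rainbow quantities by those of (\ref{DT@N}).

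\emph{Uniqueness, symmetry, and $A_{(n,0,0)}$.} First I determine the edge values. Take (\ref{AB1@R}) with $\ell=1$ at $\u n=(n,0,0)$, and (\ref{neighbor2'}) with $\ell=1$, $\{j,k\}=\{2,3\}$ at $\u n=(n+1,0,0)$. The latter reads $(\beta(n+1)+\beta^2)A_{(n+1,0,0)}=-B_{(n+1,0,0)}$, so it expresses $B$ on the edge through $A$. Inserting this into the first relation, whose right side is $B_{(n,0,0)}$, gives a two-term recursion for $A_{(n,0,0)}$, and I expect $A_{(n,0,0)}=\frac{(\beta)_n(1-2\beta)_n}{(1)_n(2\beta)_n}$, consistent with (\ref{AB=@N}). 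Since (\ref{AB1@R}) implies (\ref{Ljk=}), Lemma~\ref{construction} then gives uniqueness of $A$ and (i), (ii) for $A$. For (iii), uniqueness in Lemma~\ref{construction} reduces the claim to checking that the explicit $A^0$ of (\ref{AB=@N}) satisfies ${\mathcal H}_1A^0={\mathcal H}_2A^0$ on $\N_0^2$. The factor $(1-2\beta)_{|n_1-n_2|}/(2\beta)_{|n_1-n_2|}$ forces a case split into $n_1>n_2$, $n_1<n_2$ and $n_1=n_2$, with the diagonal needing separate treatment. As in (\ref{f+f+@R})--(\ref{f-f-@R}), I will compute the ratios $A^0_{\u n\pm e_j}/A^0_{\u n}$ and verify the identity directly, handling separately the degenerate cases $2\beta\in\N$ where a ratio is undefined.

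\emph{Relations (iv) and (v).} I define $E_{\u n}(\begin{smallmatrix}\sigma&\tau\\ j&k\end{smallmatrix})$ via (\ref{++@N})--(\ref{--@N}) and prove the analogue of Lemma~\ref{RX@R} linking these equations to equalities among the $X^{j,\pm}$. Each case requires a polynomial identity, for instance $(P^j+2\beta-\frac12)M^j-(P^k+2\beta-\frac12)M^k$ being proportional to the factor relating (\ref{++@N}) after multiplication by an $M$-shift. I will find these identities by direct expansion. I then prove a cross rule and a square rule. For the cross rule, I take the appropriate linear combination of the two given $E$-relations and $({\mathcal H}_j-{\mathcal H}_k)A=0$ multiplied by a suitable linear factor, choosing the multipliers so that all coefficients vanish. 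I expect to find these multipliers by matching the coefficients of $A_{\u n\pm e_j}$, $A_{\u n\pm e_k}$ and $A_{\u n}$.

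Next I check the base face $n_3=0$ using (\ref{AB=@N}). The induction on $n_3$ then runs verbatim, using exterior triviality, boundary triviality and symmetry, and this gives (iv) and then (v).

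\emph{Construction of $B$ and (vi).} From (\ref{++@N}), together with the shifted versions of (\ref{+-@N}) and (\ref{--@N}) from the square rule, I obtain the chain of equalities underlying (\ref{all=B@N}). These define a flow $\phi$ on lattice edges. Curl-freeness on elementary squares follows by summing the four expressions, as in (\ref{phi1@R})--(\ref{phi3@R}). I take $B$ as the potential, normalized to vanish off $\N_0^3$. To recover (\ref{AB1@R}), I sum opposite edges and should get $\L_\ell A$, using $T+M^\ell$ type identities. To recover (\ref{neighbor2'}), I combine diagonal relations analogous to (\ref{diag1@R})--(\ref{diag3@R}). The boundary formula for $B$ in (iii) is then checked against $(\L_jA)=(\nabla_kB)$ on the face. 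Uniqueness of $B$ and its symmetry follow exactly as before.

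\emph{The case $2\beta\in\N$ (vii).} I will run an induction on $n_3$ using (\ref{+-@N}) solved for the top-index term $A_{\u n+e_j}$, whose coefficient is $P^jM^\ell$, or using the version whose leading coefficient is sign-definite. Here $M^j\le-1$ means $n_j\ge n_k+n_\ell+2\beta$, and on the face this matches the vanishing of $(1-2\beta)_{|n_1-n_2|}$. The $B$ statement then follows from (\ref{all=B@N}) by telescoping, as in the rainbow case.

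The main obstacle is the cross rule: finding the right multipliers, and confirming that the coefficient identities actually close with the neighbor definitions of $M$ and $T$. A second difficulty is verifying (iii) across the $|n_1-n_2|$ diagonal.
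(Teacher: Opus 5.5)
There is one genuine gap. Your plan otherwise follows the paper's architecture: edge values, Lemma~\ref{construction}, the face check, the $X$-lemma, cross and square rules, induction on $n_3$, the flow potential for $B$, and induction plus telescoping for (vii). Two small deviations are harmless. You should eliminate $B_{(n,0,0)}$ using (\ref{neighbor2'}) at $(n,0,0)$ rather than $(n+1,0,0)$. Your three-way split for (iii), where the diagonal is trivial by symmetry of $A^0$, replaces the paper's single ratio identity (\ref{fact-Horn}).

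The gap is your claim that the rest ``runs verbatim,'' which fails at the second half of the cross rule. In Lemma~\ref{cross@R}, the conclusion $E_{\u n}(\begin{smallmatrix}+&+\\ j&k\end{smallmatrix})$ does not come from a linear combination. One chains $X^{j,+}_{\u n}=X^{k,-}_{\u n}=X^{j,-}_{\u n}=X^{k,+}_{\u n}$ and then inverts Lemma~\ref{RX@R}, which is legitimate there because the multiplier is $T_{\u n}+1>0$. In the neighbor case the multiplier is $M^{\ell}_{\u n}+1$, exactly the factor in the identity $(P^j+2\beta-\frac12)M^j-(P^k+2\beta-\frac12)M^k=-(M^{\ell}+1)(P^j-P^k)$ you anticipate. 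So $X^{j,+}_{\u n}=X^{k,+}_{\u n}$ yields (\ref{++@N}) only when $M^{\ell}_{\u n}\neq -1$.

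When $2\beta\in\N$, this fails at every $\u n$ with $n_{\ell}=n_j+n_k+2\beta$, for example $\u n=(0,2\beta,0)$ with $(j,k,\ell)=(1,3,2)$. Yet the square rule needs $E_{\u n}(\begin{smallmatrix}+&+\\ 1&3\end{smallmatrix})$ at every point of the level $n_3=N$. So your induction does not close exactly for the values $2\beta\in\N$. Those are the only values for which (vii), itself derived from (iv), says anything.

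The paper repairs this with a second direct linear combination showing that $E_{\u n}(\begin{smallmatrix}-&-\\ j&k\end{smallmatrix})$ and $E_{\u n}(\begin{smallmatrix}+&-\\ j&k\end{smallmatrix})$ imply $E_{\u n}(\begin{smallmatrix}+&+\\ j&k\end{smallmatrix})$. Multiply (\ref{--@N}), (\ref{+-@N}) and the target (\ref{++@N}) by $n_j-\beta$, $n_k-n_j$ and $n_k+2\beta$. Multiply $(\mathcal H_jA)_{\u n}=(\mathcal H_kA)_{\u n}$ by $n_k+n_{\ell}-n_j+2\beta$. Then (\ref{--change2@N})$+$(\ref{+-change2@N})$+$(\ref{++change2@N})$-$(\ref{fjk2@N}) vanishes identically, and one divides only by $n_k+2\beta>0$.

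Alternatively, keep your route for $2\beta\notin\Z$ and pass to the limit. By Lemma~\ref{construction}, every $A_{\u n}$ is a rational function of $\beta$ with no poles on $(0,\infty)$: the recursion divides only by $f_+(N+1)=(N+1)(N+2\beta)$, and the edge values are regular there. Relations (\ref{++@N})--(\ref{--@N}) are linear in $A$ with polynomial coefficients in $\beta$, so they extend to all $\beta>0$ by continuity. With either fix the remaining steps go through. This includes your version of (vii) via (\ref{+-@N}), taking $j=3$ and $\ell$ the index with $M^{\ell}\le -1$, so the leading coefficient $P^3M^{\ell}$ is nonzero and the other two terms vanish by the induction hypothesis.
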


\begin{Remark}
We may use (ii) to derive recursive formulas for $A$  and combine them with the initial values (\ref{AB=@N}) of $A$ and an induction on $n_3$  to compute all values of $A$.

In the case $2\beta\notin\Z$, the quantities $M^j_{\u n}$, $j\in[3]$, never vanish, and we may also use  (\ref{++@N}), (\ref{+-@N}), and the identity $X^{3,+}=X^{3,-}$ to derive three further sets of recursive formulas.
\end{Remark}

\medskip
\noindent\textit{Proof of Theorem~\ref{Theorem-AB@N}.}   The overall scheme of this proof is the same as in the rainbow case,
but we give full details whenever the argument is not identical.



\begin{subproof} [Construction  and initial properties of $A$]
 We first derive the values of $A_{(n,0,0)}$ assuming that $A$ and $B$ satisfy (\ref{AB1@R}) and (\ref{neighbor2'}).
Substituting ${\ell}=1$, $j=2$, $k=3$, and $\u n=(n,0,0)$ into (\ref{AB1@R}) and   (\ref{neighbor2'}), we get $$f_+(n+1)A_{(n+1,0,0)}-f_-(n) A_{(n,0,0)}=B_{(n,0,0)},\quad (\beta n+\beta ^2)A_{(n,0,0)}=-B_{(n,0,0)},$$
which together imply $A_{(n+1,0,0)}=\frac{(n+\beta )(n+1-2\beta )}{(n+1)(n+2\beta )}\cdot A_{(n,0,0)}$. Since $A_{\u 0}=1$, we get
\BGE A_{(n,0,0)}=\frac{(\beta )_n(1-2\beta )_n}{(1)_n (2\beta )_n},\quad n\in \N_0.\label{ABn00@N}\EDE

Since \(A\) satisfies (\ref{AB1@R}), applying the difference operators and using
commutativity of the discrete differences gives (\ref{Ljk=}). By Lemma \ref{construction}, there is a unique $A\in\R^{\N_0^3}$ that satisfies (\ref{ABn00@N}) and (\ref{Ljk=}) for all $j,k\in [3]$, and such $A$ is symmetric. The remainder of the proof is dedicated to showing that such $A$ satisfies Theorem \ref{Theorem-AB@N}.
This gives the uniqueness of the coefficient array \(A\), as well as the
\(A\)-parts of (i) and (ii).
Since the specialization \(n_2=0\) of the \(A\)-formula in (\ref{AB=@N}) is exactly
(\ref{ABn00@N}),   to prove that $A$ satisfies (\ref{ABN:iii}), by the uniqueness part of Lemma \ref{construction},
it suffices to show that the two-dimensional array defined by the right-hand
side of the \(A\)-formula in (\ref{AB=@N}) satisfies the two-dimensional analogue of
(\ref{Ljk=}) for \(j,k\in[2]\).   Let $A^0$ on $\N_0^2$ be defined by (\ref{AB=@N}). Then for $\u n=(n_1,n_2,0)\in\N_0^2\times\{0\}$, if $2\beta +n_1-n_2\ne 0$,
\BGE f_+(n_1+1) A^0_{\u n+e_1}=\frac{(2\beta +n_1)(\beta +n_1)(1-2\beta +n_1-n_2)}{2\beta +n_1-n_2} A^0_{\u n};\label{f+A@N}\EDE
and
\BGE f_-(n_2-1)A^0_{\u n-e_2}=\frac{(n_2-\beta )n_2(2\beta -1+n_2-n_1)}{-2\beta +n_2-n_1} A^0_{\u n}.\label{f-A@N}\EDE
Here we used the fact that, if $2\beta+n_1-n_2\ne 0$,
\BGE \frac{(1-2\beta )_{|(n_1+1)-n_2|}}{(2\beta )_{|(n_1+1)-n_2|}}= \frac{(1-2\beta )_{|n_1-(n_2-1)|}}{(2\beta )_{|n_1-(n_2-1)|}} =\frac{1-2\beta +n_1-n_2}{2\beta +n_1-n_2} \cdot \frac{(1-2\beta )_{|n_1-n_2|}}{(2\beta )_{|n_1-n_2|}},\label{fact-Horn}\EDE
no matter whether $n_1\ge n_2$ or $n_1<n_2$.  Thus, if $2\beta +n_1-n_2\ne 0$,
$$f_+(n_1+1) A^0_{\u n+e_1}-f_-(n_2-1)A^0_{\u n-e_2}=\frac{1-2\beta +n_1-n_2}{2\beta +n_1-n_2}((2\beta +n_1)(\beta +n_1)-(n_2-\beta )n_2)A^0_{\u n}$$
\BGE =(1-2\beta +n_1-n_2)(n_1+n_2+\beta )A^0_{\u n}.\label{f+-A1@N}\EDE
Suppose now $2\beta +n_1-n_2=0$. Then $2\beta\in\N$ and $(1-2\beta )_{|n_1-n_2|}=0$, which implies $A^0_{\u n}=0$. From $n_2=n_1+2\beta \ge 1$ we get
$$f_+(n_1+1)A^0_{\u n+e_1}=(n_1+1)n_2\cdot\frac{(\beta )_{n_1+1}(\beta )_{n_2}(1-2\beta )_{2\beta -1}}{(1)_{n_1+1}(1)_{n_2} (2\beta )_{2\beta -1}}
=\frac{(\beta )_{n_1+1}(\beta )_{n_2}(1-2\beta )_{2\beta -1}}{(1)_{n_1}(1)_{n_2-1} (2\beta )_{2\beta -1}};$$
$$f_-(n_2-1) A^0_{\u n-e_2}=(n_2-1+\beta )(n_1+\beta )\cdot \frac{(\beta )_{n_1}(\beta )_{n_2-1}(1-2\beta )_{2\beta -1}}{(1)_{n_1}(1)_{n_2-1} (2\beta )_{2\beta -1}}
=\frac{(\beta )_{n_1+1}(\beta )_{n_2}(1-2\beta )_{2\beta -1}}{(1)_{n_1}(1)_{n_2-1} (2\beta )_{2\beta -1}}.$$
So we have $f_+(n_1+1)A^0_{\u n+e_1}-f_-(n_2-1)A^0_{\u n-e_2}=0$. Since $A^0_{\u n}=0$, (\ref{f+-A1@N}) also holds in the case $2\beta +n_1-n_2= 0$. Symmetrically,
\BGE f_+(n_2+1) A^0_{\u n+e_2}-f_-(n_1-1)A^0_{\u n-e_1}=(1-2\beta +n_2-n_1)(n_1+n_2+\beta )A^0_{\u n}. \label{f+-A2@N}\EDE
Subtracting (\ref{f+-A2@N}) from (\ref{f+-A1@N}), we get
$$(f_+(n_1+1)A^0_{\u n+e_1}+f_-(n_1-1)A^0_{\u n-e_1})-(f_+(n_2+1)A^0_{\u n+e_2}+f_-(n_2-1)A^0_{\u n-e_2})$$
$$=2(n_1-n_2)(n_1+n_2+\beta )A^0_{\u n}=((f_+(n_1)+f_-(n_1))-(f_+(n_2)+f_-(n_2)))A^0_{\u n},$$
which implies the desired equality  $({\mathcal H}_1 A^0)_{\u n}-({\mathcal H}_2 A^0)_{\u n}=0$, as desired. So $A$ satisfies the $A$-formula in (\ref{ABN:iii}).
\end{subproof}

Now we use \(E_{\u n} (\begin{smallmatrix}
  + & + \\ j & k \end{smallmatrix} )\) or \(E_{\u n} (\begin{smallmatrix} + & + \\ k & j \end{smallmatrix} )\) to denote (\ref{++@N}); use \(E_{\u n}(\begin{smallmatrix} + & - \\ j & k \end{smallmatrix} )\) or \(E_{\u n} (\begin{smallmatrix}  - & + \\ k & j \end{smallmatrix} )\) to denote (\ref{+-@N}); and use  \(E_{\u n}(\begin{smallmatrix}  - & - \\ j & k \end{smallmatrix} )\) or \(E_{\u n} (\begin{smallmatrix}   - & - \\ k & j \end{smallmatrix} )\) to denote (\ref{--@N}).
The argument in the paragraph  before Lemma \ref{RX@R} can be applied here to show that the present  $E$-relation also satisfies the \textit{exterior triviality}: \(E_{\u n} (\begin{smallmatrix}
  \sigma & \tau \\ j & k
\end{smallmatrix} )\) holds whenever $\{\u n, \u n+\sigma e_j,\u n+\tau e_k\}$   intersects $\N_0^3$ at zero or one element.

\begin{Lemma}
Let $\u n=(n_1,n_2,n_3)\in \N_0^3$ and $\sigma,\tau\in \{+,-\}$. Let $\{j, k, {\ell}\}=[3]$. If $E_{\u n}(\begin{smallmatrix}
  \sigma & \tau \\ j & k
\end{smallmatrix} ) $ holds, then $X^{j,\sigma}_{\u n}=X^{k,\tau}_{\u n}$. Conversely, if $X^{j,+}_{\u n}=X^{k,+}_{\u n}$, then $E_{\u n}(\begin{smallmatrix}
  + & + \\ j & k
\end{smallmatrix} ) $ holds if $M^{\ell}_{\u n}\ne -1$; if $X^{j,-}_{\u n}=X^{k,-}_{\u n}$, then $E_{\u n}(\begin{smallmatrix}
  - & - \\ j & k
\end{smallmatrix} ) $ always holds; if $X^{j,+}_{\u n}=X^{k,-}_{\u n}$, then $E_{\u n}(\begin{smallmatrix}
  + & - \\ j & k
\end{smallmatrix} ) $ holds if $M^k_{\u n}\ne -1$.
\label{RX@N}
\end{Lemma}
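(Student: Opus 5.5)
The plan is to follow the proof of Lemma~\ref{RX@R} essentially verbatim: in each of the three cases, multiply the relation $E_{\u n}$ by a single scalar chosen so that the two ``off-center'' terms become exactly the first summands of $X^{j,\sigma}_{\u n}$ and $X^{k,\tau}_{\u n}$. Then check a quadratic polynomial identity in $(n_j,n_k,n_{\ell},\beta)$ showing that the remaining $A_{\u n}$-terms match. The converse direction is obtained by dividing by the same scalar, which is legitimate exactly when it is nonzero; this produces the stated exceptional conditions. Throughout I use that, with the neighbor definitions (\ref{DT@N}), $M^j_{\u n}=S-2n_j$ where $S:=|\u n|-1+2\beta$. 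Moving $\u n$ by $\pm e_m$ changes $M^j$ by $\pm1$ if $m\neq j$ and by $\mp1$ if $m=j$, and changes $T$ by $\pm1$.

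\emph{Case $\sigma=\tau=+$.} I multiply (\ref{++@N}) by $M^{\ell}_{\u n+e_j}=M^{\ell}_{\u n+e_k}=M^{\ell}_{\u n}+1$. This gives
\[
(P^jM^kM^{\ell}A)_{\u n+e_j}-(P^kM^jM^{\ell}A)_{\u n+e_k}=((M^{\ell}+1)(P^j-P^k)TA)_{\u n}.
\]
The needed identity is
\[
(P^j+2\beta-\tfrac12)M^j-(P^k+2\beta-\tfrac12)M^k=-(M^{\ell}+1)(P^j-P^k).
\]
Expanding with $M^j=S-2n_j$, the left side is $(n_j-n_k)(S-2n_j-2n_k-4\beta+1)=-(n_j-n_k)(n_j+n_k-n_{\ell}+2\beta)$, which is the right side. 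Substituting, the equation becomes $X^{j,+}_{\u n}=X^{k,+}_{\u n}$. Reversing requires dividing by $M^{\ell}_{\u n}+1$, hence the hypothesis $M^{\ell}_{\u n}\ne-1$.

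\emph{Case $\sigma=\tau=-$.} I multiply (\ref{--@N}) by $T_{\u n-e_j}=T_{\u n-e_k}=T_{\u n}-1=|\u n|$. The identity to check is
\[
(P^j-\beta+\tfrac12)M^k-(P^k-\beta+\tfrac12)M^j=(T-1)(P^j-P^k),
\]
and the left side expands to $(n_j-n_k)(S+1-2\beta)=(n_j-n_k)|\u n|$. Multiplying this identity by $M^{\ell}A$ and combining converts the equation into $X^{j,-}_{\u n}=X^{k,-}_{\u n}$. For the converse I divide by $|\u n|$, which is positive unless $\u n=\u 0$. At $\u 0$ the relation (\ref{--@N}) holds trivially: both left terms vanish by zero extension and $P^j-P^k=0$. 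So the converse holds unconditionally, as claimed.

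\emph{Case $\sigma=+,\tau=-$.} I multiply (\ref{+-@N}) by $M^k_{\u n+e_j}=M^k_{\u n-e_k}=M^k_{\u n}+1$. The target identity is
\[
(P^j+2\beta-\tfrac12)T-(P^k-\beta+\tfrac12)M^{\ell}=(M^k+1)(P^k+Q^j),
\]
multiplied by $M^jA$; it is again a short check with $M^{\ell}=S-2n_{\ell}$ and $T=|\u n|+1$. The converse needs $M^k_{\u n}\ne-1$. The case $\sigma=-,\tau=+$ follows by swapping $j$ and $k$.

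The only real work is verifying these three polynomial identities and confirming that the multipliers agree at the two shifted indices. I expect the $+-$ identity to be the most error-prone step because of the mixed $T$ and $M$ terms, so I would expand it fully in $n_j,n_k,n_{\ell},\beta$ before committing.
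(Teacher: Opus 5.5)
Your proposal is correct and follows the paper's proof essentially step for step. It uses the same multipliers ($M^{\ell}_{\u n}+1$, $T_{\u n}-1$, $M^k_{\u n}+1$), the same three polynomial identities, the same nonvanishing conditions for the converses, and the same separate trivial check of the $--$ relation at $\u n=\u 0$; your expansions of the $++$ and $--$ identities are right, and the $+-$ identity you left unexpanded does hold (the paper likewise only asserts it by direct calculation).
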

\begin{proof}
   Case 1. $\sigma=\tau=+$. Multiplying (\ref{++@N}) by $M^{\ell}_{\u n+e_j}=M^{\ell}_{\u n+e_k}=M^{\ell}_{\u n}+1$, we get
$$ (M^lP^jM^kA)_{\u n+e_j}-(M^lP^kM^jA)_{\u n+e_k}=((M^{\ell}+1)(P^j-P^k)TA)_{\u n}.$$
A direct calculation gives
$$(P^j+2\beta -\frac 12) M^j-(P^k+2\beta -\frac 12) M^k=- (M^{\ell}+1)(P^j-P^k) .$$
Combining the above two formulas, we get  $X^{j,+}_{\u n}=X^{k,+}_{\u n}$. On the other hand, if  $X^{j,+}_{\u n}=X^{k,+}_{\u n}$, we may reverse the above argument to get (\ref{++@N}) in the case $M^{\ell}_{\u n}\ne -1$.

Case 2. $\sigma=\tau=-$.
Multiplying (\ref{--@N}) by $T_{\u n-e_j}=T_{\u n-e_k} =T_{\u n}-1$, we get
 $$ ( TQ^jM^jA)_{\u n-e_j}-(TQ^kM^k A)_{\u n-e_k} = -((T-1)(P^j-P^k)M^lA)_{\u n}.$$
A direct calculation gives
$$(P^j-\beta +\frac 12)M^k-(P^k-\beta +\frac 12)M^j=(T-1)(P^j-P^k).$$
Combining the above two formulas, we get $X^{j,-}_{\u n}=X^{k,-}_{\u n}$. On the other hand, if  $X^{j,-}_{\u n}=X^{k,-}_{\u n}$, we may reverse the above argument to get (\ref{--@N}) in the case $T_{\u n}-1\ne 0$, which is true unless $\u n=\u 0$. In the case $\u n=\u 0$, (\ref{--@N}) is trivial since both sides equal $0$.

Case 3. $\sigma=+$ and $\tau=-$. Multiplying (\ref{+-@N}) by $M^k_{\u n+e_j}=M^k_{\u n-e_k}=M^k_{\u n}+1$, we get
$$ (M^kP^jM^{\ell} A)_{\u n+e_j}-(M^kQ^kTA)_{\u n-e_k} =-((M^k+1)(P^k+Q^j)M^jA)_{\u n}.$$
A direct calculation gives
$$(P^j+2\beta -\frac 12)T-(P^k-\beta +\frac 12)M^{\ell} =(M^k+1)(P^k+Q^j).$$
Combining the above two formulas, we get $X^{j,+}_{\u n}=X^{k,-}_{\u n}$. On the other hand, if  $X^{j,+}_{\u n}=X^{k,-}_{\u n}$, we may reverse the above argument to get (\ref{+-@N}) when $M^k_{\u n}\ne -1$.

Case 4. $\sigma=-$ and $\tau=+$. This follows from Case 3 after interchanging \(j\) and \(k\).
\end{proof}

\begin{Lemma} [Cross Rule]
  For any $\u n=(n_1,n_2,n_3)\in \N_0^3$ and distinct $j, k\in [3]$,  $E_{\u n}(\begin{smallmatrix}
  - & - \\ j & k
\end{smallmatrix} ) $ and $E_{\u n}(\begin{smallmatrix}
  +& - \\ j & k
\end{smallmatrix} ) $ together imply $E_{\u n}(\begin{smallmatrix}
  -& + \\ j & k
\end{smallmatrix} ) $ and $E_{\u n}(\begin{smallmatrix}
  + & + \\ j & k
\end{smallmatrix} ) $.
\label{cross@N}
\end{Lemma}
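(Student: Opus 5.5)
The plan mirrors the proof of Lemma~\ref{cross@R} and has two stages.

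\emph{Stage 1: derive $E_{\u n}(\begin{smallmatrix} - & + \\ j & k\end{smallmatrix})$.} Swapping $j$ and $k$ in (\ref{+-@N}), this relation is
\BGE (P^kM^{\ell} A)_{\u n+e_k}-(Q^jTA)_{\u n-e_j}=-((P^j+Q^k)M^kA)_{\u n}.\label{target@N}\EDE
This involves $A_{\u n+e_k}$, $A_{\u n-e_j}$, $A_{\u n}$. The available equations are (\ref{--@N}), which involves $A_{\u n-e_j},A_{\u n-e_k},A_{\u n}$, and (\ref{+-@N}), which involves $A_{\u n+e_j},A_{\u n-e_k},A_{\u n}$. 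We also have the relation $({\mathcal H}_jA)_{\u n}=({\mathcal H}_kA)_{\u n}$ from (\ref{Ljk=}), which involves all five points of the cross.

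I would look for scalar multipliers of the form $(n_j+2\beta)$, $(n_k+2\beta)$, $(n_j+n_k+c)$, and a factor proportional to $T$ or $M^{\ell}$ times the ${\mathcal H}$-difference. The requirement is that
\[
\lambda_1\,(\ref{--@N})+\lambda_2\,(\ref{+-@N})-\lambda_3\,(\ref{target@N})-\lambda_4\,({\mathcal H}_j-{\mathcal H}_k)
\]
vanishes identically as a linear form in $A_{\u n\pm e_j}$, $A_{\u n\pm e_k}$, $A_{\u n}$.

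To choose the multipliers, match coefficients at the two extreme points first:
\begin{itemize}
\item At $A_{\u n+e_j}$, the coefficient $P^jM^{\ell}$ from (\ref{+-@N}) must cancel the term $f_+(n_j+1)$ from ${\mathcal H}_j$. Since $f_+(n_j+1)=(n_j+1)(n_j+2\beta)$, this forces $\lambda_2\propto(n_j+2\beta)$ and $\lambda_4\propto M^{\ell}_{\u n}+1$, shifted appropriately.
\item Symmetrically, $\lambda_3\propto(n_k+2\beta)$.
\item Then $\lambda_1$ is read off from the coefficient at $A_{\u n-e_j}$, using $f_-(n_j-1)=(n_j-1+\beta)(n_j-\beta)$.
\end{itemize}
It then remains to check that the coefficients at $A_{\u n-e_k}$ and $A_{\u n}$ vanish. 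These are polynomial identities in $(n_j,n_k,n_{\ell},\beta)$, and I would verify them by direct expansion.

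Once the identity holds, (\ref{target@N}) follows after dividing by $\lambda_3$. If $\lambda_3$ is of the form $n_k+2\beta$, it is positive, as in the rainbow case. If it instead contains a factor like $M^{\ell}_{\u n}+1$, a degenerate case arises when $2\beta\in\N$. In that case I would argue separately, either by continuity in $\beta$ (every $A_{\u n}$ is a rational function of $\beta$ with no poles for $\beta>0$, by Lemma~\ref{construction}, since the recursion divides only by $f_+(N+1)>0$), or through the $X$-formulation. The continuity argument is cleaner: the identity holds for all non-exceptional $\beta$ and hence by polynomial continuity for all $\beta>0$. Note, however, that the hypotheses are assumed only at the specific $\beta$, so the cleaner route requires the multiplier to be nonzero. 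For this reason I would first try to arrange the combination so that the target is multiplied by $n_k+2\beta$.

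\emph{Stage 2: derive $E_{\u n}(\begin{smallmatrix} + & + \\ j & k\end{smallmatrix})$.} Apply Lemma~\ref{RX@N} to the three relations now available. They give $X^{j,+}_{\u n}=X^{k,-}_{\u n}$, $X^{j,-}_{\u n}=X^{k,-}_{\u n}$, and $X^{j,-}_{\u n}=X^{k,+}_{\u n}$, hence $X^{j,+}_{\u n}=X^{k,+}_{\u n}$.

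The converse direction of Lemma~\ref{RX@N} then yields (\ref{++@N}) provided $M^{\ell}_{\u n}\ne-1$. In the exceptional case $M^{\ell}_{\u n}=-1$, which forces $2\beta\in\Z$, I would instead derive (\ref{++@N}) directly. The plan is to combine (\ref{+-@N}) and (\ref{target@N}) with the ${\mathcal H}$-identity. This is linear elimination of $A_{\u n-e_j}$ and $A_{\u n-e_k}$, analogous to Stage 1, and it requires a second coefficient check.

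\textbf{Main obstacle.} I expect the main difficulty to be finding the right multipliers in Stage 1 and verifying the two remaining polynomial coefficient identities. A secondary difficulty is handling the exceptional half-integer $\beta$ where $M^{\ell}_{\u n}=-1$.
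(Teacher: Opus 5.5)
Your Stage 1 is the paper's argument. The paper multiplies $E_{\u n}(\begin{smallmatrix} - & - \\ j & k\end{smallmatrix})$, $E_{\u n}(\begin{smallmatrix} + & - \\ j & k\end{smallmatrix})$ and the target $E_{\u n}(\begin{smallmatrix} - & + \\ j & k\end{smallmatrix})$ by $n_j+n_k+\beta$, $n_j+2\beta$ and $n_k+2\beta$, and the ${\mathcal H}$-difference by $n_j+n_k-n_\ell+2\beta=M^\ell_{\u n}+1$. It then checks that the resulting combination is trivial and divides by $n_k+2\beta>0$. In your sign convention $\lambda_1=-(n_j+n_k+\beta)$, and the continuity-in-$\beta$ digression is never needed.

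Stage 2 is where you genuinely differ. The paper avoids Lemma~\ref{RX@N} here, since its converse for the $(+,+)$ relation needs $M^\ell_{\u n}\ne -1$. Instead it derives $E_{\u n}(\begin{smallmatrix} + & + \\ j & k\end{smallmatrix})$ directly from the two original hypotheses. It multiplies the $(-,-)$, $(+,-)$ and $(+,+)$ relations by $n_j-\beta$, $n_k-n_j$ and $n_k+2\beta$, and the ${\mathcal H}$-difference by $M^j_{\u n}+1$, and again divides by $n_k+2\beta$.

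Your route is also correct: the rainbow-style $X$-chain in the generic case, plus an elimination in the exceptional case using the $(+,-)$ relation, the $(-,+)$ relation and the ${\mathcal H}$-difference. Your fallback elimination in fact works for every $\u n$. Write $E^{\sigma\tau}$ for $E_{\u n}(\begin{smallmatrix} \sigma & \tau \\ j & k\end{smallmatrix})$, taken as left side minus right side. A coefficient-by-coefficient check at $A_{\u n\pm e_j}$, $A_{\u n\pm e_k}$ and $A_{\u n}$ gives
\[
-(n_k-\beta)\,E^{+-}+(n_j-\beta)\,E^{-+}+|\u n|\big(({\mathcal H}_jA)_{\u n}-({\mathcal H}_kA)_{\u n}\big)=(n_j+n_k+\beta)\,E^{++},
\]
and $n_j+n_k+\beta>0$. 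So you can drop Lemma~\ref{RX@N} and the case split entirely. The resulting proof is as uniform as the paper's. The only difference is that your second identity uses the $(-,+)$ relation obtained in Stage 1, whereas the paper's uses only the original hypotheses and is independent of Stage 1.
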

\begin{proof}
First, we show that $E_{\u n}(\begin{smallmatrix}  - & - \\ j & k \end{smallmatrix} ) $ and $E_{\u n}(\begin{smallmatrix} +& - \\ j & k \end{smallmatrix} ) $ together imply $E_{\u n}(\begin{smallmatrix}  -& + \\ j & k \end{smallmatrix} ) $. Let ${\ell}\in[3]\sem\{j,k\}$. Swapping $j$ and $k$ in (\ref{+-@N}), we see that  $E_{\u n}(\begin{smallmatrix}  -& + \\ j & k \end{smallmatrix} ) $ is equivalent to:
\BGE   (P^kM^{\ell} A)_{\u n+e_k}-(Q^jTA)_{\u n-e_j} =-((P^j+Q^k)M^kA)_{\u n}.\label{target1@N}\EDE
Multiplying (\ref{--@N}) ($E_{\u n}(\begin{smallmatrix}  -& - \\ j & k \end{smallmatrix} ) $), (\ref{+-@N}) ($E_{\u n}(\begin{smallmatrix}  + & - \\ j & k \end{smallmatrix} ) $) and (\ref{target1@N}) respectively by $n_j+n_k+\beta $, $n_j+2\beta $ and $n_k+2\beta $, we get
$$ ((P^j+P^k+1+\beta ) Q^jM^j A)_{\u n-e_j}-((P^j+P^k+1+\beta )Q^kM^k  A)_{\u n-e_k}$$ \BGE= -((P^j+P^k+\beta )(P^j-P^k)M^lA)_{\u n},\label{--change@N}\EDE
\BGE (f_+(P^j)M^{\ell} A)_{\u n+e_j}-((P^j+2\beta ) Q^kTA)_{\u n-e_k} =-((P^j+2\beta )(P^k+Q^j)M^jA)_{\u n};\label{+-change@N}\EDE
\BGE (f_+(P^k)M^{\ell} A)_{\u n+e_k}-((P^k+2\beta ) Q^jT A)_{\u n-e_j}=-((P^k+2\beta ) (P^j+Q^k)M^kA)_{\u n}.\label{-+change@N}\EDE

Multiplying the equation $({\mathcal H}_j A)_{\u n}-({\mathcal H}_k A)_{\u n}=0$ by $n_j+n_k-n_{\ell}+2\beta $, we get
$$ (M^lf_+(P^j) A)_{\u n+e_j}-(M^lf_+(P^k)  A)_{\u n+e_k}+((M^{\ell} +2)f_-(P^j)A)_{\u n-e_j}-(f_-(P^k)(M^{\ell}+2)A)_{\u n-e_k}$$
\BGE = ((M^{\ell}+1)2(P^j-P^k)(P^j+P^k+\beta )A)_{\u n}. \label{fjk@N}\EDE

We claim that the linear combination  (\ref{--change@N}) $-$ (\ref{+-change@N}) $+$ (\ref{-+change@N}) $+$ (\ref{fjk@N}) reduces to a trivial identity. First, the coefficients of $A_{\u n+e_j}$ and $A_{\u n+e_k}$ are zero. Second, the coefficient of $A_{\u n-e_j}$ is
$$(n_j-1+\beta )[(n_j+n_k+\beta )(n_k+n_{\ell}-n_j+2\beta )-(n_k+2\beta )(n_j+n_k+n_{\ell})+(n_j+n_k-n_{\ell}+2\beta )(n_j-\beta )]=0;$$
and symmetrically, the coefficient of $A_{\u n-e_k}$ is also $0$. Third, the coefficient of $A_{\u n}$ is
$$(n_j+n_k+\beta )[-(n_j-n_k)(n_j+n_k-n_{\ell}-1+2\beta )+ (n_j+2\beta )(n_{\ell}+n_k-n_j-1+2\beta )$$
$$-(n_k+2\beta ) (n_{\ell}+n_j-n_k-1+2\beta )+2(n_j+n_k-n_{\ell}+2\beta )(n_j-n_k)]=0.$$
Since the above linear combination is a trivial identity, equation (\ref{-+change@N})
is a consequence of   (\ref{--change@N}), (\ref{+-change@N}) and (\ref{fjk@N}). Since (\ref{--change@N}) and (\ref{+-change@N}) follow from $E_{\u n}(\begin{smallmatrix}  - & - \\ j & k \end{smallmatrix} ) $ and $E_{\u n}(\begin{smallmatrix}  +& - \\ j & k \end{smallmatrix} ) $,  and (\ref{fjk@N}) holds by (\ref{Ljk=}), we see that $E_{\u n}(\begin{smallmatrix}  - & - \\ j & k \end{smallmatrix} ) $ and $E_{\u n}(\begin{smallmatrix}  +& - \\ j & k \end{smallmatrix} ) $ together imply (\ref{-+change@N}), which further implies (\ref{target1@N}) and then $E_{\u n}(\begin{smallmatrix}  -& + \\ j & k \end{smallmatrix} ) $ since  the factor $n_k+2\beta $ by which (\ref{target1@N}) is multiplied  is positive.

Second, we show that $E_{\u n}(\begin{smallmatrix}  - & - \\ j & k \end{smallmatrix} ) $ and $E_{\u n}(\begin{smallmatrix}   +& - \\ j & k \end{smallmatrix} ) $ together imply $E_{\u n}(\begin{smallmatrix}  + & + \\ j & k \end{smallmatrix} ) $. Multiplying (\ref{--@N}) ($E_{\u n}(\begin{smallmatrix}  - & - \\ j & k \end{smallmatrix} ) $), (\ref{+-@N}) ($E_{\u n}(\begin{smallmatrix} + & - \\ j & k \end{smallmatrix} ) $) and (\ref{++@N}) ($E_{\u n}(\begin{smallmatrix}  + & + \\ j & k\end{smallmatrix} ) $) respectively by $n_j-\beta $, $n_k-n_j$ and $n_k+2\beta $, we obtain, respectively,
\BGE (f_-(P^j)M^j A)_{\u n-e_j}-((P^j-\beta )Q^kM^k A)_{\u n-e_k} = -((P^j-\beta )(P^j-P^k)M^lA)_{\u n};\label{--change2@N}\EDE
\BGE  ((P^k-P^j+1) P^jM^{\ell}  A)_{\u n+e_j}-((P^k-P^j+1) Q^kTA)_{\u n-e_k} =-((P^k-P^j)(P^k+Q^j)M^jA)_{\u n}; \label{+-change2@N}\EDE
\BGE ((P^k+2\beta ) P^jM^kA)_{\u n+e_j}-(f_+(P^k)M^jA)_{\u n+e_k}=((P^k+2\beta ) (P^j-P^k)TA)_{\u n}.\label{++change2@N}\EDE

Multiplying the equation $({\mathcal H}_j A)_{\u n}-({\mathcal H}_k A)_{\u n}=0$ by $n_k+n_{\ell}-n_j+2\beta $, we get
$$ ((M^j+2)f_+(P^j) A)_{\u n+e_j}-(M^j f_+(P^k)  A)_{\u n+e_k}+(M^j f_-(P^j) A)_{\u n-e_j}-((M^j+2)f_-(P^k)A)_{\u n-e_k}$$
\BGE = ((M^j+1) 2(P^j-P^k)(P^j+P^k+\beta )A)_{\u n}. \label{fjk2@N}\EDE

We claim that the linear combination
(\ref{--change2@N}) $+$ (\ref{+-change2@N}) $+$ (\ref{++change2@N}) $-$ (\ref{fjk2@N}) reduces to a trivial identity. First, the coefficients of $A_{\u n-e_j}$ and $A_{\u n+e_k}$ are zero. Second, the coefficient of $A_{\u n+e_j}$ is
$$(n_j+1)[(n_k-n_j)(n_j+n_k-n_{\ell}+2\beta )+(n_k+2\beta )(n_j+n_{\ell}-n_k+2\beta )-(n_k+n_{\ell}-n_j+2\beta )(n_j+2\beta )]=0.$$
Third, the coefficient of $A_{\u n-e_k}$ is
$$(n_k-1+\beta )[-(n_j-\beta )(n_j+n_{\ell}-n_k+2\beta )-(n_k-n_j)(n_j+n_k+n_{\ell})+(n_k+n_{\ell}-n_j+2\beta )(n_k-\beta )]=0.$$
Finally, the coefficient of $A_{\u n}$ is
$$(n_j-n_k)[-(n_j-\beta )(n_j+n_k-n_{\ell}-1+2\beta )+(n_j+n_k+\beta )(n_k+n_{\ell}-n_j-1+2\beta )$$ $$+(n_k+2\beta )(n_j+n_k+n_{\ell}+1)- 2(n_k+n_{\ell}-n_j+2\beta)(n_j+n_k+\beta )]=0.$$
Since the above linear combination is a trivial identity, equation (\ref{++change2@N}) is a consequence of  (\ref{--change2@N}), (\ref{+-change2@N}) and (\ref{fjk2@N}). Since (\ref{--change2@N}) and (\ref{+-change2@N}) follow from $E_{\u n}(\begin{smallmatrix}
  - & - \\ j & k
\end{smallmatrix} ) $ and $E_{\u n}(\begin{smallmatrix}
  +& - \\ j & k
\end{smallmatrix} ) $,  and (\ref{fjk2@N}) always holds, we see that $E_{\u n}(\begin{smallmatrix}
  - & - \\ j & k
\end{smallmatrix} ) $ and $E_{\u n}(\begin{smallmatrix}
  +& - \\ j & k
\end{smallmatrix} ) $ together imply (\ref{++change2@N}), which further implies $E_{\u n}(\begin{smallmatrix}
  +& + \\ j & k
\end{smallmatrix} ) $ since  the factor $n_k+2\beta $ by which (\ref{++@N}) is multiplied is positive.
\end{proof}

\begin{Lemma} [Square Rule]
   For any $\u n=(n_1,n_2,n_3)\in \N_0^3$ and distinct $j, k\in [3]$,  $E_{\u n}(\begin{smallmatrix}
  + & + \\ j & k
\end{smallmatrix} ) $ and $E_{\u n+e_j}(\begin{smallmatrix}
  - & + \\ j & k
\end{smallmatrix} ) $ together imply $E_{\u n+e_k}(\begin{smallmatrix}
  + & - \\ j & k
\end{smallmatrix} ) $ and $E_{\u n+e_j+e_k}(\begin{smallmatrix}
  - & - \\ j & k
\end{smallmatrix} ) $.
\label{square@N}
\end{Lemma}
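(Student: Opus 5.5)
We follow the rainbow square rule (Lemma~\ref{square@R}): rewrite the two target relations in shifted form, then obtain each one as a linear combination of the two hypotheses. Throughout, "multiply" means multiplying an equation by a scalar that depends only on $\u n$.

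\textbf{Rewriting the hypotheses and targets.}
\begin{itemize}
\item Replacing $\u n$ by $\u n+e_j$ in (\ref{target1@N}), the hypothesis $E_{\u n+e_j}(\begin{smallmatrix} - & + \\ j & k \end{smallmatrix})$ becomes
\[
(P^kM^{\ell} A)_{\u n+e_j+e_k}-(Q^jTA)_{\u n}=-((P^j+Q^k)M^kA)_{\u n+e_j}.
\]
\item Swapping $j,k$ in this relation, the target $E_{\u n+e_k}(\begin{smallmatrix} + & - \\ j & k \end{smallmatrix})$ is equivalent to
\[
(P^jM^{\ell} A)_{\u n+e_j+e_k}-(Q^kTA)_{\u n}=-((P^k+Q^j)M^jA)_{\u n+e_k}.
\]
\item Shifting (\ref{--@N}), the target $E_{\u n+e_j+e_k}(\begin{smallmatrix} - & - \\ j & k \end{smallmatrix})$ is equivalent to
\[
(Q^jM^jA)_{\u n+e_k}-(Q^kM^kA)_{\u n+e_j}=-((P^j-P^k)M^{\ell}A)_{\u n+e_j+e_k}.
\]
\end{itemize}
Both targets, and both hypotheses, involve only the four values $A_{\u n}$, $A_{\u n+e_j}$, $A_{\u n+e_k}$, $A_{\u n+e_j+e_k}$.

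\textbf{Deriving the first target.} In each target, two of the four lattice points appear, and the hypotheses supply the other two. We take a combination $a\cdot E_{\u n}(\begin{smallmatrix} + & + \\ j & k \end{smallmatrix}) + b\cdot E_{\u n+e_j}(\begin{smallmatrix} - & + \\ j & k \end{smallmatrix})$. The coefficients $a,b$ are chosen so that the $A_{\u n+e_j}$ terms cancel and the $A_{\u n+e_j+e_k}$ term has coefficient proportional to $P^jM^{\ell}$. On the lattice, $P^k_{\u n+e_j+e_k}=n_k+1$ and $P^j_{\u n+e_j+e_k}=n_j+1$. Since the hypothesis carries $P^k$, we expect the multipliers to be $a=n_j+1$ applied to $E_{\u n}(\begin{smallmatrix} + & + \\ j & k \end{smallmatrix})$, paired with a suitable linear factor $b$ for the other hypothesis, possibly combined with $Q^j+P^k$ or similar. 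A factor $n_k+1$ should then emerge, to be cancelled at the end, exactly as in the rainbow case.

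Concretely, we match coefficients in $n_j,n_k,n_{\ell},\beta$ for each of the four lattice points and solve the small linear system for $a,b$. This should reduce to polynomial identities analogous to
\[
(n_j+n_k+1+\beta)(n_j-n_k)-(n_j+1)(n_j+\beta)=-(n_k+1)(n_k+\beta).
\]
Here the $M$-factors are $M^j_{\u n+e_j}=M^j_{\u n}-1$ and $M^j_{\u n+e_k}=M^j_{\u n}+1$; these shifts are what allow cancellation.

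\textbf{Deriving the second target.} We use a second, independent combination of the same two hypotheses, eliminating $A_{\u n}$ instead. For example, we multiply (\ref{++@N}) by $Q^j$-type or $(n_j-n_k)$-type factors and the other hypothesis by the complementary factor. Again a common factor $n_k+1$ (or $n_j+1$) should factor out, and it is positive on $\N_0^3$, so it can be divided off.

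\textbf{Expected obstacle.} The main obstacle is finding the right multipliers. In the neighbor case the factors $M^j$, $M^k$, $M^{\ell}$ sit on different lattice points than in the rainbow case, and the $T$-factor is attached to $A_{\u n}$ rather than to $A_{\u n+e_j+e_k}$. We expect the needed identities to be quadratic identities in $n_j,n_k,n_{\ell}$ that hold for all $\beta$. We would find them by treating $a,b$ as unknown affine functions and checking each vanishing coefficient, exactly as the displayed bracket identities in the cross rule proof are checked. If a combination instead leaves a factor like $M^{\ell}_{\u n}+1$, we would need a separate argument in the degenerate case $2\beta\in\N$. We hope to avoid this, since the statement carries no exceptional hypothesis.
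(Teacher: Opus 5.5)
Your outline is the paper's route: you rewrite the hypotheses and targets in shifted form (your three displayed relations are exactly the paper's), combine the two hypotheses so that one lattice value drops out, and cancel a common factor $n_k+1$. But the proposal stops exactly where the content of the lemma lies, and the one concrete choice you commit to is wrong. (A smaller slip: each target involves three of the four values $A_{\u n},A_{\u n+e_j},A_{\u n+e_k},A_{\u n+e_j+e_k}$, not two.)

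\textbf{First target.} In $E_{\u n}(\begin{smallmatrix}+&+\\ j&k\end{smallmatrix})$ the coefficient of $A_{\u n+e_j}$ is $(n_j+1)M^k_{\u n+e_j}$. In the shifted hypothesis $E_{\u n+e_j}(\begin{smallmatrix}-&+\\ j&k\end{smallmatrix})$ it is $(n_j+n_k+\beta+1)M^k_{\u n+e_j}$. Eliminating $A_{\u n+e_j}$ therefore forces the multipliers $n_j+n_k+\beta+1$ on $E_{\u n}(\begin{smallmatrix}+&+\\ j&k\end{smallmatrix})$ and $n_j+1$ on the other hypothesis. This is the reverse of your guess $a=n_j+1$; with that $a$, the compensating $b=-(n_j+1)^2/(n_j+n_k+\beta+1)$ is not polynomial. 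With the correct multipliers, the coefficients of $A_{\u n+e_j+e_k}$ and $A_{\u n+e_k}$ come out automatically as $-(n_k+1)$ times those of the first target (all terms moved to one side). The coefficient of $A_{\u n}$ is $T_{\u n}\bigl[(n_j+1)(n_j+\beta)-(n_j+n_k+\beta+1)(n_j-n_k)\bigr]=(n_k+1)(n_k+\beta)T_{\u n}$, which is again $-(n_k+1)$ times the target's coefficient.

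\textbf{Second target.} Cancel $A_{\u n}$, whose coefficients in the two hypotheses are proportional to $(n_j-n_k)T_{\u n}$ and $(n_j+\beta)T_{\u n}$, by using the multipliers $n_j+\beta$ and $n_j-n_k$. The coefficient of $A_{\u n+e_j}$ is then $M^k_{\u n+e_j}$ times the same bracket, i.e.\ $(n_k+1)(n_k+\beta)M^k_{\u n+e_j}$. The other two coefficients match the target directly, again up to the factor $-(n_k+1)$.

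So the only factor ever divided out is $n_k+1>0$. No factor like $M^{\ell}_{\u n}+1$ appears, and the degenerate case $2\beta\in\N$ you were worried about never arises. This is consistent with the lemma carrying no exceptional hypothesis, unlike the converse parts of Lemma~\ref{RX@N}. Once these multipliers and the single identity $(n_j+1)(n_j+\beta)-(n_j+n_k+\beta+1)(n_j-n_k)=(n_k+1)(n_k+\beta)$ are written down and the coefficients checked, your sketch becomes the paper's proof.
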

\begin{proof}
Replacing $\u n$ by $\u n+e_j$ in (\ref{target1@N}), we see that $E_{\u n+e_j}(\begin{smallmatrix}
  - & + \\ j & k
\end{smallmatrix} ) $ is equivalent to
\BGE(P^kM^{\ell} A)_{\u n+e_j+e_k}- (Q^jT A)_{\u n}=-((P^j+Q^k)M^k A)_{\u n+e_j}. \label{square2@N}\EDE
Swapping $j$ and $k$ in (\ref{square2@N}), we see that $E_{\u n+e_k}(\begin{smallmatrix}
  + & - \\ j & k
\end{smallmatrix} ) $ is equivalent to
\BGE (P^jM^{\ell} A)_{\u n+e_j+e_k}- (Q^kT A)_{\u n}=-((P^k+Q^j)M^j A)_{\u n+e_k}. \label{square1-@N}\EDE
Replacing $\u n$ by $\u n+e_j+e_k$ in (\ref{--@N}) ($E_{\u n}(\begin{smallmatrix}
  - & - \\ j & k
\end{smallmatrix})$), we see that $E_{\u n+e_j+e_k}(\begin{smallmatrix}
  - & - \\ j & k
\end{smallmatrix} ) $ is equivalent to
\BGE (Q^jM^j A)_{\u n+e_k}-(Q^kM^k A)_{\u n+e_j} = -((P^j-P^k)M^lA)_{\u n+e_j+e_k}. \label{square2-@N}\EDE

Multiplying (\ref{++@N}) ($E_{\u n}(\begin{smallmatrix}
  + & + \\ j & k
\end{smallmatrix} ) $) and (\ref{square2@N}) respectively by $n_j+n_k+\beta +1$ and $n_j+1$, and arranging terms, we get
\BGE ((Q^j+P^k)P^jM^k  A)_{\u n+e_j}-( (Q^j+P^k) P^kM^j A)_{\u n+e_k}=((Q^j+P^k+1)(P^j-P^k)T A)_{\u n},\label{square3@N}\EDE
\BGE(P^jP^kM^{\ell} A)_{\u n+e_j+e_k}+(P^j (P^j+Q^k)M^k A)_{\u n+e_j}= ((P^j+1) Q^jT A)_{\u n}. \label{square4@N}\EDE
Subtracting (\ref{square3@N}) from (\ref{square4@N}) and observing that $Q^j+P^k=P^j+Q^k$ and
$$(n_j+1)(n_j+\beta )-(n_j+n_k+\beta +1)(n_j-n_k) =(n_k+1)(n_k+\beta ),$$
we get
$$(n_k+1)(P^jM^{\ell} A)_{\u n+e_j+e_k}+(n_k+1)((Q^j+P^k)M^j A)_{\u n+e_k}=(n_k+1)(Q^kT A)_{\u n}.$$
We may then cancel the common nonzero factor $(n_k+1)$ to get (\ref{square1-@N}), i.e., $E_{\u n+e_k}(\begin{smallmatrix}
  + & - \\ j & k
\end{smallmatrix} ) $.

Multiplying (\ref{++@N}) ($E_{\u n}(\begin{smallmatrix}
  + & + \\ j & k
\end{smallmatrix} ) $) and (\ref{square2@N}) respectively by $n_j+\beta $ and $n_j-n_k$, and arranging terms, we get
\BGE ( Q^j P^kM^jA)_{\u n+e_k}+( Q^j (P^j-P^k)TA)_{\u n}=((Q^j-1) P^jM^kA)_{\u n+e_j} ,\label{square5@N}\EDE
\BGE((P^j-P^k) P^kM^{\ell} A)_{\u n+e_j+e_k}-((P^j-P^k)Q^jT A)_{\u n}=-( (P^j-P^k-1)(P^j+Q^k)M^kA)_{\u n+e_j} . \label{square6@N}\EDE
Summing (\ref{square5@N}) and (\ref{square6@N}), and observing that
$$(n_j+1)(n_j+\beta )-(n_j-n_k)(n_j+n_k+\beta +1)=(n_k+1)(n_k+\beta ),$$
we get
$$(n_k+1)(Q^jM^j A)_{\u n +e_k}+(n_k+1)((P^j-P^k)M^lA)_{\u n+e_j+e_k}=(n_k+1)(Q^kM^k A)_{\u n+e_j }.$$
We may then cancel the common factor $(n_k+1)$ to get (\ref{square2-@N}), i.e.,  $E_{\u n+e_j+e_k}(\begin{smallmatrix}
  - & - \\ j & k
\end{smallmatrix} ) $.
\end{proof}

\begin{subproof}[Further properties of $A$]
  The proof of  (\ref{ABN:iv}) here is very similar to that of  Theorem \ref{Theorem-AB@R} (\ref{ABN:iv}).
 More precisely, we use the same induction scheme as in the proof of
Theorem \ref{Theorem-AB@R} (\ref{ABN:iv}), with Lemmas \ref{cross@N} and \ref{square@N} replacing Lemmas \ref{cross@R} and \ref{square@R}.
For the induction base, we need to verify $E_{\u n}(\begin{smallmatrix}
  \sigma & - \\ 1 & 3
\end{smallmatrix} ) $, $\u n\in \Z^2\times \{0\}$, $\sigma\in \{+,-\}$. This follows from (\ref{AB=@N}). In fact, for $\u n=(n_1,n_2,0)$, $E_{\u n}(\begin{smallmatrix}
  + & - \\ 1 & 3
\end{smallmatrix} ) $  and $E_{\u n}(\begin{smallmatrix}
  - & - \\ 1 & 3
\end{smallmatrix} ) $  are respectively equivalent to
\begin{align*}
    (n_1+1)(n_1-n_2+2\beta ) A_{\u n+e_1}&=-(n_1+\beta )(n_2-n_1-1+2\beta ) A_{\u n},\\
 (n_1-1+\beta )(n_2-n_1+2\beta ) A_{\u n-e_1}&= - n_1(n_1-n_2-1+2\beta ) A_{\u n}.
\end{align*}
Both identities follow by taking the ratio of the two neighboring values
given in (\ref{AB=@N}), together with the zero-extension convention. Once  (\ref{ABN:iv}) is established,  Lemma \ref{RX@N} then  yields (\ref{ABN:v}).
\end{subproof}

\begin{subproof}[Construction and main properties of $B$]  Fix $\u n\in\Z^3$ and $\{j, k, {\ell}\}= [3]$. From  \(E_{\u n}(\begin{smallmatrix}
  + & + \\ j & k
\end{smallmatrix} )\) (\ref{++@N}), \(E_{\u n+e_k}(\begin{smallmatrix}
  + & - \\ j & k
\end{smallmatrix} )\) (\ref{square1-@N}) and \(E_{\u n+e_j+e_k}(\begin{smallmatrix}
  - & - \\ j & k
\end{smallmatrix} )\) (\ref{square2-@N}), we get
 $$(Q^j TA)_{\u n}-(P^jM^k A)_{\u n+e_j}=(Q^k TA)_{\u n}-(P^kM^j A)_{\u n+e_k}$$
\BGE=(P^jM^{\ell} A)_{\u n+e_j+e_k}+(Q^j M^j A)_{\u n+e_k} =(P^kM^{\ell} A)_{\u n+e_j+e_k}+(Q^k M^k A)_{\u n+e_j}.\label{all=@N}\EDE
Now we define a flow $\phi$ on $\Z^3$ such that, for $\u n\in\Z^3$ and ${\ell}\in[3]$, $\phi(\u n,\u n-e_{\ell})$ equals   the common value in (\ref{all=@N}), and $\phi(\u n-e_{\ell},\u n)=-\phi(\u n,\u n-e_{\ell})$.

Consider a square with vertices $\u n$, $\u n-e_j$, $\u n-e_j-e_k$, $\u n-e_k$, where $\u n\in\Z^3$ and $j,k\in [3]$ are distinct. Let ${\ell}\in [3]\sem \{j,k\}$.
Using the definition of \(\phi\) and applying (\ref{all=@N}) to the four directed
edges of this square, we obtain
  \begin{align}
 \phi(\u n,\u n-e_j)&=   (Q^lT A)_{\u n}-(P^lM^k A)_{\u n+e_{\ell}};\label{phi1@N}\\
 \phi(\u n-e_j,\u n-e_j-e_k)&=  (P^lM^k A)_{\u n+e_{\ell}}+  (Q^lM^{\ell} A)_{\u n};\label{phi2@N}\\
 \phi(\u n-e_j-e_k,\u n-e_k)&=  -(P^lM^j A)_{\u n+e_{\ell}}-  (Q^lM^{\ell} A)_{\u n};\label{phi3@N}\\
 \phi(\u n-e_k,\u n)&= (P^lM^j A)_{\u n+e_{\ell}}-(Q^lT A)_{\u n}\nonumber.
  \end{align}
 Summing  the four displayed formulas, we get
$$ \phi(\u n,\u n-e_j)+\phi(\u n-e_j,\u n-e_j-e_k)+\phi(\u n-e_j-e_k,\u n-e_k)+\phi(\u n-e_k,\u n)=0.$$ 
 This shows that $\phi$ is conservative, and so there exists a potential \(B:\mathbb Z^3\to\mathbb R\), unique up to an
additive constant, such that
\(
\frac12\phi(u,v)=B(v)-B(u)
\)
for every oriented nearest-neighbor edge \((u,v)\).
 Note that $\phi(u,v)=0$ when $u,v\in\Z^3\sem \N_0^3$. Indeed, on such an edge all relevant \(A\)-terms vanish by the zero-extension convention, except possibly terms killed by a factor \(P^m=0\).
Therefore we may further assume that $B=0$ on $\Z^3\sem \N_0^3$, and so $B\in\R^{\N_0^3}$. Then we get (vi) of Theorem \ref{Theorem-AB@N} using (\ref{all=@N}) and the definitions of $\phi$ and $B$.

Summing  (\ref{phi1@N}) and (\ref{phi3@N}), and using $\phi/2=dB$, we get (\ref{AB1@R}).
Summing (\ref{phi1@N}) and (\ref{phi2@N}),  and using $\phi/2=dB$, we get
\BGE 2B_{\u n-e_j-e_k}-2B_{\u n }=(Q^{\ell}(M^{\ell}+T)A)_{\u n}=(n_{\ell}+\beta )(2n_j+2n_k+2\beta ) A_{\u n}.\label{diag1@N}\EDE
Interchanging the roles of \({\ell}\) and \(j\), and then the roles of \({\ell}\) and
\(k\), respectively, gives
\BGE 2B_{\u n-e_{\ell}-e_k }-2B_{\u n } =(n_j+\beta )(2n_{\ell}+2n_k+2\beta ) A_{\u n},\label{diag2@N}\EDE
\BGE 2B_{\u n-e_{\ell}-e_j}-2B_{\u n } =(n_k+\beta )(2n_{\ell}+2n_j+2\beta ) A_{\u n}.\label{diag3@N}\EDE
Combining (\ref{diag2@N}), (\ref{diag3@N}) and (\ref{diag1@N})  in the form $($(\ref{diag2@N})$+$(\ref{diag3@N})$-$(\ref{diag1@N})$)/2$, we obtain (\ref{neighbor2'}).

From (\ref{AB1@R}), $A$ determines $\nabla_1 \nabla_2 B$, which further determines $B$ since we assume $B=0$ on $\Z^3\sem \N_0^3$. This shows the uniqueness of $B$. The symmetry of $A$ implies the symmetry of the flow $\phi$, which further implies the symmetry of $B$. Thus, $B$ satisfies (\ref{ABN:i}).

To check the $B$-formula in (\ref{ABN:iii}), it suffices to show that, if $B$ is defined by (\ref{AB=@N}), then (\ref{AB1@R}) holds for any $\u n\in\N_0^2\times \{0\}$ and ${\ell}\in [2]$. We need to show that, for any $\u n=(n_1,n_2,0)$ and distinct $j, k\in [2]$, $(\L_j A)_{\u n}=(\nabla_k  B)_{\u n}$.  By   (\ref{f+A@N}) and (\ref{fact-Horn}), when $n_1-n_2+2\beta \ne 0$,
$$(\L _1 A)_{\u n}=\Big[(\beta +n_1)(2\beta +n_1)\cdot\frac{1-2\beta +n_1-n_2}{2\beta +n_1-n_2}-(\beta +n_1) (1-\beta +n_1)\Big]\cdot A_{\u n};$$
$$(\nabla_2  B)_{\u n}=\Big[-(\beta +n_1)(\beta +n_2)+(\beta +n_1)n_2\cdot \frac{1-2\beta +n_1-n_2}{2\beta +n_1-n_2}\Big]\cdot A_{\u n}.$$
Comparing the above two formulas, we get $(\L _1 A)_{\u n}=(\nabla_2  B)_{\u n}$. If $n_1-n_2+2\beta =0$, then $2\beta\in\N$. From $(1-2\beta )_{|n_1-n_2|}=(1-2\beta )_{2\beta }=0$ we know that $A_{\u n}=B_{\u n}=0$. From $n_2=n_1+2\beta $, we get
$$f_+(n_1+1)A_{\u n+e_1}=(n_1+1)(n_1+2\beta )\cdot \frac{(\beta )_{n_1+1} (\beta )_{n_2} (1-2\beta )_{2\beta -1}}{(1)_{n_1+1}(1)_{n_2} (2\beta )_{2\beta -1}}
$$ $$= \frac{(\beta)_{n_1+1} (\beta )_{n_2} (1-2\beta )_{2\beta -1}}{(1)_{n_1}(1)_{n_2-1} (2\beta )_{2\beta -1}}=-B_{\u n-e_2},$$
which also implies $(\L _1 A)_{\u n}=(\nabla_2  B)_{\u n}$. By symmetry, we also have $(\L _2 A)_{\u n}=(\nabla_1  B)_{\u n}$.
\end{subproof}

\begin{subproof}[Integer and half-integer values of $\beta$]
Finally, suppose $2\beta \in\N$. We prove by an induction on $n_3$ that if $M^1_{\u n}\le -1$, i.e., $n_1\ge n_2+ n_3+2\beta$, then $A_{\u n}=0$. When $n_3<0$, this holds by the zero-extension convention.  When $n_3=0$, this holds by (\ref{AB=@N}) and the zero-extension convention. Suppose  the statement holds when $n_3= N\in \N_0$. Consider a point $(n_1,n_2,N+1)$ with $n_1\ge n_2+ N+1+2\beta $. Applying (\ref{++@N}) with $\u n=(n_1,n_2,N)$, $j=3$ and $k=1$, we get
$$ (P^3 M^1 A)_{(n_1,n_2,N+1)}=(P^1M^3 A)_{(n_1+1,n_2,N)}+((P^3-P^1)TA)_{(n_1,n_2,N)}=0. $$
The last ``$=$'' holds by the induction hypothesis,
since
\(
M^1_{(n_1+1,n_2,N)}\le
M^1_{(n_1,n_2,N)}\le -1
\).
Since $P^3_{(n_1,n_2,N+1)}=N+1\ge 1$ and $M^1_{(n_1,n_2,N+1)} =n_2+N+2\beta -n_1\le -1$, the above formula implies that $A_{(n_1,n_2,N+1)}=0$. This completes the induction step. So the statement is true. By the symmetry of $A$, we also get $A_{\u n}=0$ if $M^2_{\u n}\le -1$ or $M^3_{\u n}\le -1$.

Finally, we prove that, for any $j\in[3]$, $M^j_{\u n}\le -1$ implies that $B_{\u n}=0$. By symmetry we assume $j=1$. Suppose $\u n\in\N_0^3$ is such that $M^1_{\u n}\le -1$. Applying (\ref{all=B@N}) with ${\ell}=3$, $j=1$ and $k=2$, we get
$$2B_{\u n}-2B_{\u n-e_3}=(P^1M^2 A)_{\u n+e_1}-(Q^1T A)_{\u n}=0$$
because $A_{\u n+e_1}=A_{\u n}=0$, both of which follow from  $M^1_{\u n+e_1}\le M^1_{\u n}\le -1$. So we get $B_{\u n}=B_{\u n-e_3}$, which further implies $B_{\u n}=B_{\u n-m e_3}$ for any $m\in\N$ since $M^1_{\u n-m e_3}\le -1$ for any $m\in\N$. For $m$ big enough, we have $\u n-me_3\not\in\N_0^3$, and so $B_{\u n}=B_{\u n-m e_3}=0$ by the zero-extension convention. This finishes the proof of (\ref{ABN:vii}).
\end{subproof}

\section{Two-Dimensional Degenerations} \label{section:two-dim}
In this section, we study the restrictions of the trivariate series to the coordinate faces
\(\mathcal S_{\ell}^0:=\{r\in[0,1)^3:r_{\ell}=0\}, \qquad {\ell}\in[3]\). By symmetry it suffices to consider the case ${\ell}=3$.
Let $A^0_{(n_1,n_2)}=A_{(n_1,n_2,0)}$ and $B^0_{(n_1,n_2)}=B_{(n_1,n_2,0)}$, $n_1,n_2\in\N_0$, where $A$ and $B$ are as in Theorem \ref{Theorem-AB@R} or \ref{Theorem-AB@N}. The problem reduces to the convergence of $\sum_{\u n\in\N_0^2} A^0_{\u n}\u r^{\u n}$ and $\sum_{\u n\in\N_0^2} B^0_{\u n}\u r^{\u n}$, where $\u r\in [0,1)^2$ and $\u r^{\u n}:=r_1^{n_1}r_2^{n_2}$.

The recurrence relations  (\ref{rainbow1'}), (\ref{rainbow2'}), and (\ref{neighbor2'}) imply respectively that,
\BGE (n_j+1)(n_j+2\beta) A^0_{\u n+e_j}-(n_j+\beta)(n_j+1-\beta) A^0_{\u n}=B^0_{\u n}-B^0_{\u n-e_k},\label{rainbow1''}\EDE
\BGE (\beta n_j-\beta n_k) A^0_{\u n}=-(B^0_{\u n-e_j}-B^0_{\u n-e_k}), \label{rainbow2''}\EDE
\BGE (\beta n_j+\beta n_k+\beta^2) A^0_{\u n}= -(B^0_{\u n}-B^0_{\u n-e_j-e_k}), \label{neighbor2''}\EDE
for distinct $j, k\in [2]$ and all $\u n\in\N_0^2$, where $e_1:=(1,0)$ and $e_2:=(0,1)$.

By Stirling's formula and the identity $(q)_n=\Gamma(q+n)/\Gamma(q)$ when $q\in\R\sem \Z_{\le 0}$, we see that for any $a,b\in\R\sem \Z_{\le 0}$, there is $C_{a,b}\in (0,\infty)$ depending only on $a$ and $b$ such that
\BGE \Big| \frac{(a)_n}{(b)_n }\Big|\le C_{a,b}(n+1)^{a-b},\quad \forall n\in\N_0.\label{Stirling}\EDE
The estimate also hold if $a\in \Z_{\le 0}$ and $b\in\R\sem \Z_{\le 0}$ since in that case $(a)_n\ne 0$ for only finitely many $n$. In this section, we allow the implicit constants to depend on \(\beta\).

We will also use the following positivity property. For any $\beta>0$,
\BGE f_\beta(r):= {}_2F_1(1-\beta,\beta;2\beta;r)>0,\quad \forall r\in (-1,1),\label{2F1>0}\EDE
which follows from Euler's integral representation for ${}_2F_1$  (see \cite[\S 15.6]{NIST:DLMF}):
$$f_\beta(z)=\frac{\Gamma(2\beta)}{\Gamma(\beta)^2} \int_0^1 t^{\beta-1}(1-t)^{\beta-1}(1-zt)^{\beta-1} dt,\quad z\in \C\sem [1,\infty).$$
In the case $\beta>1/2$,  by Gauss's identity, $f_\beta(1)=\frac{\Gamma(2\beta)\Gamma(2\beta-1)}{\Gamma(\beta) \Gamma(3\beta-1)}>0$.
We denote by
\BGE
        \mathcal L_{x}
        :=
        x(1-x)\partial_{x}^2
        +(2\beta-2x)\partial_{x}
        -\beta(1-\beta)
\label{hyperF21}\EDE
the Gauss hypergeometric operator in the variable \(x\) with parameters $(\beta,1-\beta;2\beta)$. Then $\L_r f_\beta(r)=0$.

\subsection{The rainbow case: Appell functions}
We begin with the rainbow case. On the face $r_3=0$, the resulting two-variable series
can be identified with an Appell $F_1$ function. Let $A$ and $B$ be as in  Theorem \ref{Theorem-AB@R}. Then $A^0$ and $B^0$ satisfy (\ref{rainbow1''}) and (\ref{rainbow2''}), and their formulas are given by (\ref{AB=@R}). By (\ref{Stirling}) we have
\BGE |A^0_{(n_1,n_2)}|\lesssim (n_1+1)^{\beta-1} (n_2+1)^{\beta-1} (n_1+n_2+1)^{1-4\beta}
\lesssim   (n_1\wedge n_2+1)^{\beta-1} (n_1\vee n_2+1)^{-3\beta},\label{A0@R<*}\EDE
\BGE |B^0_{(n_1,n_2)}|\lesssim (n_1+1)^\beta (n_2+1)^\beta(n_1+n_2+1)^{1-4\beta} \le (n_1+n_2+1)^{1-2\beta}, \label{B0@R<*}\EDE
where in (\ref{A0@R<*}) we use $n_1\vee n_2+1\asymp n_1+n_2+1$ and write the product in terms of $n_1\wedge n_2$ and $n_1\vee n_2$.

\begin{Proposition}
(i) For any $\beta>0$, both series $\sum_{\u n\in\N_0^2} A^0_{\u n} \u r^{\u n}$ and $\sum_{\u n\in\N_0^2} B^0_{\u n} \u r^{\u n}$ converge locally uniformly in $ (-1,1)^2$. Let $F_0$ and $G_0$ denote the limit functions, respectively. Then they are analytic and symmetric and satisfy the PDE (for any distinct $j, k\in [2]$)
\BGE r_j(1-r_j)\pa_j^2 F_0+(2\beta-2r_j)\pa_j F_0-\beta(1-\beta) F_0=(1-r_k)G_0, \label{rainbow1'''} \EDE
\BGE \beta r_j\pa_j F_0-\beta r_k\pa_k F_0=-(r_j-r_k) G_0. \label{rainbow2'''}\EDE
Moreover, $F_0$ coincides with the Appell $F_1$ function with parameters $(1-\beta;\beta,\beta;3\beta)$ (cf.\ \cite{Appell}):
\BGE F_0(r_1,r_2)= F_1^{\mathrm{Appell}}(1-\beta ;\beta ,\beta ;3\beta ;r_1,r_2),\label{F1-Appell}\EDE
and is positive in $(-1,1)^2$.

(ii) Suppose $\beta>1/3$. Then for any $L\in (0,1)$, $\sum_{\u n\in\N_0^2} |A^0_{\u n}|L^{n_2}<\infty$, and   $F_0$ is continuous and positive on $[0,1]^2\sem \{(1,1)\}$.
Moreover,   for any $n_1\in\N_0$, the following sum is absolutely convergent and
\BGE A^{\Sigma}_{n_1}:=\sum_{n_2=0}^\infty A^0 _{(n_1,n_2)}=\frac{\Gamma(3\beta)\Gamma(3\beta-1)}{\Gamma(2\beta)\Gamma(4\beta-1)}\cdot \frac{(\beta)_{n_1}(1-\beta)_{n_1}}{(1)_{n_1}(2\beta)_{n_1}},\label{A1@R}\EDE
which implies that $F_0$ has boundary values
\BGE  F_0(x,1)=F_0(1,x)=
\frac{\Gamma(3\beta)\Gamma(3\beta-1)}{\Gamma(2\beta)\Gamma(4\beta-1)} \cdot f_\beta (x),\quad x\in [0,1).\label{F1@R}\EDE

(iii) Suppose $\beta>1/2$. Then $\sum_{\u n\in\N_0^2} |A^0_{\u n}|<\infty$, and   $F_0$ is continuous and positive on $[0,1]^2$ with
\BGE F_0(1,1)=\frac{\Gamma(3\beta)\Gamma(2\beta-1)}{\Gamma(\beta)\Gamma(4\beta-1)}.\label{F11@R}\EDE
\label{Appell}
\end{Proposition}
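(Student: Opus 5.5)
Part (i) comes almost for free from the explicit formulas (\ref{AB=@R}). The coefficient $A^0_{(n_1,n_2)}=\frac{(\beta)_{n_1}(\beta)_{n_2}(1-\beta)_{n_1+n_2}}{(1)_{n_1}(1)_{n_2}(3\beta)_{n_1+n_2}}$ is exactly the Appell $F_1$ coefficient with parameters $(1-\beta;\beta,\beta;3\beta)$. Hence (\ref{F1-Appell}) holds as an identity of formal series, and symmetry is immediate.

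The bounds (\ref{A0@R<*}) and (\ref{B0@R<*}) are polynomial in $n$. Both series therefore converge absolutely and locally uniformly on $(-1,1)^2$, and the limits are analytic. The PDEs (\ref{rainbow1'''}) and (\ref{rainbow2'''}) are equivalent, coefficient by coefficient, to (\ref{rainbow1''}) and (\ref{rainbow2''}). These in turn are the restrictions of (\ref{rainbow1'}) and (\ref{rainbow2'}) to the face $n_3=0$, where all terms with index $-e_3$ vanish. So they hold by Theorem \ref{Theorem-AB@R}, and it suffices to differentiate termwise.

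For positivity in $(-1,1)^2$ I would use Picard's integral $F_1=\frac{\Gamma(3\beta)}{\Gamma(1-\beta)\Gamma(2\beta+1)}\int_0^1 t^{-\beta}(1-t)^{2\beta}(1-r_1t)^{-\beta}(1-r_2t)^{-\beta}\,dt$. This is valid only for $\beta<1$. So I would instead use the alternative representation with the roles of $a$ and $b$ exchanged: $F_1(a;b,b;c;x,y)$ written as a double integral over the simplex with weights $u^{b-1}v^{b-1}(1-u-v)^{c-2b-1}(1-xu-yv)^{-a}$, valid since $c-2b=\beta>0$ and $b=\beta>0$. Here $(1-xu-yv)^{\beta-1}>0$ on the simplex for $|x|,|y|<1$, so the integrand is positive, and positivity follows for all $\beta>0$.

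For (ii), set $\beta>1/3$ and fix $L<1$. Summing the bound (\ref{A0@R<*}) over $n_1$ first gives a convergent series when $3\beta>1$: for fixed $n_2$, $\sum_{n_1}(n_1+1)^{\beta-1}(n_1+n_2+1)^{1-4\beta}\lesssim (n_2+1)^{1-3\beta}$ up to logs. The factor $L^{n_2}$ then makes the total summable. This gives continuity on $[0,1]\times[0,L]$, and symmetrically on $[0,L]\times[0,1]$, which covers $[0,1]^2\setminus\{(1,1)\}$.

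For (\ref{A1@R}), the inner sum over $n_2$ is a terminating-free ${}_2F_1(\beta,1-\beta+n_1;3\beta+n_1;1)$ multiplied by the $n_1$-prefactor. Gauss's identity (\ref{Gauss}) applies since $3\beta+n_1-\beta-(1-\beta+n_1)=3\beta-1>0$. Simplifying the gamma ratios with Pochhammer shifts yields the stated constant times $\frac{(\beta)_{n_1}(1-\beta)_{n_1}}{(1)_{n_1}(2\beta)_{n_1}}$, and Abel's theorem with the dominated sum gives (\ref{F1@R}).

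Positivity on the edges $\{x=1\}$ and $\{y=1\}$ follows from (\ref{2F1>0}) together with positivity of the gamma constant, using $4\beta-1>0$. Positivity on the remaining edges near $0$ follows from (i).

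For (iii), with $\beta>1/2$, I would sum (\ref{A0@R<*}) over both indices. Splitting into $n_1\le n_2$ and its mirror, the sum is $\sum_m (m+1)^{-3\beta}\sum_{k\le m}(k+1)^{\beta-1}\lesssim\sum_m (m+1)^{-2\beta}$ (with a log when $\beta=$ small, irrelevant here). This is finite exactly when $\beta>1/2$, and it gives continuity on $[0,1]^2$. The value $F_0(1,1)=\sum_{n_1}A^\Sigma_{n_1}$ is then the prefactor times $f_\beta(1)$, which Gauss evaluates as $\frac{\Gamma(2\beta)\Gamma(2\beta-1)}{\Gamma(\beta)\Gamma(3\beta-1)}$. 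Multiplying by the prefactor and cancelling $\Gamma(2\beta)$ and $\Gamma(3\beta-1)$ gives (\ref{F11@R}), which is positive.

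The main obstacle I expect is the positivity claim in (i) on all of $(-1,1)^2$ for every $\beta>0$. This requires choosing the correct integral representation, and checking that its convergence conditions hold uniformly in $\beta$.
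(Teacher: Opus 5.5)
Your proposal is correct and follows the paper's proof essentially step by step: termwise convergence from the bounds (\ref{A0@R<*})--(\ref{B0@R<*}), the same Euler-type simplex integral with integrand $u^{\beta-1}v^{\beta-1}(1-u-v)^{\beta-1}(1-ur_1-vr_2)^{\beta-1}$ for positivity in (i), Gauss's identity for $A^{\Sigma}_{n_1}$, and the same one-sided and two-sided summations (needing $3\beta>1$ and $2\beta>1$) for (ii) and (iii). The Picard single-integral formula you set aside is misstated (it should read $t^{-\beta}(1-t)^{4\beta-2}$ with prefactor $\Gamma(3\beta)/(\Gamma(1-\beta)\Gamma(4\beta-1))$, valid only for $1/4<\beta<1$), but since you discard it, it plays no role in your argument.
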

\begin{proof}
(i) By (\ref{A0@R<*}) and (\ref{B0@R<*}), for any $L\in (0,1)$, $\sum |A^0_{\u n}| L^{|\u n|}<\infty$ and $\sum |B^0_{\u n}| L^{|\u n|}<\infty$,  which implies the local uniform convergence of  $\sum_{\u n\in\N_0^2} A^0_{\u n} \u r^{\u n}$ and $\sum_{\u n\in\N_0^2} B^0_{\u n} \u r^{\u n}$ in $(-1,1)^2$. Therefore the limit functions $F_0$ and $G_0$ are analytic. The symmetry of $F_0$ and $G_0$ follows from that of $A^0$ and $B^0$. Equations (\ref{rainbow1'''}) and (\ref{rainbow2'''}) follow from (\ref{rainbow1''}) and (\ref{rainbow2''}).
Identity (\ref{F1-Appell}) follows from (\ref{AB=@R}) and the definition of the Appell $F_1$ function (cf.\ \cite{Appell}). Since \(\beta>0\), the Euler-type integral representation for Appell \(F_1\) in \cite[Formula (18)]{Schlosser-Appell}
applies and gives:
\BGE F_0(r_1,r_2)=\frac{\Gamma(3\beta)}{\Gamma(\beta)^3} \int\!\!\int_{\triangle} u^{\beta-1} v^{\beta-1} (1-u-v)^{\beta-1} (1-u r_1 -v r_2)^{\beta-1} dudv, \label{F0-integral}
\EDE
where $\triangle$ is the triangular region: $\{(u,v)\in\R^2:u> 0,v> 0,u+v< 1\}$. For \((r_1,r_2)\in(-1,1)^2\), we have \(1-ur_1-vr_2>0\) on \(\triangle\), so the integrand is positive.

(ii)  By (\ref{A0@R<*}), for any $L\in (0,1)$,
$$\sum_{\u n\in\N_0^2} |A^0_{\u n}| L^{n_2}\lesssim \sum_{n_1\le n_2} (n_1+1)^{\beta-1} (n_2+1)^{-3\beta} L^{n_2}+\sum_{n_2\le n_1} (n_2+1)^{\beta-1} (n_1+1)^{-3\beta} L^{n_2}$$
\BGE \lesssim \sum_{n_2=0}^\infty (n_2+1)^\beta (n_2+1)^{-3\beta} L^{n_2}+ \sum_{n_2=0 }^\infty (n_2+1)^{\beta-1} (n_2+1)^{1-3\beta} L^{n_2}<\infty,\label{sumA0L}\EDE
where in the second ``$\lesssim$'' we used $-3\beta<-1$. By Weierstrass $M$-test, $\sum_{\u n\in\N_0^2} A^0{\u n} \u r^{\u n}$ converges uniformly on $[0,1]\times [0,L]$, and so $F_0$ is continuous on this set. Since $L\in (0,1)$ is arbitrary, $F_0$ is continuous on $[0,1]\times [0,1)$. By the symmetry of $F_0$, $F_0$ is also continuous on $[0,1)\times [0,1]$. So it is continuous on $[0,1]^2\sem \{(1,1)\}$.

To prove (\ref{A1@R}), we use Gauss's identity (\ref{Gauss}):
  $$A^\Sigma_{n_1}=\sum_{n_2=0}^\infty A^0 _{(n_1,n_2)}=\frac{(\beta)_{n_1}(1-\beta)_{n_1}}{(1)_{n_1} (3\beta)_{n_1}} \sum_{n_2=0}^\infty \frac{(\beta)_{n_2}(1-\beta+n_1)_{n_2}}{(1)_{n_2}(3\beta+n_1)_{n_2}}$$
  $$=\frac{(\beta)_{n_1}(1-\beta)_{n_1}}{(1)_{n_1} (3\beta)_{n_1}} \cdot \frac{\Gamma(3\beta+n_1) \Gamma(3\beta-1)}{\Gamma(2\beta+n_1)\Gamma(4\beta-1)} =\frac{\Gamma(3\beta)\Gamma(3\beta-1)}{\Gamma(2\beta)\Gamma(4\beta-1)}\cdot \frac{(\beta)_{n_1}(1-\beta)_{n_1}}{(1)_{n_1}(2\beta)_{n_1}}.$$
Since $\sum A_{\u n} \u r^{\u n}$ converges absolutely on $[0,1)\times [0,1]$, we have  $F_0(x,1)=\sum_{n=0}^\infty A^{\Sigma}_n x^n$. By the above displayed formula and the symmetry of $F_0$ we get (\ref{F1@R}). By part (i), $F_0>0$ on $[0,1)^2$. Together with (\ref{2F1>0}) and (\ref{F1@R}), this implies that $F_0>0$ on $[0,1]^2\setminus\{(1,1)\}$.

(iii) Since $-2\beta<-1$,  (\ref{sumA0L}) holds for $L=1$, i.e., $\sum|A_{\u n}|<\infty$. By Weierstrass $M$-test, $\sum_{\u n\in\N_0^2} A^0{\u n} \u r^{\u n}$ converges uniformly on $[0,1]^2$, and so $F_0$ is continuous on $[0,1]^2$. By (\ref{F1@R}) and Gauss's identity, we get (\ref{F11@R}), which implies that $F_0$ is also positive at $(1,1)$.
\end{proof}

\subsection{The neighbor case: Horn functions}
We now turn to the neighbor case. The corresponding two-variable degeneration is
governed by a Horn $G_2$ function, and its boundary behavior is subtler than in the rainbow
case. Let $A$ and $B$ be as in Theorem \ref{Theorem-AB@N}. Then $A^0$ and $B^0$ satisfy (\ref{rainbow1''}) and (\ref{neighbor2''}), and their formulas are given by (\ref{AB=@N}). By (\ref{Stirling}) we have
$$ |A^0_{(n_1,n_2)}|\lesssim (n_1+1)^{\beta-1} (n_2+1)^{\beta-1} (|n_1-n_2|+1)^{1-4\beta}$$
 \BGE \le (n_1\wedge n_2+1)^{\beta-1} (n_1\vee n_2+1)^{\beta-1}(n_1+n_2+1)^{0\vee (1-4\beta)},\label{A0@N<*}\EDE
\BGE |B^0_{(n_1,n_2)}|\lesssim (n_1+1)^\beta (n_2+1)^\beta(|n_1-n_2|+1)^{1-4\beta}  \le (n_1+n_2+1)^{2\beta+(0\vee (1-4\beta))}. \label{B0@N<*}\EDE

\begin{Proposition}
 Both  $\sum_{\u n\in\N_0^2} A^0_{\u n} \u r^{\u n}$ and $\sum_{\u n\in\N_0^2} B^0_{\u n} \u r^{\u n}$ converge locally uniformly in $ (-1,1)^2$. Let $F_0$ and $G_0$ denote the limit functions, respectively. Then they are analytic and symmetric and satisfy equation (\ref{rainbow1'''}) for any distinct $j,k\in [2]$ and
\BGE \beta r_1\pa_1 F_0+\beta r_2\pa_2 F_0+\beta^2 F_0=-(1-r_1r_2) G_0. \label{neighbor2'''}\EDE
Moreover, $F_0>0$ on $(-1,1)^2$ and is given by the   Horn $G_2$ function (\cite{Horn}) with parameters $(\beta,\beta;1-2\beta,1-2\beta)$:
\BGE F_0(r_1,r_2)= G_2^{\mathrm{Horn}}(\beta ,\beta ;1-2\beta ,1-2\beta;-r_1,-r_2).\label{F0-Horn}\EDE
In addition, the following hold.
\begin{enumerate}
  \item [(i)] Suppose $\beta>1/3$. Then for any $n_1\in\N_0$, $A^{\Sigma}_{n_1}:=\sum_{n_2=0}^\infty A^0_{(n_1,n_2)}$ converges absolutely, and
\BGE A^{\Sigma}_{n_1} =\frac{\Gamma(2\beta)\Gamma(3\beta-1)}{\Gamma(\beta)\Gamma(4\beta-1)}\cdot \frac{(\beta)_{n_1}(1-\beta)_{n_1}}{(1)_{n_1}(2\beta)_{n_1}}.\label{A1@N}\EDE
Moreover, for any $L\in (0,1)$,
\BGE \sum_{n_1=0}^\infty \sum_{n_2=0}^\infty |A^0_{(n_1,n_2)}| L^{n_2}=\sum_{n_1=0}^\infty \sum_{n_2=0}^\infty |A^0_{(n_1,n_2)}| L^{n_1}<\infty.\label{AL}\EDE
In this case, $F_0 $ is continuous and positive on $[0,1]^2\sem\{(1,1)\}$ with
\BGE  F_0(x,1)=F_0(1,x)=
\frac{\Gamma(2\beta)\Gamma(3\beta-1)}{\Gamma(\beta)\Gamma(4\beta-1)} \cdot f_\beta (x),\quad x\in [0,1).\label{F1@N}\EDE
\item [(ii)]  If $\beta>1/2$, then $F_0$ extends continuously to a positive function on  $[0,1]^2$ with
\BGE F_0(1,1) = \frac{\Gamma(2\beta)^2\Gamma(2\beta-1)}{\Gamma(\beta)^2 \Gamma(4\beta-1)}.\label{F11@N}\EDE
\end{enumerate}
\label{Horn}
\end{Proposition}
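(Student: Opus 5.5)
The plan is to work directly from the explicit array (\ref{AB=@N}) and the bounds (\ref{A0@N<*}), (\ref{B0@N<*}), mirroring the proof of Proposition~\ref{Appell}. There is one genuinely new point: at the corner $(1,1)$ the array $A^0$ is \emph{not} absolutely summable. Its diagonal band $n_1\approx n_2$ carries weight $\asymp n^{2\beta-2}$, which is not summable for $\beta\ge 1/2$.

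\textbf{Convergence, PDE, $G_2$ form, positivity on $(-1,1)^2$.} Since (\ref{A0@N<*}) and (\ref{B0@N<*}) are polynomial in $|\u n|$, we have $\sum|A^0_{\u n}|L^{|\u n|}<\infty$ and $\sum|B^0_{\u n}|L^{|\u n|}<\infty$ for every $L<1$. This gives local uniform convergence and analyticity on $(-1,1)^2$. Symmetry follows from Theorem~\ref{Theorem-AB@N}(i). Equations (\ref{rainbow1'''}) and (\ref{neighbor2'''}) are the generating-function forms of (\ref{rainbow1''}) and (\ref{neighbor2''}).

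For (\ref{F0-Horn}), I will use $G_2^{\mathrm{Horn}}(a,a';b,b';x,y)=\sum\frac{(a)_m(a')_n}{m!\,n!}(b)_{n-m}(b')_{m-n}x^my^n$ together with $(b)_{-k}=(-1)^k/(1-b)_k$. With $b=b'=1-2\beta$ this gives
\[
(b)_{n-m}(b')_{m-n}=(-1)^{|m-n|}\frac{(1-2\beta)_{|m-n|}}{(2\beta)_{|m-n|}} .
\]
The sign $(-1)^{|m-n|}$ cancels against $(-1)^{m+n}$ coming from $(-r_1)^m(-r_2)^n$, and the result is exactly (\ref{AB=@N}).

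For positivity on $(-1,1)^2$, I would first try to transform $G_2$, via a classical reduction (to be checked against Erdélyi's tables), into $(1+r_1)^{-\beta}(1+r_2)^{-\beta}$ times an Appell-type function of $\frac{r_1}{1+r_1},\frac{r_2}{1+r_2}$. That function should have an Euler double-integral representation with positive integrand for $|r_j|<1$. Failing that, I would use a Mellin--Barnes/Pochhammer-contour representation and deform it to a real positive integral.

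\textbf{Part (i).} Fix $n_1$. By (\ref{A0@N<*}), $|A^0_{(n_1,n_2)}|\lesssim_{n_1} n_2^{\beta-1}n_2^{1-4\beta}=n_2^{-3\beta}$, so the row sums converge absolutely when $\beta>1/3$. Now sum (\ref{rainbow1''}) with $j=1$ over $n_2\le N$. The right side telescopes to $B^0_{(n_1,N)}$, which is $O(N^{\beta}N^{1-4\beta})=O(N^{1-3\beta})\to0$. Hence
\[
f_+(n_1+1)A^\Sigma_{n_1+1}=f_-(n_1)A^\Sigma_{n_1},
\]
so $A^\Sigma_{n}$ is $A^\Sigma_0$ times the ${}_2F_1(\beta,1-\beta;2\beta)$ coefficients. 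Gauss's identity (\ref{Gauss}) then gives
\[
A^\Sigma_0={}_2F_1(\beta,1-2\beta;2\beta;1)=\frac{\Gamma(2\beta)\Gamma(3\beta-1)}{\Gamma(\beta)\Gamma(4\beta-1)},
\]
which is (\ref{A1@N}).

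For (\ref{AL}), I split into $n_1\le n_2$ and $n_1>n_2$ as in (\ref{sumA0L}). On $n_2<n_1$ the factor $(n_1-n_2+1)^{1-4\beta}$ makes the sum over $n_1$ at fixed $n_2$ converge, bounded by $\lesssim n_2^{\beta-1}n_2^{\beta-1}\cdot n_2^{\max(0,2-4\beta)}$-type powers, against which $L^{n_2}$ wins. The Weierstrass $M$-test then gives continuity on $[0,1]\times[0,1)$, and by symmetry on $[0,1]^2\setminus\{(1,1)\}$. The boundary values (\ref{F1@N}) follow from $F_0(x,1)=\sum_n A^\Sigma_n x^n$. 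Positivity on the two edges comes from (\ref{2F1>0}).

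\textbf{Part (ii), the main obstacle.} The candidate value is
\[
\sum_n A^\Sigma_n=A^\Sigma_0 f_\beta(1)=\frac{\Gamma(2\beta)^2\Gamma(2\beta-1)}{\Gamma(\beta)^2\Gamma(4\beta-1)},
\]
and this sum converges absolutely since $|A^\Sigma_n|\lesssim n^{-2\beta}$ with $\beta>1/2$. What must be shown is joint continuity at $(1,1)$. The plan is to prove
\[
\sup_{x,y\in[0,1]}\Bigl|\sum_{n_1\ge K}x^{n_1}\sum_{n_2}A^0_{(n_1,n_2)}y^{n_2}\Bigr|\to0 \quad\text{as } K\to\infty,
\]
after which the finitely many remaining rows are handled by part (i).

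For a row $n_1$, I write the inner sum as the part with $n_2\ge n_1$ plus a finite part. On $n_2\ge n_1+2\beta$, the ratio $A^0_{(n_1,n_2+1)}/A^0_{(n_1,n_2)}$ computed from (\ref{fact-Horn}) is eventually positive and $<1$. So the tail is single-signed and monotone, and Abel summation by parts bounds $\sup_y$ of the row tail by a constant times the first term, $|A^0_{(n_1,n_1+m)}|\approx n_1^{2\beta-2}m^{1-4\beta}$. The remaining difficulty is that a crude term-by-term bound reintroduces the non-summable diagonal mass $n_1^{2\beta-2}$. I expect to overcome this by grouping the rows in pairs along the anti-diagonal, or by comparing with $y^{n_1}A^\Sigma_{n_1}$-type expressions using the recursion (\ref{rainbow1''}) in telescoped form, so that the near-diagonal pieces cancel up to $O(n_1^{-2\beta})$.

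If this fails, the fallback is an ODE argument. I would restrict (\ref{rainbow1'''}) and (\ref{neighbor2'''}) to the segments $y=1-\epsilon$ and let $\epsilon\to0$, using the Appendix ODE result to control the approach to the corner. Positivity at $(1,1)$ is then immediate from the explicit Gamma value.
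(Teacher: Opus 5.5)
The gap is part (ii), which is the one genuinely new point of the proposition, and your proposal does not prove it. Absolute summability of $A^\Sigma_n=O(n^{-2\beta})$ only gives continuity of $x\mapsto F_0(x,1)$ up to $x=1$, i.e.\ along an edge. Joint continuity at $(1,1)$ depends on a cancellation that is invisible term by term. For $\beta>1/2$ the absolute mass of row $n_1$ is $\asymp n_1^{2\beta-2}$ and sits near the diagonal. There $A^0_{(n,n)}=((\beta)_n/n!)^2>0$, while the neighbouring off-diagonal entries have the sign of $(1-2\beta)_k$ (negative for $\beta\in(1/2,1)$). The signed row sum at $y=1$, by contrast, is only $O(n_1^{-2\beta})$. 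Your Abel-summation step bounds a row tail by its first term, which is exactly the non-summable $n_1^{2\beta-2}$, and the finite part $n_2<n_1$ is not controlled uniformly in $y$ at all. The remedies you list --- pairing rows along anti-diagonals, or comparing with $y^{n_1}A^\Sigma_{n_1}$ via the telescoped recursion --- are stated as expectations. Nothing shows that the near-diagonal cancellation persists uniformly as $(x,y)\to(1,1)$, which is precisely what must be proved. The ODE fallback does not help: the Appendix proposition requires coefficients analytic on $U\times V$, while $(1,1)$ is a singular point of the system (\ref{rainbow1'''}), (\ref{neighbor2'''}), so it gives no information at the corner.

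The paper sidesteps the series at the corner by using the Euler-type representation (\ref{F=int-K}), $F_0=\frac{\Gamma(2\beta)^2}{\Gamma(\beta)^4}\iint_{(0,1)^2}W(s)\Phi(s,r)\,ds$, with $W=(s_1s_2(1-s_1)(1-s_2))^{\beta-1}$ and $\Phi=(1-s_1r_1)^{2\beta-1}(1-s_2r_2)^{2\beta-1}(1-s_1s_2r_1r_2)^{1-4\beta}$. This gives positivity on $(-1,1)^2$ at once, and it reduces (ii) to showing $\iint W|\Phi(\cdot,r)-\Phi(\cdot,(1,1))|\,ds\to0$. Set $U=\max_j(1-s_j)$ and $T=\max_j(1-r_j)\le\delta$. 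On $\{U>\delta\}$ one has $W\Phi\le\delta^{1-4\beta}W$, and dominated convergence applies. On $\{U\le\delta\}$ the inequalities $1-s_jr_j\le T+U$, $1-s_1s_2r_1r_2\ge T\vee U$ and $2\beta>1$ give $\Phi\lesssim(U+T)^{-1}$. Since also $W\lesssim((1-s_1)(1-s_2))^{\beta-1}$ there, the pieces $\{T\le U\le\delta\}$ and $\{U<T\}$ contribute $\lesssim\int_T^\delta U^{2\beta-2}dU$ and $\lesssim T^{-1}\int_0^TU^{2\beta-1}dU$. Both are $O(\delta^{2\beta-1})$ uniformly in $r$, and this is where $\beta>1/2$ enters.

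You need a positive integral representation for the positivity claim anyway. The reduction you sketch does exist, but since $F_0$ is $G_2$ evaluated at $(-r_1,-r_2)$, it reads $F_0=(1-r_1)^{-\beta}(1-r_2)^{-\beta}F_2\bigl(4\beta-1;\beta,\beta;2\beta,2\beta;-\tfrac{r_1}{1-r_1},-\tfrac{r_2}{1-r_2}\bigr)$, and its Euler integral has a positive integrand on $(-1,1)^2$. The remaining parts are sound. In particular, your derivation of (\ref{A1@N}) by telescoping (\ref{rainbow1''}) in $n_2$, using $B^0_{(n_1,N)}=O_{n_1}(N^{1-3\beta})\to0$, needs Gauss's identity only for $A^\Sigma_0$, and is cleaner than the paper's two-case computation.
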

\begin{proof}
By (\ref{A0@N<*}) and (\ref{B0@N<*}), for any $L\in (0,1)$, $\sum |A^0_{\u n}| L^{|\u n|}<\infty$ and $\sum |B^0_{\u n}| L^{|\u n|}<\infty$,  which implies the local uniform convergence of  $\sum_{\u n\in\N_0^2} A^0_{\u n} \u r^{\u n}$ and $\sum_{\u n\in\N_0^2} B^0_{\u n} \u r^{\u n}$ in $(-1,1)^2$. Therefore the limit functions $F_0$ and $G_0$ are analytic. The symmetry of $F_0$ and $G_0$ follows from that of $A^0$ and $B^0$. The PDE (\ref{rainbow1'''}) and (\ref{neighbor2'''}) follow from (\ref{rainbow1''}) and (\ref{neighbor2''}).
Identity (\ref{F0-Horn}) follows from the definition of the Horn $G_2$ function (\cite{Horn}) and that \BGE A^0_{(n_1,n_2)}=\frac{(\beta)_{n_1} (\beta)_{n_2} (1-2\beta)_{n_1-n_2} (1-2\beta)_{n_2-n_1}}{(1)_{n_1}(1)_{n_2}(-1)^{n_1+n_2}},\label{Horn-formula}\EDE
where if $n=-m\in \Z_{<0}$ and $q\not\in [m]$,  $(q)_n:=(q-1)^{-1}(q-2)^{-1}\cdots (q-m)^{-1}$.
The positivity of $F_0$ on $(-1,1)^2$ follows from the integral representation of the Horn $G_2$ function (\cite[Formula (3.10)]{G2-integral}), which together with (\ref{F0-Horn}) implies
\BGE F_0(\u r)=\frac{\Gamma(2\beta)^2}{\Gamma(\beta)^4} \int\!\!\int_{(0,1)^2} K(\u s,\u r)d\u s,\label{F=int-K}\EDE
where
$$K (\u s, \u r):=\underbrace{(s_1s_2(1-s_1)(1-s_2))^{\beta-1}}_{=:W(\u s)} \underbrace{(1-s_1r_1)^{2\beta-1} (1-s_2r_2)^{2\beta-1} (1-s_1s_2r_1r_2)^{1-4\beta}}_{=:\Phi(\u s,\u r)} .$$


(i) Suppose $\beta>1/3$. We now prove (\ref{A1@N}). Consider two cases. Case 1: $n_1+1-2\beta\not\in\N$. In this case, $(1-2\beta)_{\pm n_1}\ne 0$, and by (\ref{Horn-formula}),
  $$\sum_{n_2=0}^\infty A^0_{(n_1,n_2)}= \frac{(\beta)_{n_1}(1-2\beta)_{n_1}(1-2\beta)_{-n_1}}{(1)_{n_1}(-1)^{n_1}} \sum_{n_2=0}^\infty \frac{(\beta)_{n_2}(1-2\beta+n_1)_{-n_2}(1-2\beta-n_1)_{n_2}}{(1)_{n_2} (-1)^{n_2}}$$
  $$=\frac{(\beta)_{n_1}(1-2\beta)_{n_1}}{(1)_{n_1}(2\beta)_{n_1}} \sum_{n_2=0}^\infty \frac{(\beta)_{n_2}(1-2\beta-n_1)_{n_2}}{(1)_{n_2} (2\beta-n_1)_{n_2}}.$$
  By (\ref{Stirling}), the summand in the last series is   $O_{n_1}(n_2^{-3\beta}) $. Since
\(
(2\beta-n_1)-\beta-(1-2\beta-n_1)=3\beta-1>0
\),
Gauss's identity applies and gives
$$A^{\Sigma}_{n_1}= \frac{(\beta)_{n_1}(1-2\beta)_{n_1}}{(1)_{n_1}(2\beta)_{n_1}} \cdot \frac{\Gamma(2\beta-n_1)\Gamma(3\beta-1)}{\Gamma(\beta-n_1)\Gamma(4\beta-1)}= \frac{(\beta)_{n_1}(1-\beta)_{n_1}}{(1)_{n_1}(2\beta)_{n_1}}  \cdot \frac{\Gamma(2\beta )\Gamma(3\beta-1)}{\Gamma( \beta )\Gamma(4\beta-1)},$$
  where we used $(-1)^n(1-x)_{n}=1/ (x)_{-n} =\Gamma(x)/\Gamma(x-n) $. So we get (\ref{A1@N}).

Case 2: $N:=n_1+1-2\beta\in\N$. Then $2\beta\in\N$ and $1-2\beta =N-n_1\in\Z_{\le 0}$, which implies that $A^0_{(n_1,n_2)}=0$ if $|n_2-n_1|\ge n_1-N+1$. Thus, $\sum_{n_2=0}^\infty A^0_{(n_1,n_2)}$ converges absolutely, and
$$A^{\Sigma}_{n_1}=\sum_{n_2=N}^\infty \frac{(\beta)_{n_1}(\beta)_{n_2}(1-2\beta)_{n_2-n_1} }{(1)_{n_1}(1)_{n_2} (2\beta)_{n_2-n_1}}$$
$$=\frac{(\beta)_{n_1}(\beta)_N(1-2\beta)_{N-n_1} }{(1)_{n_1}(1)_N (2\beta)_{N-n_1}} \sum_{n_2=N}^\infty  \frac{(\beta+N)_{n_2-N} (1-2\beta+N-n_1)_{n_2-N} }{(1+N)_{n-N_2} (2\beta-n_1+N)_{n-N_2} } $$
$$=\frac{(\beta)_{n_1}(\beta)_{N}(1-2\beta)_{1-2\beta} }{(1)_{n_1}(1)_N (2\beta)_{1-2\beta}} \sum_{m=0}^\infty  \frac{(\beta+N)_m (2-4\beta)_m }{(1+N)_m (1)_{m} } $$$$=\frac{(\beta)_{n_1}(\beta)_{N}(1-2\beta)_{1-2\beta} }{(1)_{n_1}(1)_N (2\beta)_{1-2\beta}} \cdot \frac{\Gamma(N+1)\Gamma(3\beta-1)}{\Gamma(n_1+2\beta)\Gamma(1-\beta)}$$
$$=(-1)^{2\beta-1}\cdot\frac{(\beta)_{n_1}(1-\beta)_{n_1}}{(1)_{n_1}(2\beta)_{n_1}}  \cdot \frac{\Gamma(2\beta )\Gamma(3\beta-1)}{\Gamma( \beta )\Gamma(4\beta-1)},$$
where we used Gauss's identity in the penultimate line, and the equalities $\Gamma(N+1)=(1)_N$, $\Gamma(\beta)(\beta)_N=\Gamma(\beta+N)=\Gamma(1-\beta)(1-\beta)_{n_1}$, $(1-2\beta)_{1-2\beta}=(-1)^{2\beta-1}\Gamma(2\beta)/\Gamma(4\beta-1)$, and $(2\beta)_{1-2\beta}=1/ \Gamma(2\beta)$ in the last line. Note that the last line differs from (\ref{A1@N}) by a factor $(-1)^{2\beta-1}$. If $2\beta$ is odd, then they are equal because the factor is $1$. If $2\beta$ is even, they are also equal because both equal $0$ as can be seen from the zero factor $(1-\beta)_{n_1}$ since $n_1\ge 2\beta$ and $\beta\in\N$. Thus, we obtain (\ref{A1@N}) in Case 2.

Now we prove (\ref{AL}). The equality in (\ref{AL}) holds by the symmetry of $A$. Set $$V_{n_1}:=\sum_{n_2=0}^\infty |A^0_{(n_1,n_2)}|,\quad n_1\in\N_0.$$ It suffices to show that $\sum_{n_1} V_{n_1} L^{n_1}<\infty$ for any $L\in (0,1)$. 
We write
$$V_{n_1}= \sum_{n_2=0}^{\lfloor n_1/2 \rfloor}  |A^0_{(n_1,n_2)}| +  \sum_{n_2=\lfloor n_1/2 \rfloor+1}^{2n_1}  |A^0_{(n_1,n_2)}| + \sum_{n_2=2n_1+1}^\infty |A^0_{(n_1,n_2)}| =: I_1+I_2+I_3.$$

We first estimate $I_3$. If $n_2\ge 2n_1$, then  $n_2-n_1\asymp n_2$, and so by (\ref{Stirling}) we get
\BGE |A^0_{(n_1,n_2)}|\lesssim  (n_1+1)^{\beta-1} (n_2+1)^{-3\beta}. \label{est3}\EDE
Since $-3\beta<-1$, from (\ref{est3}) we get  $I_3\lesssim   (n_1+1)^{-2\beta}$.

To estimate $I_1$, we note that when $n_1\ge 2n_2$, we have $n_1-n_2\asymp n_1$,  and so by (\ref{Stirling}),
\BGE |A^0_{(n_1,n_2)}| \lesssim (n_1+1)^{-3\beta} (n_2+1)^{\beta-1},\label{est1}\EDE
which implies that $I_1\lesssim   (n_1+1)^{-2\beta}$ since $\beta>0$.

To estimate $I_2$,  since $n_1\asymp n_2$ and $4\beta>1$, we get by (\ref{Stirling}),
\BGE  |A^0_{(n_1,n_2)}| \lesssim (n_1+1)^{2\beta-2}(|n_1-n_2|+1)^{1-4\beta}\le (n_1+1)^{2\beta-2}. \label{est2}\EDE
Since the middle range contains \(O(n_1+1)\) terms, (\ref{est2}) gives
\(I_2\lesssim (n_1+1)^{2\beta-1}\).

Combining the estimates for $I_1,I_2,I_3$, we get $V_{n_1}\lesssim (n_1+1)^{2\beta-1}$. Thus, $\sum_{n_1} V_{n_1} L^{n_1}<\infty$ if $0<L<1$. This shows (\ref{AL}).

By (\ref{AL}) and Weierstrass M-test, for any $L\in(0,1)$,   $F_0$ is continuous on $[0,L]\times [0,1]$ and $[0,1]\times [0, L]$. Since this holds for any $L\in (0,1)$, $F_0$ is continuous on $[0,1]^2\sem \{(1,1)\}$. The continuity  allows us to evaluate the boundary value $F_0(x,1)$, and (\ref{A1@N}) gives (\ref{F1@N}). The positivity of $F_0$ on $[0,1)\times\{1\}$ and $\{1\}\times [0,1)$  follows from (\ref{2F1>0}).

(ii) Suppose $\beta>1/2$. We now prove that $F_0 $ further extends continuously to $\u 1:= (1,1)$.
Once this is established, we may let \(x\uparrow1\) in (\ref{F1@N}); Gauss's
identity then gives   (\ref{F11@N}), which implies that $F_0(\u 1)>0$, and so $F_0$ is positive on $[0,1]^2$.

To prove the convergence of $F_0(\u r)$ as $\u r\to \u 1$, it suffices to show
\BGE \int\!\!\int_{(0,1)^2}W(\u s) |\Phi(\u s,\u r)-\Phi(\u s, \u 1)|d\u s\to 0,\quad\text{as }\u r\to \u 1.\label{WPhi}\EDE

Let $u_j=u_j(s_j)=1-s_j$ and  $t_j=t_j(r_j)=1-r_j$ for $j\in [2]$, $U=U(\u s)=u_1\vee u_2$, and $T=T(\u r)=t_1\vee t_2$.
Then \BGE 1-s_j \le U,\quad 1-r_j \le T,\quad  1-s_j r_j\le  T+U,\quad 1-s_1s_2r_1r_2\ge T\vee U. \label{sx+ssxx}\EDE

Fix $\delta\in (0,1/2)$. Suppose $\u r\in [0,1)^2$ is sufficiently close to $\u 1$ so that $T(\u r)\le \delta$.
Decompose $(0,1)^2$ into
\BGE \Omega_\delta^{\text L}:=U^{-1}((\delta,1)),\quad  \Omega^{\text M}_{\delta,\u r}:=U^{-1}([T(\u r),\delta]),\quad \Omega^{\text S}_{\u r}:=U^{-1}((0,T(\u r))).\label{OmegaLMS}\EDE For $\u s\in \Omega^{\text L}_\delta$,  by (\ref{sx+ssxx}) and that $U(\u s)\ge \delta$, and $2\beta>1$, we get $W(\u s) \Phi(\u s,\u r)\le \delta^{1-4\beta}W(\u s)$ for any $\u r\in [0,1)^2$. Since $W$ is integrable on $(0,1)^2$,  the Dominated  Convergence Theorem gives
\BGE \lim_{\u r\to \u 1} \int\!\!\int_{\Omega^{\text L}_\delta} W(\u s) |\Phi(\u s,\u r)-\Phi(\u s, \u 1)|d\u s= 0.\label{integral-A*}\EDE

For $\u s\in \Omega^{\text M}_{\delta,\u r}\cup\Omega^{\text S}_{\u r}$, since $U(\u s)=(1-s_1)\vee (1-s_2)\le \delta\le 1/2$, we have $1/2\le s_j\le 1$, $j=1,2$, and so
$ W(\u s)\lesssim  (u_1(s_1)u_2(s_2))^{\beta-1}$. By a change of variables, this estimate implies that,  for any   interval $I\subset (0,\delta]$ with endpoints $a<b$, and $\gamma\in\R$,
\BGE \int\!\!\int_{U^{-1}(I)} W(\u s) U(\u s)^\gamma d\u s \lesssim 2\int_a^b U^{\gamma+\beta-1} \int_0^U u^{\beta-1} du  \lesssim \int_I U^{\gamma+2\beta-1} dU .\label{dT}\EDE

If $\u s\in \Omega^{\text M}_{\delta,\u r}\cup \Omega^{\text S}_{\u r}$, by  (\ref{sx+ssxx}) and $2\beta>1$,
$\Phi(\u s, \u r)\lesssim (U+T)^{-1}$, which together with (\ref{dT}) implies that, for any $\u r\in [0,1]^2$ with $T(\u r)\le \delta$,
$$  \int\!\!\int_{\Omega^{\text M}_{\delta,\u r}}  W(\u s) \Phi(\u s,\u r)d\u s \lesssim    \int_{T(\u r) }^{\delta} U ^{2\beta-2} dU\lesssim {\delta^{2\beta-1}} ;$$ 
$$ \int\!\!\int_{\Omega^{\text S}_{\u r}}  W(\u s) \Phi(\u s,\u r) d\u s \lesssim     T(\u r)^{-1} \int_0^{T(\u r) } U^{2\beta-1}dU \lesssim T(\u r)^{2\beta-1} \le  {\delta^{2\beta-1}} .$$ 
The same estimates hold for \(\u r=\u 1\), with \(T(\u 1)=0\) and
\(\Omega_{\u 1}^S=\varnothing\).
Combining the above estimates  with (\ref{integral-A*}), we obtain
$$\limsup_{\u r\to \u 1}\int\!\!\int_{(0,1)^2} W(\u s) |\Phi(\u s,\u r)-\Phi(\u s, \u 1)|d\u s\lesssim \delta^{2\beta-1}.$$
Since $\delta\in (0,1/2)$ is arbitrary and $2\beta>1$, the LHS of the above formula equals $0$. So we get (\ref{WPhi}), as desired.
\end{proof}

\begin{Remark}
In the above proof, we should not expect $\sum_{\u n\in\N_0^2}|A^0_{\u n}|<\infty$. On the contrary, by (\ref{AB=@N}) and (\ref{Stirling}),  we get $A^0_{(n,n)}\asymp n^{2\beta-2}$, which implies  $\sum_{n=1}^\infty |A^0_{(n,n)}|=\infty$ for $\beta\ge 1/2$. \label{Rem:comp}
\end{Remark}

\subsection{Degenerate partition functions and boundary Green's functions} \label{section-degenerate}
In this subsection, we discuss how the degenerate partition functions studied above are
heuristically related to boundary Green's functions for SLE.

Suppose $\eta$ is an SLE$_\kappa$-type curve in a domain $D$, and $\xi$ is a point on the boundary or in the bulk of $D$. The Green's function of $\eta$ at $\xi$ is the renormalized probability that $\eta$ comes within distance $r$ of $\xi$ as $r\to 0$.  Specifically, if, for some $\delta>0$, the limit
$$\lim_{r\to 0^+} r^{-\delta} \PP[\dist(\xi,\eta)<r]$$
exists and lies in $(0,\infty)$, then the limit is called the Green's function of $\eta$ at $\xi$, and $\delta$ is called the scale exponent. 

For $\kappa \in (0,8)$, the Green's function of a single chordal SLE$_\kappa$ is known to exist at both bulk and boundary points lying on analytic boundary arcs, with corresponding scale exponents of $1-\frac{\kappa}{8}$ \cite{LR} and $\frac{8}{\kappa}-1$ \cite{Mink-real}, respectively, and the explicit formulas  are known in both cases.  For $\kappa\in (0,4]$, an explicit formula for the Green's function at a bulk point of an SLE$_\kappa(2)$ curve is derived in \cite{LV}. Here, an SLE$_\kappa(2)$ curve is one of the two components in a degenerate $2$-SLE$_\kappa$ configuration, where the two curves share a common endpoint. The explicit formulas of two-curve Green's functions of $2$-SLE$_\kappa$ were derived for both bulk points (\cite{2-curve-Green-interior}) and boundary points (\cite{2-curve-Green-boundary}), where the two-curve Green's function of a $2$-SLE$_\kappa$  at $\xi$ requires that both curves get close to $\xi$.

Here we give a heuristic discussion of the boundary Green's function for one curve in a
non-degenerate $2$-SLE$_\kappa$.  The martingale argument below is therefore informal; however, once the candidate PDE is obtained, the verification that our proposed formula satisfies it is a direct computation. Let $\kappa\in (0,8)$. Suppose $\eta_1$ and $\eta_2$ form a $2$-SLE$_\kappa$ in $\HH$ with link pattern $\alpha$ and marked points $x_1<x_2<x_3<x_4$. Let $\xi\in \R\sem \{x_1,x_2,x_3,x_4\}$ be such that $\eta_1$ does not disconnect it from $\eta_2$ in $\HH$. We will study the boundary Green's function of $\eta_2$ at $\xi$. By a M\"obius transformation, after relabeling if necessary, we may assume that $\alpha=\{\{1,2\},\{3,4\}\}$, i.e., $\eta_j$ joins $x_{2j-1}$ to $x_{2j}$, $j=1,2$, and $\xi\in (x_2,x_3)\cup (x_3,x_4)\cup (x_4,\infty)$.

It is known that the marginal law of $\eta_{2}$ is that of an hypergeometric SLE$_\kappa$ (hSLE$_\kappa$, for short) curve in $\HH$ from $x_{3}$ to $x_{4}$ with force points $x_{1}$ and $x_{2}$ (\cite{Wu-hyper}; this particular hSLE family appeared earlier in \cite{reversal-kappa-rho}  under the name ``intermediate SLE$_\kappa(\rho)$'').  We orient $\eta_{2}$ from $x_{3}$ to $x_{4}$, truncate it at the first time $T$ that the curve disconnects $x_{4}$ from $\infty$, and parameterize the truncated curve, denoted by $\til \eta$, by the half-plane capacity so that $\til\eta$ becomes a chordal Loewner curve. Let $W_t$, $g_t$, and $K_t$, $0\le t<T$, be the corresponding driving function, maps, and hulls, respectively. Then for some Brownian motion $B$, $W$ satisfies the SDE:
\BGE dW_t=\sqrt\kappa dB_t+\sum_{j=1}^2 \frac{2(-1)^j dt}{W_t-g_t(x_j)} +\frac{(\kappa-6)dt}{W_t-g_t(x_4)}
+\Lambda(R_t) \Big(\frac {1}{W_t-g_t(x_2)}-\frac {1}{W_t-g_t(x_1)}\Big),\label{dWt}\EDE
with initial value $W_0=x_{3}$, where $\Lambda(z):=\kappa z\frac{f_\beta'(z)}{f_\beta(z)}$, $R_t:=\frac{(W_t-g_t(x_2))(g_t(x_4)-g_t(x_1))}{(W_t-g_t(x_1))(g_t(x_4)-g_t(x_2))}$, and   $f_\beta$ is as in (\ref{2F1>0}) with $\beta=4/\kappa$.

Let $\xi\in (x_2,x_3)\cup (x_3,x_4)\cup (x_4,\infty)$. We assume that the Green's function of $\eta_2$ at $\xi$ exists with scale exponent $\frac8\kappa -1$, the exponent of the boundary Green's function for a single chordal SLE$_\kappa$. Let $\mathcal{G}(x_1,x_2,x_3,x_4,\xi)$ denote this Green's function. Let $\eps>0$ be small. Then
$$\PP[\eta_{2}\text{ hits }B(\xi,\eps)]\approx \mathcal{G}(x_1,x_2,x_3,x_4,\xi) \eps^{\frac 8\kappa -1}.$$
Let $\F=(\F_t)_{t\ge 0}$ be the filtration generated by $B$. Let $\tau$ be an $\F$-stopping time with $\tau<T$. Conditionally on $\F_\tau$, by the Markov property of hSLE$_\kappa$, the part of $\eta_{2}$ after $\tau$, i.e., $\eta_{2}(\tau+\cdot)$ is an hSLE$_\kappa$ curve in $\HH\sem K_\tau$ from $\eta_{2}(\tau)$ to $x_{4}$ with force points $x_{1}$ and $x_{2}$. After the conformal map $g_\tau$, the conditional law of $g_\tau(\eta_{2 }(\tau+\cdot))$ then equals that of an hSLE$_\kappa$ curve in $\HH $ from $W_\tau$ to $g_\tau(x_{4})$ with force points $g_\tau(x_{ 1})$ and $g_\tau(x_{2})$. Thus, conditionally on $\F_\tau$, the probability that $\eta_{2}$ hits $B(\xi,\eps)$ is the probability that the above hSLE$_\kappa$ curve hits $g_\tau(B(\xi,\eps))$, which is approximated by the disk $B(g_\tau(\xi),g_\tau'(\xi)\eps)$.
Thus,
$$\PP[\eta_{2 }\text{ hits }B(\xi,\eps)|\F_\tau]\approx \mathcal{G}(g_\tau(x_{1}),g_\tau(x_{2}),W_\tau,g_\tau(x_{4}),g_\tau(\xi)) (g_\tau'(\xi) \eps)^{\frac 8\kappa -1}.$$
So it is reasonable to expect that the process
\BGE M_t:=g_t'(\xi)^{\frac 8\kappa -1}\mathcal{G}(g_t(x_{ 1}),g_t(x_{2}),W_t,g_t(x_{4}),g_t(\xi)),\quad 0\le t<T,\label{M-mtgl}\EDE
is a local martingale.

 We further assume that \(\mathcal G\) is smooth, as is the case in the known
explicit examples of SLE Green's functions.
It\^o's formula shows that the local-martingale condition for \(M\) leads to
the PDE
$$\Big[\frac\kappa 2\pa_{x_3}^2+\sum_{j\in [4]\sem \{3\}} \frac{2\pa_{x_j}}{x_j-x_3}+  \frac{2\pa_\xi}{\xi -x_3}+ \frac{2\pa_{x_3}}{x_3-x_2}-\frac{2\pa_{x_3}}{x_3-x_1}+\frac{(\kappa-6)\pa_{x_3}}{x_3-x_4} $$
\BGE +\Big(
\frac{1}{x_3-x_2}
-
\frac{1}{x_3-x_1}
\Big)\Lambda\Big(\frac{(x_3-x_2)(x_4-x_1)}{(x_3-x_1)(x_4-x_2)}\Big)\pa_{x_3}+\frac{-2(\frac 8\kappa -1)}{(\xi-x_3)^2}\Big]\mathcal{G} =0. \label{Green-PDE}\EDE
Here we used $\frac d{dt} g_t'(\xi)=\frac{-2g_t'(\xi)}{(g_t(\xi)-W_t)^2}$.

We now derive a candidate solution of (\ref{Green-PDE}) heuristically. The intuition is that, if we condition $\eta_2$ to pass through $\xi$, then we get two curves connecting $\xi$ with $x_3$ and $x_4$, which together with $\eta_1$ form a degenerate $3$-SLE$_\kappa$ with link pattern $\{\{x_1,x_2\},\{x_3,\xi\},\{\xi,x_4\}\}$. The word ``degenerate'' means that two curves share a common endpoint. We may also obtain a degenerate $3$-SLE$_\kappa$ by merging two endpoints of a non-degenerate $3$-SLE$_\kappa$.

\begin{Remark}
Here  we use a mild abuse of notation for link patterns. Strictly speaking, a link
pattern is a pairing of $[2N]$ together with an ordering of the marked boundary points. When
we write expressions such as
\(
\{\{x_1,x_2\},\{x_3,\xi'\},\{\xi'',x_4\}\},
\)
we mean the pairing of the displayed marked boundary points, with the understanding that
the displayed marked points are relabeled increasingly along \(\mathbb R\)
to obtain the actual labels in \([2N]\). Likewise, an
expression such as
\(
\{\{x_1,x_2\},\{x_3,\xi\},\{\xi,x_4\}\}
\)
is only a shorthand for the corresponding degenerate limit in which two adjacent marked points
coalesce to $\xi$.
\end{Remark}

We define the pure partition function for the above degenerate $3$-SLE$_\kappa$ by
$${\mathcal Z}_3(x_1,x_2,x_3,x_4,\xi):=|(x_2-x_1)(x_3-\xi )(x_4-\xi )|^{\frac{\kappa-6}\kappa} ( C_{\{x_1,x_2\},\{\xi ,x_3\}} C_{\{x_1,x_2\},\{\xi ,x_4\}}  )^{\frac 2\kappa} $$ \BGE \cdot \frac{(x_4-x_3)^{\frac 2\kappa}}{|x_4-\xi|^{\frac 2\kappa}|x_3-\xi|^{\frac 2\kappa}}  \cdot F_0(C_{\{x_1,x_2\},\{\xi,x_3\}},C_{\{x_1,x_2\},\{\xi,x_4\}}),\label{Z3*}\EDE
where $F_0$ is as in Proposition \ref{Appell} (for the rainbow patterns) if $\xi\in (x_2,x_3)\cup (x_4,\infty)$ or in Proposition \ref{Horn} (for the neighbor patterns) if $\xi\in (x_3,x_4)$.


Formally, the formula for \(\mathcal Z_3\) is obtained by splitting the
degenerate point \(\xi\) into two nearby ordered points \(\xi',\xi''\), chosen
on the two sides of \(\xi\) so that the resulting non-degenerate link pattern
is the corresponding rainbow or neighbor pattern, applying (\ref{Z3}), dividing
by \(|\xi''-\xi'|^{2/\kappa}\), and then letting \(\xi',\xi''\to\xi\).

Let ${\mathcal Z}_2$ denote the unnormalized  pure partition function for $2$-SLE$_\kappa$ with link pattern $\{\{x_1,x_2\},\{x_3,x_4\}\}$. We use the formula \BGE {\mathcal Z}_2(x_1,x_2,x_3,x_4)=((x_2-x_1) (x_4-x_3))^{\frac{\kappa-6}\kappa} \Big(\frac{(x_3-x_2)(x_4-x_1)}{(x_3-x_1)(x_4-x_2)}\Big)^{\frac 2\kappa} f_\beta \Big(\frac{(x_3-x_2)(x_4-x_1)}{(x_3-x_1)(x_4-x_2)}\Big).\label{Z2Green}\EDE
We now check formally that, for any $C\in (0,\infty)$,
\BGE \mathcal{G}(x_1,x_2,x_3,x_4,\xi):=C\cdot \frac{{\mathcal Z}_3(x_1,x_2,x_3,x_4,\xi)}{{\mathcal Z}_2(x_1,x_2,x_3,x_4)}\label{GZZ}\EDE
satisfies (\ref{Green-PDE}).

Since (\ref{Green-PDE}) is a linear PDE, we may assume $C=1$. We work on three cases separately. Case 1. $\xi\in (x_4,\infty)$. In this case, $F_0$ is as in Proposition \ref{Appell}, $C_{\{x_1,x_2\},\{\xi ,x_3\}}=\frac{(x_3-x_2)(\xi-x_1)}{(x_3-x_1)(\xi-x_2)}$, and $C_{\{x_1,x_2\},\{\xi ,x_4\}}=\frac{(x_4-x_2)(\xi-x_1)}{(x_4-x_1)(\xi-x_2)}$. So we have
\BGE \mathcal{G}=\Big(\frac{(\xi-x_3)(\xi-x_4)}{x_4-x_3}\Big)^{\frac{\kappa-8}\kappa} \Big(\frac{(x_4-x_2)(\xi-x_1)}{(x_4-x_1)(\xi-x_2)}\Big)^{\frac 4\kappa}\cdot \frac{F_{0}\Big( \frac{(x_3-x_2)(\xi-x_1)}{(x_3-x_1)(\xi-x_2)}, \frac{(x_4-x_2)(\xi-x_1)}{(x_4-x_1)(\xi-x_2)}\Big)}{f_\beta\Big(\frac{(x_3-x_2)(x_4-x_1)}{(x_3-x_1)(x_4-x_2)}\Big)}.
\label{G=F/F}\EDE

Since $F_0$  satisfies (\ref{rainbow1'''}) and (\ref{rainbow2'''}), by eliminating $G_0$, we get
 $$(r_1-r_2) r_1(1-r_1)\pa_{r_1}^2 F_0+[(r_1-r_2)(2\beta-2r_1)+\beta r_1(1-r_2)]\pa_{r_1} F_0$$ \BGE -\beta r_2(1-r_2) \pa_{r_2} F_0-\beta(1-\beta)(r_1-r_2) F_0=0.\label{FG3@R}\EDE
A direct computation, using (\ref{FG3@R}) and $\L_r f_\beta(r)=0$, where $\L_r$ is as in (\ref{hyperF21}),  shows that the $\mathcal{G}$ in (\ref{G=F/F}) satisfies (\ref{Green-PDE}).

Case 2. $\xi\in (x_2,x_3)$. Then  $F_0$ is as in Proposition \ref{Appell},  $C_{\{x_1,x_2\},\{\xi ,x_3\}}=\frac{(x_3-x_1)(\xi-x_2)}{(x_3-x_2)(\xi-x_1)}$, and $C_{\{x_1,x_2\},\{\xi ,x_4\}}=\frac{(x_4-x_1)(\xi-x_2)}{(x_4-x_2)(\xi-x_1)}$. So we have
\BGE \mathcal{G}=\Big(\frac{(x_3-\xi)(x_4-\xi)}{x_4-x_3}\Big)^{\frac{\kappa-8}\kappa} \Big(\frac{(x_3-x_1)(\xi-x_2)}{(x_3-x_2)(\xi-x_1)}\Big)^{\frac 4\kappa}\cdot \frac{F_{0}\Big( \frac{(x_3-x_1)(\xi-x_2)}{(x_3-x_2)(\xi-x_1)}, \frac{(x_4-x_1)(\xi-x_2)}{(x_4-x_2)(\xi-x_1)}\Big)}{f_\beta\Big(\frac{(x_3-x_2)(x_4-x_1)}{(x_3-x_1)(x_4-x_2)}\Big)}.
\label{G=F/F*}\EDE
Then we may also use (\ref{FG3@R}) and $\L_r f_\beta(r)=0$ to show that  the $\mathcal{G}$ in (\ref{G=F/F*}) satisfies (\ref{Green-PDE}).

Case 3. $\xi\in (x_3,x_4)$. Then $F_0$ is as in Proposition \ref{Horn}, $C_{\{x_1,x_2\},\{\xi ,x_3\}}=\frac{(x_3-x_2)(\xi-x_1)}{(x_3-x_1)(\xi-x_2)}$, and $C_{\{x_1,x_2\},\{\xi ,x_4\}}=\frac{(x_4-x_1)(\xi-x_2)}{(x_4-x_2)(\xi-x_1)}$.
So we get
\BGE \mathcal{G}=\Big(\frac{(\xi-x_3)(x_4-\xi)}{x_4-x_3}\Big)^{\frac{\kappa-8}\kappa}  \cdot \frac{F_{0}\Big( \frac{(x_3-x_2)(\xi-x_1)}{(x_3-x_1)(\xi-x_2)}, \frac{(x_4-x_1)(\xi-x_2)}{(x_4-x_2)(\xi-x_1)}\Big)}{f_\beta\Big(\frac{(x_3-x_2)(x_4-x_1)}{(x_3-x_1)(x_4-x_2)}\Big)}.
\label{G=F/F**}\EDE

Since $F_0$  satisfies (\ref{rainbow1'''}) and (\ref{neighbor2'''}),  by eliminating $G_0$, we get
 $$( 1-r_1r_2) r_1(1-r_1)\pa_{r_1}^2 F_0+[( 1-r_1r_2)(2\beta-2r_1)+\beta r_1(1-r_2)]\pa_{r_1} F_0$$ \BGE +\beta r_2(1-r_2) \pa_{r_2} F_0+[\beta^2(1-r_2)-\beta(1-\beta) (1-r_1r_2)] F_0=0.\label{FG3@N}\EDE
 Then we may  use (\ref{FG3@N}) and (\ref{hyperF21}) to show that  the $\mathcal{G}$ in (\ref{G=F/F**}) satisfies (\ref{Green-PDE}).

Heuristically,  (\ref{GZZ}) should give the boundary Green's function for $\eta_2$ at $\xi$. The constant $C$ in (\ref{GZZ}) is closely related to the unknown constant $\hat c$ in \cite{Mink-real}. Theorem 1 of \cite{Mink-real} together with a M\"obius transformation implies  that, if $\eta_2$ is a chordal SLE$_\kappa$ in $\HH$ from $x_3$ to $x_4$, then the boundary Green's function of $\eta_2$ at $\xi$ equals
$$\hat c \Big(\frac{|\xi-x_3||\xi-x_4|}{|x_4-x_3|}\Big)^{\frac{\kappa-8}\kappa} .$$

Intuitively, when $x_2\to x_1$, the curve $\eta_2$ in the  $2$-SLE$_\kappa$   with link pattern $\{\{x_1,x_2\},\{x_3,x_4\}\}$ that joins $x_3$ to $x_4$ tends to a chordal SLE$_\kappa$ in $\HH$ from $x_3$ to $x_4$. We also observe that, when $x_2\to x_1$,  the functions in (\ref{G=F/F}), (\ref{G=F/F*}), and (\ref{G=F/F**}) all converge to
$$  \Big(\frac{|\xi-x_3||\xi-x_4|}{|x_4-x_3|}\Big)^{\frac{\kappa-8}\kappa} \cdot \frac{F_0(1,1)}{f_\beta(1)} .$$
Thus, it is reasonable to expect that the $C$ in (\ref{GZZ}) equals $\hat c \cdot \frac{f_\beta(1)}{F_0(1,1)}$. Recall $f_\beta(1)=\frac{\Gamma(2\beta)\Gamma(2\beta-1)}{\Gamma(\beta)\Gamma(3\beta-1)}$. In Cases 1 and 2, corresponding to the rainbow degeneration, $F_0(1,1) =\frac{\Gamma(3\beta)\Gamma(2\beta-1)}{\Gamma(\beta)\Gamma(4\beta-1)}$ by (\ref{F11@R}). In Case 3, corresponding to the neighbor degeneration, $F_0(1,1) = \frac{\Gamma(2\beta)^2\Gamma(2\beta-1)}{\Gamma(\beta)^2 \Gamma(4\beta-1)}$ by (\ref{F11@N}).

 \begin{Remark}
The same degeneration heuristic also suggests a formula for the ordered two-point boundary
Green's function of a chordal SLE$_\kappa$ in $\mathbb H$ from $0$ to $\infty$, namely for the
event that the curve approaches two marked boundary points $\xi_1$ and $\xi_2$ in the prescribed
order $\xi_1$ then $\xi_2$. More generally, ordered boundary visit amplitudes (or boundary zig-zags)  go back to Bauer and Bernard \cite{BB04} and were studied
systematically in \cite{JJK16}, where multi-point boundary Green's functions were expressed by
Coulomb gas integrals.

When $\xi_1$ and $\xi_2$ lie on the same side of $0$, the existence of the corresponding
two-point boundary Green's function was proved by Lawler in \cite{Mink-real}, while an explicit
formula was obtained by Schramm and Zhou in \cite{SZ}; see also \cite{JJK16}. Our
computation of the pure partition function for the degenerate $3$-SLE$_\kappa$ with neighbor
pattern $\{\{0,\xi_1\},\{\xi_1,\xi_2\},\{\xi_2,\infty\}\}$ recovers this formula up to a
multiplicative constant.

When $\xi_1$ and $\xi_2$ lie on opposite sides of $0$, the existence of the corresponding
ordered boundary Green's function is known from \cite{Rami}. Our computation of the pure
partition function for the degenerate $3$-SLE$_\kappa$ with rainbow pattern
$\{\{0,\xi_1\},\{\xi_1,\xi_2\},\{\xi_2,\infty\}\}$ suggests that the ordered Green's function
should be given by
\[
\hat c^2 |\xi_1|^{\frac{\kappa-8}{\kappa}} |\xi_2|^{\frac{4}{\kappa}}
|\xi_2-\xi_1|^{\frac{\kappa-12}{\kappa}}
\frac{\Gamma(2\beta)\Gamma(4\beta-1)}{\Gamma(3\beta)\Gamma(3\beta-1)^2}
\,{}_2F_1\!\left(1-\beta,\beta;3\beta;\frac{\xi_2}{\xi_2-\xi_1}\right),
\]
where $\hat c>0$ is the one-point normalizing constant from \cite{Mink-real}. In contrast to the general
integral formulas of \cite{JJK16}, this expression is comparatively simple.
\end{Remark}

\section{Analytic Solutions} \label{section:analytic}
In this section, we return from the two-dimensional degenerations to the full three-variable problem. For the rainbow patterns, the coefficient estimates yield direct convergence and boundary regularity; for the neighbor patterns, we first obtain a local analytic solution near the origin and then use Pfaff systems to continue it to a larger domain. Recall the $f_\beta$ and $\L_\cdot$ defined respectively in (\ref{2F1>0}) and (\ref{hyperF21}).

\subsection{The rainbow case}
Let $A_{\u n}$, $\u n\in\N_0^3$, be the coefficient array from Theorem \ref{Theorem-AB@R}.
Define for $\u r\in\N_0^3$,
$$a_{\u n}:=\max\{n_1,n_2,n_3\},\quad b_{\u n}:=\med\{n_1,n_2,n_3\},\quad c_{\u n}:=\min\{n_1,n_2,n_3\},$$
$$N_{\u n}:=|\u n|=n_1+n_2+n_3,\quad A^{(-1)}_{\u n}:=A_{\u n-e_j},\quad A^{(-2)}_{\u n}:=A_{\u n-e_j-e_k},$$
where $(j,k,{\ell})$ is a permutation of $\{1,2,3\}$ such that $n_j\ge n_{\ell}\ge n_k$. When there are ties, the value of $A^{(-1)}_{\u n}$ and $A^{(-2)}_{\u n}$ do not depend on the choice of  $(j,k,{\ell})$ since $A$ is symmetric. Recall the zero-extension convention: if $\u n-e_j\not\in\N_0^3$, then $A^{(-1)}_{\u n}=0$.

Replacing the $\u n$ by $\u n-e_j$ in (\ref{+-@R}), we get
   $$ (P^jT A)_{\u n}=((P^k+Q^j)M^k A)_{\u n-e_j}+(Q^kM^{\ell} A)_{\u n-e_j-e_k}.$$
  Using (\ref{PQ}) and (\ref{DT@R}) to expand the above formula, we get
\BGE A_{\u n}=w^{(1)}_{\u n} A^{(-1)}_{\u n}+w^{(2)}_{\u n} A^{(-2)}_{\u n}=w^{(1)}_{\u n} A _{\u n-e_j}+w^{(2)}_{\u n} A^{ }_{\u n-e_j-e_k},\label{AAA**}\EDE
where
\BGE w_{\u n}^{(1)}:=\frac{(a_{\u n}+c_{\u n}+\beta-1)(a_{\u n}+b_{\u n}-c_{\u n}-\beta)}{a_{\u n}(N_{\u n}+3\beta-1)},\quad w_{\u n}^{(2)}:=\frac{(c_{\u n}+\beta-1)(a_{\u n}-b_{\u n}+c_{\u n}-\beta-1)}{a_{\u n}(N_{\u n}+3\beta-1)}.\label{w1w2**}\EDE
Let $\delta=1-w^{(1)}-w^{(2)}$.  A direct calculation shows that
  \BGE  \delta =\frac{(a+c+\beta-1)(2\beta+1)}{a(N+3\beta-1)}.\label{del*}\EDE
If $N_{\u n}>6\beta+1$ and $c_{\u n}\ge 1$, then since $a_{\u n}\ge N_{\u n}/3$, we get $w^{(1)}_{\u n}>0$ and $w^{(2)}_{\u n} $ has the same sign as $a_{\u n}-b_{\u n}+c_{\u n}-\beta-1$.

\begin{Lemma}
(i) For any   $p\in (0,3\beta+1-(\beta\vee 1))$, there exists $ C_p>0$ such that $|A_{\u n}|\le C_p(N_{\u n}+1)^{-p}$  for all $\u n\in\N_0^3$.

(ii) If $\beta\in (1/2,1)$, then for any $s\in (1, (\beta+1/2)\wedge (2-\beta))$, there exists $ C_s>0$ such that $|A_{\u n}|\le C_s(a_{\u n}+1)^{-s}(b_{\u n}+1)^{-1}(c_{\u n}+1)^{-s}$  for all $\u n\in\N_0^3$.
\label{N-p}
\end{Lemma}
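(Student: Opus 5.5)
Both parts will be proved by induction on $N=|\u n|$ using the two-term recursion (\ref{AAA**}), $A_{\u n}=w^{(1)}_{\u n}A_{\u n-e_j}+w^{(2)}_{\u n}A_{\u n-e_j-e_k}$. Here $j$ indexes the largest coordinate and $k$ the smallest, so both predecessors have strictly smaller $|\cdot|$. Small $N$ (say $N\le N_0$ with $N_0$ large depending on $\beta,p$) and the faces $c_{\u n}=0$ form the base. On faces, (\ref{AB=@R}) and (\ref{Stirling}) give $|A_{(n_1,n_2,0)}|\lesssim (n_1\wedge n_2+1)^{\beta-1}(n_1\vee n_2+1)^{-3\beta}$, as in (\ref{A0@R<*}). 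This face bound already yields decay $(N+1)^{-p}$ for every $p<3\beta+1-(\beta\vee1)$: the worst case is $n_1\wedge n_2\asymp N$, giving $N^{-2\beta-1}$ when $\beta\le1$, and $n_1\wedge n_2=0$ otherwise.

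\textbf{Part (i).} The ansatz is $|A_{\u n}|\le C(N+1)^{-p}$. Inserting it into (\ref{AAA**}) gives
\[
|A_{\u n}|\le C\big(|w^{(1)}|N^{-p}+|w^{(2)}|(N-1)^{-p}\big).
\]
When $c\ge1$ and $N$ is large, $w^{(1)}>0$. If $w^{(2)}\ge0$, the identity $w^{(1)}+w^{(2)}=1-\delta$ from (\ref{del*}) gives the bound $\lesssim(1-\delta+p/N)N^{-p}$. Since $a\le N$, we have $\delta\ge(a+c+\beta-1)(2\beta+1)/(N(N+3\beta-1))\cdot N/a$, and it remains to check that $\delta N\ge p+\epsilon$ whenever $p<3\beta+1-(\beta\vee1)$. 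This is delicate: $\delta\approx(2\beta+1)(a+c)/(aN)$, so $\delta N\approx(2\beta+1)(1+c/a)$, which can be as small as $2\beta+1$ when $c\ll a$. So the plain ansatz only closes for $p<2\beta+1$, which equals the target when $\beta\le1$. When $\beta>1$ the target is $p<2\beta+1$ as well, since $3\beta+1-\beta=2\beta+1$. The target is therefore exactly $2\beta+1$ in both cases, and the ansatz is consistent. When $w^{(2)}<0$, i.e. $a-b+c<\beta+1$, both weights are bounded in absolute value and $|w^{(1)}|+|w^{(2)}|=1-\delta+2|w^{(2)}|$ with $|w^{(2)}|=O(c/(aN))\cdot O(1)$. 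I would handle this through the more precise bookkeeping $|w^{(1)}|+|w^{(2)}|\le1-(2\beta+1-O(\beta))/N$, absorbing the negative part into $\delta$. If that fails, I would use the refined weight $(N+1)^{-p}(1+\lambda c/N)$ to gain margin. I expect this sign bookkeeping, together with the interplay of $c\approx0$ (where $c+\beta-1<0$ when $\beta<1$ changes signs), to be the main obstacle.

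\textbf{Part (ii).} The same induction is run with the product weight $\Phi(\u n)=(a+1)^{-s}(b+1)^{-1}(c+1)^{-s}$. The step $\u n\to\u n-e_j$ lowers $a$ (possibly reordering with $b$), and $\u n\to\u n-e_j-e_k$ lowers $a$ and $c$. Expanding the ratios gives $\Phi(\u n-e_j)/\Phi(\u n)\le1+s/a+O(\text{reorder})$ and $\Phi(\u n-e_j-e_k)/\Phi(\u n)\le1+s/a+s/c+\dots$. The induction closes if
\[
w^{(1)}(1+s/a)+|w^{(2)}|(1+s/a+s/c)\le1
\]
up to lower order. Here $w^{(2)}\approx(c+\beta-1)(a-b+c-\beta-1)/(aN)$ is what pays for the $s/c$ term, and $\delta\approx(2\beta+1)(a+c)/(aN)$ pays for $s/a$; the conditions $s<\beta+1/2$ and $s<2-\beta$ should emerge precisely from these two comparisons, the latter from the region $c=O(1)$ where $c+\beta-1$ is small. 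The base cases are the faces, via (\ref{A0@R<*}) with $\beta-1>-s$ and $3\beta\ge s+1$ for the ranges given, plus finitely many small $\u n$. Ties $a=b$ or $b=c$ need separate but analogous treatment.
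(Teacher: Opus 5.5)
Your scheme (induction on $N$ through (\ref{AAA**}), with the faces $c_{\u n}=0$ as base) is the paper's. However, in both parts the closing inequality, which is the whole content of the lemma, is either mis-stated or missing.

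In (i), the predecessors are bounded by $CN^{-p}$ and $C(N-1)^{-p}$ and the target is $C(N+1)^{-p}$. The leading-order requirement (for $w^{(2)}\ge0$) is therefore $\delta\ge\frac pN(1+w^{(2)})$, not $\delta N\ge p+\epsilon$. Your intermediate bound $(1-\delta+p/N)N^{-p}$ would even require $\delta N\ge2p$. That fails near the target exponent when $c\ll a$, where $\delta N\approx2\beta+1$. Since $w^{(2)}$ is not small in general (it is about $1/3$ when $a\approx b\approx c$), you must show that the part $(2\beta+1)c/a$ of $\delta N$, which you discarded, pays for $p\,w^{(2)}$.

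The paper does exactly this via $|w^{(2)}_{\u n}|\le\frac{c+\beta-1}{a}$. If $w^{(2)}\ge0$, this follows from $a-b+c-\beta-1<a-1$; if $w^{(2)}<0$, then $c<\beta+1$ and $|w^{(2)}|=O(N^{-2})$. Inserting this into (\ref{del*}) gives $\delta\ge\frac{2\beta+1}{N+3\beta-1}(1+|w^{(2)}|)$. Hence the step closes for every $p<2\beta+1$, and the negative case costs only $O(N^{-2})$. There is no sign problem from $c+\beta-1$, since $c\ge1$ in the inductive step. Also, $3\beta+1-(\beta\vee1)=3\beta$ (not $2\beta+1$) when $\beta\le1$. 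This restriction comes only from the faces, where the worst case is $n_1\wedge n_2=0$ for $\beta\le1$ and $n_1\wedge n_2\asymp N$ for $\beta>1$; you swapped these.

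In (ii), your face check is wrong: $3\beta\ge s+1$ fails for every $\beta\le2/3$ because $s>1$. The correct check is $|A_{\u n}|(a+1)^s(b+1)\lesssim(b+1)^{\beta}(a+1)^{s-3\beta}\le(a+1)^{s-2\beta}\le1$, which needs only $s<2\beta$.

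More importantly, $\Phi(\u n-e_j-e_k)/\Phi(\u n)\le1+s/a+s/c+\cdots$ is not a lower-order expansion when $c$ is bounded, and that is precisely where $s<2-\beta$ enters. The missing ingredient is the exact inequality $(1+\frac1c)^s-1<\frac{s}{c+\beta-1}$ for $c\ge2$, which uses $0<s-1<1-\beta$. It cancels the factor $c+\beta-1$ in $w^{(2)}$, giving $w^{(2)}\big((1+\frac1c)^s-1\big)<\frac{s(a-b+c-\beta-1)}{a(N+3\beta-1)}$. Adding $\frac sa=\frac{s(N+3\beta-1)}{a(N+3\beta-1)}$ gives $\frac{2s(a+c+\beta-1)}{a(N+3\beta-1)}$. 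This is beaten by $\delta=\frac{(2\beta+1)(a+c+\beta-1)}{a(N+3\beta-1)}$ exactly when $2s<2\beta+1$. So $\delta$ pays for both losses, not for $s/a$ alone as your heuristic suggests.

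Finally, your inductive step does not cover the layer $c=1$, which is an infinite family. There $w^{(2)}<0$ when $a=b$, and the weight ratio for $\u n\to\u n-e_j-e_k$ is about $2^s$, far from $1+s/a+s/c$. The paper treats this layer separately: it evaluates $(\mathcal H_3A)_{(a,b,0)}=(\mathcal H_1A)_{(a,b,0)}$ with (\ref{AB=@R}) to get $|A_{(a,b,1)}|\lesssim|A_{(a,b,0)}|$. It then runs the induction only for $c\ge2$, where $w^{(1)},w^{(2)}>0$, and handles the tie cases using $s>1$.
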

\begin{proof}
(i) Fix $p\in (0,3\beta+1-(\beta\vee 1))$. By (\ref{A0@R<*}), there is $C_1>0$ such that $|A_{\u n}|\le C_1 (N_{\u n}+1)^{-p}$ if $c_{\u n}=0$.  Let  $m_1=6\beta+2$. By the Taylor expansion of $(1+x)^p$ and $(1-x)^p$ at $0$, there are $m_2,C_2,m_3,C_3>0$ such that
\BGE \Big(1+\frac 1{n-1}\Big)^p \le 1+\frac p{n+1} +\frac{C_2}{(n+1)^2},\quad \text{if }n\ge m_2;\label{N1Cp*}\EDE
\BGE 1-\frac p{n+1}\le \Big(1-\frac 1{n+1}\Big)^p \Big(1+\frac{C_3}{(n+1)^2}\Big),\quad \text{if }n\ge m_3.\label{N1Cp**}\EDE
Since $p<3\beta+1-(\beta\vee 1)\le 2\beta+1$, there is $m_4\ge 2$ such that
\BGE \frac{2\beta+1}{n+3\beta-1}-\frac p{n+1}\ge \frac{12\beta^2}{n(n-1)}+\frac{C_2}{(n+1)^2},\quad \text{if }n\ge m_4.\label{N3C2*}\EDE

For $m\in\N_0$, let $S_m=\{\u n\in\N_0^3:N_{\u n}=m\}$ and $\Psi_m=\max\{|A_{\u n}|:\u n\in S_m\}$. Then it suffices to prove that there is $C_p>0$ such that $\Psi_m\le C_p(m+1)^{-p}$ for all $m\in\N_0$.
Pick $m\in\N$ such that $m\ge m_0:=\left\lceil\max_{j\in [4]} m_j\right\rceil$. If the maximum in \(\Psi_m\) is attained at some \(n\) with \(c_n=0\), then the desired bound follows from the boundary estimate above. Hence it remains to consider a maximizer \(n\in S_m\) with \(c_n\ge1\). Take $\u n\in S_m$ such that $c_{\u n}\ge 1$. Assume by symmetry that $\u n=(a,b,c)$ with $a\ge b\ge c\ge 1$.
By (\ref{AAA**}) and that $\u n-e_1\in S_{m-1}$ and $\u n-e_1-e_3\in S_{m-2}$, we get \BGE |A_{\u n}|\le |w^{(1)}_{\u n}|\Psi_{m-1}+|w^{(2)}_{\u n}| \Psi_{m-2} .\label{APsi-bd*}\EDE
Since $N_{\u n}=m\ge 6\beta+1$ and $c_{\u n}\ge 1$, we have $w^{(1)}_{\u n}>0$, and $w^{(2)}_{\u n} $ has the same sign as $a -b +c -\beta-1$.

If $w^{(2)}_{\u n}<0 $, then $c <\beta+1$, which implies
$$|c +\beta-1|=c +\beta-1<2\beta,\quad |a -b +c -\beta-1|\le \beta+1-c \le \beta,$$
and so from $a\ge N_{\u n}/3=m/3$, we get $$|w^{(2)}_{\u n}|\le \frac{2\beta^2}{a (N_{\u n}-1)}\le \frac{6\beta^2}{m(m-1)}<\frac{\beta}{m}\le \frac{c +\beta-1}{m} .$$
If $w^{(2)}_{\u n}\ge 0 $, then $|a -b +c -\beta-1|=a  -b +c -\beta-1<a -1$, and so
$$|w^{(2)}_{\u n}|\le \frac{(a -1)(c +\beta-1)}{a  (N_{\u n}+3\beta-1)}< \frac{(a -1)(c +\beta-1)}{a  (m -1)}\le \frac{c +\beta-1}{m}.$$
Thus, we always have $|w^{(2)}_{\u n}|\le \frac{c +\beta-1}{m}<\frac 12$ because  $m\ge (3c )\vee  (6\beta) $.

From (\ref{del*}) and $|w^{(2)}_{\u n}|\le \frac{c +\beta-1}{m}\le\frac{c  +\beta-1}{a }$, we get
\BGE \delta\ge \frac{2\beta+1}{m+3\beta-1}(1+|w^{(2)}_{\u n}|).\label{delta>*}\EDE
Since $|w^{(2)}_{\u n}|\le  \frac{6\beta^2}{m(m-1)}$ when $w^{(2)}_{\u n}<0$, we get $|w^{(2)}_{\u n}|\le w^{(2)}_{\u n}+\frac{12\beta^2}{m(m-1)}$, and so
\BGE 1-|w^{(1)}_{\u n}|-|w^{(2)}_{\u n}| \ge \delta_{\u n}-2  |w^{(2)}_{\u n}|\ge  \delta_{\u n}-\frac{12\beta^2}{m(m-1)}.\label{|w1|*}\EDE
Since $m\ge m_4$, by (\ref{N3C2*}), (\ref{delta>*}), (\ref{|w1|*}), and $1>|w_2|\ge 0$,
$$ 1-|w^{(1)}_{\u n}|-|w^{(2)}_{\u n}| \ge  \frac{p}{m+1}(1+|w^{(2)}_{\u n}|) +\frac{C_2}{(m+1)^2}\ge  \frac{p}{m+1}(1+|w^{(2)}_{\u n}|) +\frac{C_2|w^{(2)}_{\u n}|}{(m+1)^2}.$$
Rearranging the above inequality, we get
$$|w^{(1)}_{\u n}|+|w^{(2)}_{\u n}|\Big(1+  \frac{p}{m+1}+\frac{C_2 }{(m+1)^2}\Big)\le 1-\frac p{m+1} .$$
Since $m\ge m_2\vee m_3$, by (\ref{N1Cp*}) and (\ref{N1Cp**}),
$$|w^{(1)}_{\u n}|+|w^{(2)}_{\u n}| \Big(1+\frac 1{m-1}\Big)^p\le \Big(1-\frac 1{m+1}\Big)^p  \Big(1+\frac{C_3}{(m+1)^2}\Big).$$
Multiplying both sides by $m^{-p}$, we obtain
\BGE |w^{(1)}_{\u n}|m^{-p}+|w^{(2)}_{\u n}|(m-1)^{-p}\le  (m+1)^{-p}  \Big(1+\frac{C_3}{(m+1)^2}\Big).\label{w+w<*}\EDE

Combining (\ref{APsi-bd*}) with (\ref{w+w<*}), we get
$$|A_{\u n}|\le (m+1)^{-p}  \Big(1+\frac{C_3}{(m+1)^2}\Big)  \max_{s\in [2]}\{(m-s+1)^p\Psi_{m-s} \}.$$
This inequality holds for any $\u n\in S_m$ with $c_{\u n}\ge 1$. Combining it with $|A_{\u n}|\le C_1 (N_{\u n}+1)^{-p}$ when $c_{\u n}=0$, we get
$$(m+1)^p \Psi_m \le  \Big(1+\frac{C_3}{(m+1)^2}\Big)\Big(C_1\vee (m^p \Psi_{m-1})\vee  ((m-1)^p \Psi_{m-2})\Big),\quad \text{if }m\ge m_0.$$
Let $C_4=\prod_{m=m_0}^\infty (1+\frac{C_3}{(m+1)^2})\in (0,\infty)$. Let $ C_p=C_4(C_1\vee  (m_0^p \Psi_{m_0-1})\vee  ((m_0-1)^p \Psi_{m_0-2}))$. Then $\Psi_m\le   C_p (m+1)^{-p}$ for all $m\ge m_0$. By possibly increasing $C_p$, we can make the inequality hold for all $m\in\N_0$ since there are only finitely many $m\in\N_0$ with $m<m_0$.

(ii) Let $\beta\in (1/2,1)$ and $s\in (1, (\beta+1/2)\wedge (2-\beta))$. Define
$$\Phi_{\u n}=(a_{\u n}+1)^s(b_{\u n}+1)(c_{\u n}+1)^s,\quad W_{\u n}=A_{\u n}\Phi_{\u n},\quad \u n\in \N_0^3.$$
Then we need to show that $W$ is bounded on $\N_0^3$. Since $s<\beta+1/2<2\beta$, by (\ref{A0@R<*}), if $c_{\u n}=0$,
$$|W_{\u n}|\lesssim (a_{\u n}+1)^{s-3\beta}(b_{\u n}+1)^{\beta}\le (a_{\u n}+1)^{s-2\beta}\le 1.$$
Thus, $W$ is bounded on $\{\u n:c_{\u n}=0\}$. We now show that $W$ is bounded on $\{\u n:c_{\u n}=1\}$. Let $\u n\in\N_0^3$ satisfy $c_{\u n}=1$. By symmetry, we assume $\u n=(a,b,1)$ with $a\ge b\ge 1$. Let $\u n'=(a,b,0)$. Applying (\ref{Ljk=}) to $j=3$, $k=1$, and $\u n=\u n'$, we get
$$2\beta A_{\u n} =(a+1)(a+2\beta) A_{\u n'+e_1}-2a(a+\beta) A_{\u n'} +(a-1+\beta)(a-\beta) A_{\u n'-e_1}.$$
By (\ref{AB=@R}),
$$\frac{A_{\u n'+e_1}}{A_{\u n'}}=\frac{(a+\beta)(a+b+1-\beta)}{(a+1)(a+b+3\beta)},\quad \frac{A_{\u n'-e_1}}{A_{\u n'}}= \frac{a(a+b-1+3\beta)}{(a-1+\beta)(a+b-\beta)}.$$
Thus,
$$\frac{ A_{\u n}}{A_{\u n'}}= \frac{(a+2\beta)(a+\beta)(a+b+1-\beta)}{2\beta(a+b+3\beta)}+ \frac{(a-\beta)a(a+b-1+3\beta)}{ 2\beta (a+b-\beta)} -\frac{2a(a+\beta)}{2\beta}$$
$$=\frac{ \beta (a^2+b^2)+2 (1-3\beta)ab+ \beta (1-2\beta)(a+b)+ \beta^2(\beta-1)}{(a+b+3\beta)(a+b-\beta)}.$$
Since the numerator is a polynomial of degree $2$, while $(a+b+3\beta)(a+b-\beta)\asymp (a+b+1)^2$, we get $|\frac{ A_{\u n}}{A_{\u n'}}|\lesssim 1$. Since $c_{\u n'}=0$, we have
$$|W_{\u n}|=|\Phi_{\u n}| |A_{\u n}|\lesssim  |\Phi_{\u n}| |A_{\u n'}|=2^s  |\Phi_{\u n'}| |A_{\u n'}|\lesssim |W_{\u n'}|\lesssim 1.$$
So $W$ is bounded on $\{\u n:c_{\u n}=1\}$.

Let $\u n\in\N_0^3$ satisfy $c_{\u n}\ge 2$. Assume by symmetry that $\u n=(a,b,c)$ with $a\ge b\ge c\ge 2$. Let $N=N_{\u n}=a+b+c$. Since $\beta\in (1/2,1)$ and $c\ge 2$, by (\ref{w1w2**}), $w^{(j)}_{\u n}>0$ for $j\in [2]$. By (\ref{AAA**}),
$$|W_{\u n}|\le   w^{(1)}_{\u n}\frac{\Phi_{\u n}}{\Phi_{\u n-e_1}} |W_{\u n-e_1}|+ w^{(2)}_{\u n}\frac{\Phi_{\u n}}{\Phi_{\u n-e_1-e_3}} |W_{\u n-e_1-e_3}|)$$
$$\le \Big(1+\frac 1a\Big)^s \Big[  w^{(1)}_{\u n} |W_{\u n-e_1}|+ w^{(2)}_{\u n}  \Big(1+\frac 1c\Big)^s   |W_{\u n-e_1-e_3}|\Big],$$
where we used the inequalities
$$\frac{\Phi_{\u n}}{\Phi_{\u n-e_1}}\le \Big(\frac{a+1}a\Big)^s,\quad \frac{\Phi_{\u n}}{\Phi_{\u n-e_1-e_3}}\le \Big(\frac{a+1}a\Big)^s \Big(\frac{c+1}c\Big)^s ,$$
which can be checked by considering the cases $a>b>c$, $a=b>c$, $a>b=c$, and $a=b=c$, separately, and using the assumption $s>1$. Let $M=\max\{ |W_{\u n-e_1}|,|W_{\u n-e_1-e_3}|\}$. Since $1-\delta_{\u n}= w^{(1)}_{\u n}+ w^{(2)}_{\u n}$, we get
$$\Big(1+\frac 1a\Big)^{-s} |W_{\u n}|\le M\Big[ w^{(1)}_{\u n}  + w^{(2)}_{\u n}   \Big(1+\frac 1c\Big)^s\Big]\le M\Big[ 1-\delta_{\u n}   + w^{(2)}_{\u n}   \Big(\Big(1+\frac 1c\Big)^s-1\Big)\Big].$$
Since $0<s-1<1-\beta<1$, we have
$$\Big(1+\frac 1c\Big)^s-1=s\int_0^{1/c} (1+x)^{s-1}dx< \frac s c \Big(1+\frac 1c\Big)^{s-1}< \frac s c \Big(1+\frac{s-1} c\Big)$$
$$< \frac s c \Big(1+\frac{1-\beta} c\Big)< \frac s c \Big(1-\frac{1-\beta} c\Big)^{-1}=\frac{s}{c+\beta-1}.$$
From (\ref{w1w2**}), (\ref{del*}) and $(1+\frac 1a)^s\le 1+\frac s a+\frac{s(s-1)}{2a^2}$, we get
$$  |W_{\u n}|\le M \Big(1+\frac s a+\frac{s(s-1)}{2a^2}\Big) \Big[1-\frac{(a+c+\beta-1)(2\beta+1)}{a(N+3\beta-1)}+\frac{(a-b+c-\beta-1)s}{a(N+3\beta-1)}\Big]$$
$$\le M  \Big[1+\frac{s(s-1)}{2a^2}+\frac s a-\frac{(a+c+\beta-1)(2\beta+1)}{a(N+3\beta-1)}+\frac{(a-b+c-\beta-1)s}{a(N+3\beta-1)}\Big]$$
$$= M  \Big[1+\frac{s(s-1)}{2a^2}- \frac{(a+c+\beta-1)(2\beta+1-2s)}{a(N+3\beta-1)} \Big]\le M  \Big[1+\frac{9 s^2}{2N^2}- \frac{ 2\beta+1-2s }{  N+3\beta } \Big] ,$$
where in the second line we used $(a+c+\beta-1)(2\beta+1)>(a-b+c-\beta-1)s$, and in the third line we used $a\ge N/3$ and $2\beta+1>2s$. Suppose $N\ge  m_0:=\left\lceil(3\beta)\vee \frac{9s^2}{2\beta+1-2s}\right\rceil$. Then
$$\frac{9 s^2}{2N^2}- \frac{ 2\beta+1-2s }{  N+3\beta } \le \frac{9 s^2}{2N^2}- \frac{ 2\beta+1-2s }{ 2N } =\frac{9 s^2-(2\beta+1-2s)N}{2N^2}\le 0,$$
which implies $ |W_{\u n}|\le M=\max\{ |W_{\u n-e_{j_{\u n}}}|,|W_{\u n-e_{j_{\u n}}-e_{k_{\u n}}}|\}$. This inequality holds for any $\u n\in\N_0^3$ with $N_{\u n}\ge m_0$ and $c_{\u n}\ge 2$. Combining it with the boundedness of $W$ on $\{\u n: c_{\u n}\in\{0,1\}\}$ and an induction on $N_{\u n}$, we conclude that $W$ is bounded on $\N_0^3$.
\end{proof}

\begin{Theorem}
  Let $A_{\u n}$ and $B_{\u n}$ be given by Theorem \ref{Theorem-AB@R}. Let $F(\u r)=\sum_{\u n\in\N_0^3} A_{\u n} \u r^{\u n}$ and $G(\u r)=\sum_{\u n\in\N_0^3} B_{\u n} \u r^{\u n}$, where $\u r^{\u n}:=r_1^{n_1} r_2^{n_2} r_3^{n_3}$. The following hold.
  \begin{enumerate}
    \item [(i)] If $\beta>1/2$, $\sum_{\u n\in\N_0^3} |A_{\u n}|<\infty$, and so $F$ is (defined and) continuous on $[-1,1]^3$, and
         \BGE F(r_1,r_2,1)=\frac{\Gamma(3\beta)\Gamma(3\beta-1)}{\Gamma(2\beta)\Gamma(4\beta-1)}\cdot \prod_{k\in [2]} f_\beta (r_{k}) .\label{F=1@R}\EDE
    \item [(ii)] For all $\beta>0$,  $\sum_{\u n\in\N_0^3}  A_{\u n} \u r^{\u n} $ and $\sum_{\u n\in\N_0^3}  B_{\u n}\u r^{\u n}$ converge locally uniformly in $(-1,1)^3$, and so $F$ and $G$ are analytic in $(-1,1)^3$ and satisfy (\ref{rainbow1}) and (\ref{rainbow2}).
  \end{enumerate}
\label{convergence@R*}
\end{Theorem}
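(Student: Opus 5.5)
We prove (ii) first, since it needs only crude bounds, and then deduce (i) from Lemma~\ref{N-p}.

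\textbf{Part (ii).} It suffices to show that $|A_{\u n}|$ and $|B_{\u n}|$ grow at most polynomially in $N_{\u n}$ for every $\beta>0$. Absolute convergence on $[-L,L]^3$ for each $L<1$ then follows by comparison with $\sum (N+1)^{K}L^{N}$.

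For $A$, we use the two-term recursion (\ref{AAA**}). The weights $w^{(1)}_{\u n}$ and $w^{(2)}_{\u n}$ in (\ref{w1w2**}) satisfy $|w^{(1)}_{\u n}|+|w^{(2)}_{\u n}|\le 1+C/(N_{\u n}+1)$ for $N_{\u n}$ large, since numerators and denominators are both quadratic with $a\ge N/3$. Hence $\Psi_m:=\max_{S_m}|A|$ obeys $\Psi_m\le (1+C/m)\max(\Psi_{m-1},\Psi_{m-2})$. This gives $\Psi_m\lesssim (m+1)^C$ once the boundary terms with $c_{\u n}=0$ are included; those are controlled by (\ref{A0@R<*}), which holds for all $\beta>0$.

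For $B$, we use (\ref{all=B@R}): $2(\nabla_{\ell}B)_{\u n}$ is a combination of $A$-values with polynomial coefficients. Summing along the $e_{\ell}$ direction from the zero region then yields a polynomial bound on $|B_{\u n}|$.

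Local uniform convergence gives analyticity. Differentiating termwise converts (\ref{rainbow1'}) and (\ref{rainbow2'}), which hold for $A,B$ by Theorem~\ref{Theorem-AB@R}, back into the PDEs (\ref{rainbow1}) and (\ref{rainbow2}).

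\textbf{Part (i), summability.} We show $\sum_{\u n}|A_{\u n}|<\infty$ by splitting according to $\beta$.

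If $\beta\ge 1$, Lemma~\ref{N-p}(i) applies with any $p<2\beta+1$, in particular some $p>2$ since $2\beta+1\ge 3$. Because $\#S_m\asymp m^2$, we get $\sum_m m^{2-p}$ with $p>3$ available, so the series converges. More precisely, choose $p\in(3,2\beta+1)$ when $\beta>1$. For $\beta=1$ the array is the polynomial $F_{\beta=1}=1$ by Theorem~\ref{Theorem-AB@R}(vii), so there is nothing to prove.

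If $\beta\in(1/2,1)$, Lemma~\ref{N-p}(i) only gives $p<3\beta$, which is insufficient. We therefore use Lemma~\ref{N-p}(ii) with some $s>1$. It bounds the sum by $6\sum_{a\ge b\ge c}(a+1)^{-s}(b+1)^{-1}(c+1)^{-s}$. Summing over $c\le b$ gives $O(1)$. Summing over $b\le a$ gives $O(\log(a+2))$. Finally $\sum_a(a+1)^{-s}\log(a+2)<\infty$.

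Absolute summability and the Weierstrass $M$-test then give continuity on $[-1,1]^3$.

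\textbf{Part (i), boundary formula.} We must identify $F(r_1,r_2,1)=\sum_{n_1,n_2}\big(\sum_{n_3}A_{(n_1,n_2,n_3)}\big)r_1^{n_1}r_2^{n_2}$ with (\ref{F=1@R}); this is the main obstacle.

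Set $D_{n_1,n_2}:=\sum_{n_3}A_{(n_1,n_2,n_3)}$, which converges absolutely. By Proposition~\ref{Appell}(ii), $D_{n_1,0}$ is exactly $A^\Sigma_{n_1}$ in (\ref{A1@R}). We then sum the relation (\ref{Ljk=}) with $j=1$ and $k=3$ over $n_3$. The terms of $\mathcal H_3A$ telescope, because $\mathcal H_3=\nabla_3\mathcal L_3$ and $\mathcal L_3A\to0$ as $n_3\to\infty$; the latter follows from the decay in Lemma~\ref{N-p} together with the quadratic growth of $f_\pm$. We need to verify this tail carefully, possibly via a summation by parts using (\ref{all=B@R}).

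The outcome is $\mathcal H_1D=0$ and, symmetrically, $\mathcal H_2D=0$. Each of these relations propagates from the initial row $D_{n_1,0}$ to a unique array in $n_2$, exactly as in Lemma~\ref{construction}. The product array $A^\Sigma_{n_1}A^\Sigma_{n_2}/A^\Sigma_0$ satisfies both relations, because $\mathcal L_j$ annihilates the coefficients of $f_\beta$, i.e.\ $\L_r f_\beta=0$. By uniqueness the two arrays coincide, which gives (\ref{F=1@R}).

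If the telescoping tail proves delicate, the fallback is to argue at the level of functions. Restrict the PDEs to $r_3\to1$ to show that $F(\cdot,\cdot,1)$ solves $\L_{r_j}F=0$ in each variable separately; this follows from (\ref{rainbow1}), whose right-hand side carries the factor $(1-r_3)$. Then match the boundary data on $r_2=0$.
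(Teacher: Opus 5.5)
Your part (ii), the summability argument in part (i), and the identification of the first row $D_{n_1,0}=A^\Sigma_{n_1}$ via Proposition~\ref{Appell}(ii) are sound and essentially follow the paper. The gap is in how you obtain a two-dimensional relation for $D$.

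Summing $(\mathcal H_1A)_{\u n}=(\mathcal H_3A)_{\u n}$ over $n_3$ and telescoping $\mathcal H_3=\nabla_3\L_3$ leaves the boundary term $\lim_{N\to\infty}(\L_3A)_{(n_1,n_2,N)}$, where
$(\L_3A)_{(n_1,n_2,N)}=f_+(N+1)A_{(n_1,n_2,N+1)}-f_-(N)A_{(n_1,n_2,N)}$.
You claim this vanishes ``by the decay in Lemma~\ref{N-p} and the quadratic growth of $f_\pm$''. That claim is false for $\beta\in(1/2,2/3]$. There Lemma~\ref{N-p}(i) allows only $p<3\beta\le 2$, and Lemma~\ref{N-p}(ii) only $s<\beta+\tfrac12<2$. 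The resulting bound on each term is $O(N^{2-p})$ or $O(N^{2-s})$, which does not tend to zero. (For $\beta>2/3$ your reasoning does work, using Lemma~\ref{N-p}(i) with some $p>2$.)

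Your fallback has the same kind of hole. Letting $r_3\to1$ in (\ref{rainbow1}) with $\ell\in\{1,2\}$ requires $(1-r_3)G\to0$. Nothing available controls $G$ near $r_3=1$: part (ii) only gives polynomial growth of $B_{\u n}$.

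The missing idea is to sum the tangential relation instead. In $(\mathcal H_1A)_{\u n}=(\mathcal H_2A)_{\u n}$, both operators shift only in $n_1,n_2$, with coefficients independent of $n_3$. Each $\sum_{n_3}A_{(m_1,m_2,n_3)}$ converges absolutely, so summing over $n_3$ gives $\mathcal H_1D=\mathcal H_2D$ with no tail term at all.
\begin{itemize}
\item Lemma~\ref{construction}(i) then says $D$ is determined by its first row.
\item The product array $A^\Sigma_{n_1}A^\Sigma_{n_2}/A^\Sigma_0$ satisfies $\mathcal H_1=0=\mathcal H_2$ and has the same first row.
\item Hence it equals $D$.
\end{itemize}
You never need $\mathcal H_1D=0$ by itself; this is precisely the paper's argument.

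Your telescoping route can also be repaired. The partial sums show that $(\L_3A)_{(n_1,n_2,N)}$ converges to $(\mathcal H_1D)_{(n_1,n_2)}$. A nonzero limit $\ell$ would force $A_{(n_1,n_2,N)}\sim \ell/((2\beta-1)N)$, by solving the first-order recurrence with the integrating factor $\frac{(\beta)_N(1-\beta)_N}{(1)_N(2\beta)_N}\asymp N^{-2\beta}$. That contradicts absolute summability in $n_3$. This extra argument, however, is not in your proposal.
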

\begin{proof}
  (i) For the continuity of $F$, by Weierstrass M-test, it suffices to show $\sum_{\u n\in\N_0^3} |A_{\u n}|<\infty$.
  If $\beta=1$, by Theorem \ref{Theorem-AB@R}, $A_{\u n}= 0$ if $\u n\ne \u 0$, and so $\sum_{\u n\in\N_0^3} |A_{\u n}|=|A_{\u 0}|<\infty$. Suppose $\beta>1$. We pick $p\in (3,2\beta+1)$. By Lemma \ref{N-p} (i), $|A_{\u n}|\lesssim (N+1)^{-p}$ if $\u n\in S_N=\{\u n\in\N_0^3:|\u n|=N\}$. Thus,
  $$\sum_{\u n\in\N_0^3} |A_{\u n}|\le\sum_{N=0}^\infty |S_N| (N+1)^{-p} = \sum_{N=0}^\infty \frac{(N+1)(N+2)}2 (N+1)^{-p}  \le   \sum_{N=0}^\infty (N+1)^{2-p}  <\infty.$$
  Suppose $\beta\in (1/2,1)$. We pick $s\in (1, (\beta+1/2)\wedge (2-\beta))$. By Lemma \ref{N-p} (ii),  $|A_{\u n}|\lesssim (a_{\u n}+1)^{-s}(b_{\u n}+1)^{-1} (c_{\u n}+1)^{-s}$, and so
  $$\sum_{\u n\in\N_0^3} |A_{\u n}|\lesssim 6\sum_{a=0}^\infty \sum_{b=0}^a\sum_{c=0}^b (a+1)^{-s} (b+1)^{-1} (c+1)^{-s} $$ $$
  \lesssim \sum_{a=0}^\infty \sum_{b=0}^a (a+1)^{-s} (b+1)^{-1}\le  \sum_{a=0}^\infty (a+1)^{-s} (1+\log(a+1))<\infty.$$

By the above absolute convergence, $F(r_1,r_2,1)=\sum_{(n_1,n_2)\in\N_0^2} A^\Sigma_{(n_1,n_2)}r_1^{n_1} r_2^{n_2}$, where
  $$A^\Sigma_{(n_1,n_2)}:=\sum_{n_3=0}^\infty A_{(n_1,n_2,n_3)},\quad n_1,n_2\in\N_0.$$
Since  ${\mathcal H}_1 A={\mathcal H}_2 A$, we also have  ${\mathcal H}_1 A^\Sigma={\mathcal H}_2 A^\Sigma$. By Lemma \ref{construction} (i), $A^\Sigma$ is determined by the values of $A^\Sigma$ on $\N_0\times \{0\}$. By  Proposition \ref{Appell} (ii), \BGE A^\Sigma_{(n_1,0)}=\frac{\Gamma(3\beta)\Gamma(3\beta-1)}{\Gamma(2\beta)\Gamma(4\beta-1)} \frac{(1-\beta)_{n_1}(\beta)_{n_1}}{(1)_{n_1}(2\beta)_{n_1}},\quad n_1\in\N_0.\label{bdry-A}\EDE
Using the one-dimensional hypergeometric recurrence in each coordinate, one
checks that
  $$\til A^\Sigma_{(n_1,n_2)}:=\frac{\Gamma(3\beta)\Gamma(3\beta-1)}{\Gamma(2\beta)\Gamma(4\beta-1)} \prod_{j=1}^2  \frac{(1-\beta)_{n_j}(\beta)_{n_j}}{(1)_{n_j}(2\beta)_{n_j}},\quad n_1,n_2\in\N_0,$$
  satisfies ${\mathcal H}_1 \til A^\Sigma=0={\mathcal H}_2 \til A^\Sigma$ and (\ref{bdry-A}). Thus, $A^\Sigma=\til A^\Sigma$, and so we get (\ref{F=1@R}),

  (ii) For a general $\beta>0$, we   pick $p< 3\beta+1-(\beta\vee 1) $. Let $L\in (0,1)$. Then
  $$\sum_{\u n\in\N_0^3} |A_{\u n}| L^{|\u n|}\lesssim \sum_{N=0}^\infty |S_N| (N+1)^{-p} L^{N}\lesssim \sum_{N=0}^\infty  (N+1)^{2-p} L^{N}<\infty.$$
  Thus,  $\sum_{\u n\in\N_0^3} A_{\u n} \u r^{\u n}$ converges uniformly in $[-L,L]^3$, and so $F$ is analytic in $(-L,L)^3$. Since $L\in (0,1)$ is arbitrary, $F$ is analytic in $(-1,1)^3$.

  By (\ref{all=B@R}) and the estimate $|A_{\u n}|\lesssim (|\u n|+1)^{-p}$, we get $|\nabla_1 B_{\u n}|\lesssim, (|\u n|+1)^{2-p}$. Since $B$ vanishes outside $\N_0^3$, we have $B_{\u n}=\sum_{k=0}^{n_1} \nabla_1 B_{\u n-k e_1}$, which implies $|B_{\u n}|\lesssim (|\u n|+1)^{ 3-p}$. A similar argument shows $\sum |B_{\u n}| L^{|\u n|}<\infty$ for each $L\in (0,1)$.  Thus, $\sum_{\u n\in\N_0^3} B_{\u n} \u r^{\u n}$ converges absolutely and uniformly in $(-L,L)^3$ for any $L\in (0,1)$, and so $G$ is analytic in $(-1,1)^3$.

  Finally, since we may differentiate the series $\sum A_{\u n} \u r^{\u n}$ and $\sum B_{\u n} \u r^{\u n}$ term by term, f  the coefficient relations (\ref{AB1@R}) and (\ref{rainbow2'}) imply that $F$ and $G$ satisfy (\ref{rainbow1}) and (\ref{rainbow2}).
\end{proof}

\begin{Remark}
  If $\beta\in (0,1/2]$, then by (\ref{AB=@R}) and (\ref{Stirling}) , $$\sum_{\u n\in\N_0^3} |A_{\u n}|\ge\sum_{\u n\in\N_0^2} |A^0_{\u n}|\gtrsim \sum_{(n_1,n_2)\in\N_0^2} (n_1+1)^{\beta-1} (n_2+1)^{\beta-1}  (n_1+n_2+1)^{1-4\beta}  $$
  $$\gtrsim \sum_{b=0}^\infty \sum_{a=b}^\infty (b+1)^{\beta-1} (a+1)^{-3\beta}\gtrsim  \sum_{b=0}^\infty (b+1)^{-2\beta}=\infty.$$
  If $\beta\in (1/3,1/2]$, taking $p\in (1,3\beta+1-(\beta\vee 1))$ and using $|A_{\u n}|\lesssim (N_{\u n}+1)^{-p}$,  one can show $$\sum_{\u n\in\N_0^3} |A_{\u n}| L^{n_1+n_2}\lesssim \sum_{n_1,n_2=0}^\infty L^{n_1+n_2} \sum_{n_3=0}^\infty (n_1+n_2+n_3+1)^{-p}$$ $$\lesssim \sum_{n_1,n_2=0}^\infty (n_1+n_2+1)^{1-p} L^{n_1+n_2}  = \sum_{m=0}^\infty (m+1)^{2-p} L^{m}  <\infty,\quad L\in (0,1).$$
  This implies that  $\sum_{\u n\in\N_0^3} A_{\u n} \u r^{\u n}$ converges uniformly and absolutely on $[-L,L]^2\times [0,1]$ for any $L\in (0,1)$, which implies that $F$ is continuous on $(-1,1)^2\times [0,1]$. A similar argument yields (\ref{F=1@R}) for $\beta\in (1/3,1/2]$ and $(r_1,r_2)\in (-1,1)^2$.
\end{Remark}

\begin{Corollary}
Assume \(\beta>1/2\). Let \(F\) and \(G\) be as in
Theorem \ref{convergence@R*}. Let $\alpha\in\LP_3$ be a rainbow pattern. Then the ${\mathcal Z}_\alpha$ defined by (\ref{Z3}) with
\BGE
c_\alpha
 :=
\frac{\Gamma(2\beta)\Gamma(4\beta-1)}
{\Gamma(3\beta)\Gamma(3\beta-1)}
\left(
\frac{\Gamma(\beta)\Gamma(3\beta-1)}
{\Gamma(2\beta)\Gamma(2\beta-1)}
\right)^2 \label{calpha}
\EDE is positive and satisfies (PDE) (\ref{PDE}), (COV) (\ref{COV}), and (ASY) (\ref{ASY}). In particular,
\(Z_\alpha\)  has the defining properties of  the corresponding pure partition function for
\(\kappa=4/\beta\in(0,8)\).
  \label{victory@R}
\end{Corollary}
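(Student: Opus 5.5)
The plan is to verify the hypotheses of Theorem~\ref{Prop1} (i), (iii) and (iv) for the $F$ and $G$ of Theorem~\ref{convergence@R*}, and then check that $c_\alpha$ in (\ref{calpha}) is exactly the constant prescribed in Theorem~\ref{Prop1} (iii). Everything is already available from earlier results; what remains is bookkeeping, plus one positivity check on the boundary.

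\emph{Step 1: (PDE) and (COV).} By Theorem~\ref{convergence@R*}(ii), $F$ and $G$ are analytic on $(-1,1)^3$, hence smooth on $(0,1)^3$, and they satisfy (\ref{rainbow1}) and (\ref{rainbow2}). Theorem~\ref{Prop1}(i) then gives (PDE) and (COV) for any $c_\alpha>0$.

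\emph{Step 2: continuity and the boundary value.} By Theorem~\ref{convergence@R*}(i), $F$ extends continuously to $[0,1]^3$. The symmetry of $A$ (Theorem~\ref{Theorem-AB@R}(i)) makes $F$ symmetric. Specializing (\ref{F=1@R}) to $r_1=1$ (legitimate by continuity) and using $f_\beta(1)$ from Gauss's identity gives
\[
F(r,1,1)=c_F f_\beta(r),\qquad c_F=\frac{\Gamma(3\beta)\Gamma(3\beta-1)}{\Gamma(2\beta)\Gamma(4\beta-1)}\cdot\frac{\Gamma(2\beta)\Gamma(2\beta-1)}{\Gamma(\beta)\Gamma(3\beta-1)}>0 .
\]
By symmetry the same formula holds on the other two edges. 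Theorem~\ref{Prop1}(iii) therefore gives (ASY) with $c_\alpha=c_F^{-1}f_\beta(1)^{-1}$. Expanding this product of Gamma factors reproduces (\ref{calpha}), which is a routine check.

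\emph{Step 3: positivity on $\partial[0,1]^3$.} This is the only step that needs an argument, so it is the main obstacle. By symmetry it suffices to treat the faces $\{r_3=0\}$ and $\{r_3=1\}$.
\begin{itemize}
\item[] On $\{r_3=0\}$, $F$ restricts to $F_0$, the Appell function. Proposition~\ref{Appell}(iii) gives $F_0>0$ on $[0,1]^2$ when $\beta>1/2$. Continuity of $F$ on $[0,1]^3$ guarantees that this restriction is the actual boundary value.
\item[] On $\{r_3=1\}$, (\ref{F=1@R}) gives $F(r_1,r_2,1)=\text{const}\cdot f_\beta(r_1)f_\beta(r_2)$ with a positive constant. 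Here $f_\beta>0$ on $[0,1)$ by (\ref{2F1>0}), and $f_\beta(1)>0$ by Gauss's identity. Hence $F>0$ on this whole face, including the edges and corners where some $r_k=1$.
\end{itemize}
Theorem~\ref{Prop1}(iv) then yields $\mathcal Z_\alpha>0$ on $\mathfrak X_6$.

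Combining Steps 1–3, $\mathcal Z_\alpha$ has all the defining properties, for every $\kappa=4/\beta\in(0,8)$.
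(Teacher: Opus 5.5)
Your proposal is correct and follows essentially the same route as the paper: (PDE) and (COV) come from Theorem~\ref{convergence@R*}(ii) combined with Theorem~\ref{Prop1}(i). (ASY) comes from the face formula (\ref{F=1@R}), which gives $c_F=\frac{\Gamma(3\beta)\Gamma(3\beta-1)}{\Gamma(2\beta)\Gamma(4\beta-1)}f_\beta(1)$, together with Theorem~\ref{Prop1}(iii). Positivity comes from Proposition~\ref{Appell}(iii) on the faces $r_j=0$ and from (\ref{F=1@R}) with (\ref{2F1>0}) on the faces $r_j=1$, fed into Theorem~\ref{Prop1}(iv). The step you flag as the ``main obstacle'' is, as in the paper, just a direct citation of these earlier results.
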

\begin{proof}
By Theorem~\ref{convergence@R*} (ii) and Theorem \ref{Prop1} (i),  ${\mathcal Z}_\alpha$ satisfies (PDE) (\ref{PDE}) and (COV) (\ref{COV}).

By Theorem \ref{convergence@R*} (i), $F$ is continuous on $[0,1]^3$ and satisfies (\ref{boundary-F}) with
\BGE c_F:=\frac{\Gamma(3\beta)\Gamma(3\beta-1)}{\Gamma(2\beta)\Gamma(4\beta-1)} \cdot f_\beta(1).\label{cF}\EDE
By Gauss's identity, the $c_\alpha$ in (\ref{calpha}) satisfies $c_\alpha=c_F^{-1} {}2F_1(1-\beta,\beta;2\beta;1)^{-1}$. Thus, by Theorem \ref{Prop1} (iii), ${\mathcal Z}_\alpha$ satisfies (ASY) (\ref{ASY}). By Proposition \ref{Appell} (iii), $F>0$ on the faces $r_j=0$. By (\ref{F=1@R}) and (\ref{2F1>0}), $F>0$ on the faces $r_j=1$.  Thus, by Theorem \ref{Prop1} (iv), ${\mathcal Z}_\alpha$ is positive.
\end{proof}

\subsection{The neighbor case: local convergence}
We now turn to the neighbor case. Unlike the rainbow case, the argument in this subsection yields only a local analytic solution near the origin. The global continuation and boundary analysis will be carried out in Sections~\ref{subsection-Pfaff} and~\ref{subsection-analytic-extension}.

For $z_0\in\C$ and $R>0$, let $B(z_0,R)=\{z\in\C:|z-z_0|<R\}$. For $S\subset\C$ and $R>0$, let $B(S,R)=\bigcup_{z\in S} B(z,R)$.

\begin{Proposition}
   Let $A_{\u n}$ and $B_{\u n}$, $\u n\in\N_0^3$, be given by Theorem \ref{Theorem-AB@N}. Then for any $R\in (0,\frac 1{10})$, $\sum |A_{\u n}| R^{|\u n|}<\infty$ and  $\sum |B_{\u n}| R^{|\u n|}<\infty$, which implies that  $\sum A_{\u n} \u r^{\u n}$ and $\sum B_{\u n} \u r^{\u n}$ converge absolutely and uniformly in $B(0,R)^3$. So (\ref{FG}) defines  two analytic functions $F$ and $G$ on $B(0,\frac 1{10})^3$, which satisfy the PDEs (\ref{rainbow1}) and (\ref{neighbor2}). \label{convergence@N}
\end{Proposition}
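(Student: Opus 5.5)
We will prove a crude geometric growth bound of the form $|A_{\u n}|\le C\,K^{|\u n|}$ and $|B_{\u n}|\le C'\,K'^{|\u n|}$ with $K,K'<10$. Once these hold, absolute and uniform convergence on $B(0,R)^3$ for $R<1/10$ follows by comparison with a geometric series, since $|S_N|=O(N^2)$. Analyticity and the PDEs (\ref{rainbow1}), (\ref{neighbor2}) then follow by term-by-term differentiation, because the coefficient relations (\ref{AB1@R}) and (\ref{neighbor2'}) hold by Theorem \ref{Theorem-AB@N}.

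The main tool for $A$ is the recursion obtained from (\ref{Ljk=}), namely ${\mathcal H}_3A={\mathcal H}_1A$, exactly as in Lemma \ref{construction}. Solving at $\u n=(n_1,n_2,N)$ for $A_{(n_1,n_2,N+1)}$ gives
$$f_+(N+1)A_{\u n+e_3}=(f_+(N)+f_-(N))A_{\u n}-f_-(N-1)A_{\u n-e_3}+f_+(n_1+1)A_{\u n+e_1}-(f_+(n_1)+f_-(n_1))A_{\u n}+f_-(n_1-1)A_{\u n-e_1}.$$
The difficulty is that the coefficients $f_\pm(n_1)$ are of size $n_1^2$ while we divide only by $f_+(N+1)\asymp N^2$, so growth in $n_1$ is not directly controlled. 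The bound will therefore be weighted. We will use the base case from (\ref{AB=@N}) and (\ref{Stirling}): on the face $n_3=0$, $|A_{(n_1,n_2,0)}|$ is polynomially bounded, say by $C(n_1+n_2+1)^{q}$.

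The cleaner route is to use the first-order relations of Theorem \ref{Theorem-AB@N}(\ref{ABN:iv}). In particular (\ref{+-@N}) with $j=3$ gives
$$(N+1)M^{\ell}_{\u n+e_3}A_{\u n+e_3}=(Q^kTA)_{\u n-e_k}-((P^k+Q^3)M^3A)_{\u n}.$$
The multiplier on the left may vanish when $2\beta\in\Z$, and to cope with this we will choose $\ell$, the index among $\{1,2\}$, to be the one with the larger $n_\ell$. Then $M^\ell_{\u n+e_3}=n_k-n_\ell+N+2\beta$, which is of size $\gtrsim N$ whenever $n_k-n_\ell$ is small. Since this is not always the case, we will instead use the induction on $n_3$ in the form: $|A_{\u n}|\le C K^{n_3}(|\u n|+1)^{q}$. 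In each step the right-hand sides involve coefficients of ratio $O(|\u n|)/O(N)$.

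The expected obstacle is a uniform bound on these ratios when $N$ is small relative to $n_1,n_2$. We plan to handle it with the symmetry of $A$ in Theorem \ref{Theorem-AB@N}(\ref{ABN:i}). We will always perform the induction in the direction of the largest coordinate: order the coordinates so that $n_3=a_{\u n}\ge |\u n|/3$. The recursion ${\mathcal H}_3A={\mathcal H}_j A$ then divides by $f_+(a)\asymp a^2$, while the other coefficients are $O(|\u n|^2)=O(a^2)$ with explicit constants. Counting these constants (about $2+2+2+2$ terms against $a^2$, with $|\u n|\le 3a$) gives a bound $\Psi_{m+1}\le c_0(\Psi_m+\Psi_{m-1})$ for $\Psi_m=\max_{S_m}|A|$, with $c_0$ explicit. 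This yields growth $K^m$ with $K$ the root of $K^2=c_0(K+1)$. The number $1/10$ should come out of this bookkeeping; if it does not, a finer split of terms will be needed.

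For $B$, we use (\ref{all=B@N}), which expresses $\nabla_\ell B$ via $O(|\u n|^2)$ multiples of $A$. Summing along a coordinate line from the zero region, as in the rainbow proof of Theorem \ref{convergence@R*}, gives $|B_{\u n}|\lesssim(|\u n|+1)^3K^{|\u n|}$, which suffices.
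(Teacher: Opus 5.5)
Your final route, the recursion $\mathcal{H}_3A=\mathcal{H}_jA$ organized by shells $S_m$, does not close. Solving it at $\u n$ for $A_{\u n+e_3}$ leaves the term $f_+(n_j+1)A_{\u n+e_j}$ on the right. The point $\u n+e_j$ lies on the same shell $S_{|\u n|+1}$ as the target, so the recursion is not degree-lowering and $\Psi_{m+1}\le c_0(\Psi_m+\Psi_{m-1})$ does not follow.

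This term cannot be absorbed into the left side either. Relative to the divisor $f_+(n_3+1)$ its coefficient is $f_+(n_j+1)/f_+(n_3+1)$, and for nearly balanced targets this is $\ge 1$ for every admissible $j$. It exceeds $1$ for the target $(a,a,a)$, and it equals $1$ for the target $(a,a,a+1)$. In the latter case the relation is actually empty. By the symmetry of $A$, $(\mathcal{H}_3A)_{\u n}=(\mathcal{H}_jA)_{\u n}$ holds identically whenever $n_j=n_3$, so $A_{(a,a,a+1)}$ is not constrained at all by the relations you use: both choices $j=1,2$ lead to $\u n=(a,a,a)$. Lemma~\ref{construction} avoids this only because it inducts on $n_3$, where $A_{(a+1,a,a)}$ is already known. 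In that ordering, however, the ratios $f_\pm(n_1)/f_+(N+1)$ are unbounded, which is exactly the obstacle you identified. No bookkeeping with this relation gives a geometric bound.

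The first-order relation (\ref{+-@N}) that you set aside is the right tool, and it is degree-lowering; it needs two corrections. First, do not fix $j=3$. Relabel so that $j$ is the index of the largest coordinate of the target $\u n\in S_N$, and apply (\ref{+-@N}) at $\u n-e_j$. Second, take $\ell$ to be the index of the smallest coordinate. You took the larger one, which is what allows $M^\ell$ to vanish. With $n_j\ge n_k\ge n_\ell$ the relation reads
\[
n_j\bigl(|\u n|-2n_\ell+2\beta-1\bigr)A_{\u n}=-(n_j+n_k+\beta-1)\bigl(|\u n|-2n_j+2\beta\bigr)A_{\u n-e_j}+(n_k+\beta-1)\bigl(|\u n|-1\bigr)A_{\u n-e_j-e_k}.
\]
The right side involves only $S_{N-1}$ and $S_{N-2}$. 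The left multiplier is at least $n_j(n_j+2\beta-1)>0$, so it never vanishes, even when $2\beta\in\Z$.

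When $n_k\ge1$, every right-hand factor except $|\u n|-2n_j+2\beta$ is positive. Every coordinate is at most $n_j$ and $|\u n|\le 3n_j$, so you can compare coefficients separately for the two signs of $|\u n|-2n_j+2\beta$. This gives $|A_{\u n}|\le 10\max\{\Psi_{N-1},\Psi_{N-2}\}$ once $n_j\ge 2\beta+1$, for example whenever $N\ge 6\beta+3$. The axis points, where $n_k=n_\ell=0$, are controlled by (\ref{AB=@N}). Together these give $\max_{S_N}|A|\lesssim 10^N$, which is where $1/10$ comes from. Your treatment of $B$ via (\ref{all=B@N}) and summation along coordinate lines is fine.
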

\begin{proof}
Let $N\in\N $ and $\u n\in S_N$. Replacing the $\u n$ in (\ref{+-@N}) by $\u n-e_j$, we get
  $$  (P^jM^{\ell} A)_{\u n }=-((P^k+Q^j)M^jA)_{\u n-e_j}+(Q^kTA)_{\u n-e_j-e_k}.$$
  Using (\ref{PQ}) and (\ref{DT@N}) to expand the above formula, we get
  \BGE n_j(|\u n|-2n_{\ell}+2\beta-1) A_{\u n}=-(n_j+n_k+\beta-1)(|\u n|-2n_j+2\beta)A_{\u n-e_j} +(n_k+\beta-1)(|\u n|-1)A_{\u n-e_j-e_k}.\label{factors}\EDE
Suppose $\{j, k, {\ell}\}=[3]$ and $n_j\ge n_k\ge n_{\ell}$. If $n_k=0$ then $n_{\ell}=0$, and so $|\u n|=n_j$. From (\ref{AB=@N}), (\ref{Stirling}) and the symmetry of $A$ we find that there exists $C_*>0$ such that
\BGE |A_{\u n}|\lesssim C_*(|\u n|+1)^{-3\beta},\quad \text{if }n_k=0.\label{trivial}\EDE

  Suppose $n_k\ge 1$. Then all factors other than $|\u n|-2n_j+2\beta$ in (\ref{factors}) are positive. We consider two cases. Case 1. $|\u n|-2n_j+2\beta\le 0$. In this case, we have
  $$n_j(|\u n|-2n_{\ell}+2\beta-1) |A_{\u n}| \le (n_j+n_k+\beta-1)(2n_j-|\u n|-2\beta)\Psi_{N-1}+(n_k+\beta-1)(|\u n|-1) \Psi_{N-2}$$
  $$\le [ (n_j+n_k+\beta-1)(2n_j-|\u n|-2\beta)+(n_k+\beta-1)(|\u n|-1) ]\max\{\Psi_{N-1},\Psi_{N-2}\}.$$
  A direct calculation gives that the coefficient of $\max\{\Psi_{N-1},\Psi_{N-2}\}$ is no more than that of $|A_{\u n}|$. So we get
  \BGE |A_{\u n}|\le \max\{\Psi_{N-1},\Psi_{N-2}\},\quad\text{ if }\u n\in S_N\text{, }n_k\ge 1\text{ and }2n_j\ge |\u n|+2\beta.\label{Good}\EDE
  Case 2.  $|\u n|-2n_j+2\beta> 0$. In this case, we have
  $$n_j(|\u n|-2n_{\ell}+2\beta-1) |A_{\u n}| \le (n_j+n_k+\beta-1)(|\u n|+2\beta-2n_j)\Psi_{N-1}+(n_k+\beta-1)(|\u n|-1) \Psi_{N-2}$$
  $$\le [ (n_j+n_k+\beta-1)(n_k+n_{\ell}+2\beta-n_j)+(n_k+\beta-1)(|\u n|-1) ]\max\{\Psi_{N-1},\Psi_{N-2}\}.$$
 Since $n_k,n_{\ell}\le n_j$ and $|\u n|\le 3 n_j$, the coefficient of $\max\{\Psi_{N-1},\Psi_{N-2}\}$ is bounded above by
 \BGE (2n_j+\beta-1)(n_j+2\beta)+(n_j+\beta-1)(3 n_j-1) =5 n_j^2+(8\beta-5) n_j+(2\beta-1)(\beta-1). \label{ubpd}\EDE
Suppose $n_j\ge 2\beta+1$. Then $n_j\ge 2$ since $\beta>0$. Note that
$$5 n_j^2+(8\beta-5) n_j=5n_j^2+4(2\beta+1)n_j-9n_j\le 9n_j^2-9n_j.$$
 When $\beta\ge 1$, $(2\beta-1)(\beta-1)\le (2\beta-1)(2\beta-2)\le n_j(n_j-1)$; when $\beta\in (\frac 12,1)$, $(2\beta-1)(\beta-1)<0<n_j(n_j-1)$; when $\beta\in (0,1/2)$, $(2\beta-1)(\beta-1)=(1-2\beta)(1-\beta)<1<n_j(n_j-1)$ since $n_j\ge 2$.
Combining these estimates with (\ref{ubpd}) we see that the coefficient of $\max\{\Psi_{N-1},\Psi_{N-2}\}$ is bounded above by $10n_j^2-10 n_j$.  On the other hand, since $\beta>0$, the coefficient of $|A_{\u n}|$ is $n_j(n_j+n_k-n_{\ell}+2\beta-1)\ge n_j(n_j-1)$. Thus,
  \BGE  |A_{\u n}|\le 10 \max\{\Psi_{N-1},\Psi_{N-2}\},\quad\text{ if }\u n\in S_N\text{, }n_j\ge 2\beta+1\text{, }n_k\ge 1\text{ and }2n_j\le |\u n|+2\beta.\label{Bad}\EDE
  Let $\Phi_N=\max\{\Psi_N,\Psi_{N-1}\}$. Combining (\ref{trivial}), (\ref{Good}) and (\ref{Bad}) and using that $n_j\ge |\u n|/3$, we see that, when $N\ge 6\beta+3$,  $\Phi_N\le 10 \max\{C_* , \Phi_{N-1}\}$.  Let $N_1=\lceil 6\beta+3\rceil$.
 An induction shows that $\Phi_N\le  10^{N-N_1+1}(C_*+\Phi_{N_1-1})$  if $N\ge N_1$. Thus, if $R<\frac 1{10}$,   $\sum_{\u n\in \N_0^3} |A_{\u n}| R^{|\u n|} \le \sum_{N=0}^\infty |S_N| \Phi_N R^N  <\infty$.
 Using (\ref{all=B@N}), we first obtain the same polynomial-times-$10^N$ bound for
the discrete derivatives $\nabla_{\ell} B$ on each shell. Since $B=0$ on
$\mathbb Z^3\setminus \mathbb N_0^3$, summing these differences along coordinate
lines shows that $10^{-N}\max_{n\in S_N}|B_n|$ also has polynomial growth in $N$. Thus, we also have $\sum |B_{\u n}| R^{|\u n|}<\infty$ for each $R\in (0,\frac 1{10})$. The remaining statements of the theorem follow easily from the above convergence and the discussion after (\ref{FG}).
\end{proof}

\begin{Remark}
A little more work could improve the lower bound for the local polyradius
from \(1/10\) to \(1/5\).  In general, we do not expect that the power series converges in $B(0,1)^3$. In fact, when $\beta=1$, the power series does not converge in $B(0,R)^3$ if $R> 1/3$. See Remark \ref{Rem:extension}.
\end{Remark}

\subsection{Pfaff systems}\label{subsection-Pfaff}

To continue the local solution in Proposition \ref{convergence@N} beyond the initial polydisc $B(0,\frac 1{10})^3$, we now derive Pfaff systems for the neighbor pattern.

We will use subscripts to denote partial derivatives.
Suppose $F$ and $G$ are symmetric smooth functions satisfying  (\ref{rainbow1}) and (\ref{neighbor2}). We will show that, for any ${\ell}\in[3]$, $G_{\ell}$  can be expressed as a linear combination of $F$, $G$, and $F_k$, $k\in [3]$,  with coefficients rational in $r_1,r_2,r_3$. By symmetry, we consider the case ${\ell}=3$.

Define the following discriminant polynomial for the neighbor pattern:
\BGE \Delta (\u r):=(1-r_1r_2-r_1r_3-r_2r_3)^2-4r_1r_2r_3(r_1+r_2+r_3-2). \label{Delta-nb}\EDE
It is symmetric in $r_1,r_2,r_3$ and has boundary values:
\BGE \Delta |_{r_j=0}=(1-r_{j-1}r_{j+1})^2,\quad \Delta |_{r_j=1}=(1-r_{j-1})^2(1-r_{j+1})^2,\quad j\in [3],\label{Delta-nb-boundary}\EDE
where $r_0:=r_3$ and $r_4:=r_1$.
For ${\ell}\in [3]$, define
\BGE K_{\ell}(\u r)=r_{\ell}(r_{{\ell}+1}+r_{{\ell}-1})-(1+r_{{\ell}+1}r_{{\ell}-1}),\quad K_{\ell}^+(\u r)=K_{\ell}(\u r)+2(1-r_{\ell}).\label{Kl}\EDE

Let $t_j=1-r_j$, $j\in [3]$. Then
\BGE \Delta=t_1^2 t_2^2+t_2^2 t_3^2+t_3^2 t_1^2+2(2-t_1-t_2-t_3) t_1t_2 t_3  \label{Delta-tj}\EDE
$$=\frac 12\sum_{j\in [3]} t_j^2 (t_{j+1}-t_{j-1})^2 +(4-\sum_{j\in [3]} t_j ) \prod_{j\in [3]} t_j  >0,\quad \text{if }\u r\in [0,1)^3.$$
\BGE K_{\ell}^+= t_{\ell}(t_{{\ell}+1}+t_{{\ell}-1})-t_{{\ell}+1}t_{{\ell}-1},\quad K_{\ell}=K_{\ell}^+-2t_{\ell}, \quad {\ell}\in [3].\label{K-tj}\EDE

Setting ${\ell}=2$ in (\ref{rainbow1}) and then differentiating it w.r.t.\ $r_3$, we get
\BGE r_2t_2F_{223}-t_1t_3 G_3=-t_1G+\beta(1-\beta)F_3-(2\beta-2r_2) F_{23}. \label{2231@R}\EDE
Setting ${\ell}=1$ in (\ref{neighbor2}) and then differentiating it w.r.t.\ $r_2$, we get
\BGE r_2r_3 F_{223}-\frac {K_1}2 G_2=\frac 12(r_1 -r_3)G-\frac\beta 2 r_2 F_{22}-\frac{\beta+\beta^2} 2 F_2-\frac{\beta+2}2 r_3 F_{23} -\frac\beta2 r_1 F_{12}.\label{2232@N}\EDE
$t_2\times (\ref{2232@N}) - r_3\times (\ref{2231@R})$  yields:
$$ r_3t_1 t_3 G_3-\frac {K _1}2 t_2 G_2= [r_3t_1-\frac 12t_2(r_3-r_1)]G -\beta(1-\beta) r_3 F_3$$
\BGE-\frac{\beta+\beta^2} 2t_2F_2-\frac\beta2r_1t_2F_{12}-\frac\beta 2 r_2t_2 F_{22} +[2(\beta-r_2)r_3-\frac{\beta+2}2 r_3t_2]F_{23}.
\label{G231@N}\EDE
Since $F$, $G$, and $K _1$ are symmetric in $r_2$ and $r_3$, by swapping the roles of $r_2,t_2$ with those of $r_3,t_3$, we get  $$ r_2t_1t_2 G_2-\frac {K _1}2 t_3 G_3= [r_2t_1-\frac 12t_3(r_2-r_1)]G -\beta(1-\beta) r_2 F_2$$
\BGE-\frac{\beta+\beta^2} 2t_3F_3-\frac\beta2r_1t_3F_{13}-\frac\beta 2 r_3t_3 F_{33} +[2(\beta-r_3)r_2-\frac{\beta+2}2 r_2t_3]F_{23}.
\label{G232@N}\EDE
$-4r_2t_1\times (\ref{G231@N}) -2  {K _1}  \times (\ref{G232@N})$ together with the identity $K_1^2-4r_2 r_3 t_1^2=\Delta$ yields
\begin{align}
  &t_3\Delta G_3\nonumber\\
  =&-{t_3} ((r_1-r_2)^2 r_3 -r_1^2r_2-r_1r_2^2+4r_1r_2-r_1-r_2)G\nonumber\\
   &-2{\beta r_2}((\beta-1) r_3(r_1-r_2)+(\beta+1)(r_1+r_2) -2r_1r_2 -2\beta)F_2\nonumber\\
  &-\beta [(\beta+1)(1-r_1r_2-r_3-r_1r_3+r_1r_3^2-r_2r_3^2)+(5-3\beta)r_1r_2r_3+(5\beta-3) r_2 r_3]F_3\nonumber\\
  &+2\beta  r_1r_2t_1t_2F_{12}+\beta  r_1t_3(r_1r_2+r_1r_3-r_2r_3-1)F_{13}\nonumber\\
  &+ 2\beta t_1r_2^2t_2F_{22}+\beta  r_3t_3(r_1r_2+r_1r_3-r_2r_3-1) F_{33}\nonumber\\
  &+{r_2} [(\beta-2)(r_1r_2r_3+r_2r_3^2-r_1r_3^2)+(3\beta-2)(r_1r_3-r_1r_2+1)\nonumber\\
  &\qquad+(\beta+2)r_2r_3-(5\beta-2)r_3 ] F_{23}. \label{G3=1@N}
\end{align}

Then we use (\ref{rainbow1}) and (\ref{neighbor2}) to express $F_{22}$, $F_{33}$, $F_{12}$, $F_{13}$, and $F_{23}$ in terms of $G$, $F$, and $F_j$, $j\in [3]$, and substitute them back into (\ref{G3=1@N}) to obtain, for ${\ell}=3$,
\BGE   G_{\ell}=  \sum_{f\in \mathcal S^{\text{Pf}}} C^{({\ell})}_f f ,\label{G3=2@N}\EDE
where $\mathcal S^{\text{Pf}}:=\{F_1,F_2,F_3,F,G\}$, and for $\{j,k\}=[3]\sem \{{\ell}\}$,
\begin{align}
 C^{({\ell})}_{F_j}&:= -\beta(1-\beta)\frac{r_j K _k}{r_{\ell} \Delta}=2\beta(1-\beta)\frac{r_jt_k }{r_{\ell} \Delta} -\beta(1-\beta)\frac{r_j K _k^+}{r_{\ell} \Delta} ,\label{CnbF1}\\
 C^{({\ell})}_{F_{\ell}}&:=-2 \beta(1-\beta) \frac{r_{\ell} t_j t_k}{t_{\ell} \Delta} ,\label{CnbF3}\\
 C^{({\ell})}_F&:= \beta^2(1-\beta)\frac{K_{\ell}^+}{t_{\ell} r_{\ell} \Delta} ,\label{CnbF}\\
 C^{({\ell})}_G&:=-\frac \beta {r_{\ell} t_{\ell}} -\frac{Q^{({\ell})}}{r_{\ell} \Delta}, \label{CnbG}
\end{align}
where
\BGE Q^{({\ell})}:=2r_{\ell}^2(r_{j}-r_{k})^2-r_{\ell}(r_{j}+r_{k})(1+r_{j} r_{k})+4 r_{\ell} r_j r_k-(1-r_{j}r_{k})^2\label{CnbQ}\EDE
\BGE =t_{\ell}(2t_{\ell} -3)(t_j-t_k)^2+t_lt_jt_k(2-t_j-t_k)-t_jt_k(3(2-t_j-t_k)+t_j t_k). \label{Q-tj}\EDE
By the symmetry of $F$ and $G$, (\ref{G3=2@N}) holds for any $\{j,k,{\ell}\}=[3]$.

For rational functions $R_k$, $k\in[5]$, in $\u r$,  define   ${Y}=[\begin{array}{ccccc} R_1F_1 & R_2 F_2 & R_3F_3 & R_4F  & R_5G
\end{array}]^{\text T}$. Using (\ref{rainbow1}), (\ref{neighbor2}) and (\ref{G3=2@N}), we see that, for any ${\ell}\in[3]$, we get a Pfaff equation $\pa_{r_{\ell}} {Y}=M_lY$, where $M_{\ell}$ is a $5\times 5$ matrix function whose entries are rational functions of $\u r$. Together these form a Pfaff system (see, e.g., \cite{Aomoto-Kita}).

\begin{Remark}
 If  $F$ and $G$ satisfy  (\ref{rainbow1}) and (\ref{rainbow2}) for the rainbow pattern,  we may derive a formula for $G_{\ell}$ of the same form as (\ref{G3=2@N}) but with different formulas for the coefficients $C^{({\ell})}_f$: if $\{j,k,{\ell}\}=[3]$,
 $$ C^{({\ell})}_{F_j} =-\beta(1-\beta)\frac{r_jK _k}{r_{\ell}\Delta^{[\text{R}]} },\,  C^{({\ell})}_{F_{\ell}} =-2 \beta(1-\beta) \frac{r_lt_jt_k}{t_{\ell}\Delta^{[\text{R}]} }, \,  C^{({\ell})}_F = \beta^2(1-\beta) \frac{K^+_{\ell} }{t_{\ell} r_{\ell}\Delta^{[\text{R}]} },\,  C^{({\ell})}_G =-\frac\beta {r_lt_{\ell}}- \frac{Q^{({\ell})}}{r_{\ell}\Delta^{[\text{R}]}},$$
where $\Delta^{[\text{R}]}$ is the discriminant polynomial for the rainbow pattern defined by
$$
\Delta^{[\text{R}]}(r_1,r_2,r_3)
:=
(r_1r_2r_3-r_1-r_2-r_3+2)^2
-
4(1-r_1)(1-r_2)(1-r_3).  
$$
and $K_j$, $K^+_j$, and $Q^{({\ell})}$ now have expressions:
$$  K_j=r_1r_2r_3-r_1-r_2-r_3+2r_j;\quad K^+_j=K_j+2 t_j-2 t_{j+1} t_{j-1};$$
$$Q^{({\ell})}=2r_{\ell}^2(1-r_{{\ell}+1}r_{{\ell}-1})^2-r_{\ell}(r_{{\ell}+1}+r_{{\ell}-1})(1+r_{{\ell}+1} r_{{\ell}-1})-(r_{{\ell}+1}-r_{{\ell}-1})^2+4 r_1 r_2 r_3.$$
Such $\Delta^{[\text{R}]} $ has zeros in $(0,1)^3$, and so the Pfaff system for the rainbow pattern is not as useful as that for the neighbor pattern. The sign of $\Delta^{[\text{R}]}$ has the following physical meaning.
  Let
\(
\alpha=\{\{a_j,b_j\}:j\in[3]\}\in\mathrm{LP}_3
\)
be a rainbow pattern. Then $$(
C_{\{x_{a_1},x_{b_1}\},\{x_{a_3},x_{b_3}\}},
C_{\{x_{a_2},x_{b_2}\},\{x_{a_3},x_{b_3}\}},
C_{\{x_{a_1},x_{b_1}\},\{x_{a_2},x_{b_2}\}}
)\in \{\u r\in(0,1)^3: \Delta^{[\text{R}]}(\u r)\ge 0\}.$$
In fact, for $\u r\in [0,1)^3$, $\Delta(\u r)>0$ iff $\log(\frac{1+\sqrt{r_j}}{1-\sqrt{r_j}})$, $j\in [3]$, form the three sides of a triangle.
\end{Remark}

\subsection{Analytic extension to a neighborhood of $[0,1)^3$} \label{subsection-analytic-extension}


With the Pfaff systems from the previous subsection in hand, we now
analytically continue the local power-series solution.  More precisely, we
will show that \(F\) and \(G\) are analytic on \([0,1)^3\).  Here and below,
we use the convention that a function \(f\) is said to be analytic on a
subset \(S\subset\mathbb R^3\) if \(f\) extends analytically to an open
subset \(U\subset\mathbb C^3\) containing \(S\).

\begin{Theorem}
  The analytic functions $F$ and $G$ on $B(0,1/10)^3$ defined in Proposition \ref{convergence@N} extend analytically to a domain in $\C^3$ containing $[0,1)^3$. Moreover, the PDEs (\ref{rainbow1}) and (\ref{neighbor2}) hold in this domain. \label{extension}
\end{Theorem}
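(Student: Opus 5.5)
We plan to continue the local solution using the Pfaff system of Section~\ref{subsection-Pfaff}. Set $Y=[F_1,F_2,F_3,F,G]^{\mathrm T}$. On $B(0,1/10)^3$, Proposition~\ref{convergence@N} says that $F,G$ satisfy (\ref{rainbow1}) and (\ref{neighbor2}). Hence, by the derivation leading to (\ref{G3=2@N}), $Y$ satisfies $\pa_{r_\ell}Y=M_\ell Y$ for $\ell\in[3]$. The entries of $M_\ell$ are rational functions whose denominators are products of $r_j$, $t_j=1-r_j$, and $\Delta$. Concretely, $F_{\ell\ell}$ comes from (\ref{rainbow1}) divided by $r_\ell t_\ell$, $F_{jk}$ comes from (\ref{neighbor2}) divided by $2r_jr_k$, and $G_\ell$ comes from (\ref{CnbF1})--(\ref{CnbG}). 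The polar locus is therefore contained in
\[
\Sigma:=\{r_1r_2r_3=0\}\cup\{r_1r_2r_3t_1t_2t_3\Delta=0\}.
\]
By (\ref{Delta-tj}), $\Delta>0$ on $[0,1)^3$, and $t_j>0$ there. So the only obstruction on $[0,1)^3$ is the coordinate hyperplanes $r_j=0$.

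\textbf{Step 1: the analytic core.} Let $D:=\{\u r\in\C^3:\ \Re r_j\in(0,1),\ \text{small }|\Im r_j|,\ \Delta\ne0\}$, chosen as a simply connected neighborhood of $(0,1)^3$ that avoids $\Sigma$. We first verify that the system is integrable, i.e.\ $\pa_k M_\ell-\pa_\ell M_k+[M_\ell,M_k]=0$. This should follow formally: locally near the origin, the space of $(F,G)$ solving (\ref{rainbow1}) and (\ref{neighbor2}) is parametrized by the values of $Y$ at a point, or alternatively one checks it by direct symbolic computation. Granting integrability, the Frobenius theorem extends $Y$ analytically along any path in the simply connected domain $D$. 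Starting from the overlap $D\cap B(0,1/10)^3\neq\emptyset$, we obtain a single-valued analytic extension of $Y$, hence of $F$ and $G$, to $D$. We must still check that the first three components of the extension remain the gradient of the fourth. This holds because the relations $\pa_kF=Y_k$ and $\pa_jY_k=\pa_kY_j$ are built into $M$ and persist by the identity theorem. The PDEs continue to hold by the same argument.

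\textbf{Step 2: the faces $r_j=0$ (main obstacle).} The hyperplanes $r_j=0$ are genuine singular loci of the Pfaff system: the indicial exponents in $r_\ell$ from (\ref{rainbow1}) are $0$ and $1-2\beta$. The analytic branch nevertheless continues across them, because $F$ is already analytic at $r_\ell=0$ near the origin.

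I would prove analyticity near a boundary point $\u r^0\in[0,1)^3$ with some coordinates zero by induction on the number of zero coordinates. Say $r^0_3=0$ and $r^0_1,r^0_2\in(0,1)$. Restricted to a transversal line in $r_3$, the system gives a regular singular ODE at $r_3=0$ whose local solution space splits into an analytic piece and an $r_3^{1-2\beta}$ piece (with possible logarithms at resonances). I would use the ODE result of Appendix~\ref{section:appendix} to conclude that analyticity on an open subset of the face propagates along the face.

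On the face itself, the restriction $r_3=0$ is governed by $F_0=G_2^{\mathrm{Horn}}$ and $G_0$ from Proposition~\ref{Horn}. These are analytic on $(-1,1)^2$, and they satisfy the two-variable Pfaff subsystem, whose denominators $r_1r_2t_1t_2(1-r_1r_2)$ do not vanish on $(0,1)^2$. Similarly, the coefficients of the power series in $r_3$ satisfy recursive ODEs in $(r_1,r_2)$ whose coefficients are analytic on $[0,1)^2$. Hence the coefficients of the analytic part extend, and an a priori bound in the style of Cauchy--Kovalevskaya gives a uniform radius of convergence in $r_3$. The vanishing of the $r_3^{1-2\beta}$ component near the origin then persists by connectedness of the face.

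Edges and the corner are handled the same way, using the one-variable data $A_{(n,0,0)}$ and the Horn data. Finally, we glue the local extensions by uniqueness to obtain a neighborhood of $[0,1)^3$.

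The hardest point is the face propagation for non-generic $\beta$, where the exponents $0$ and $1-2\beta$ differ by an integer and logarithmic terms may appear. There I would first try to show directly that the formal series in $r_3$, with coefficients in analytic functions of $(r_1,r_2)$, is uniquely determined and convergent, rather than relying on an exponent-splitting argument.
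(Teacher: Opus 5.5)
Your proposal has two gaps, and the paper's proof avoids both with a simpler mechanism.

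First, Step~1 rests on the flatness condition $\partial_kM_\ell-\partial_\ell M_k+[M_\ell,M_k]=0$, which you do not establish. The justification you offer is circular. Saying that local solutions are parametrized by the value of $Y$ at a point is equivalent to flatness. Moreover, near the origin Theorem~\ref{Theorem-AB@N} shows that the analytic solutions form only a one-dimensional family, so nothing can be read off there. The paper never needs flatness. It continues in one coordinate at a time, with the other two coordinates as analytic parameters, using the parametrized ODE result of Appendix~\ref{section:appendix}. At each stage only the single equation $\partial_\ell Y=M^{(\ell)}Y$ is integrated. The identity theorem then shows that the continued $F,G$ still satisfy (\ref{rainbow1}) and (\ref{neighbor2}) on the enlarged connected domain, which supplies the Pfaff equation in the next coordinate.

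Second, and more seriously, Step~2 is not a proof. You leave the resonant case open, and the non-resonant case rests on an unsubstantiated Cauchy--Kovalevskaya-type bound. Even granting flatness, your domain $D$ only approaches the faces from the side $\operatorname{Re} r_j>0$. The missing idea is that the faces never have to be crossed.

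In the paper, $\varepsilon$ is chosen so that $\Delta\neq0$ on $B([0,L],\varepsilon)^3$. One then solves in $r_3$ over the slit domain $V=B([0,L],\varepsilon)\setminus[-\varepsilon,0]$, starting from $r_3=\varepsilon/2$. The part near $r_3=0$ is already covered by the initial polydisc, so the singular point of the continuation variable is never approached. The parameters $(r_1,r_2)$ range over $(B(0,\varepsilon)\setminus\{0\})^2$. This set is not simply connected, but the appendix result still gives joint analyticity, so no monodromy issue arises.

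The apparent singularities at $r_1=0$ and $r_2=0$ are then removed directly. The negative Laurent coefficients in $r_1$ are analytic in $(r_2,r_3)$. They vanish for small $r_3$, where $F$ is already analytic, and hence vanish identically by the identity theorem. Repeating this cyclically in $r_2$ and then $r_1$ gives analyticity on $B([0,L],\varepsilon)^3$. The exponents $0$ and $1-2\beta$, possible logarithms, and the Horn data never enter.
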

\begin{proof}
By the identity theorem, it suffices to show that, for any $L\in (0,1)$, $F$ and $G$ extend analytically to a domain in $\C^3$ containing $[0,L]^3$.
Fix some $L\in (0,1)$. For $S\subset \C$, we will use $B(S,r)$ to denote $\bigcup_{z\in S} B(z,r)$.

Using (\ref{rainbow1}), (\ref{neighbor2}), (\ref{G3=2@N}) and the symmetry of $F$ and $G$, we construct the following Pfaff vector: ${Y}:=[\begin{array}{ccccc} F_1 & F_2 & F_3 & F & G
\end{array}]^{\text T}$, which satisfies, for any ${\ell}\in [3]$,
\BGE \pa_{\ell} {Y}=M^{({\ell})} {Y}\quad \text{on }B(0,1/10)^3\sem\Big(\{\text{zeros of }\Delta\}\cup\bigcup_{j=1}^3\{r_j=0\}\Big).\label{pa3Y}\EDE
where $M^{({\ell})}=(M^{({\ell})}_{jk})_{j,k=1}^5$  is a $5\times 5$ matrix function  such that each $M_{jk}$  can be written with denominator dividing  $(1-r_{\ell})r_1r_2r_3 \Delta(\u r)$.

Since $\Delta>0$ on $[0,1)^3$ and $L\in (0,1)$, there is $\eps\in (0,(1/10)\wedge (1-L))$ such that $\Delta$ has no zero in $B([0,L],\eps)^3$. Let $B_*(z,r)=B(z,r)\sem \{z\}$. Let $U=B_*(0,\eps) ^2$ and $V=B([0,L],\eps)\sem [-\eps,0]$. Since $\prod_{j\in[3]}(r_j(1-r_j)^2) \Delta(\u r) $ has no zeros in $U\times V$, $M^{(3)}$ is analytic in $U\times V$. Let $z_0=\eps/2\in V$ and $\phi(\cdot,\cdot)={Y}(\cdot,\cdot,z_0)$. Then $\phi$ is analytic on $U$. Since $V$ is a simply connected domain, by Proposition \ref{ODE-complex}, the equation
 $\pa_3\til {Y}=M^{(3)}\til {Y}$ with $\til {Y}(\cdot,z_0)=\phi$
has a unique analytic solution in $U\times V$. By the uniqueness part in Proposition \ref{ODE-complex} applied to $U':=B_*(0,\eps)^2$ and $V':=B(0,\eps)\sem [-\eps,0]$, we get $\til Y=Y$ on $U'\times V'$.   Since $\til Y$ and $Y$ are analytic on the domain $D:= (U\times V)\cap B(0,1/10)^3\supset U'\times V'$,  the identity theorem implies that $\til Y=Y$ on $D$.  Thus,  ${Y}$ extends analytically to $(U\times V)\cup B(0,1/10)^3$, which contains $B(0,\eps)^3\cup(B_*(0,\eps)^2\times B([0,L],\eps))$. Since $F$ and $G$ are components of $Y$, they also extend analytically to $B(0,\eps)^3\cup (B_*(0,\eps)^2\times B([0,L],\eps))$.

At $r_1=0$, $F$ has Laurent series expansion: $\sum_{n=-\infty}^\infty a_n(r_2,r_3) r_1^n$, where each $a_n$ is analytic in $B_*(0,\eps)\times B([0,L],\eps)$.
Since \(F\) is analytic in \(B(0,\epsilon)^3\), the negative Laurent
coefficients vanish for \((r_2,r_3)\in B_*(0,\epsilon)\times B(0,\epsilon)\).
By the identity theorem in the variables \((r_2,r_3)\), they vanish on
\(B_*(0,\epsilon)\times B([0,L],\epsilon)\). Thus, $F$ extends analytically to $B(0,\eps)^3\cup (B(0,\eps)\times B_*(0,\eps)\times B([0,L],\eps))$. A similar argument shows that $F$ further extends analytically to $\Omega_1:=B(0,\eps)^2\times B([0,L],\eps)$. Similarly, $G$ also extends analytically to $\Omega_1$.

By the identity theorem, $F$ and $G$ also satisfy  the PDEs (\ref{rainbow1}) and (\ref{neighbor2})  in $\Omega_1$. Define $Y=[\begin{array}{ccccc}F_1 & F_2 & F_3 & F & G
\end{array}]^{\text T}$ on $\Omega_1$. Then $Y$ satisfies $\pa_2 Y=M^{(2)} Y$ in $\Omega_1$.

We now repeat the preceding continuation argument cyclically. For the equation
$\pa_2Y=M^{(2)}Y$, we first work on a punctured domain on which the entries of
$M^{(2)}$ are analytic, apply Proposition~\ref{ODE-complex}  to the \(r_2\)-equation, with \((r_1,r_3)\) treated as parameters, and then remove the apparent singularities along
the coordinate hyperplanes $r_1=0$ and $r_3=0$ by Laurent expansions and the identity
theorem. In this way, $Y$, and hence $F$ and $G$, extend analytically to
\(
\Omega_2:=B(0,\eps)\times B([0,L],\eps)^2
\).
By the identity theorem, $F$ and $G$ still satisfy (\ref{rainbow1}) and
(\ref{neighbor2}) in $\Omega_2$. Applying the same punctured-domain argument once
more to the equation $\pa_1Y=M^{(1)}Y$, and then removing the singularities in $r_2=0$ and $r_3=0$, we
finally obtain analytic extensions of $F$ and $G$ to
\(
\Omega_3:=B([0,L],\eps)^3
\).
The proof is now complete.
\end{proof}


\begin{Remark}
The following observation will be used in Section \ref{section:integer}.
Since the functions $F$ and $G$ from Theorems \ref{convergence@R*} and \ref{extension} satisfy (\ref{rainbow1}),  the function $F$  satisfies
\BGE {\mathcal H}_j F={\mathcal H}_k F,\quad \forall j,k\in[3],\label{Ljk-PDE}\EDE
where now $ {\mathcal H}_j:=(1-r_j)\L_{r_j}$,
where $\L_{r_j}$ is defined in (\ref{hyperF21}) with $x=r_j$.
On the other hand, if $F$ is analytic at the origin, then the coefficients of its power series, extended by zero outside $\N_0^3$,  satisfy (\ref{Ljk=}) iff $F$ satisfies (\ref{Ljk-PDE}).
\label{Remark-Hj}
\end{Remark}



\section{Continuous Extension to $[0,1]^3$ in the Neighbor Case} \label{section:continuous}
In this section, \(F\) and \(G\) denote the functions obtained in
Theorem~\ref{extension}, restricted to \([0,1)^3\). We use subscripts to
denote partial derivatives of \(F\) and \(G\), and   use the convention
$t_j=1-r_j$, $j\in [3]$.
Our goal is to prove that \(F\) extends continuously to \([0,1]^3\)  {for every $\beta>1/2$}.
 {In Sections \ref{Section-6.1}-\ref{Section-6.3},}
 {a}part from a few preliminary estimates that are stated in the larger range \(\beta>1/2\), we work under the standing assumption \(\beta\ge 2/3\).  {The remaining range $1/2<\beta<2/3$ is treated separately in Section \ref{Section-6.4}.}
All implicit constants in \(\lesssim\), \(\gtrsim\), or $O(\cdot)$ are allowed to depend on \(\beta\). When an implicit constant depends on an additional parameter, say \(L\), we write \(\lesssim_L\), \(\gtrsim_L\), or $O_L(\cdot)$. Recall the $f_\beta$ defined in (\ref{2F1>0}). We also set
\BGE \alpha:=2\beta-1>0,\quad \gamma:=\alpha\wedge 1\in (0,1]. \label{gamma}\EDE

\begin{Theorem}
Let \(\beta\ge 2/3\). Then \(F\) extends continuously to \([0,1]^3\), and for each \({\ell}\in[3]\), its restriction of this continuous extension to the side face \(\{r_{\ell}=1\}\) is given by \BGE
F|_{r_{\ell}=1}
=
\frac{\Gamma(2\beta)\Gamma(3\beta-1)}
{\Gamma(\beta)\Gamma(4\beta-1)}
\prod_{j\in[3]\setminus\{{\ell}\}}
f_\beta (r_j).
\label{F-boundary}
\EDE
\label{Thm-coninuity-neighbor}
\end{Theorem}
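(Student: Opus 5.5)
We work from the analytic continuation of Theorem~\ref{extension} and the first-order system in the transverse direction that the Pfaff vector $Y=(F_1,F_2,F_3,F,G)^{\mathrm T}$ satisfies. By symmetry it suffices to treat approach to the side face $\{r_3=1\}$, uniformly on compact pieces, and then the edges and the triple corner.

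\emph{Side faces.} Fix $L\in(0,1)$ and restrict to $(r_1,r_2)\in[0,L]^2$. There $\Delta$ does not vanish up to $r_3=1$, since by (\ref{Delta-nb-boundary}) $\Delta|_{r_3=1}=t_1^2t_2^2>0$. The Pfaff equation $\partial_3Y=M^{(3)}Y$ therefore has at most a simple pole at $t_3=0$ (regular singular point), coming from the factors $1/t_3$ in (\ref{CnbF3}), (\ref{CnbF}), (\ref{CnbG}) and from (\ref{rainbow1}) with $\ell=3$. We compute the residue matrix. Its relevant exponents should be $0$ and $1-2\beta$, namely the local exponents of $\mathcal L_{r_3}$ at $r_3=1$ for the parameters $(\beta,1-\beta;2\beta)$, whose $c-a-b=2\beta-1=\alpha$. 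Consequently, near $t_3=0$,
\[
F=\Phi_0(r_1,r_2,t_3)+t_3^{\alpha}\,\Phi_1(r_1,r_2,t_3),
\]
with $\Phi_0,\Phi_1$ analytic, possibly with logarithmic corrections when $\alpha\in\mathbb N$. Since $\alpha>0$, $F$ extends continuously to $[0,L]^2\times[0,1]$ with boundary value $\Phi_0(r_1,r_2,0)$.

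Rather than solving the connection problem exactly, we make the normal-direction argument quantitative. Write the system for the components and estimate $|F_3|\lesssim_L t_3^{\gamma-1}$ by a Gronwall argument on the ODE in $t_3$. This gives integrability of $F_3$ in $t_3$ and hence the limit; this is where $\gamma=\alpha\wedge1$ enters. To identify the limit, restrict (\ref{rainbow1}) for $\ell=1,2$ to $r_3=1$. There the right side has the factor $t_3$, so it vanishes provided $t_3G\to0$, which we establish alongside the $F_3$ estimate using (\ref{CnbG}). We obtain $\mathcal L_{r_j}F(\cdot,\cdot,1)=0$ for $j=1,2$, so $F(r_1,r_2,1)=c\,\phi(r_1)\psi(r_2)$ with each factor a solution of the hypergeometric equation. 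Boundedness at $r_j=0$ (the other exponent there is $1-2\beta<0$) forces $F(r_1,r_2,1)=c\,f_\beta(r_1)f_\beta(r_2)$. The constant $c$ is read off at $r_1=r_2=0$ from Proposition~\ref{Horn}(i), giving $F(0,0,1)=\frac{\Gamma(2\beta)\Gamma(3\beta-1)}{\Gamma(\beta)\Gamma(4\beta-1)}$, which yields (\ref{F-boundary}). Using Proposition~\ref{Horn} at the face $r_1=0$ requires continuity to be uniform down to $r_1=0$, which is why we work on $[0,L]^2$ including $0$.

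\emph{Edges and corner: the main obstacle.} Near the edge $\{r_2=r_3=1\}$ and the corner $\mathbf 1$, $\Delta$ degenerates: $\Delta\approx t_2^2t_3^2+\dots$, so the Pfaff system is no longer regular singular in one variable. We propagate continuity from the faces inward using the now-known boundary values. Since $F$ is continuous on each closed face minus the edges, and the face data $f_\beta(r_1)f_\beta(r_2)$ extend continuously to $[0,1]^2$ because $\beta>1/2$, we estimate the oscillation of $F$ along paths moving in $t_3$ with $t_2$ fixed small. The derivative bound from the normal-direction step must be made uniform in $t_2$: we aim for $|F_3|\lesssim t_3^{\gamma-1}(t_2+t_3)^{-\epsilon}$ with $\epsilon$ small, using the weighted form (\ref{Delta-tj}), $\Delta\asymp(t_2+t_3)^2\,\cdots$, to bound the coefficients of $M^{(3)}$. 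Then $\int_0^{\delta}|F_3|\,dt_3\to0$ as $\delta\to0$ uniformly, and continuity at the edge and corner follows. The standing assumption $\beta\ge2/3$ is where this uniform bound is expected to close, with exponents like $3\beta-1$ versus $1$ suggesting the threshold. We expect this corner-propagation estimate, controlling the coefficients of $M^{(3)}$ near $\Delta$'s degenerate locus, to be the hardest step, and we would attempt it first with the weights $t_j^{\gamma}$ in a maximum-principle style comparison analogous to Lemma~\ref{N-p}.
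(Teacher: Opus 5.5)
Your identification of the face values is the paper's argument (let $r_3\to1$ in (\ref{rainbow1}) for $\ell=1,2$ using $t_3G\to0$, discard the $r_j^{1-2\beta}$ branch by continuity at $r_j=0$, fix the constant via Proposition~\ref{Horn}(i)). However, your mechanism for reaching the face is incorrect, and the edges and the corner are not proved.

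At $\{r_3=1\}$ the normal behaviour is not that of $\mathcal L_{r_3}$, because the right side $t_1t_2G$ of (\ref{rainbow1}) with $\ell=3$ is itself singular there. For $Y=(r_1F_1,r_2F_2,F_3,F,t_1t_2G)^{\mathrm T}$ one has $\partial_{r_3}Y=(N+uv^{\mathrm T}/t_3)Y$, with $N$ bounded on $[0,L]^2\times[J,1)$, $u=(0,0,1,0,-\beta)^{\mathrm T}$ and $v^{\mathrm T}u=2-3\beta$. Hence the non-analytic parts of $F_3$ and $t_1t_2G$ are of order $t_3^{3\beta-2}$ (logarithmic at $\beta=2/3$), and $F-F|_{r_3=1}$ has a $t_3^{3\beta-1}$ term, not a $t_3^{\alpha}$ term. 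Already $F(0,0,r)={}_2F_1(\beta,1-2\beta;2\beta;r)$ has exponent $c-a-b=3\beta-1$ at $r=1$. A Gronwall bound based on $\|M\|$ only gives $\|Y\|\lesssim t_3^{-|u|\,|v|}$ with $|u|\,|v|>1$, which is not integrable. You have to use the rank-one structure, e.g.\ the scalar equation $\partial_{r_3}y=-\frac{3\beta-2}{t_3}y+O(\|Y\|)$ for $y=v^{\mathrm T}Y$. This is also one place where $\beta\ge2/3$ genuinely enters: it is exactly what keeps $F_3$ bounded (up to a logarithm) at the face, which is needed later to subtract the face jet $\mathcal F^{[3]}=F_3|_{r_3=1}$.

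For the edges and the corner you only state a target bound $|F_3|\lesssim t_3^{\gamma-1}(t_2+t_3)^{-\epsilon}$. Reducing continuity to a uniform integrable bound of this type is fine, but you give no way to prove it, and the tools you name do not apply. Lemma~\ref{N-p} works on the Taylor coefficients of a series converging on the closed cube. In the neighbor case the series at $0$ does not even converge on $B(0,R)^3$ for $R>1/3$ when $\beta=1$ (Remark~\ref{Rem:extension}). The equations for $F$ are not elliptic either ($\mathcal H_1F=\mathcal H_2F$ has principal part $r_1t_1^2\partial_1^2-r_2t_2^2\partial_2^2$), so there is no comparison principle. Propagating from $\{r_3=1\}$ in $t_3$ while $t_2\to0$ is precisely where $\Delta\to0$ and the constants of your regular-singular analysis blow up.

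The missing ideas are two.
\begin{enumerate}
\item[(a)] For the edges, propagate from the faces $\{r_\ell=0\}$, not from $\{r_\ell=1\}$. There $F$ is the Horn function, and its integral representation gives $|\partial_jF_0|\lesssim\tau_{jk}^{\gamma-1}$ (Lemma~\ref{1.G}). In the rescaled unknowns $S,U,V$ with weight $w=\tau_{jk}^{\gamma-1}$, and after the edge cancellation of Lemma~\ref{lem:weighted-edge-cancellation}, the $r_\ell$-system is regular singular at $r_\ell=0$. Its residue has eigenvalues $0,0,0,-2\beta,1-2\beta$ and an eigenbasis that stays uniformly bounded and invertible as $t_j,t_k\to0$. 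This yields integrable bounds $|F_j|\lesssim(1-r_j)^{\gamma-1}$ (up to a logarithm at $\beta=2/3$) and continuity off $(1,1,1)$.
\item[(b)] At the corner, split by $\chi=t_1t_2/t_3$. Where $\chi\ge1/2$, subtract the face jets, $R=F-\mathcal F+t_3\mathcal F^{[3]}$; at $\beta=2/3$ an extra $t_3(1+\log t_3^{-1})\mathcal A$ is needed, and the resulting logarithmic terms must be shown to cancel. Then run the scaling flow $(t_1,t_2,t_3)=(e^{-s},\rho e^{-s},\vartheta\rho e^{-2s})$, which gives a block-triangular system with eigenvalues $-\alpha,-\alpha,-2\alpha,-2\alpha$. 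Where $\chi<1/2$, use the radial flow $t_j\mapsto t_je^{-\zeta}$ (eigenvalues $-\alpha$ three times and $-\tfrac32\alpha$) to get $t_j|F_j|+t_1t_2t_3|G|\lesssim t_1^{\gamma'}$. Finally integrate $F_3$ in $r_3$ into $\{\chi\ge1/2\}$, at the cost of a factor $\log(1/t_1)$.
\end{enumerate}
Without an argument of this kind, neither the edges nor the corner is covered.
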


The section is organized as follows. In Section \ref{Section-6.1} we prove non-corner estimates and continuity away from the corner point. In Section \ref{Section-6.2} we derive the side-face equations and regular normal estimates. In Section \ref{Section-6.3} we propagate these estimates into the corner region and complete the final stitching argument.  {Section~\ref{Section-6.4} treats the remaining range
$1/2<\beta<2/3$.}

\subsection{Non-corner estimates and continuity} \label{Section-6.1}
For $\{j,k,{\ell}\}=[3]$ and $L\in (0,1)$, define
$$\mathcal D^{({\ell})}_L=\{\u r\in [0,1)^3: r_{\ell}\le L\},\quad \mathcal D^{({\ell})}_{L*}=\{\u r\in [0,1)^3: 0<r_{\ell}\le L\};$$
$$\tau_{jk}=\tau_{kj}=1-r_jr_k ,\quad w_{jk}=\tau_{jk}^{\gamma-1},\quad\text{and}\quad \Lambda_{jk}= \log\frac{\tau_{jk} }{t_j}.$$
Note that $0<t_j,t_k\le \tau_{jk}\le 1$, $w_{jk}\ge 1$, and $\Lambda_{jk}\ge  0$.

\begin{Proposition}
Suppose $\beta\ge 2/3$. Let $\{j,k,{\ell}\}=[3]$ and $L\in (0,1)$. Then the following estimates hold on $\mathcal D^{({\ell})}_L$:
\BGE |F|+|F_{\ell}|\lesssim_L 1,\quad |F_j|+|F_k|+\tau_{jk}|G|\lesssim_L w_{jk},\quad\text{if }\beta>2/3;\label{F+G>2/3}\EDE
$$ |F|+|F_{\ell}|\lesssim_L 1,\quad \tau_{jk}|G|\lesssim_L w_{jk}(1+\Lambda_{jk}+\Lambda_{kj}),$$
\BGE |F_j|\lesssim_L w_{jk}(1+\Lambda_{jk}),\quad  |F_k|\lesssim_L w_{kj}(1+\Lambda_{kj}),  \quad\text{if }\beta=2/3.\label{F+G=2/3}\EDE
In particular, for every $L\in (0,1)$, $F$, $t_j F_j$, $j\in [3]$, and $t_1t_2 t_3 G$ are bounded on $\bigcup_{{\ell}\in [3]}  \mathcal D^{({\ell})}_{L*}$. Moreover,  $F$ extends continuously to $[0,1]^3\sem \{(1,1,1)\}$ for all $\beta\ge 2/3$.
\label{prop-non-corner}
\end{Proposition}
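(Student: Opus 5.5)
We work on $\mathcal D^{(\ell)}_L$ with $r_\ell\le L$, so the only degenerations left are $t_j\to0$ and/or $t_k\to 0$. The plan is to treat $r_\ell$ as a parameter and view the system as an ODE problem in the variables $(r_j,r_k)$ near the edge $\{r_j=r_k=1\}$, where it resembles the two-dimensional Horn situation of Proposition~\ref{Horn}.

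\emph{Step 1: starting data.} First obtain a priori bounds on a slab $\{r_j\le L',\ r_k\le L'\}$. There $F$, $G$ and their first derivatives are bounded by analyticity on a neighborhood of $[0,1)^3$ (Theorem~\ref{extension}). On the face $r_\ell=0$, Proposition~\ref{Horn} already gives $F_0$ continuous on $[0,1]^2$.

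\emph{Step 2: integrate in $r_j$.} Write (\ref{rainbow1}) for index $j$ as a hypergeometric ODE in $r_j$ with forcing $t_kt_\ell G$:
\[
\mathcal L_{r_j}F=(1-r_k)(1-r_\ell)G .
\]
Use (\ref{neighbor2}) with index $\ell$ to express $G$ through first-order terms, including $2r_jr_kF_{jk}$, divided by $K_\ell$. Note that on $\mathcal D^{(\ell)}_L$ we have $-K_\ell\asymp \tau_{jk}$, since $-K_\ell = 1+r_jr_k-r_\ell(r_j+r_k)\ge (1-r_\ell)(\ldots)$. Also exploit the Pfaff system (\ref{G3=2@N}): on $\mathcal D^{(\ell)}_L$ we have $\Delta\asymp \tau_{jk}^2$ roughly, so the coefficients of $G_j$ become singular only like $1/(t_j\tau_{jk})$.

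Introduce the quantity
\[
\Theta:=|F_j|/w_{jk}+|F_k|/w_{kj}+\tau_{jk}|G|/w_{jk},
\]
and derive a differential inequality for it along rays $r_j\uparrow$ with $r_k$ fixed (and symmetrically). The indicial exponents at $t_j=0$ of $\mathcal L_{r_j}$ are $0$ and $1-2\beta+\ldots$; after the factor $\tau_{jk}$ enters, the homogeneous growth of $F_j$ is like $\tau_{jk}^{2\beta-2}$. This matches $w_{jk}=\tau_{jk}^{\gamma-1}$ when $\beta<1$, and is bounded otherwise. A Grönwall argument with integrable kernel $\tau_{jk}^{\gamma-1}\,dt_j/\tau_{jk}$ then closes. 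In the borderline $\beta=2/3$, a resonance produces the extra logarithms $\Lambda_{jk}=\log(\tau_{jk}/t_j)$; I would track these by explicitly integrating $\int_{t_j}^{\tau_{jk}} ds/s$.

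\emph{Step 3: consequences.} Boundedness of $F$ and $F_\ell$ follows by integrating $F_j$, which is integrable in $t_j$ because $\int \tau^{\gamma-1}(1+\log)\,dt<\infty$. Differentiating in $r_\ell$ via the Pfaff system gives the bound on $F_\ell$. Boundedness of $t_jF_j$ and $t_1t_2t_3G$ then follows since $t_j\le\tau_{jk}$ and $\tau^{\gamma}\log$ is bounded. For continuity away from $(1,1,1)$: any such boundary point has some $r_\ell<1$, hence lies in the closure of some $\mathcal D^{(\ell)}_L$. Since $F_j$ is integrable up to $t_j=0$ uniformly, and $F_\ell$ is bounded, $F$ is uniformly continuous there, giving a continuous extension.

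\emph{Main obstacle.} The hard part is Step 2 near the edge $t_j,t_k\to0$ simultaneously, where the coupling through $F_{jk}$ and $G$ mixes both directions. I would first try a weighted maximum principle on shells $\tau_{jk}=$ const, mimicking the $U$-decomposition used in the proof of Proposition~\ref{Horn}(ii), and use the condition $\beta\ge 2/3$ exactly where it makes the exponent $3\beta-2\ge0$ in the Grönwall kernel.
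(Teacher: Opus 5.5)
Your proposal has a genuine gap, located exactly where you put the ``main obstacle'': all of the difficulty lies in the neighbourhood of the edge $\{t_j=t_k=0\}$, and the mechanism you offer there does not work.

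\begin{itemize}
\item Data on the compact slab $\{r_j,r_k\le L'\}$ never reach the edge. Integrating in $r_j$ from $r_j=L'$ already requires bounds on $\{r_j=L'\}$ as $r_k\to1$, which is the symmetric problem. The corner $\{r_j,r_k\ge L'\}$ is reached by neither one-directional integration.
\item The Gr\"onwall kernel you propose is not uniformly integrable: $\int_0^1(t_j+t_k)^{\gamma-2}\,dt_j\asymp t_k^{\gamma-1}$, or $\log(1/t_k)$ if $\gamma=1$.
\item More fundamentally, the $r_j$-system is Fuchsian at $t_j=0$. The coefficients $C^{(j)}_{F_j},C^{(j)}_F,C^{(j)}_G$ in (\ref{G3=2@N}) contain $1/t_j$, and $\Delta\asymp_L\tau_{jk}^2$ makes the others singular along the whole edge. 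So bounds can only come from the spectrum of a residue matrix, not from integrable kernels.
\item At a point of $\{t_j=0\}$ with $t_k,t_\ell>0$, the residue on $(F,F_j,t_kt_\ell G)$ has eigenvalues $0,0,3\beta-2$. This is the right home for your $3\beta-2$; the exponents of $\mathcal L_{r_j}$ at $r_j=1$ are $0$ and $2\beta-1$, not $1-2\beta$.
\item That residue computation uses $\Delta|_{t_j=0}=t_k^2t_\ell^2$, so the region where it governs the solution shrinks like $t_k$. Obtaining the sharp rate $\tau_{jk}^{\gamma-1}$ would require a blow-up analysis at the edge, which you do not supply.
\item Two smaller errors. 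First, $-K_\ell=t_\ell(r_j+r_k)+t_jt_k\asymp_L1$ on $\mathcal D^{(\ell)}_L$, not $\asymp\tau_{jk}$. Second, solving (\ref{neighbor2}) for $G$ brings in $F_{jk}$, which your $\Theta$ does not control.
\end{itemize}

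The missing idea is to never integrate toward the edge. You mention the face $r_\ell=0$ in Step~1, but only use continuity of $F_0$. The point is to extract sharp derivative bounds there and then propagate in $r_\ell$.

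On that face, $F$ is the Horn $G_2$ function with the integral representation (\ref{F=int-K}). Differentiating under the integral gives the sharp edge bound $|\partial_jF_0|\lesssim\tau_{jk}^{\gamma-1}$ (Lemma~\ref{1.G}). This is exactly where $\beta>2/3$ is used, via integrability of $(1-s)^{3\beta-3}$; a logarithm appears at $\beta=2/3$.

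The paper then propagates in $r_\ell$, which is a nondegenerate direction on $\mathcal D^{(\ell)}_L$. Set $S,U=(r_jF_j\pm r_kF_k)/w_{jk}$, $V=\tau_{jk}G/w_{jk}$ and $\overline X=X-X|_{r_\ell=0}$. Then $Y=(\overline F,\overline{F_\ell},\overline S,\overline U,\overline V)$ satisfies $\partial_{r_\ell}Y=r_\ell^{-1}MY+RY+E$, where:
\begin{itemize}
\item $M$ has eigenvalues $0,0,0,-2\beta,1-2\beta$ and an eigenbasis bounded uniformly in $(r_j,r_k)$, using $w_{jk}\ge1$ and $t_jt_kw_{jk}/\tau_{jk}\le1$;
\item $\|R\|\lesssim_L1$ and $\|E\|\lesssim_L1$ (up to a logarithm at $\beta=2/3$) uniformly up to the edge, thanks to the cancellations in Lemma~\ref{lem:weighted-edge-cancellation}.
\end{itemize}
Variation of constants from $r_\ell=0$ plus Gr\"onwall transports the face bounds to all of $\mathcal D^{(\ell)}_L$ without any edge analysis. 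The region $r_j<1/2$ is then handled by the hypergeometric ODE in $r_j$ integrated from $r_j=0$. Your Step~3 (the consequences and the continuity argument) is essentially the paper's once these bounds are available.
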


We first establish two auxiliary lemmas needed for the proof of Proposition~\ref{prop-non-corner}.  By symmetry, it suffices to prove them on \(\mathcal D_L^{(3)}\).  We write $\tau=\tau_{12}$, $w=w_{12}$, $\Lambda_1=\Lambda_{12}$,  $\Lambda_2=\Lambda_{21}$, and $\Lambda=\Lambda_1+\Lambda_2$.
We will use the following simple estimates: for all $x_1,x_2\in [0,1]$,
\BGE |x_1-x_2|\le 1-x_1x_2\le (1-x_1)+(1-x_2)\le 2(1-x_1x_2);\label{x1x2-est1}\EDE
\BGE (1-x_1)(1-x_2)\le (1-x_1x_2)^2.\label{x1x2-est2}\EDE
Let $p=p(r_1,r_2)=\frac{r_1-r_2}{\tau }$ and $q=q(r_1,r_2)=\frac{t_1 t_2}{\tau}$.
The above estimates imply that $\tau_{12}\asymp t_1+t_2$, $|p|\le 1$, and $0<q\le \tau_{12}\le 1$. Moreover, $0<qw(1+\Lambda)=\tau^\gamma\cdot \frac{t_1t_2}{\tau^2}(1+\log\frac{\tau^2}{t_1t_2})\le 1$.

 Define
\BGE F^{[3]}=F_3,\quad S=(r_1F_1+r_2 F_2)/w,\quad U=(r_1 F_1-r_2 F_2)/w,\quad V=\tau G/w 
.\label{SUVW}\EDE
Setting ${\ell}=3$ in (\ref{rainbow1}), we get
\BGE  r_3 t_3 \pa_{r_3} F^{[3]}+(2\beta-2 r_3) F^{[3]}-\beta(1-\beta)F=q w V=t_1 t_2 G.\label{FGH-1}\EDE
Summing the two formulas obtained by applying (\ref{neighbor2}) with ${\ell}=1$ and ${\ell}=2$,   we obtain:
\BGE r_3w\pa_{r_3} S+\beta w S+\beta r_3  F^{[3]}+\beta^2   F+ w V=0.\label{FGH-2}\EDE
For $X\in\Sigma:=\{F,F^{[3]},S,U,V
\}$,  define $X_0=X|_{r_3=0}$ and $\lin X=X-X_0$.
By a slight abuse of notation,
we regard \(X_0\) as a function on \([0,1)^2\times[0,1]\) by setting
\(
X_0(r_1,r_2,r_3):=X_0(r_1,r_2)
\),
so that it is independent of \(r_3\).  Note that the $F_0$  here agrees with the $F_0$ in Proposition \ref{Horn}.
Setting $r_3=0$ in (\ref{FGH-1}) and (\ref{FGH-2}), we get
\BGE 2\beta F^{[3]}_0-\beta(1-\beta) F_0=q w V_0.\label{FGH0-1}\EDE
\BGE \beta w S_0+\beta^2 F_0   + w V_0=0. \label{FGH0-2}\EDE

\begin{Lemma}
Suppose $\beta\ge 2/3$. The following estimates hold on $[0,1)^2$.
$$|F_0|+|F^{[3]}_0|+|S_0|+|U_0|+|V_0|\lesssim 1,\quad \text{if }\beta>2/3;$$
$$|F_0|+|F^{[3]}_0|\lesssim 1,\quad  |S_0|+|U_0|+|V_0|\lesssim 1+ \Lambda,\quad \text{if }\beta=2/3.$$
Recall that $\Lambda=\Lambda_1+\Lambda_2=\Lambda_{12}+\Lambda_{21}=\log\frac{\tau^2}{t_1t_2}$.
\label{1.G}
\end{Lemma}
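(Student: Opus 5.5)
The plan is to work entirely on the face $r_3=0$, where everything is explicit through the two-variable Horn data of Proposition~\ref{Horn}. There $F_0$ is the Horn $G_2$ function with the integral representation (\ref{F=int-K}), and it is bounded on $[0,1)^2$ by Proposition~\ref{Horn}(ii) since $\beta>1/2$. This gives $|F_0|\lesssim 1$ directly.

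For $F^{[3]}_0=F_3|_{r_3=0}$, the coefficients are $A_{(n_1,n_2,1)}$. I would express these through (\ref{Ljk=}) with $j=3$ at $n_3=0$, exactly as in the $c_{\u n}=1$ step of Lemma~\ref{N-p}(ii). Then $2\beta A_{(n_1,n_2,1)}=(\mathcal H_1A)_{(n_1,n_2,0)}+(\text{terms in }A_{(n_1,n_2,0)})$. Summing this against $r_1^{n_1}r_2^{n_2}$ shows that $2\beta F^{[3]}_0$ is a second-order operator in $r_1$ applied to $F_0$, namely $\mathcal H_1F_0+(\text{const})F_0$ in the PDE form of Remark~\ref{Remark-Hj}.

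It is cleaner, however, to avoid derivatives. Use (\ref{FGH0-1}) and (\ref{FGH0-2}) to reduce everything to $V_0$ and $S_0$:
\[
F^{[3]}_0=\tfrac{1-\beta}{2}F_0+\tfrac{qw}{2\beta}V_0,\qquad V_0=-\beta S_0-\tfrac{\beta^2}{w}F_0 .
\]
Since $qw\le 1$ and $w\ge1$, it suffices to bound $S_0$ and $U_0$, that is, $w^{-1}r_jF_{0,j}$, by $1$ (resp.\ $1+\Lambda$).

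For $r_jF_{0,j}$ I would differentiate (\ref{F=int-K}) under the integral. Since $r_1\partial_{r_1}$ hits $\Phi$, it produces the factors
\[
\frac{(2\beta-1)s_1r_1}{1-s_1r_1}\qquad\text{and}\qquad \frac{(4\beta-1)s_1s_2r_1r_2}{1-s_1s_2r_1r_2}.
\]
The needed estimate then becomes
\[
\int\!\!\int W\,\Phi\,\Big(\frac1{1-s_1r_1}+\frac1{1-s_1s_2r_1r_2}\Big)\,d\u s\;\lesssim\; \tau^{\gamma-1}\quad\text{(resp. } \tau^{\gamma-1}(1+\Lambda)\text{)} .
\]
I would prove this with the $U$/$T$ decomposition of the proof of Proposition~\ref{Horn}(ii), using (\ref{sx+ssxx}) and (\ref{dT}), now with asymmetric $t_1,t_2$. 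Here $\tau\asymp t_1+t_2$ plays the role of $T$, and the extra factor $(1-s_1r_1)^{-1}$ is integrated first in $s_1$ near $1$. That integration yields either $t_1^{\beta-1}$-type or logarithmic contributions.

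The case bookkeeping works as follows. The inner power is $(1-s_1r_1)^{2\beta-2}$ integrated against $u_1^{\beta-1}$, giving exponent $3\beta-2$. This is positive (bounded) when $\beta>2/3$ and gives $\log(\tau/t_1)=\Lambda_1$ at $\beta=2/3$. This is precisely where the threshold $2/3$ and the $\Lambda$ terms arise. The remaining $U$-integral then gives $\tau^{2\beta-2}\le\tau^{\gamma-1}$ when $\alpha<1$, and $O(1)$ when $\alpha\ge1$, matching $w=\tau^{\gamma-1}$.

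The main obstacle is this weighted two-dimensional integral estimate near the corner $(1,1)$ with unequal $t_1,t_2$. Specifically, one must split $(0,1)^2$ according to $u_1$ versus $t_1$ and $u_2$ versus $t_2$, rather than only by $U=u_1\vee u_2$, so as to capture the exact $\log(\tau/t_j)$. $U_0$ is handled identically, since $|U_0|\le |r_1F_{0,1}|/w+|r_2F_{0,2}|/w$, and symmetry handles $j=2$.
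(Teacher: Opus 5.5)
Your reduction matches the paper's proof: you bound $r_jF_{0,j}/w$ by differentiating (\ref{F=int-K}) under the integral, then recover $V_0$ from (\ref{FGH0-2}) and $F^{[3]}_0$ from (\ref{FGH0-1}). The paper does the same. There is, however, a genuine gap at $\beta=1$.

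\textbf{The case $\beta=1$.} There $\gamma=1$ and $w=1$, and your plan needs
\[
\iint W\Phi\Big(\frac{1}{1-s_1r_1}+\frac{1}{1-s_1s_2r_1r_2}\Big)\,d\u s\lesssim 1 .
\]
This bound is false. At $\beta=1$ we have $W\equiv1$, and near the corner, with $u_j=1-s_j$, the integrand is comparable to
\[
(u_2+t_2)(u_1+u_2+t_1+t_2)^{-3}+(u_1+t_1)(u_2+t_2)(u_1+u_2+t_1+t_2)^{-4}.
\]
Both terms are homogeneous of degree $-2$. Integrating over $\tau\lesssim u_1+u_2\lesssim 1$ therefore gives a quantity of order $\log(1/\tau)$, not $O(1)$. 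This is exactly the borderline case $\lambda_1+\lambda_2=m$ that is excluded from (\ref{elementary}). Your bookkeeping claim ``$O(1)$ when $\alpha\ge1$'' is correct only for $\alpha>1$.

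The true derivative is bounded only because of cancellation. The two pieces of $r_1\partial_{r_1}\Phi$,
\[
-(2\beta-1)\frac{s_1r_1}{1-s_1r_1}\Phi \qquad\text{and}\qquad (4\beta-1)\frac{s_1s_2r_1r_2}{1-s_1s_2r_1r_2}\Phi,
\]
have opposite signs, and their logarithmic parts cancel. Any estimate that puts absolute values inside the integral cannot detect this. The paper handles $\beta=1$ separately: by (\ref{AB=@N}), $F_0=\frac{2-r_1-r_2}{2(1-r_1r_2)}$, so $|\partial_1F_0|=\frac{t_2^2}{2\tau^2}\le 1=w$. You need this closed form, or an explicit cancellation argument, to close the case $\beta=1$.

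\textbf{A smaller slip at $\beta=2/3$.} In $F^{[3]}_0=\frac{1-\beta}{2}F_0+\frac{qw}{2\beta}V_0$, using only $qw\le1$ together with $|V_0|\lesssim 1+\Lambda$ gives $|F^{[3]}_0|\lesssim 1+\Lambda$. The statement asserts the stronger bound $|F^{[3]}_0|\lesssim1$. To get it you need $qw(1+\Lambda)\le1$. This holds because $qw(1+\Lambda)=\tau^\gamma x(1+\log(1/x))$ with $x=t_1t_2/\tau^2\in(0,1]$, and $x(1+\log(1/x))\le 1$ on that interval.
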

\begin{proof}
By Proposition~\ref{Horn} (ii), since $\beta\ge 2/3>1/2$, $|F_0|\lesssim 1$.

Suppose $\beta>2/3$. We first prove $|\pa_1 F_0|\lesssim w$.
When $\beta=1$, by (\ref{AB=@N}), we have
\[
        F_0(r_1,r_2)
        =
        \sum_{n=0}^\infty(r_1r_2)^n
        -
        \frac12\sum_{n=0}^\infty r_1^{n+1}r_2^n
        -
        \frac12\sum_{n=0}^\infty r_1^nr_2^{n+1}=
        \frac{2- (r_1+r_2)}{2\tau}.
\]
Therefore (recall $t_j=1-r_j\le \tau=1-r_1 r_2$)
\(
       | \partial_1F_0|
        =
        \frac{t_2^2}{2\tau^2}\le 1=w\).

Suppose   $\beta\in (2/3,1)\cup(1,\infty)$.  Differentiating the integral representation (\ref{F=int-K}) under the integral sign and using \(s_1^\beta\le1\), we estimate
$ |\pa_{r_1} F_0(\u r)|\lesssim I_1+I_2$,
where
$$I_1:=\int_0^1\!\!\int_0^1  s_2^{\beta-1} (1-s_1)^{\beta-1}(1-s_2)^{\beta-1} (1-s_1 r_1)^{2\beta-2} (1-s_2r_2)^{2\beta-1} (1-s_1s_2r_1r_2)^{1-4\beta} ds_1ds_2;$$
$$I_2:=\int_0^1\!\!\int_0^1 s_2^\beta  (1-s_1)^{\beta-1}(1-s_2)^{\beta-1}(1-s_1 r_1)^{2\beta-1} (1-s_2r_2)^{2\beta-1}   (1-s_1s_2r_1r_2)^{-4\beta} ds_1ds_2.$$
If $r_1\wedge r_2\le 1/2$, then   $1-s_1s_2r_1r_2\asymp 1$. Using separation of variables, we can estimate upper bounds for $I_1$ and $I_2$. Using further $s_1^\beta,(1-s_2r_2)^{2\beta-1}\le 1$, $s_2^\beta\le s_2^{\beta-1}$, and $(1-s_1 r_1)^{2\beta-1}\le (1-s_1r_1)^{2\beta-2}$, we find that, for $j\in [2]$,
$$I_j\lesssim \int_0^1    (1-s_1)^{\beta-1}(1-s_1 r_1)^{2\beta-2} ds_1 \cdot \int_0^1  s_2^{\beta-1}(1-s_2)^{\beta-1} ds_2$$
\BGE \lesssim \int_0^1    (1-s_1)^{\beta-1}(1-s_1 r_1)^{2\beta-2} ds_1\le \int_0^1  (1-s_1) ^{(\beta-1)+((2\beta-2)\wedge 0)} ds_1\lesssim 1,\label{Ij-2/3}\EDE
where the last integral is finite because $(\beta-1)+((2\beta-2)\wedge 0)>-1$ since $\beta>2/3$. So we get $ | \partial_1F_0| \lesssim 1\le w$.

In the following, we assume $r_1,r_2\ge 1/2$. We decompose the integral for $I_j$ into
$$I_j=I_j'+I_j'':= {\int\!\!\int_{[1/2,1]^2} \cdot ds_1 ds_2} +  {\int\!\!\int_{[0,1]^2\sem [1/2,1]^2} \cdot ds_1 ds_2} ,\quad j\in [2].$$
On $[0,1]^2\sem [1/2,1]^2$, we have  $1-s_1s_2r_1r_2\asymp 1$. Then we may use the same separation-of-variables argument to get $I_j''\lesssim 1$. So it suffices to show that $I_j'\lesssim w$, $j\in [2]$.

We now use a change of variables: $u_j=1-s_j$, $j\in [2]$. When $s_j,r_j\in [1/2,1]$, $j\in [2]$, we have by (\ref{x1x2-est1}), $1-s_jr_j\asymp u_j+t_j$ and $1-s_1r_1s_2r_2\asymp u_1+t_1+u_2+t_2$. Using further $(u_2+t_2)^{2\beta-1}\le (u_1+u_2+t_1+t_2) ^{2\beta-1}$ because $\beta>1/2$, we get
\BGE I_j'\lesssim \int \!\!\int_{[0,1/2]^2} u_1^{\beta-1}u_2^{\beta-1}(u_1+t_1)^{2\beta-3+j} (u_1+u_2+t_1+t_2) ^{-(2\beta -1+j)}du_1du_2.\label{Ij'}\EDE

Note that $[0,1/2]^2\subset\triangle:=\{(u_1,u_2)\in \R_+^2:u_1+u_2\le 1\}$. We use the elementary estimate:
\BGE
        \int\!\!\int_{\triangle }
        u_1^{\lambda_1-1}u_2^{\lambda_2-1}(u_1+u_2+v)^{-m} du_1du_2
        \lesssim  v^{(\lambda_1+\lambda_2-m)\wedge 0},\quad v>0.
\label{elementary}
\EDE
which is valid for $\lambda_1,\lambda_2>0$, $m\ge 0$, and $\lambda_1+\lambda_2\ne m$.  This follows by the change of variables:
$u_1=v s \theta$ and $u_2=vs(1-\theta)$ for $s\in (0,1/v)$ and $\theta\in [0,1]$.

We estimate $I_j'$ in two cases. Case 1. $\beta\in (2/3,1)$ and $j=1$. Then  $2\beta-3+j=2\beta-2 <0$, and so $\gamma-1=2\beta-2$. From (\ref{Ij'}) we get
\BGE I_1'\le \int\!\!\int_\triangle u_1^{3\beta-3} u_2^{ \beta-1} (u_1+u_2+t_1+t_2)^{-2\beta}du_1du_2.\label{Ij'-2/3}\EDE
Applying (\ref{elementary}) with $\lambda_1=3\beta-2>0$, $\lambda_2= \beta>0$ and $m=2\beta$, since $\lambda_1+\lambda_2-m=2\beta-2<0$, we get $I_j'\lesssim (t_1+t_2)^{2\beta-2}=(t_1+t_2)^{\gamma-1}$.

Case 2. Either $\beta>1$ or $j=2$. Then $2\beta-3+j>0$. From (\ref{Ij'}), we get
$$I_j' \le \int\!\!\int_\triangle u_1^{ \beta-1} u_2^{ \beta-1} (u_1+u_2+t_1+t_2)^{-2}du_1du_2.$$
Applying (\ref{elementary}) with $\lambda_1=\lambda_2= \beta$ and $m=2$, we get $I_j'\lesssim (t_1+t_2)^{\gamma-1}$.

Thus, in both cases, we have $I_j'\lesssim (t_1+t_2)^{\gamma-1}$, $j\in[2]$, which together with $I_j''\lesssim 1$ and (\ref{x1x2-est1}) implies that $|\pa_1 F_0|\lesssim w$. Similarly, $|\pa_2 F_0|\lesssim w$. The two estimates together imply that $|S_0|,|U_0|\lesssim 1$.
From $|F_0|,|S_0|\lesssim 1$, $w\ge 1$, and (\ref{FGH0-2}), we get $|V_0|\lesssim 1$. By $q\le \tau$,   $|F_0|,|V_0| \lesssim 1$,  and (\ref{FGH0-1}), we get $|F^{[3]}_0|\lesssim 1+\tau w\lesssim 1$.

We now treat the case $\beta=2/3$. Most of the above argument still goes through except for two places. First, (\ref{Ij-2/3}) no longer holds, since the last integral in (\ref{Ij-2/3}) equals $\infty$. Now the first integral in (\ref{Ij-2/3}) becomes $\int_0^1 (1-s_1)^{-1/3} (1-r_1 s_1)^{-2/3}$. We use the change of variables: $u=(\frac{1-s_1}{1-r_1 s_1})^{1/3}$ and find that it equals
$$\int_0^1 \frac{3u du}{1-r_1 u^3}\lesssim \int_0^1 \frac{du}{1-r_1^{1/3} u}=-r_1^{-1/3} \log(1-r_1^{1/3})\lesssim 1+\Lambda_1,$$
where  the ``$\le$'' holds because $1-x^3\ge 3x(1-x)$ for any $x\in [0,1)$, and the ``$\lesssim$'' holds because (i) if $r_1\le 1/2$, $-r_1^{-1/3} \log(1-r_1^{1/3})\lesssim 1\le 1+ \Lambda_1$; and (ii) if $r_1> 1/2$, then $r_2\le 1/2$ by the assumption $r_1\wedge r_2\le 1/2$, and so $\tau\ge t_2\ge 1/2$, and
$$-r_1^{-1/3} \log(1-r_1^{1/3})\lesssim -\log(1-r_1^{1/3})\le \log 3-\log t_1\lesssim 1+\Lambda_1  .$$
Thus, we get $|\pa_1 F_0|\lesssim w(1+\Lambda_1) $ in the case $r_1\wedge r_2\le 1/2$.

The second place is that we cannot apply (\ref{elementary}) to bound the integral in (\ref{Ij'-2/3}), which now equals $\infty$. We may bound the integral in (\ref{Ij'}) by first integrating with respect to $u_2$ for every fixed $u_1$:
$$\int_0^{1/2} u_2^{-1/3} (u_1+u_2+t_1+t_2)^{-4/3} du_2\lesssim (u_1+t_1+t_2)^{-2/3}.$$
Thus, using $\tau\asymp t_1+t_2$, we see that the integral in (\ref{Ij'}) is
$$\lesssim \int_0^{\infty} u_1^{-1/3}(u_1+t_1)^{-2/3}  (u_1+\tau)^{-2/3} du_1=:I_1+I_2+I_3,$$
where $I_1,I_2,I_3$ are respectively the integral  over $[0,t_1]$, $[t_1,\tau]$, and $[\tau,\infty)$. On $[0,t_1]$, the integrand is bounded by $\tau^{-2/3} u_1^{-1/3}(u_1+t_1)^{-2/3}$, and so $I_1\lesssim \tau^{-2/3}$. On $[\tau,\infty)$, the integrand is bounded by $u_1^{-5/3}$, and so $I_3\lesssim \tau^{-2/3}$.  On $[t_1,\tau]$, the integrand is bounded by $\tau^{-2/3} u_1^{-1}$, and so $I_2\le \tau^{-2/3}\Lambda_1$. Thus, we get $|\pa_1 F_0|\lesssim w( 1+\Lambda_1) $ in the case $r_1\wedge r_2\ge 1/2$.

Symmetrically,  $|\pa_2 F_0|\lesssim w( 1+\Lambda_2) $. The two estimates together imply that $|S_0|,|U_0|\lesssim 1+\Lambda$.
From $|F_0|,|S_0|\lesssim 1+\Lambda $, $w\ge 1$, and (\ref{FGH0-2}), we get $|V_0|\lesssim  1+\Lambda$. By $|F_0|\lesssim 1$, $|V_0| \lesssim1+ \Lambda$, $qw(1+\Lambda)\le  1$,    and (\ref{FGH0-1}), we get $|F^{[3]}_0|\lesssim   1$.
\end{proof}


Since $F_3=F^{[3]}$, we have
\BGE r_3  \pa_{r_3}\lin{F }=r_3(\lin{F^{[3]}}+F^{[3]}_0).\label{F-row}\EDE
Subtracting (\ref{FGH0-1}) from (\ref{FGH-1}), we get
\BGE r_3  \pa_{r_3}\lin{ F^{[3]}} =({\beta(1-\beta)}\lin F+ {qw}\lin V-2\beta \lin{F^{[3]}})+\frac{r_3}{t_3} (\beta(1-\beta) \lin F+2(1-\beta) \lin{F^{[3]}}+ q w\lin V+2F^{[3]}_0) . \label{F3-row}\EDE
Subtracting (\ref{FGH0-2}) from (\ref{FGH-2}), we get
\BGE r_3\pa_{r_3} \lin S= -{\beta^2} \lin F/w-\beta \lin S-\lin V-\beta r_3 (\lin{ F^{[3]}}+ F^{[3]}_0)/w.\label{S-row}\EDE
Taking the difference of the two formulas we obtain by applying (\ref{neighbor2}) with ${\ell}=1$ and ${\ell}=2$,   we obtain:
\BGE r_3\pa_{r_3}\lin U= -pr_3 \lin V-p r_3 V_0.\label{U-row}\EDE


\begin{Lemma}
\label{lem:weighted-edge-cancellation}
Let $\beta\ge 2/3$. Let $\Lambda^*=0$ if $\beta>2/3$, and $\Lambda^*=\Lambda$ if $\beta=2/3$. Fix $L\in (0,1)$.
On \( \mathcal D^{(3)}_{L*}\), there exist functions $E_V$ and $R_{V,X}$, $X\in \Sigma$, with $|E_V|\lesssim_L 1+\Lambda^*$ and $ |R_{V,X}|
        \lesssim_L
        1 $, $X\in\Sigma$,  such that
\begin{equation}
       r_3 \partial_{r_3}\lin V = {\beta^2(1-\beta)}  \lin F/w+\beta(1-\beta) \lin S +(1-\beta) \lin V +r_3\sum_{X\in \Sigma} R_{V,X} \lin X
        + r_3 E_V.
\label{V-row}
\end{equation}
 \end{Lemma}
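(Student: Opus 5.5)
We will obtain (\ref{V-row}) by computing $r_3\partial_{r_3}(\tau G/w)=\frac{\tau}{w}\,r_3 G_3$. Since $\tau=\tau_{12}$ and $w$ do not depend on $r_3$, the only input needed is the Pfaff formula (\ref{G3=2@N}) with ${\ell}=3$. It expresses $G_3$ as a combination of $F_1,F_2,F_3,F,G$ with the explicit coefficients (\ref{CnbF1})--(\ref{CnbG}). We rewrite each of these in the variables of (\ref{SUVW}): $r_1F_1=\frac w2(S+U)$, $r_2F_2=\frac w2(S-U)$, $F_3=F^{[3]}$, $G=wV/\tau$. This gives an identity
\[
r_3\partial_{r_3}V=\sum_{X\in\Sigma}c_X(\u r)\,X,
\]
where the $c_X$ are rational functions. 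Multiplying through by $r_3$ removes the $1/r_3$ in the coefficients, since every coefficient has at most a simple factor $r_3^{-1}$.

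Next, we expand each $c_X$ at $r_3=0$ as $c_X=c_X^0(r_1,r_2)+r_3\,\tilde c_X(\u r)$. We will check that the constant parts are exactly those in (\ref{V-row}): $c^0_F=\beta^2(1-\beta)/w$, $c^0_S=\beta(1-\beta)$, $c^0_V=1-\beta$, and $c^0_U=c^0_{F^{[3]}}=0$. This is a direct computation using $\Delta|_{r_3=0}=\tau^2$ from (\ref{Delta-nb-boundary}), $K_3^+|_{r_3=0}=t_1+t_2-\dots$ from (\ref{K-tj}), and $Q^{(3)}|_{r_3=0}=-\tau^2$. For example, $C_G^{(3)}$ gives $-\beta+\tau^2/\Delta\to 1-\beta$ after multiplying by $r_3$. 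The coefficients of $F_1$ and $F_2$ combine into $\beta(1-\beta)\cdot\frac{r_1t_2+r_2t_1}{\tau}$ times averaged $S$ plus a $U$-part, and these simplify using the $r_3=0$ structure. As a consistency check, (\ref{V-row}) restricted to $r_3=0$ reads $0=\beta^2(1-\beta)F_0/w+\beta(1-\beta)S_0+(1-\beta)V_0$, which is $(1-\beta)/\beta$ times (\ref{FGH0-2}). This confirms the constant coefficients and shows that the constant part annihilates $X_0$.

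We then write every $X=\lin X+X_0$. The $c^0$-terms applied to the $X_0$ cancel by (\ref{FGH0-2}), so those terms leave only $c^0_X\lin X$. The $r_3\tilde c_X X$ terms split into $r_3\tilde c_X\lin X$, which we set as $R_{V,X}:=\tilde c_X$, and $r_3\sum_X\tilde c_X X_0=:r_3E_V$. The bound on $E_V$ then follows from Lemma~\ref{1.G}: on $[0,1)^2$ we have $|X_0|\lesssim 1+\Lambda^*$, so it suffices to have $|\tilde c_X|\lesssim_L 1$.

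The main obstacle is proving $|\tilde c_X|\lesssim_L1$ uniformly on $\mathcal D^{(3)}_{L*}$, which is where $r_1,r_2\to1$ is allowed. Uniformity in $r_3$ is harmless because $r_3\le L$ keeps $t_3\ge 1-L$. The real issue is the factor $1/\Delta$ and the weights $w,\tau$ as $\tau\to0$. Our plan is to use (\ref{Delta-tj}) to show $\Delta\gtrsim_L\tau^2$ on $\mathcal D^{(3)}_L$: with $t_3\ge1-L$, $\Delta\ge t_3^2(t_1^2+t_2^2)\gtrsim\tau^2$ by (\ref{x1x2-est1}). Then $\tilde c_X=(c_X-c_X^0)/r_3$, and we bound it via the mean value theorem in $r_3$, writing $\partial_{r_3}c_X$ with numerators polynomial in $t_j$. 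We must verify that each numerator carries enough powers of $t_1,t_2$ (or $\tau$), together with the factors $w$ or $w^{-1}$ and $\tau$ coming from the change of variables, to beat $\Delta^{-2}\lesssim\tau^{-4}$. This needs care especially for the $F$ and $F^{[3]}$ coefficients, which arrive multiplied by $\tau/w$. Here we would use $\tau/w=\tau^{2-\gamma}\le1$, together with the vanishing of $K_3^+$ to second order in $(t_1,t_2)$ and $q\le\tau$. We expect this degree bookkeeping, done factor by factor, to be the main work.
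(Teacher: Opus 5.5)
Your proposal follows the paper's proof essentially step for step. The paper also writes $r_3\partial_{r_3}\lin V=\sum_X b_X X$ from (\ref{G3=2@N}), combining the $F_1,F_2$ coefficients into $B_S,B_U$. It sets $R_{V,X}=(b_X-b_X^0)/r_3$ and $E_V=\sum_X R_{V,X}X_0$, using $\sum_X b^0_X X_0=0$, and closes with $\Delta\gtrsim_L t_1^2+t_2^2$ and Lemma~\ref{1.G}. The only technical difference is that the paper factors each difference $b_X-b_X^0$ exactly, so only $\Delta^{-1}$ appears. Your mean-value argument produces $\Delta^{-2}$, which still works because $\partial_{r_3}\Delta=2[4r_1r_2-(r_1+r_2)(1+r_1r_2)]+2r_3(r_1-r_2)^2=O(\tau^2)$.

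Two of your stated ingredients are false as written, though neither breaks the argument.

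\emph{The lower bound on $\Delta$.} The inequality $\Delta\ge t_3^2(t_1^2+t_2^2)$ fails: as $\u r\to 0$ you get $\Delta\to 1$ while the right side tends to $2$. The reason is that the term $2(2-t_1-t_2-t_3)t_1t_2t_3$ in the first line of (\ref{Delta-tj}) can be negative. Use the second line of (\ref{Delta-tj}) instead. With $t_j\le 1$ it gives $\Delta\ge \frac12 t_3^2(t_1-t_2)^2+t_1t_2t_3\ge \frac12 t_3^2(t_1^2+t_2^2)$, which is all you need.

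\emph{The order of vanishing of $K_3^+$.} $K_3^+=t_3(t_1+t_2)-t_1t_2$ vanishes only to first order in $(t_1,t_2)$. Indeed $K_3^+|_{r_3=0}=\tau$, and this is exactly what you need to obtain $c^0_F=\beta^2(1-\beta)/w$, so second-order vanishing would contradict your own constant. The prefactor is enough anyway. You have $c_F=\beta^2(1-\beta)\tau^{2-\gamma}K_3^+/(t_3\Delta)$, and $\partial_{r_3}\bigl[K_3^+/(t_3\Delta)\bigr]=O_L(\tau^{-1})$. Hence $\partial_{r_3}c_F=O_L(\tau^{1-\gamma})=O_L(1)$, since $\gamma\le 1$.

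\emph{Minor slips.} The factor relating the $r_3=0$ identity to (\ref{FGH0-2}) is $(1-\beta)/w$, not $(1-\beta)/\beta$. The combined $S$-coefficient is exactly $\beta(1-\beta)\tau^2/\Delta$, because $K_1+K_2=-2\tau$. The $U$-coefficient is $\beta(1-\beta)r_3\tau(r_1-r_2)/\Delta$, because $K_2-K_1=2r_3(r_2-r_1)$. Neither involves $r_1t_2+r_2t_1$.
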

\begin{proof}
Recall the $C^{({\ell})}_f$, $f\in\{F_1,F_2,F_3,F,G\}$, and $Q^{({\ell})}$ defined in (\ref{CnbF1})-(\ref{CnbQ}).
Define
\BGE
        B_S :=  \frac12  ( r_1^{-1}C^{(3)}_{F_1}+ r_2^{-1}C^{(3)}_{F_2} )=\frac{ \beta(1-\beta)}{r_3\Delta}\,(1-r_1r_2),\label{BS}\EDE
\BGE
         B_U :=  \frac12 ( r_1^{-1}C^{(3)}_{F_1}- r_2^{-1}C^{(3)}_{F_2}) = \frac{ \beta(1-\beta)}{\Delta}\,(r_1-r_2). \label{BU}\EDE
Since $F_1=\frac{w(S+U)}{2r_1}$ and
        $F_2=\frac{w(S-U)}{2r_2}$,
we have
$ C^{(3)}_{F_1}F_1+C^{(3)}_{F_2}F_2
        =
    w(    B_S S+B_U U)$. We first prove the identity on \((0,1)^2\times(0,L]\) and then extend it to \(D^{(3)}_{L*}\) by continuity.
        By (\ref{G3=2@N}), we get
\BGE     r_3 \pa_{r_3}   \lin V  = r_3\tau ( C^{(3)}_F F/w+C^{(3)}_{F_3} F^{[3]}/w+ B_S S+B_U  U )+r_3C^{(3)}_G  V. \label{paW}\EDE
Define $b_F=r_3\tau C^{(3)}_F/w$, $b_{F^{[3]}}=r_3\tau C^{(3)}_{F_3}/w$, $b_S=r_3\tau B_S$, $b_U=r_3\tau B_U$,  $b_V=  r_3C^{(3)}_G$, $b_X^0 =b_X|_{r_3=0}$ (in the sense of limits) and $R_{V,X}=(b_X-b_X^0)/r_3$ for $X\in \Sigma$, and $E_V=\frac 1{r_3} \sum_{X\in\Sigma}  b_X X_0$.
Then
\(
       r_3 \partial_{r_3} \lin V
        =  \sum_{X\in\Sigma} (b^0_X+r_3R_{V,X}) \lin X+ r_3 E_V\).
Thus it suffices to show $b_F^0=\beta^2(1-\beta)/w$, $b_S^0=\beta(1-\beta)$, $b_V^0=1-\beta$, $b_{F^{[3]}}^0=b_U^0=0$, $ |R_{V,X}|\lesssim_L 1$, and $|E_V|\lesssim_L 1+ \Lambda^*$ on \( \mathcal D^{(3)}_{L*} \). We restrict all subsequent estimates to $\mathcal D^{(3)}_{L*}$.

Since $t_3\ge 1-L \gtrsim_L 1$, we get $\Delta\gtrsim_L t_1^2+t_2^2$ by  (\ref{Delta-tj}).
By (\ref{CnbF3}), (\ref{BU}) and (\ref{x1x2-est1}),  $b_X^0 =0$ and $|R_{V,X}|=|r_3^{-1}b_X|\lesssim_L 1$ for $X\in \{ {F^{[3]}},U\}$. By  (\ref{CnbG}), $b_V-(1-\beta)=-\frac{\beta r_3}{t_3}-\frac{ \Delta+Q^{(3)} }{\Delta}$.  By (\ref{Delta-nb}) and (\ref{CnbQ}),  $ |\Delta+Q^{(3)}|=3r_3(t_1t_2(r_1+r_2)+(t_1-t_2)^2 t_3)\lesssim r_3(t_1^2+t_2^2)$. Thus, $b_V^0=1-\beta$ and $|R_{V,V} |  \lesssim_L \frac 1{t_3}+ \frac{t_1^2+t_2^2}\Delta\lesssim_L 1$. By (\ref{BS}), $b_S=\beta(1-\beta)\frac{\tau^2}\Delta$. Since $\Delta^0:=\Delta|_{r_3=0}=\tau^2$ by (\ref{Delta-nb-boundary}), we get $b_S^0= \beta(1-\beta)$. By (\ref{x1x2-est1}),
$$|R_{V,S}|\lesssim_L \frac{ \tau^2}{r_3}\cdot \frac{|\Delta^0-\Delta|}{\Delta\Delta^0}=\frac{|2 (r_1 t_2^2+r_2t_1^2)-r_3 (t_1-t_2)^2|}\Delta\lesssim \frac{t_1^2+t_2^2 }{\Delta} \lesssim_L 1.
$$
By (\ref{CnbF}) and (\ref{Kl}), $b_F=\beta^2(1-\beta)\tau \cdot \frac{\tau-r_3(t_1+t_2)}{w t_3 \Delta}$. Thus, $$b_F^0=\beta^2(1-\beta) \tau\cdot \frac{\tau}{w\Delta^0}=\frac{\beta^2(1-\beta)}w .$$
$$|R_{V,F}|\lesssim_L \frac{\tau}{r_3w}\Big|\frac{\tau-r_3(t_1+t_2)}{t_3 \Delta}-\frac 1{\tau}\Big|=\frac{ |P |}{ wt_3 \Delta} ,$$
where
$P := -\tau t_1t_2+2t_3(r_2 t_1^2+r_1t_2^2)-r_3t_3 (t_1-t_2)^2=O_L(t_1^2+t_2^2)$. Thus, from $w\ge 1$ and $t_3\Delta \gtrsim_L t_1^2+t_2^2$, we get $|R_{V,F}|\lesssim_L 1$.
From the values of the $b_X^0$'s we get $\sum_{X\in\Sigma} b_X^0 X_0=0$. Thus, $E_V=\sum_{X\in\Sigma} R_{V,X} X_0$.
By Lemma \ref{1.G} and the estimate $|R_{V,X}|\lesssim_L 1$, we get $|E_V|
        \lesssim_L 1+ \Lambda^*$.
\end{proof}

\begin{proof}[Proof of Proposition \ref{prop-non-corner}]
By symmetry, it suffices to consider \({\ell}=3\). We restrict  estimates in this proof to $\mathcal D^{(3)}_{L}$ unless otherwise stated.
First, suppose $\beta>2/3$. We adopt the notation in (\ref{SUVW}) and
define a Pfaff factor $Y=[\begin{array}{ccccc} \lin F  & \lin{F^{[3]}} & \lin S & \lin U & \lin V
\end{array}]^{\text T}$. Let $\Lambda^*$ be as in Lemma \ref{lem:weighted-edge-cancellation}. From (\ref{F-row})-(\ref{V-row}), we get the Pfaff system:
\BGE \pa_{r_3} Y=r_3^{-1}M Y+ RY+E, \label{Pfaff-RE}\EDE
where
$$M=\left[ \begin{array}{ccccc} 0 & 0 & 0 & 0 & 0\\ \beta(1-\beta) & -2\beta & 0 & 0 & q w\\   -{\beta^2}/w  &0& -\beta & 0 & -1 \\ 0 & 0 & 0 & 0 & 0\\   {\beta^2(1-\beta)}/w & 0& \beta(1-\beta)  & 0 & 1-\beta
\end{array}
\right],$$
$R$ is a matrix function satisfying  $\|R\| \lesssim_L 1$, and $E$ is a vector function satisfying $\|E\|\lesssim_L 1+ \Lambda^*$.

We note that $M$ has five  eigenvalues: $0,0,0,-2\beta,1-2\beta$  and five corresponding linearly independent eigenvectors:
$$e_4,\quad e_1+\frac{1-\beta}2 e_2-\frac \beta{w} e_3,\quad \frac{qw}{2\beta} e_2-\frac 1\beta e_3+e_5,\quad e_2,\quad -(1-\beta)qw e_2+ e_3-(1-\beta) e_5.$$
Let $\Phi$ be the matrix formed by the above eigenvectors. From $w\ge 1$ and $|qw|\le 1$, we get $\|\Phi\|\lesssim 1$. We calculate $|\det(\Phi)|=\frac{2\beta-1}\beta\gtrsim 1$. So we also have $\|\Phi^{-1}\|\lesssim 1$. Since all eigenvalues are $\le 0$, we get $\|t^M\|\lesssim 1$ for $t\ge 1$.

Fix $(r_1,r_2)\in [0,1)^2$ and view $M$ as a constant matrix and $Y,R,E$ as functions of $r_3\in [0,L]$.
The variation-of-constants formula for (\ref{Pfaff-RE}) gives, for $0<\eps<r_3\le L$,
$$Y(r_3)=\Big(\frac{r_3}\eps\Big)^M Y(\eps)+\int_\eps^{r_3} \Big(\frac{r_3}s\Big)^M (  R(s) Y(s)+ E(s)) ds.$$
Letting $\eps\to 0^+$, we get $Y(r_3)= \int_0^{r_3} (\frac{r_3}s )^M ( R(s) Y(s)+ E(s)) ds$ since   $Y(\eps)\to 0$  as $\eps\to 0^+$.
From $\|(\frac{r_3}s)^M\|\lesssim 1$, $\|R(s)\|\lesssim_L 1$, and  $\|E(s)\|\lesssim_L 1+ \Lambda^*$, we see that there exists $C>0$ depending only on $\beta,L$ such that  $ \|Y(r_3)\|\le C \int_0^{r_3} (\|Y(s)\|+1+\Lambda^*)ds$ for $r_3\in (0,L]$. By Gronwall  inequality, $\|Y\|\lesssim_L r_3(1+\Lambda^*)$, which together with Lemma \ref{1.G} implies that $|X|\lesssim_L 1+ \Lambda^*$ for $X\in\Sigma$. By (\ref{SUVW}) and the continuity of $F,F_j,G$ at $[0,1)^2\times \{0\}$, the bound of $X$  further implies  that $|F|+|F_3|\lesssim_L 1+\Lambda^*$ and $r_1|F_1|+r_2|F_2|+\tau |G|\lesssim w (1+\Lambda^*) $.

Suppose $\beta>2/3$. Then $\Lambda^*=0$, and so we get the desired bounds of $F$, $F_3$, and $\tau G$.
We now show $|F_1| \lesssim_L w $. We write $\mathcal D^{(3)}_{L}=\Omega_\ge\cup \Omega<$   such that $r_1\ge 1/2$ in $\Omega_\ge$ and $r_1<1/2$ in $\Omega_<$.  In $\Omega_\ge $, we have $|F_1|\lesssim_L r_1^{-1} w\lesssim w$ since $r_1\ge 1/2$. It remains to show that $|F_1|\lesssim_L w$ in $\Omega_<$. In $\Omega_<$, we have $t_1,\tau,w\asymp_L 1$, and so $|G|\lesssim_L 1$. Applying (\ref{rainbow1}) with ${\ell}=1$, we get
\BGE\pa_{r_1} F_1+\frac{(2\beta-2 r_1)}{r_1t_1} F_1=H:=\frac{t_2 t_3 G+\beta(1-\beta) F}{r_1t_1}.\label{H=G}\EDE
By the above estimates, $|H|\lesssim_L r_1^{-1}$ in $\Omega_<$. Solving the above equation using the integration factor
$r_1^{2\beta}(1-r_1)^{2-2\beta}$, we see that, in $\Omega_<$,
$$  F_1(\u r)=\Big(\frac \eps{r_1}\Big)^{2\beta}\Big(\frac{1-\eps}{1-r_1}\Big)^{2-2\beta} F_1(\eps,r_2,r_3)+ \int_\eps^{r_1}\Big(\frac s{r_1}\Big)^{2\beta} \Big(\frac{1-s}{1-r_1}\Big)^{2-2\beta}  H(s,r_2,r_3)ds .$$
Letting $\eps\to 0^+$ in the above equality and using $|F_1(\eps,r_2,r_3)|\lesssim \eps^{-1} w \lesssim \eps^{-1}$ and $2\beta>1  $, we get
\BGE |F_1(\u r)|\le \int_0^{r_1}\Big(\frac s{r_1}\Big)^{2\beta} \Big(\frac{1-s}{1-r_1}\Big)^{2-2\beta}  |H(s,r_2,r_3)|ds\label{FH}\EDE
 $$\lesssim_L \int_0^{r_1}\Big(\frac s{r_1}\Big)^{2\beta} \frac{ds}s\lesssim_L 1\le w.$$
A similar argument shows $|F_2| \lesssim_L w $ on $\mathcal D^{(3)}_{L} $.

Suppose $\beta=2/3$. Then $\Lambda^*=\Lambda$, and so $|F|+|F_3|\lesssim_L 1+\Lambda$,  $\tau|G|\lesssim w(1+\Lambda)$.
To prove $|F|+|F_3|\lesssim_L 1+\Lambda$, consider a two-component  Pfaff vector $\til Y:=[\begin{array}{cc } \lin F  & \lin{F^{[3]}}
\end{array}]^{\text T}$. Since $|V|\lesssim 1+\Lambda$ and $qw(1+\Lambda)\le 1$, by (\ref{F-row}) and (\ref{F3-row}), we get the Pfaff system:
$$  \pa_{r_3} \til Y=r_3^{-1}\til M \til Y+ \til R\til Y+\til E,$$
where $\til M=\Big[\begin{array}{cc} 0 & 0 \\ \beta(1-\beta) & -2\beta
\end{array}\Big]$, $\til R$ is a matrix function satisfying  $\|\til R\| \lesssim_L 1$, and $\til E$ is a vector function satisfying $\|E\|\lesssim_L 1$. Since $\til M$ is a constant matrix and has two distinct nonpositve eigenvalues: $0$ and $-2\beta$, we have $\|t^{\til M}\|\lesssim 1$ for any $t\ge 1$. Then the same argument as in the proof of $\|Y\|\lesssim_L 1+ \Lambda^*$ can be used here to show that $\|\til Y\|\lesssim_L 1$, which then implies that $|F|,|F_3|\lesssim_L 1$.

We now show that $|F_1| \lesssim_L w (1+\Lambda_1)$. Consider three cases. Case 1. $t_1\ge 1/2$, i.e., $r_1\le 1/2$. In this case, $\Lambda_1\le \log 2$, and $\tau\asymp w\asymp 1$. Thus, $|G|\lesssim 1+\Lambda_2\le 1+\log(1/t_2)$. Then $|t_2t_3 G|\le t_2(1+\log(1/t_2))\lesssim 1$. This together with $|F|\lesssim_L 1$ implies that $|H|\lesssim r_1^{-1}$. Applying the integral estimate (\ref{FH}) with the above bound on \(H\), we obtain  $|F_1|\lesssim w\le w(1+\Lambda_1)$. Case 2. $t_1\le 1/2$ and $t_1\le t_2$. In this case $r_1\ge 1/2$ and $\Lambda_2\le \Lambda_1$, and so $|F_1|\lesssim w(1+2\Lambda_1)\lesssim w(1+\Lambda_1)$. Case 3. $t_2\le t_1\le 1/2$. Then $r_1\ge 1/2$ and $\tau\le 2 t_1$. We break the integral in (\ref{FH}) into integrals over $[0,1/2]$ and $[1/2,{r_1}]$. When $s\in[0,1/2]$, by the argument in Case 2, $|H(s,r_2,r_3)|\lesssim_L s^{-1}$. When $s\in[1/2,r_1]$,  $\Lambda_1(s,r_2)=\log(\frac{1-s r_2}{1-s})\le \log(2)$ and $\Lambda_2(s,r_2)=\log\frac{1-s r_2}{t_2}\lesssim 1+ \log\frac{1-s}{t_2}$. Thus, from $|G|\lesssim \tau^{-1} w(1+\Lambda)$, $|F|\lesssim \tau^{-1} w(1+\Lambda)$,  and $\tau(s,r_2)\asymp 1-s$, we get
 $|G(s,r_2,r_3)|\lesssim_L  (1-s)^{\gamma-2}(1+\log\frac{1-s}{t_2})$.  Since  $|F|\lesssim_L 1$, by (\ref{H=G}), we get with $u:=1-s$,
$$|H(s,r_2,r_3)|\lesssim_L u^{-1} (t_2|G| +|F|)\lesssim u^{-1}+ u^{\gamma-3}  t_2  (1+\log(u/t_2))   .  $$
Thus, by (\ref{FH}), $$|F_1(\u r)|\lesssim \int_0^{1/2}\frac{s^{2\beta-1}}{t_1^{2-2\beta}} ds +\int^{1/2}_{t_1} \Big(\frac u{t_1}\Big)^{2-2\beta}\Big[u^{-1}+u^{\gamma-3} t_2\Big(1+\log\Big(\frac u{t_2}\Big)\Big)\Big] du $$
$$\lesssim t_1^{2\beta-2}+t_1^{2\beta-2}\Big[\int_{t_1}^{1/2} u^{1-2\beta} du+\int_{t_1}^{1/2}\frac{ t_2}{ u^{2}} \Big(1+\log\Big(\frac u{t_2}\Big)\Big) du\Big]$$
$$\lesssim  t_1^{2\beta-2}\Big[1+\int_{2t_2}^{t_2/t_1} (1+\log(1/v))dv\Big]\lesssim t_1^{\gamma-1}\lesssim w\le w(1+\Lambda).$$
Thus, we have $|F_1| \lesssim_L w (1+\Lambda_1)$ in all cases. Symmetrically, $|F_2| \lesssim_L w (1+\Lambda_2)$.

Since $|F|+|F_3|\lesssim_L 1$,  $F$ and $t_3 F_3$ are bounded on $\mathcal D^{(3)}_{L}$. For $j\in [2]$, since $t_j w(1+\Lambda_j)\le t_j^\gamma (1+\log(1/t_j))\lesssim 1$ and $|F_j|\lesssim_L w(1+\Lambda_j)$, we see that $t_j F_j$ is bounded on $\mathcal D^{(3)}_{L}$.  Since $|G|\lesssim_L \tau^{-1} w(1+\Lambda)\lesssim (t_1+t_2)^{\gamma-2}(1+\log(1/t_1)+\log(1/t_2))$, from $t_1t_2 t_3(t_1+t_2)^{\gamma-2}\le  t_1^{\gamma/2} t_2^{\gamma/2}$, and $t_j^{\gamma/2} \log(1/t_j)\lesssim 1$ we see that $t_1t_2t_3 G$ is bounded on $\mathcal D^{(3)}_{L}$. By symmetry, $F$, $t_j F_j$, $j\in [3]$, and $t_1t_2 t_3 G$ are bounded on $\mathcal D^{({\ell})}_{L}$ for each ${\ell}\in [3]$, and so is bounded on their union.

Finally, we prove the continuous extension of $F$. Since $[0,1]^3\sem \{(1,1,1)\}=\bigcup_{{\ell}\in [3]}\bigcup_{L\in (0,1)} \lin{D^{({\ell})}_L}$, it suffices to show that $F$ extends continuously to $ \lin{D^{({\ell})}_L}$ for each ${\ell}\in [3]$ and $L\in (0,1)$. By the symmetry, we may assume ${\ell}=3$.  Let $L\in (0,1)$. On $\mathcal D^{(3)}_{L}$, $|F_3|\lesssim_L 1$ and for $j\in [2]$,
$$|F_j|\lesssim_L w(1+\Lambda_j)\le \tau^{\gamma-1}(1+\log(1/(1-r_j)))\le (1-r_j)^{\gamma-1}(1+\log(1/(1-r_j))).$$ Since $\gamma>0$, $\int_0^1 (1-r )^{\gamma-1}(1+\log(1/(1-r )))dr<\infty$. Thus, on $\mathcal D^{(3)}_{L}$, $F$ is uniformly continuous in each variable, uniformly with respect to all other variables. So $F$ is jointly uniformly continuous on $\mathcal D^{(3)}_{L}$, which implies that it extends continuously to  $ \lin{D^{(3)}_L}$.
\end{proof}

\subsection{Side-face equations and regular normal estimates}  \label{Section-6.2}
Recall the $f_\beta$ in (\ref{2F1>0}), which extends continuously to a positive function on $[0,1]$ since $\beta>1/2$. We have $f_\beta'(r)=\frac{1-\beta}2 {}_2F_1(2-\beta,1+\beta;1+2\beta;r)$ by the standard differentiation formula for ${}_2F_1$. If $\beta\ge 1$, $f_\beta'$ is bounded on $[0,1)$ by Gauss's identity. If $\beta\in (1/2,1)$, by the connection formula of ${}_2F_1$ at $1$, we get $|f_\beta'(r)|\lesssim (1-r)^{2\beta-2})$  on $[0,1)$. So we conclude that
\BGE  |f_\beta'(r)|\lesssim (1-r)^{\gamma-1}. \label{f'-bd}\EDE
When $r\in [1/2,1)$, using \(\mathcal L_rf_\beta=0\) and (\ref{f'-bd}), we get
 \BGE  |f_\beta''(r)|\lesssim (1-r)^{\gamma-2}. \label{f''-bd}\EDE
 This formula holds for all $r\in [0,1)$ since on \([0,1/2]\), \(f_\beta''\) is bounded by analyticity.

\begin{Proposition}  (i) Let $\beta> 2/3$. Then $r_1F_1,r_2 F_2,F_3,G$  extend continuously to $[0,1)^2\times [0,1]$. Recall that, by Proposition~\ref{prop-non-corner}, \(F\) already extends continuously to
\([0,1)^2\times[0,1]\).  After the continuous extension, we define $\F,\G,\HF$ on $[0,1)^2$ such that
   \BGE \F =F(\cdot,\cdot,1),\quad \G =G(\cdot,\cdot,1) ,\quad \HF =F_3(\cdot,\cdot,1).\label{FGH}\EDE
 Then  for $j\in [2]$, the following hold in $(0,1)^2$:
 \BGE \L_{r_j}\F=0 ;\label{F-bdry-hyper}\EDE
    \BGE \F(r_1,r_2)=\frac{\Gamma(2\beta)\Gamma(3\beta-1)}{\Gamma(\beta)\Gamma(4\beta-1)}   f_\beta(r_1) f_\beta(r_2);\label{F1&N*}\EDE
    \BGE  |
        \partial_{r_1}^{n_1}\partial_{r_2}^{n_2}\F
         | \lesssim t_1^{(\gamma-n_1)\wedge 0} t_2^{(\gamma-n_2)\wedge 0} \quad \text{for any }n_1,n_2\in\N_0\text{ with }n_1\vee n_2\le 2; \label{paF}\EDE
    \begin{equation}
          |
        \partial_{r_1}^{n_1}\partial_{r_2}^{n_2}\HF
         |
        \lesssim t_1^{\gamma-1-n_1}t_2^{\gamma-1-n_2}  \quad \text{for any }n_1,n_2\in\N_0\text{ with }n_1+n_2\le 2;
\label{eq:HF-side-jet-bound}
\end{equation}
\BGE 2( \beta-1)\HF -\beta(1-\beta) \F =t_1t_2\G.\label{FGH1}\EDE

(ii) If $\beta\in (1/3,2/3]$, then $F$, $r_1F_1$, $r_2F_2 $, $r_1r_2 F_{12}$ and $\beta F_3+t_1t_2 G$  extend continuously to $[0,1)^2\times [0,1]$, and (\ref{F-bdry-hyper})   still holds. If $\beta\in [1/2,2/3]$, we also have (\ref{F1&N*}).
\label{Extension1}\end{Proposition}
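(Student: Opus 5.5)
We work on $\mathcal D^{(1)}_L\cap\mathcal D^{(2)}_L$, i.e.\ $r_1,r_2\le L<1$, with $r_3\to1$. The roles of $r_3$ and $t_3$ are now reversed relative to Section~\ref{Section-6.1}, so we run the same Pfaff-system argument in the variable $t_3$ near $t_3=0$. Fix $(r_1,r_2)\in[0,L]^2$. By Proposition~\ref{prop-non-corner} (applied with ${\ell}=1$ and ${\ell}=2$), the quantities $F$, $F_3$, $r_1F_1$, $r_2F_2$ and $t_3G$ are bounded there. The bound on $F_3$ needs $\beta>2/3$, where $w_{12}$ is replaced by bounded quantities since $t_1,t_2\gtrsim_L1$.

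First we form the vector $Y=(F,\,t_3F_3,\,r_1F_1,\,r_2F_2,\,t_3 G)$ (or the analogous combinations $S,U,V$ built with $t_3$ in place of $r_3$). Using (\ref{rainbow1}) with ${\ell}=3$, (\ref{neighbor2}) with ${\ell}=1,2$, and (\ref{G3=2@N}) with ${\ell}=3$, we write $\partial_{t_3}Y=t_3^{-1}MY+RY$, where $M$ is constant in $t_3$ (depending analytically on $r_1,r_2$) and $R$ is bounded. The coefficients $C^{(3)}_f$ have $\Delta$ in the denominator, and by (\ref{Delta-nb-boundary}) $\Delta|_{t_3=0}=t_1^2t_2^2$, which is bounded below on $[0,L]^2$; this is what keeps $R$ bounded. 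The main step is computing the eigenvalues of $M$. From the indicial equation of $\mathcal L_{r_3}$ at $r_3=1$ (exponents $0$ and $1-\gamma$-type, namely $0$ and $2\beta-1$ for the $r_3$-ODE), we expect eigenvalues $0$ and positive multiples like $2\beta-1$, $3\beta-2$. The threshold $\beta>2/3$ should be exactly where $3\beta-2>0$. If all nonzero eigenvalues have positive real part, the solution is a constant-eigenvector component plus $O(t_3^{\min})$, so $Y$ has a limit as $t_3\to0$, uniformly in $(r_1,r_2)$. This gives the continuous extensions of $r_1F_1$, $r_2F_2$, $F_3$ and $G$. Identifying $M$ and checking that no resonant eigenvalue produces logarithms (and that $t_3F_3\to0$ is consistent) is the main obstacle. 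I would first try to reuse the eigenvector structure found for $M$ in the proof of Proposition~\ref{prop-non-corner}.

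Next we pass to the limit $r_3\to1$ in the PDEs. Equation (\ref{rainbow1}) with ${\ell}=3$, after dividing and letting $t_3\to0$ with the boundedness of $t_3F_{33}$ from the ODE, yields (\ref{FGH1}). Equation (\ref{rainbow1}) with ${\ell}=1$ has right side $t_2t_3G\to0$, which gives (\ref{F-bdry-hyper}) distributionally. Since the limits of $r_jF_j$ are continuous, a weak-solution argument then gives classical solutions. Hence $\F$ lies in the span of $f_\beta(r_1)$ and the second solution, and similarly in $r_2$. Continuity of $\F$ at $r_j=0$ (indeed $\F$ is bounded and $r_jF_j\to0$ at $r_j=0$) excludes the singular solution with exponent $1-2\beta$, so $\F=c f_\beta(r_1)f_\beta(r_2)$. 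The constant $c$ is read off from $\F(0,0)=F(0,0,1)=F_0(0,1)$, which by (\ref{F1@N}) equals $\frac{\Gamma(2\beta)\Gamma(3\beta-1)}{\Gamma(\beta)\Gamma(4\beta-1)}$. This proves (\ref{F1&N*}).

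The derivative bounds (\ref{paF}) follow from (\ref{F1&N*}), (\ref{f'-bd}) and (\ref{f''-bd}). For (\ref{eq:HF-side-jet-bound}), we use (\ref{FGH1}) to express $\HF$ in terms of $\F$ and $\G$. We then differentiate the limiting equations (\ref{neighbor2}) at $r_3=1$ to get first-order ODEs in $r_1,r_2$ for $\HF$ (for instance, the ${\ell}=3$ version gives $\beta(r_1\partial_1+r_2\partial_2)\F+\beta\HF+\beta^2\F=-t_1t_2\G$), and bound the result using (\ref{paF}) together with Cauchy estimates in the analytic variables $(r_1,r_2)$. This part is away from $r_j=1$ by $L$, with the powers of $t_j$ only tracking the uniformity as $L\uparrow1$.

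For part (ii), the same scheme applies, but one eigenvalue is now $\le0$, so $F_3$ may blow up like $t_3^{3\beta-2}$ or logarithmically. We therefore only extend the combinations lying in the eigenspaces with nonnegative exponent: $F$, $r_jF_j$, $r_1r_2F_{12}$, and $\beta F_3+t_1t_2G$, the last being the combination that removes the bad mode according to (\ref{FGH1}). The hyperbolic equation (\ref{F-bdry-hyper}) and the identification of $\F$ then go through as before. For $\beta\ge1/2$ we use Proposition~\ref{Horn}(ii) to fix the constant, which is why (\ref{F1&N*}) is stated only for $\beta\in[1/2,2/3]$.
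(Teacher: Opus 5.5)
The central step of part (i), the continuous extension of $F_3$ and $G$, is not proved. With your vector $Y=(F,t_3F_3,r_1F_1,r_2F_2,t_3G)$, even a complete proof that $Y$ converges as $t_3\to0$ only gives $t_3F_3\to0$ and $t_3G\to0$. These are already immediate from Proposition~\ref{prop-non-corner}, which on $[0,L]^2\times[0,1)$ bounds $F_3$ and $G$ themselves, since $\tau_{23}\ge 1-L$. Convergence of $Y$ says nothing about the limits of $F_3$ and $G$, or about (\ref{FGH1}).

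Your eigenvalue heuristics also do not fit your system. Its residue matrix is block triangular with spectrum $\{0,0,0,1,3\beta-1\}$, where the eigenvalue $1$ is an artifact of the $t_3$-weights. So all nonzero eigenvalues are positive for every $\beta>1/3$, and nothing changes sign at $\beta=2/3$. Nor is the exponent $2\beta-1$ of $\mathcal L_{r_3}$ relevant: the coupling through $t_1t_2G$ turns it into $3\beta-1$. Extracting $F_3=(t_3F_3)/t_3$ would need the order-$t_3$ term and a proof that the forcing has no component along the resonant eigenvalue-$1$ direction. That is exactly what you call ``the main obstacle'' and leave open.

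The missing idea is to keep $F_3$ and $G$ unweighted. Take $Y=(r_1F_1,r_2F_2,F_3,F,t_1t_2G)^{\mathrm T}$ and $\partial_{r_3}Y=MY$ from (\ref{rainbow1}), (\ref{neighbor2}), (\ref{G3=2@N}). Then $N:=M-uv^{\mathrm T}/t_3$ is bounded on $[0,L]^2\times[J,1)$, where $u=e_3-\beta e_5$ and $v=(0,0,2(1-\beta),\beta(1-\beta),1)^{\mathrm T}$. The singular part is rank one with $v^{\mathrm T}u=2-3\beta$. Hence $y=v^{\mathrm T}Y=2(1-\beta)F_3+\beta(1-\beta)F+t_1t_2G$ satisfies $\partial_{r_3}y=-\frac{3\beta-2}{t_3}y+O(1)$. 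For $\beta>2/3$, using $\|Y\|\lesssim1$ from Proposition~\ref{prop-non-corner}, this gives $|y|\lesssim\delta(t_3)$ with $\int_0^1\delta(t)\,t^{-1}dt<\infty$. Inserting this into $\partial_{r_3}Y=NY+uy/t_3$ gives a Cauchy criterion for every component, and $y\to0$ is exactly (\ref{FGH1}).

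Several later steps also need repair.
\begin{itemize}
\item (\ref{FGH1}) does not follow from mere boundedness of $t_3F_{33}$. Since $r_3t_3F_{33}+2t_3F_3=y$, you need $t_3F_{33}\to0$. This does hold once $F,F_3,G$ converge: $t_3F_{33}$ then has a limit, which must vanish or $F_3$ would diverge logarithmically.
\item For (\ref{eq:HF-side-jet-bound}), your $\ell=3$ limit equation drops $2r_1r_2\partial_{r_1}\partial_{r_2}\mathcal F$. Combined with (\ref{FGH1}), the correct relation is $(2r_1r_2\partial_{r_1}\partial_{r_2}+\beta r_1\partial_{r_1}+\beta r_2\partial_{r_2}+\beta(2\beta-1))\mathcal F+(3\beta-2)\mathcal F^{[3]}=0$.
\item The bounds (\ref{eq:HF-side-jet-bound}) are needed uniformly on $(0,1)^2$, because Section~\ref{Section-6.3} uses them as $t_j\to0$. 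Cauchy estimates on compacts cannot give this. The cases $(n_1,n_2)=(2,0),(0,2)$ also need third derivatives of $f_\beta$, which (\ref{paF}) does not supply. The paper instead obtains a second-order ODE for $\mathcal F^{[3]}$ in each $r_j$, by differentiating (\ref{rainbow1}) in $r_3$ and using $t_3G_3\to0$.
\item In part (ii), your description (``one eigenvalue is now $\le0$'') is false for your own system. What is needed is:
  \begin{itemize}
  \item the conjugated vector $\widetilde Y=t_3^{uv^{\mathrm T}}Y$;
  \item a Gronwall bound from $\|t_3^{uv^{\mathrm T}}Nt_3^{-uv^{\mathrm T}}\|\lesssim t_3^{-|3\beta-2|}$ (logarithmic at $\beta=2/3$), which is integrable because $\beta>1/3$;
  \item the combinations $w^{\mathrm T}Y$ with $w^{\mathrm T}u=0$;
  \item $t_3Y\to0$, to get $t_3G\to0$ and hence the extension of $r_1r_2F_{12}$.
  \end{itemize}
  This route is indispensable because Proposition~\ref{prop-non-corner} is unavailable there.
\item The restriction $\beta\ge1/2$ in (ii) is not about fixing the constant; that comes from (\ref{F1@N}) for all $\beta>1/3$. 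The reason is that for $\beta<1/2$ the second Frobenius exponent $1-2\beta$ is positive, so continuity at $r_j=0$ no longer rules out the second solution.
\end{itemize}
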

\begin{proof}
Define a Pfaff vector:
\BGE Y=[\begin{array}{ccccc} r_1 F_1 & r_2 F_2 & F_3 & F & t_1t_2 G
\end{array}]^{\text T}.\label{Pfaff1}\EDE  By (\ref{rainbow1}), (\ref {neighbor2}) and (\ref{G3=2@N}), we have the Pfaff system: $\pa_3 Y=MY$, where
\BGE M=\left[\begin{array}{ccccc}
  -\dfrac{\beta}{2r_3} &-\dfrac{ \beta}{2r_3}&-\dfrac\beta 2 & -\dfrac{\beta^2}{2r_3} & \dfrac{K _2}{2t_1t_2r_3}\\
  -\dfrac{\beta}{2r_3} & -\dfrac{\beta}{2r_3} &-\dfrac\beta 2 & -\dfrac{\beta^2}{2r_3} &\dfrac{K _1}{2t_1t_2r_3} \\
  0 & 0 & \dfrac{2(r_3-\beta)}{r_3t_3} & \dfrac{\beta(1-\beta)}{r_3t_3} & \dfrac1{r_3t_3}\\
  0 & 0 & 1 & 0 & 0\\
  t_1t_2 r_1^{-1} C^{(3)} _{F_1} & t_1t_2 r_2^{-1} C^{(3)} _{F_2} & t_1t_2 C ^{(3)}_{F_3} & t_1t_2 C^{(3)} _{F} &  C^{(3)}_{G}
\end{array}
\right],\label{Paff1-M}\EDE
in which $K_1,K_2$ are as in (\ref{Kl}), and $C^{(3)}_f$, $f\in\{F_1,F_2,F_3,F,G\}$, are as in (\ref{CnbF1})-(\ref{CnbG}).

Fix $L,J\in (0,1)$. We restrict all subsequent estimates to $R_{L,J}:= {[0,L]^2\times [J,1)}$. For $j\in[2]$, we have $r_3\in [J,1)$ and $t_j\in [1-L,1]$,  and so $r_3 ,t_j\asymp_{L,J} 1$. By (\ref{Delta-nb-boundary}) and (\ref{Delta-tj}), $\Delta>0$ on the compact set $\lin{R_{L,J}}$, and so $\Delta\asymp_{L,J} 1$. By (\ref{K-tj}), $|K_j^+|\le 3$ on $[0,1]^3$ for $j\in [3]$.

We define
\BGE u:=[\begin{array}{ccccc} 0 & 0 & 1 & 0 & -\beta
\end{array}]^{\text T},\quad v:=[\begin{array}{ccccc}0 & 0 &2(1-\beta)&\beta(1-\beta)&1
\end{array}]^{\text T}\label{uv}\EDE
and claim that
\BGE  N(\u r):=M(\u r)-\frac{u v^{\text T}}{t_3} \label{uvM}\EDE
is uniformly bounded.
The boundedness of the entries $N_{m,n}$ for $m\in[4]$ and $n\in [5]$ follows easily from the bounds on $r_3^{-1}$ and $t_j^{-1}$, $j\in [2]$. By (\ref{CnbF1}), the boundedness of $N_{5,n}$, $n\in[2]$, follows from the bounds on $K_{3-n}^+$, $r_3^{-1}$ and $\Delta^{-1}$. Let $C_{5,3}=-2\beta(1-\beta)$. By (\ref{CnbF3}) and (\ref{Delta-tj}),
$$N_{5,3}= t_1t_2 C ^{(3)}_{F_3}-\frac{C_{5,3}}{t_3}=C_{5,3}\cdot \frac{r_3t_1^2t_2^2 - \Delta}{ t_3\Delta}=  \frac{P_{5,3}(t_1,t_2,t_3)}{ \Delta}$$
for some polynomial $P_{5,3}$, where the last equality holds because by (\ref{Delta-tj}), $\Delta-t_1^2t_2^2$ is divisible by $t_3$. Let $C_{5,4}=-\beta^2(1-\beta)$. By (\ref{CnbF}), (\ref{K-tj}) and (\ref{Delta-tj}),
$$N_{5,4}= t_1t_2 C ^{(3)}_{F }-\frac{C_{5,4}}{t_3}=-C_{5,4}\cdot\frac{t_1t_2K_3^++(1-t_3)\Delta}{r_3 t_3\Delta}= \frac{P_{5,4}(t_1,t_2,t_3)}{r_3 \Delta}$$
for some polynomial $P_{5,4}$, where the last step holds because by (\ref{K-tj}) and (\ref{Delta-tj}), $\Delta+t_1 t_2K_3^+$ is divisible by $t_3$. So the boundedness of $N_{5,3}$ and $N_{5,4}$ follows from the bounds on  $r_3^{-1}$ and $\Delta^{-1}$.  By (\ref{CnbG}),  $N_{5,5}=-\frac\beta{r_3}-\frac{Q^{(3)}}{r_3\Delta}$, whose boundedness  follows from the bounds on $r_3^{-1}$, $\Delta^{-1}$ and $Q^{(3)}$.

Let $a=3\beta-2$. Define two scalar functions of  $\u r$ by \BGE y=v^{\text T}Y=2(1-\beta)F_3+\beta(1-\beta)F+t_1t_2 G\label{y=vY}\EDE
and $y_N=v^{\text T}NY$.   Fix $(r_1,r_2)\in (0,L]^2$ and view $Y,y,y_N$ as functions of $r_3\in [J,1)$.  From $\pa_3Y=NY+\frac{u v^{\text T} Y}{t_3}$ and $v^{\text T} u=-a$, we get
\BGE \pa_{r_3} y=\frac{-a }{t_3}\cdot y+y_N.  \label{dy}\EDE

Suppose $\beta>2/3$; then $a>0$.
 By Proposition \ref{prop-non-corner}, $ |F|+|F_1|+|F_2|+|F_3|+|G|\lesssim_{L,J}  1$, 
which implies that $\|Y\|\lesssim_{L,J} 1$, and so $\|y\|+\|y_N\|\lesssim_{L,J} 1$. Solving (\ref{dy}) using the integration factor $(1-r_3)^{-a}=t_3^{-a}$, we see that, for $r_3\in [J,1)$,
$$ |y(r_3)|\lesssim_{L,J}  \Big(\frac{t_3}{1-J}\Big)^a+ \int_J^{r_3}  \Big(\frac{t_3}{1-s}\Big)^a ds\lesssim_J \delta(t_3):=\begin{cases} t_3^a, & \text{if }a<1,\\ t_3\log(1/t_3),&  \text{if }a=1,\\ t_3, & \text{if }a>1.
\end{cases}$$
Then we have $\delta(t_3)\to 0$ as $t_3\to 0$, and $\int_J^1 \frac{\delta(1-s)}{1-s}ds<\infty$.
By $|y|\lesssim \delta$ and (\ref{y=vY}),
\BGE y= 2(1-\beta)F_3+\beta(1-\beta)F+t_1t_2 G\to 0,\quad \text{as }r_3\to 1.\label{y->0}\EDE
From $\pa_3 Y= NY+\frac{u y}{t_3}$ and $\|Y\|\lesssim_{L,J}1$ we see that, for any  $r_3<r_3'\in [J,1)$,
$$\|Y(r_3')-Y(r_3)\|\lesssim_{L,J}  \int_{r_3}^{r_3'}\Big(1+ \frac{\delta(1-s)}{1-s}\Big) ds .$$
Since $\int_J^1 \frac{\delta(1-s)}{1-s}ds<\infty$,  $Y(r_1,r_2,r_3)$ converges as $r_3\to 1^-$ uniformly in $(r_1,r_2)\in (0,L]^2$. Since $Y$ is continuous on $[0,1)^3$, the uniform convergence holds for $(r_1,r_2)\in [0,L]^2$. Thus, $Y$ extends continuously to $\lin{R_{L,J}}=[0,L]^2\times [J,1]$. Since $F$, $r_1F_1$, $r_2F_2$, $F_3$, and $t_1t_2G$  are components of $Y$, they all extend continuously to $\lin{R_{L,J}}$.  Since $t_1t_2>0$ on $\lin{R_{L,J}}$, $G$ also extends continuously to $\lin{R_{L,J}}$.
Since $F$, $r_1F_1$, $r_2F_2$, $F_3$ and $G$ are continuous on $[0,1)^3$, and $L,J\in (0,1)$ are arbitrary, these functions extend continuously to $[0,1)^2\times [0,1]$. From (\ref{y->0}) we get (\ref{FGH1}).

From (\ref{neighbor2}) we see that $r_1r_2 F_{12}$, $r_1r_3 F_{13}$, and $r_2r_3 F_{23}$ also extend continuously to $[0,1)^2\times [0,1]$. It follows that $F_1$, $F_2$,  $F_{12}$, $F_{13}$, $F_{23}$  extend continuously to $(0,1)^2\times [0,1]$. Applying (\ref{rainbow1}) with ${\ell}\in[2]$, we see that $F_{11},F_{22}$ extend continuously to $(0,1)^2\times [0,1]$. It follows from the fundamental theorem of calculus that, for each $j,k\in[2]$, the continuous extensions of $F_j$ and $F_{jk}$ to $(0,1)^2\times\{1\}$ coincide with $\pa_{r_j}\F$ and $\pa_{r_j}\pa_{r_k}\F$, and the continuous extensions of $F_{j3}$ to $(0,1)^2\times\{1\}$ coincide with $\pa_{r_j}\HF$. Setting ${\ell}=3$ in (\ref{neighbor2}) and sending $r_3\to 1$, we get  \BGE 2r_1r_2 \pa_{r_1}\pa_{r_2} \F +\beta r_1 \pa_{r_1}\F+\beta r_2 \pa_{r_2}\F+\beta \HF+\beta^2 \F= -t_1 t_2 \G.\label{FGH2}\EDE
Summing  (\ref{FGH1}) and (\ref{FGH2}), we get
 \BGE (2r_1r_2\pa_{r_1}\pa_{r_2}    +\beta r_1 \pa_{r_1}  +\beta r_2 \pa_{r_2} +\beta(2\beta-1))\F+(3\beta-2)\HF=0.\label{FGH3}\EDE

Since $t_1t_2 G$ is the $5$-th component of $Y$, from $\pa_{r_3} Y=MY$ and $M=N+\frac{ u v^{\text T}}{t_3}$, we get
$$t_1t_2t_3 G_3=t_3(NY)_5+ ( u v^{\text T}Y)_5=t_3 (NY)_5-\beta y\to 0,\quad \text{as }r_3\to 1.$$
 Thus, $t_3 G_3\to 0$ as $r_3\to 1^-$ for every fixed $(r_1,r_2)\in [0,1)^2$. Setting ${\ell}\in [2]$ in (\ref{rainbow1}) and then differentiating w.r.t.\ $r_3$, we get
$\L_{r_{\ell}} F_3=t_3t_{3-{\ell}} G_3-t_{3-{\ell}} G$. Sending $r_3\to 1^-$ and using the convergence of $G$ and $t_3 G_3$ as $r_3\to 1^-$, we see that $F_{\ell\ell 3}$ also extends continuously to $(0,1)^2\times [0,1]$, whose values on $(0,1)^2\times \{1\}$ agree with those of $\HF_{\ell\ell}$ by the fundamental theorem of calculus. Then we get
\BGE \L_{r_j} \HF= - t_{3-j} \G,\quad j\in [2],\quad \text{in }(0,1)^2. \label{FGH5}\EDE
For $j\in [2]$, (\ref{FGH1})+$t_j\times$ (\ref{FGH5}) yields
\BGE r_j t_j^2  \HF_{jj}+(2\beta-2r_j)t_j \HF_j- (\beta t_j+2)(1-\beta) \HF-\beta(1-\beta)\F=0  .\label{FGH6}\EDE

Applying (\ref{rainbow1}) with ${\ell}\in [2]$  and letting $r_3\to 1^-$,    by the continuity of $F_{jj},F_j,G$ on $(0,1)^2\times [0,1]$ and the fact that $t_3G_3\to 0$, we get (\ref{F-bdry-hyper}).
Fix $r_2\in (0,1)$. Then $\F(\cdot,r_2)$ satisfies the Gauss hypergeometric equation (\ref{F-bdry-hyper}) on $(0,1)$. Hence it is a linear combination of two linearly independent local solutions near $r_1=0$. One of them is  $f_\beta(r_1)$, which is continuous at $0$. Since $\beta>1/2$, the other solution has Frobenius exponent $1-2\beta<0$, and therefore cannot be continuous at $0$. Since $\F$ is continuous on $[0,1)^2$, there is a function $\til y $ on $[0,1)$ such that $\F(r_1,r_2)=f_\beta(r_1)\til y(r_2)$. Swapping the roles of $r_1$ and $r_2$, we see that $\til y=Cf_\beta$ for some $C\in\R$. By (\ref{F1@N}) and the symmetry of $F$, we get $C=F(0,0,1)=F(1,0,0)=\frac{\Gamma(2\beta)\Gamma(3\beta-1)}{\Gamma(\beta)\Gamma(4\beta-1)}$. So we get  (\ref{F1&N*}). Now (\ref{paF}) follows from Gauss's identity,  (\ref{f'-bd}), (\ref{f''-bd}) and (\ref{eq:HF-side-jet-bound}) in the case $n_1\vee n_2\le 1$ follows from (\ref{FGH3}), (\ref{F1&N*}) and (\ref{paF}). When $(n_1,n_2)=(2,0)$ or $(0,2)$, (\ref{eq:HF-side-jet-bound}) follows from (\ref{FGH6}) and the estimates proved above.

We now turn to Part (ii). Assume $\beta\in (1/3,2/3]$. Then $a\in (-1,0]$.   Since  $v^{\text T}u=-a$, we get
\BGE \|t_3^{\pm uv^{\text T}}\|=\begin{cases}
  \|I_5-\frac{t_3^{\mp a}-1}{ a} uv^{\text T}\| {\lesssim} t_3^{-\max\{0,\pm a\}},&\text{if }a\ne 0;\\
 \| I_5\pm \log(t_3) u v^{\text T}\|\lesssim 1+\log(1/t_3),&\text{if }a=0.
\end{cases}\label{Epm}\EDE
Let $\til Y=t_3^{uv^{\text T}} Y$ and $\til N=t_3^{uv^{\text T}} N t_3^{ {-uv^{\text T}}}$.
Since $\|N\|\lesssim_{L,J} 1$, we have
$$\|\til N\|\lesssim_{L,J} h(r_3):=\begin{cases} (1-r_3)^{-|a|},&\text{if }a\ne 0\\ (1+\log(1/(1-r_3)))^2,&\text{if }a= 0. \end{cases}$$ Since $|a|<1$, we have $\int_ J^1 h(s) ds<\infty$.
Solving   $\pa_{r_3} Y-\frac{u v^{\text T}}{1-r_3} Y=NY$ using the integration factor $(1-r_3)^{uv^{\text T}}$, we get
\[\pa_{r_3} \til Y= \til N\til Y.\]
 {This renormalized system will be used again in Section~6.4, where
$r_1,r_2$ are allowed to approach $1$ and quantitative control of $N$ near the corner is needed.}

 {It follows that}
$$ \|\til Y(r_3)\|\le  \|\til Y(J)\|+\int_{J}^{r_3} \|\til N(s)\| \|\til Y(s)\| ds,\quad \text{if }J\le r_3 <1.$$
By Gronwall's inequality, for $r_3\in [J,1)$,
$$\|\til Y(r_3)\|\le \|\til Y(J)\|\exp\Big(\int_J^{r_3} \|\til N(s)\|ds\Big)\lesssim_{L,J}\|\til Y(J)\| \exp\Big(\int_J^{1}h(s) ds\Big)\lesssim_{L,J} 1 .$$ Thus, if $r_3\le r_3'\in [J,1)$ and $r_3,r_3'\to 1$,
$$ \|\til Y(r_3')-\til Y(r_3)\|\le \int_{r_3}^{r_3'} \|\til N(s)\|\| \til Y(s)\| ds\lesssim_{L,J} \int_{r_3}^{r_3'} h(s) ds\to 0,$$
Hence, $\til Y(r_1,r_2,r_3)$ converges as $r_3\to 1^-$, uniformly in $(r_1,r_2)\in [0,L]^2$, which implies that $\til Y$ extends continuously to $\lin{R_{L,J}}$.

Define vectors $w_1:=e_1$, $w_2:=e_2$, $w_3:=e_4$, and $w_4:=\beta e_3+e_5$. Then, for $s\in [4]$,  $w_s{\text{T}} u=0$, and so $w_s^{\text{T}} \til Y=w_s^{\text{T}}(1-r_3)^{uv^{\text T}} Y=w_s^{\text{T}} Y$, which implies that $w_1^{\text{T}} Y=r_1 F_1$, $w_2^{\text{T}} Y=r_2 F_2$, $w_3^{\text{T}} Y=F$, and $w_4^{\text{T}} Y=\beta F_3+t_1t_2\G$  extends continuously to $\lin{R_{L,J}}$.  Since these functions are continuous on $[0,1)^3$, and $L,J\in (0,1)$ are arbitrary, they extend continuously to $[0,1)^2\times [0,1]$. 

By (\ref{Epm}) and the fact that $|a|<1$, $(1-r_3)  \| (1-r_3)^{-uv^{\text T}}\|\to 0$ as $r_3\to 1^-$. Since $Y =(1-r_3)^{-uv^{\text T}}  \til Y $,  as $r_3\to 1$, $t_3 Y\to 0$. Since  $t_1t_2 G$ is a component of $Y$ and $t_1t_2>0$ on $[0,1)^2$, we conclude that  $t_3 G\to 0$  as $r_3\to 1$ locally uniformly on $[0,1)^2$. Applying (\ref{neighbor2}) to ${\ell}=3$ and using (\ref{K-tj}), we get
 $$-2r_1r_2 F_{12}=\beta r_1 F_1+\beta r_2 F_2+\beta^2 F+r_3(\beta F_3+t_1t_2 G)+ (2-(t_1+t_2)+t_1 t_2) t_3 G.$$
 Since each term on the RHS extends continuously to $[0,1)^2\times [0,1]$, so does $r_1r_2 F_{12}$.   Applying (\ref{rainbow1}) with ${\ell}\in[2]$ and letting $r_3\to 1$, we see that $F_{11}$ and $F_{22}$  extend continuously to $(0,1)^2\times [0,1]$, and (\ref{F-bdry-hyper}) holds. When $\beta\in [1/2,2/3]$, any solution of (\ref{F-bdry-hyper}) on $(0,1)$ other than $f_\beta$ is not continuous at $0$. For \(\beta>1/2\), the second Frobenius exponent is \(1-2\beta<0\);
for \(\beta=1/2\), the second local solution is logarithmic. So the same argument as  that used in the case $\beta>2/3$ again shows $\F$ satisfies (\ref{F1&N*}).
\end{proof}

\begin{Remark} Assume $\beta> 1/2$. Since $\F$ is continuous on $[0,1)^2$, (\ref{F1&N*}) actually holds on $[0,1)^2$. Since $f_\beta$ is continuous and positive on $[0,1]$,  $\F$ extends to a positive continuous function on $[0,1]^2$. 
By (\ref{paF}),
\BGE |\F(r_1,r_2)-\F(1,1)|\lesssim t_1^\gamma+t_2^\gamma. \label{F-11}\EDE
\end{Remark}

\begin{Lemma}
Let $\beta=2/3$. Let $y$ be as in (\ref{y=vY}), and $\A=\A(r_1,r_2)=\lim_{r_3\to 1^-} y(r_1,r_2,r_3)$. Then $\A=-\frac 29 C_{\beta} t_1^{-2/3} t_2^{-2/3}$, where $C_\beta:=\frac{\Gamma(2\beta)\Gamma(3\beta-1)}{\Gamma(\beta)\Gamma(4\beta-1)}$. Let $L(x)=\log(1/x)$. Then for any $J\in (0,1/2]$, on the slice $\Omega_J:=\{\u r\in [0,1)^3: 0<t_3\le t_2\le t_1=J\}$, we have
\BGE |y -\A |\lesssim_J t_2^{-\frac 53} t_3(1+L(t_3)) ,\quad  |F_3-L(t_3)\A|\lesssim_J t_2^{-\frac 23}(1+L(t_2)).\label{y-A}\EDE
\label{critial-est}
\end{Lemma}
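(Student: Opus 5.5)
The plan is to get the limit $\A$ in closed form from the side-face relations of Proposition~\ref{Extension1}(ii), and then to obtain both estimates in (\ref{y-A}) by integrating the Pfaff system $\pa_3Y=MY$ of (\ref{Pfaff1})--(\ref{uvM}) in $r_3$ on the slice.

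\textbf{Existence and value of $\A$.} At $\beta=2/3$ we have $a=3\beta-2=0$. Also $y=v^{\text T}Y=\tfrac23F_3+\tfrac29F+t_1t_2G=(\beta F_3+t_1t_2G)+\tfrac29F$. By Proposition~\ref{Extension1}(ii), both $\beta F_3+t_1t_2G$ and $F$ extend continuously to $[0,1)^2\times[0,1]$, so $\A$ exists.

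To evaluate it, we apply (\ref{neighbor2}) with $\ell=3$, written via (\ref{K-tj}) as in the proof of Proposition~\ref{Extension1}(ii). We let $r_3\to1^-$ and use $t_3G\to0$ together with the convergence of $r_1F_1$, $r_2F_2$ and $r_1r_2F_{12}$ to $\pa$-derivatives of $\F$. This gives
\[
\lim_{r_3\to1^-}(\beta F_3+t_1t_2G)=-\bigl(2r_1r_2\pa_{r_1}\pa_{r_2}+\beta r_1\pa_{r_1}+\beta r_2\pa_{r_2}+\beta^2\bigr)\F .
\]
Substituting $\F=C_\beta f_\beta(r_1)f_\beta(r_2)$ from (\ref{F1&N*}) and completing the product, we obtain
\[
\A=-2C_\beta\,h(r_1)h(r_2)+C_\beta\bigl(\tfrac{\beta^2}{2}+\tfrac29-\beta^2\bigr)f_\beta(r_1)f_\beta(r_2),\qquad h:=rf_\beta'+\tfrac{\beta}{2}f_\beta .
\]
At $\beta=2/3$ the scalar coefficient vanishes, since $\tfrac{\beta^2}{2}+\tfrac29-\beta^2=\tfrac29-\tfrac29=0$. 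Moreover, $f_{2/3}={}_2F_1(\tfrac13,\tfrac23;\tfrac43;r)$ satisfies $r^{1/3}f_{2/3}(r)=\tfrac13\int_0^r s^{-2/3}(1-s)^{-2/3}ds$, by the Euler integral (or by term-by-term differentiation). Hence $h=r^{2/3}\bigl(r^{1/3}f_{2/3}\bigr)'=\tfrac13(1-r)^{-2/3}$, and therefore $\A=-\tfrac29C_\beta t_1^{-2/3}t_2^{-2/3}$.

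\textbf{Estimate for $y-\A$.} Since $a=0$, equation (\ref{dy}) becomes $\pa_{r_3}y=y_N=v^{\text T}NY$, and hence $y-\A=-\int_{r_3}^1 y_N\,ds$. On $\Omega_J$ we have $t_1=J$, so $\Delta\gtrsim_J t_2^2$ by (\ref{Delta-tj}). Going through the entries of $N$ exactly as in the boundedness proof of Proposition~\ref{Extension1}, but now keeping track of $t_2$, we expect $\|N\|\lesssim_J t_2^{-1}$. For $Y$ we use Proposition~\ref{prop-non-corner} (the case $\beta=2/3$ of (\ref{F+G=2/3})) on $\mathcal D^{(1)}_{1-J}$, where $\tau_{23}\asymp t_2$ and $w_{23}\asymp t_2^{-1/3}$. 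This gives $|F|,|F_1|\lesssim_J1$, while $|F_2|,|F_3|$ and $t_2t_3|G|$ are $\lesssim_J t_2^{-1/3}(1+\log(t_2/t_3))$. Consequently $\|Y\|\lesssim_J t_2^{-2/3}(1+L(t_3))$ on $\Omega_J$, so $|y_N|\lesssim_J t_2^{-5/3}(1+L(s))$, and integrating over $s\in[r_3,1)$ yields the first bound in (\ref{y-A}). The main risk in the whole argument is this bookkeeping of the $t_2$-dependence of $N$ and $Y$. If the crude bounds lose a power of $t_2$, I would split $N$ into its $t_2$-regular part and the explicit rational singular entries, and exploit cancellations with the relation $y\to\A$.

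\textbf{Estimate for $F_3$.} The third row of $M=N+uv^{\text T}/t_3$ gives $\pa_{r_3}F_3=y/t_3+(NY)_3$, and the third row of $N$ is bounded on $\Omega_J$. We write $y=\A+(y-\A)$ and integrate from the base point $r_3^*:=1-t_2$ to $r_3$. The term $\A$ contributes $\A\bigl(L(t_3)-L(t_2)\bigr)$. By the first estimate, $\int|y-\A|/t_3$ contributes $\lesssim_J t_2^{-5/3}\int_0^{t_2}(1+L(s))\,ds\lesssim t_2^{-2/3}(1+L(t_2))$. The term $(NY)_3$ contributes at most $t_2\sup\|Y\|$. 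Finally, at the base point, Proposition~\ref{prop-non-corner} gives $|F_3(r_3^*)|\lesssim_J t_2^{-2/3}(1+L(t_2))$, and $|\A L(t_2)|$ is of the same order. Adding these contributions gives the second bound in (\ref{y-A}).
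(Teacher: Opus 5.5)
Your proposal is correct and follows essentially the same route as the paper: $\A$ is obtained from (\ref{neighbor2}) with $\ell=3$ in the limit $r_3\to1^-$, using (\ref{F1&N*}) and the identity $rf_\beta'+\tfrac13 f_\beta=\tfrac13(1-r)^{-2/3}$. The first bound comes from integrating $\pa_{r_3}y=v^{\text T}NY$ with $\|N\|\lesssim_J t_2^{-1}$ (this bound does hold; compare the entrywise estimate in Lemma~\ref{lem:N-bdd}) together with Proposition~\ref{prop-non-corner}, and the second from integrating (\ref{rainbow1}) with $\ell=3$ (your third Pfaff row) from the base point $t_3=t_2$.

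Two small slips need fixing. First, on $\Omega_J$ one has $w_{23}\asymp t_2^{-2/3}$, not $t_2^{-1/3}$, so Proposition~\ref{prop-non-corner} gives $|F_2|,|F_3|,t_2|G|\lesssim_J t_2^{-2/3}(1+\log(t_2/t_3))$; this is what your bound on $\|Y\|$ actually uses. Second, the $(NY)_3$ term must be bounded by integrating this pointwise bound over $s$, which gives $O_J(t_2^{1/3})$, rather than by $t_2\sup\|Y\|$, since that supremum is not uniformly controlled as $t_3\to0$.
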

\begin{proof}
The existence of $\A$ follows from Proposition \ref{Extension1}.
Applying (\ref{neighbor2}) to ${\ell}=3$ and letting $r_3\to 1$, we get
$$2r_1r_2 \F_{12}+\beta r_1 \F_1+\beta r_2\F_2+(\beta^2/2) \F+\mathcal A=0.$$
Thus, by (\ref{F1&N*}),
$$\mathcal A=-2C_\beta (r_1 f_{\beta}'(r_1)+(\beta/2) f_{\beta}(r_1))(r_2 f_{\beta}'(r_2)+(\beta/2) f_{\beta}(r_2)).$$
Since $\beta=2/3$, $\beta/2=1-\beta=2\beta-1$, and so
$$r f_{\beta}'(r )+(\beta/2) f_{\beta}(r )=\sum_{n=0}^\infty \frac{(1-\beta)_n(\beta)_n}{(1)_n(2\beta)_n} (n+\beta/2)r^n  =\frac\beta 2 \sum_{n=0}^\infty \frac{ (\beta)_n}{(1)_n }   r^n =\frac\beta 2 (1-r)^{-\beta}.$$
Therefore, $\mathcal A=-\frac 29 C_{\beta}  t_1^{-2/3} t_2^{-2/3}$.

By (\ref{dy}) and the fact that $a=3\beta-2=0$, $\pa_{r_3} y=y_N=v^{\text T}NY$, where $v$, $N$ and $Y$ are respectively as in (\ref{uv}), (\ref{uvM}), and (\ref{Pfaff1}). By (\ref{uvM}) and that $v^{\text T} u=-a=0$, $v^{\text T}N=v^{\text T}M$, where $M$ is as in (\ref{Paff1-M}). Thus, $\pa_{r_3} y =v^{\text T}MY$. We restrict all estimates in this proof to $\Omega_J$. For example, since $t_1=J$, we have $|\A|\lesssim_J t_2^{-\frac 23}$.

Applying Proposition \ref{prop-non-corner} to ${\ell}=1$ and $L=1-J$, we get
\BGE|F|+|F_1|\lesssim_J 1,\quad |F_2|\lesssim_J t_2^{-\frac 23},\quad |F_3|+t_2|G|\lesssim_J t_2^{-\frac 23}\Big(1+\log \frac{t_2}{t_3}\Big),\label{LJFG}\EDE
  where we used $\gamma=(2\beta-1)\wedge 1=\frac 13$, $\tau_{23}\asymp t_2+t_3\asymp t_2$, $\Lambda_{23}\lesssim 1$, and $\Lambda_{32}\lesssim  (1+\log \frac{t_2}{t_3} )$. Thus, $\|Y\|\lesssim t_2^{-\frac 23} (1+\log \frac{t_2}{t_3} )$. Using (\ref{Paff1-M}), (\ref{uv}), (\ref{CnbF1})-(\ref{Q-tj}),  (\ref{Delta-tj}) and (\ref{K-tj}), we get  $\|v^{\text T} M\|\lesssim_J t_2^{-1}$. For example, the third and fifth entries are given by
  $(v^{\text T} M)_3=\frac 29 -\frac 89\cdot\frac 1{ r_3} +\frac 49\cdot\frac{t_1^2 t_2^2}\Delta +\frac 49 \cdot \frac{\Delta-t_1^2t_2^2}{t_3\Delta}$, and $(v^{\text T} M)_5=-\frac{Q^{(3)}}{r_3\Delta}$.
  Then we use $r_3\asymp 1$, $\Delta\gtrsim t_2^2$ and $|\Delta-t_1^2t_2^2|\lesssim_J t_2t_3$ (by (\ref{Delta-tj}))  to get $(v^{\text T} M)_3=O(t_2^{-1})$; and use $r_3\asymp 1$, $\Delta\gtrsim t_2^2$, and $|Q^{(3)}|\lesssim_J t_2$ (by (\ref{Q-tj})) to get $(v^{\text T} M)_5=O(t_2^{-1})$. When estimating $(v^{\text T} M)_j$ for $j\in [2]$, we also use $|K^+_{3-j}|\lesssim_J t_2$ (by (\ref{K-tj})); and when estimating $(v^{\text T} M)_4$, we also use $\Delta+t_1t_2 K_3^+=O(t_2t_3)$ (by (\ref{Delta-tj}) and (\ref{K-tj})).

  Since $\pa_{r_3} y =v^{\text T}MY$, we then get $|\pa_{r_3} y|\lesssim_J t_2^{-\frac 53}  (1+\log \frac{t_2}{t_3} )$, which implies the first inequality in (\ref{y-A}) because
$$ |y(\u r)-\A|\le \int_{r_3}^1 (1-r_2)^{-\frac 53}  (1+\log \frac{1-r_2}{1-s} )ds=t_2^{-\frac 53} \int_0^{t_3} (1+\log \frac{t_2}u ) du$$ $$=t_2^{-\frac 53} t_2 \int_0^{\frac{t_3}{t_2}} (1+\log \frac 1v)dv=t_2^{-\frac 53} t_3(2+\log \frac {t_2}{t_3}) \lesssim  t_2^{-\frac 53} t_3(1+L(t_3)) .$$ 
Applying (\ref{rainbow1}) to ${\ell}=3$, we get $r_3 t_3 F_{33}+2 t_3 F_3=y$, which implies $\pa_{r_3}(r_3^2 F_3)=\frac{r_3}{t_3} y$. Thus,
  $$r_3^2 F_3(\u r)-r_2^2 F_3(r_1,r_2,r_2)=\int_{r_2}^{r_3} \frac{s}{1-s}\cdot y(r_1,r_2,s) ds.$$
  Since $\int_{r_2}^{r_3} \frac s{1-s} ds= L(t_3)-L(t_2)+t_3-t_2$, by the above   formula,
  $$ r_3^2 F_3(\u r)- L(t_3)\A= r_2^2 F_3(r_1,r_2,r_2)-(L(t_2)+t_2-t_3)\A+\int_{r_2}^{r_3} \frac s{1-s} ( {y(r_1,r_2,s)-\A}) ds,$$
  which together with (\ref{LJFG}), $|\A|\lesssim_J t_2^{-\frac 23}$, and the first inequality in (\ref{y-A}) implies that
  $$|F_3- L(t_3)\A|\lesssim_J t_3|F_3|+r_2^2|F_3(r_1,r_2,r_2)|+(1+L(t_2))|\A|+t_2^{-\frac 53}\int_{r_2}^{r_3} (1+L(1-s))ds$$
  $$\lesssim_J t_2^{-\frac 23} t_3(1+L(t_3))+t_2^{-\frac 23}+t_2^{-\frac 23}(1+L(t_2))+t_2^{-\frac 53}\cdot t_2 (1+L(t_2))\lesssim_J  t_2^{-\frac 23}(1+L(t_2)).$$
This proves the second inequality in (\ref{y-A}).
\end{proof}

\subsection{Corner propagation and final stitching} \label{Section-6.3}
We now complete the analysis near the corner \((1,1,1)\).  The estimates from the previous subsection provide the necessary boundary input on the side faces.  The remaining task is to propagate these estimates into the interior corner region.  We first work in the ordered region \(t_3\le t_2\le t_1\le 1/2\), where the relative size of \(t_3\) compared with \(t_1t_2\) separates the analysis into two regimes.  After obtaining the required bounds in this ordered region, symmetry will give the corresponding estimates in the other regions and will complete the proof of continuity at the corner.

Define $\Omega=\{\u r\in [0,1)^3:t_3\le t_2\le t_1\le 1/2\}$. Define $\chi$ on $[0,1)^3$ by $\chi(\u r)=t_1t_2/t_3$. Define $\mathcal O=\{\u r\in \Omega:\chi(\u r)\ge 1/2\}$. Recall the $\alpha,\gamma$ from (\ref{gamma}).

\begin{Proposition}
\label{lem:direct-inner-core}
If $\beta\ge 2/3$, then on $\mathcal O$,
$$|F(\u r)-\F(1,1)|+\sum_{j=1}^3 t_j |F_j(\u r)| + t_1t_2t_3| G(\u r)| \lesssim  t_1^\gamma.$$
 \end{Proposition}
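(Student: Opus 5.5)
We propose to analyze the inner core $\mathcal O$ by integrating the side-face Pfaff system in the $r_3$ direction, starting from the face $r_3=1$ where Proposition~\ref{Extension1} gives explicit boundary data. On $\mathcal O$ we have $t_3\le 2t_1t_2\le t_2$, so $t_3$ is much smaller than $t_2$ and $t_1$ whenever $t_1$ is small. For fixed $(r_1,r_2)$, we therefore integrate along $s\in[r_3,1)$, i.e.\ $u=1-s\in(0,t_3]$, and every such point still satisfies $u\le t_1t_2$.

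\textbf{Boundary data on $r_3=1$.} For $\beta>2/3$, (\ref{F1&N*}), (\ref{F-11}), (\ref{paF}) and (\ref{eq:HF-side-jet-bound}) give
\[
|\F-\F(1,1)|\lesssim t_1^\gamma,\qquad t_j|\pa_j\F|\lesssim t_j^\gamma\ (j\in[2]),\qquad |\HF|\lesssim (t_1t_2)^{\gamma-1}.
\]
Relation (\ref{FGH1}) then gives $t_1t_2|\G|\lesssim (t_1t_2)^{\gamma-1}$. The quantities to be controlled are $t_3F_3$ and $t_1t_2t_3G$. At the face their expected sizes are $t_3(t_1t_2)^{\gamma-1}$, and since $t_3\lesssim t_1t_2$ on $\mathcal O$ this is at most $(t_1t_2)^\gamma\le t_1^\gamma$. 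For $\beta=2/3$ we use Lemma~\ref{critial-est} instead: $F_3\approx L(t_3)\A$ with $|\A|\asymp (t_1t_2)^{-2/3}$, so $t_3|F_3|\lesssim t_3 (t_1t_2)^{-2/3}(1+L(t_3))$. Since $t_3\le t_1t_2$ and $x^{1/3}\log(1/x)$ is bounded, this is $\lesssim (t_1t_2)^{1/3}$ up to the harmless logarithm.

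\textbf{Step 1: interior vector.} Form $Y=[r_1F_1,\ r_2F_2,\ F_3,\ F,\ t_1t_2G]^{\mathrm T}$, which satisfies $\pa_3Y=NY+uv^{\mathrm T}Y/t_3$ as in (\ref{uvM}). Rescale to $\hat Y=\mathrm{diag}(1,1,t_1t_2,1,t_1t_2)\,Y$, or equivalently use the weights $t_jF_j$, $t_1t_2t_3G$. Then re-estimate $N$ near the corner: using (\ref{Delta-tj}), $\Delta\gtrsim t_1^2t_2^2$ on $\mathcal O$ because the $t_1^2t_2^2$ term dominates when $t_3\lesssim t_1t_2$. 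Likewise $|K_j^+|$ and $|Q^{(3)}|$ are controlled by the $t$'s. The goal is $\|\hat N\|\lesssim 1/(t_1t_2)$ for the rescaled matrix.

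\textbf{Step 2: Gronwall on a short interval.} The integration interval has length $t_3\lesssim t_1t_2$, so the accumulated factor is $\exp\bigl(\int \|\hat N\|\bigr)\lesssim 1$. The singular rank-one part $uv^{\mathrm T}/t_3$ is handled exactly as in Proposition~\ref{Extension1}, either via the scalar $y=v^{\mathrm T}Y$ and (\ref{dy}) or via conjugation by $t_3^{uv^{\mathrm T}}$ using (\ref{Epm}). This yields $\|\hat Y(r_3)-\hat Y(1)\|\lesssim$ the boundary sizes above. In particular it gives $|F-\F|\lesssim \int_{r_3}^1|F_3|\lesssim t_3(t_1t_2)^{\gamma-1}$ (with a logarithm when $\beta=2/3$), which is $\lesssim t_1^\gamma$. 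Combining with (\ref{F-11}) gives the $F$ bound.

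\textbf{Step 3: the remaining terms.} The bounds on $t_jF_j$ for $j=1,2$ come from the first two components. Here we need $t_j|F_j|\lesssim t_1^\gamma$ given $t_2\le t_1$; the face estimate gives $t_2^\gamma$, so a little care is needed. We would use the symmetric $S,U$ combinations, or the fact that only $F_1$ appears weighted by $t_1$ and $F_2$ by $t_2$, and check that the variation term is $\lesssim (t_1t_2)^\gamma\le t_1^\gamma$.

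\textbf{Main obstacle.} We expect the hardest step to be bounding $\hat N$ uniformly near the corner, in particular the entries $t_1t_2C^{(3)}_{F_j}/r_j$ and $C^{(3)}_G$, whose denominators involve $\Delta$. This requires showing $\Delta\gtrsim t_1^2t_2^2$ together with cancellations of the numerators to order $t_1t_2$ on $\mathcal O$. The $\beta=2/3$ logarithms are the secondary difficulty.
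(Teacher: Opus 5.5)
Your Step~2 does not go through, and the obstruction is structural. For fixed $(r_1,r_2)$, the face $\{r_3=1\}$ is a regular singular surface of $\partial_{r_3}Y=NY+uv^{\mathrm T}Y/t_3$, not an initial surface. In the variable $t_3$ the residue is $-uv^{\mathrm T}$, with eigenvalues $0$ (four times) and $-v^{\mathrm T}u=3\beta-2$ (eigenvector $u$). So besides four solutions with finite face limits there is a fifth, behaving like $c(r_1,r_2)\,t_3^{3\beta-2}u$.

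When $\beta>2/3$ this fifth solution vanishes at $t_3=0$. The face values determine only the four-dimensional part, consistent with $v^{\mathrm T}Y\to0$, i.e.\ (\ref{FGH1}). Adding a multiple of this mode changes none of the boundary bounds you list. Hence no Gronwall argument started at $r_3=1$ can give $\|\hat Y(r_3)-\hat Y(1)\|\lesssim$ ``boundary sizes''. You would need a bound on the amplitude $c$, uniform as $(r_1,r_2)\to(1,1)$, and nothing in the proposal supplies it.

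Your scheme is in fact the one the paper uses in Section~\ref{Section-6.4} for $1/2<\beta<2/3$ (Lemmas~\ref{lem:N-bdd} and~\ref{lem:subcritical-layer}). There $\nu=3\beta-2<0$, so this mode is the dominant singular one and its amplitude $S=v^{\mathrm T}\tilde Y|_{r_3=1}$ is a genuine face quantity. Lemma~\ref{lem:subcritical-amplitude} computes it as $S=s_\beta t_1^{-\beta}t_2^{-\beta}$, by solving ODEs in $r_1,r_2$ and matching with $F(0,0,r)$ through a connection formula. For $\beta>2/3$ the mode is subdominant (for $\beta>1$ even below the regular $O(t_3)$ terms), so its amplitude is not a face limit at all.

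Related steps fail for the same reason:
\begin{itemize}
\item Conjugating by $\varpi_3^{uv^{\mathrm T}}$ gives only $\|\hat N\|\lesssim\varpi_3^{-\nu}$, which is not integrable once $\beta\ge1$.
\item The integrating-factor treatment of $y$ in Proposition~\ref{Extension1} starts from an interior slice $r_3=J$. In the corner this would mean data on $\{t_3=2t_1t_2\}$, which is exactly the output of this proposition that the radial flow in the proof of Theorem~\ref{Thm-coninuity-neighbor} consumes.
\item For $\beta=2/3$, Lemma~\ref{critial-est} holds only on slices $t_1=J$ with $J$-dependent constants, so it is not the uniform face input you use.
\end{itemize}

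The paper never treats the face as an initial surface. It flows along $\partial_s=t_1\partial_{r_1}+t_2\partial_{r_2}+2t_3\partial_{r_3}$ in the coordinates $t_1=e^{-s}$, $t_2=\rho t_1$, $t_3=\vartheta t_1t_2$. These preserve $\mathcal O$ and start at $t_1=1/2$, where Proposition~\ref{prop-non-corner} gives finite data.

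It sets $R=F-\F+t_3\HF$ (with $t_3\HF$ replaced by $t_3(1+L_3)\A$ when $\beta=2/3$), and then $X_j=t_jR_j$, $P=t_1t_2R_3$, $Z=t_1t_2y$. For these it derives $\partial_sY=MY+\mathcal N(s)D+E$, where:
\begin{itemize}
\item $M$ is block triangular with eigenvalues $-\alpha,-\alpha,-2\alpha,-2\alpha$;
\item $\|D\|\lesssim t_1$;
\item $\|E\|\lesssim(t_1t_2)^{\gamma}$, with some $\gamma'\in(\gamma/2,\gamma)$ in place of $\gamma$ when $\beta=2/3$, after a cancellation of the $t_1t_2L_3\A$ terms.
\end{itemize}
Gronwall then gives $\mathcal N(s)\lesssim t_1^\gamma$. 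This interior starting data is what pins down the mode that is invisible at the face.

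A minor point: your concern in Step~3 is unnecessary, since $t_2\le t_1$ already gives $t_2^\gamma\le t_1^\gamma$.
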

\begin{proof}
First, suppose $\beta>2/3$.
Let $\F,\G,\HF$ be as in (\ref{FGH}), and $y$ be as in (\ref{y=vY}). We regard \(\mathcal F,\mathcal G,\mathcal F^{[3]}\) as functions of
\((r_1,r_2,r_3)\) which are independent of \(r_3\) and use subscripts to denote partial derivatives of them.
Define $R:=F-\F+t_3\HF$,
\BGE
       X_j:=t_j  R_j,\quad j\in [2],\quad   P:=t_1t_2 R_3,\quad Z:=t_1t_2y.\label{RXQZ}\EDE
Here the subscript in $R_j$ means the partial derivative  with respect to $r_j$, $j\in [3]$, but the subscripts in  $X_1$ and $X_2$ do not have this meaning. We will use
\BGE F=R+\F-t_3\HF,\quad F_j=R_j+\F_j-t_3\HF_j,\quad j\in [2],\quad F_3=R_3+\HF.\label{FR}\EDE

We restrict $R,X_j,P,Z$ to $\mathcal O$ and view them as functions of $(\rho,\vt,s)$, where $\rho\in (0,1]$, $\vt\in (0,2]$, and $s\in [s_0:=\log 2,\infty)$, via the correspondence:
\BGE t_1=e^{-s},\quad t_2=\rho t_1=\rho e^{-s},\quad t_3=\vt t_1t_2=\vt \rho e^{-2s}.\label{corresp}\EDE
We use $\pa_s$ to denote the partial derivative with respect to $s$ with $\rho,\vt$ fixed. In terms of $\pa_{r_j}$, $j\in[3]$, $\pa_s$ is expressed by
\(
        \partial_s
        =
        t_1\partial_{r_1}
        +
        t_2\partial_{r_2}
        +
        2t_3\partial_{r_3}
\).

All estimates in this proof are restricted to $\mathcal O$. Note that we have   $0<t_3\le t_2\le t_1\le 1/2$ and $t_3\le 2 t_1t_2$ in $ \mathcal O$, which implies that $r_j\asymp 1$ and $r_j^{-1}=1+O(t_1)$, $j\in [3]$.
 For \(s\ge s_0\), define
\BGE
       \mathcal N(s)
        :=
        \sup_{\u r\in \mathcal O_s}
       \max\{ |X_1(\u r)|,|X_2(\u r)|,|P(\u r)|,|Z(\u r)|\},\quad   \mathcal O_s:=\{\u r\in  \mathcal O: 1-r_1=e^{-s}\} . \label{SM}
\EDE
By (\ref{F+G>2/3}) applied to ${\ell}=1$ and $L=1-e^{-s}$, (\ref{paF}) and (\ref{eq:HF-side-jet-bound}), we have that on $ \mathcal O_s$,
$$|F|+|F_1|+|\F_1|\lesssim_s 1,\quad |F_2|+|F_3|+|\F_2|+|\HF|+|\HF_1|\lesssim_s t_2^{\gamma-1},\quad |G|+|\HF_2|\lesssim_s t_2^{\gamma-2}.$$
These estimates together imply that $\mathcal N(s)<\infty$ for any $s\ge s_0$.

Since $\F$ and $\HF$ are respectively the continuous extension of $F$ and $F_3$ to $[0,1)^2\times \{1\}$, we have $R\to 0$ as $t_3\to 0^+$.   Thus, using $t_3\le 2t_1t_2$ and $P=t_1t_2 R_3\le \mathcal N(s)$, we get
\BGE
        |R(\u r)|
        \le
        \int_{r_3}^{1}|R_3(r_1,r_2,\xi)|d\xi \le 2 t_1 t_2 \max_{\xi\in [r_3,1)}  |R_3(r_1,r_2,\xi)|
        \le
        2 \mathcal N(s). \label{eq:65-rough-normal}
\EDE

Let $\delta=(t_1t_2)^\gamma$. We write $\err$ for the combined error $O(t_1\mathcal N(s)) +O(\delta)$, which may change values from line to line. Using (\ref{paF})-(\ref{eq:HF-side-jet-bound}), (\ref{RXQZ})-(\ref{FR}) and (\ref{SM})-(\ref{eq:65-rough-normal}), we get
\begin{equation}\label{err-F}
\begin{aligned}
&
t_j(F_j-\mathcal F_j)=X_j+\mathscr E,\quad j\in[2],\quad
t_1t_2F_3=P+\mathscr E,
\quad t_3F_3 =\vartheta P+\mathscr E, \\
& t_1(F-\mathcal F) =\mathscr E,\quad
t_1t_2F=\mathscr E,\quad
t_1t_2F_j=\mathscr E,\quad
t_3F_j=\mathscr E,\quad j\in[2],\quad  t_1 t_3 F_3=\err.
\end{aligned}
\end{equation}
Using (\ref{y=vY}) and the above estimates, we get
\BGE  t_1^2 t_2^2 G  =Z-2(1-\beta)P +\err,\quad t_1t_2t_3 G=\vt Z-2(1-\beta)\vt P+\err,\quad t_1^2t_2t_3 G=\err.\label{GZ}\EDE


We compute the \(\partial_s\)-derivatives of \(X_1,X_2,P,Z\) and express them as linear combinations of \(X_1,X_2,P,Z\), up to the error $\err$.
The bounds (\ref{SM})-(\ref{eq:65-rough-normal}), estimates (\ref{err-F})-(\ref{GZ}), and basic facts: $0<t_3\le t_2\le t_1\le 1/2$, $t_3=\vt t_1 t_2=O(t_1t_2)$, $r_j\asymp 1$, and $r_j^{-1}=1+O(t_1)$, $j\in [3]$, will be repeatedly used without being further mention.

Since $X_1=t_1 R_1$ and $\pa_s=t_1\pa_{r_1}+t_2\pa_{r_2}+2t_3\pa_{r_3}$,    using (\ref{paF}) and (\ref{eq:HF-side-jet-bound}), we get
\BGE \pa_s X_1=-X_1+t_1^2(F_{11}-\F_{11}) +t_1t_2 F_{12}  +2t_1t_3 F_{13} +\err.  \label{pasX1}\EDE
Applying (\ref{rainbow1})  with ${\ell}=1$ and (\ref{F-bdry-hyper}) with $j=1$, we get
\begin{align}
  t_1^2 (F_{11}-\F_{11})&=\frac{t_1}{r_1} [(2r_1-2\beta)(F_1-\F_1)+\beta(1-\beta)(F-\F)+t_2 t_3 G ] \label{F11-pre}\\
 & =(2-2\beta)X_1-2(1-\beta) \vt P+\vt Z+\err.\label{F11}
\end{align}
We will need the following estimates by (\ref{K-tj}):
\BGE K_j=-2 t_j+t_1t_2+O(t_1t_3),\quad j\in[2],\quad \text{and}\quad K_3=-2t_3-t_1t_2 +O(t_1t_3).\label{K23}\EDE
Applying (\ref{neighbor2}) with ${\ell}=3$ and ${\ell}=2$, respectively, we get
\begin{align}
  t_1t_2 F_{12}&=-\frac{t_1t_2}{2r_1r_2} (\beta r_1F_1+\beta r_2F_2+\beta r_3 F_3+\beta^2 F- K_3 G ) \label{F12-pre}\\
 & =((1-\beta)(2\vt +1)-\frac\beta 2)P-(\vt+\frac 12)Z+\err;\label{F12}
\end{align}
\begin{align}
  t_1^2 t_2 F_{31}&=-\frac{t_1^2 t_2}{2r_3 r_1}(\beta r_1F_1+\beta r_2F_2+\beta r_3 F_3+\beta^2 F- K_2 G )\label{F31-pre}\\
   & =2(1-\beta) P- Z+\err.\label{F31}
\end{align}
A similar computation gives the same estimate for $ t_1 t_2^2F_{32}$.
Multiplying (\ref{F31}) by $2\vt$, we get
\BGE
  2t_1t_3 F_{13}   =4(1-\beta)\vt P-2\vt Z+\err.\label{F13}
\EDE
Substituting  (\ref{F11}), (\ref{F12}) and (\ref{F13}) into (\ref{pasX1}),  we get 
\BGE \pa_s X_1=-\alpha X_1+(1-\frac32{\beta} +4(1-\beta)\vt) P-(\frac 12+2\vt)Z +\err.\label{pasX1*}\EDE
A similar computation yields
\BGE \pa_s X_2=-\alpha X_2+(1-\frac32{\beta} +4(1-\beta)\vt) P-(\frac 12+2\vt)Z +\err.\label{pasX2*}\EDE

Now we compute $\pa_s P$.
Since $P=t_1t_2 R_3$ and $R_3=F_3-\HF$, by (\ref{eq:HF-side-jet-bound}), we get
\BGE \pa_s P=-2P+t_1t_2 (t_1F_{31} +t_2 F_{32}+2t_3 F_{33})+\err. \label{pasQ-pre}\EDE
Applying (\ref{rainbow1}) with ${\ell}=3$, we get
\begin{align}
  2t_1t_2 t_3 F_{33}&=\frac{2t_1t_2}{r_3}[(2r_3-2\beta) F_3+\beta(1-\beta) F+t_1 t_2 G] = 2Z+\err ,\label{F33}
\end{align}
which together with (\ref{F31}) and its $j=2$ analogue  implies
\BGE \pa_s P=-2\alpha P +\err.\label{pasQ*}\EDE

It remains to compute $\pa_s Z$.  Since $Z=t_1 t_2 y$, by (\ref{y=vY}), with $(\lambda_1,\lambda_2,\lambda_3):=(1,1,2)$, we have
\begin{align}
  \pa_s Z&= -2 Z-2t_1^2 t_2^2 G+ t_1t_2 \sum_{j\in [3]} \lambda_j  t_j(2(1-\beta) F_{3j}+\beta(1-\beta)F_j+ t_1t_2 G_j)\label{pasZ-pre}\\
  &=4(1-\beta)(3-2\beta) P-4 Z +t_1^2 t_2^2 (t_1 G_1+t_2 G_2+2t_3 G_3)+\err. \label{pasZ-pre2}
\end{align}

We use the Pfaff system (\ref{G3=2@N}) to estimate the last term   in (\ref{pasZ-pre2}). By (\ref{Delta-tj}), (\ref{K-tj}) and (\ref{Q-tj}),
\BGE
\begin{aligned}
  & \Delta\gtrsim  {t_1^2 t_2^2},\quad  \Delta^{-1}= t_1^{-2} t_2^{-2} (1+4\vt)^{-1} +O(t_1^{-2} t_2^{-3} t_3),\quad K_{\ell}^+=O(t_1t_2),\quad {\ell}\in [3];\\
  & t_3Q^{(3)}= t_1^2t_2^2(-6\vt+O(\vt t_1)),\quad t_j Q^{(j)}=t_1^2 t_2^2 (-3-6\vt+ O(  t_1)),\quad j\in [2].
\end{aligned}
\label{err-DQ}
\EDE
Applying  (\ref{G3=2@N})-(\ref{CnbG}) to ${\ell}=1$ and using the above estimates, we get
\begin{align}
  t_1^3t_2^2  G_1=&-2\beta(1-\beta)\frac{  t_1^2 t_2^2} \Delta r_1 t_2 t_3 F_1 -\beta(1-\beta) \frac{ t_1^2 t_2^2 }{r_1\Delta} {r_2 t_1  K_3}  F_2  +\beta^2(1-\beta) \frac{t_1^2 t_2^2 }{r_1\Delta}K_1^+ F \nonumber\\
 & -\beta(1-\beta) \frac{  t_1^2 t_2^2 }{r_1\Delta} r_3 t_1 K_2 F_3 -\Big(\frac{\beta}{r_1} +\frac{t_1 Q^{(1)}}{r_1\Delta}\Big) t_1^2 t_2^2 G \label{t13G-pre}\\
 =&\Big[ \frac{2\beta(1-\beta)}{1+4\vt}-2(1-\beta)\Big(\frac{3+6\vt}{1+4\vt}-\beta\Big)\Big]P+\Big[\frac{3+6\vt}{1+4\vt}-\beta\Big] Z+\err,\label{t12G}
\end{align}
where the terms in (\ref{t13G-pre}) are absorbed into the error $\err$ except for the last two. A similar computation shows that $t_1^2t_2^3  G_2$ can also be estimated by (\ref{t12G}).
Applying  (\ref{G3=2@N})-(\ref{CnbG}) to ${\ell}=3$, we get
\begin{align}
2t_1^2 t_2^2 t_3 G_3=&-2\beta(1-\beta)\frac{t_1^2t_2^2}{r_3\Delta} r_1 t_3 K_2 F_1 -2\beta(1-\beta)\frac{t_1^2t_2^2}{r_3\Delta} r_2 t_3 K_1 F_2+2\beta^2(1-\beta) \frac{t_1^2t_2^2}{r_3\Delta} K_3^+ F \nonumber\\
&-4\beta(1-\beta) \frac{t_1^2t_2^2}{ \Delta} r_3 t_1 t_2 F_3-2\Big(\frac\beta{r_3}+\frac{t_3 Q^{(3)}}{r_3\Delta}\Big) t_1^2 t_2^2 G \label{t31G-pre}\\
=&\Big[- \frac{4\beta(1-\beta)}{1+4\vt}-2(1-\beta)\Big(\frac{12 \vt}{1+4\vt}-2\beta\Big)\Big]P+\Big[\frac{12 \vt}{1+4\vt}-2\beta\Big] Z+\err,\label{t31G}
\end{align}
where we absorb the terms in (\ref{t31G-pre}) except for the last two into $\err$.
Combining the above estimates, we get
\BGE \pa_s Z=-2\alpha Z+ \err. \label{pasZ}\EDE

Define a Pfaff vector:
\BGE Y=[\begin{array}{ccccc}X_1 & X_2 & P & Z
\end{array}]^{\text T}.\label{Pfaff-XPZ}\EDE By (\ref{pasX1*}), (\ref{pasX2*}), (\ref{pasQ*}) and (\ref{pasZ}), we obtain the Pfaff system: \BGE \pa_s Y=MY+\mathcal N(s) D+E,\label{YMDE}\EDE where
\BGE M:=\Big[\begin{array}{cc} -\alpha I_2 & N \\ 0 & -2\alpha I_2
\end{array}\Big],\quad N:=\Big[\begin{array}{cc} 1-\frac 32\beta+4(1-\beta)\vt & -\frac 12-2\vt \\1-\frac 32\beta+4(1-\beta)\vt & -\frac 12-2\vt
\end{array}\Big],\label{MN}\EDE
and $D,E$ are vector function of length $4$ that satisfy $\|D\|\lesssim t_1$ and $\|E\|\lesssim \delta$.

We fix $\rho,\vt$ for a moment and view $M$ as a constant matrix, and $Y,D,E$ as vector functions of $s$. Solving $ Y'=MY+ \mathcal N D+E$, we get
$$Y(s)=e^{(s-s_0)M} Y(s_0)+\int_{s_0}^s\mathcal N(r)  e^{(s-r) M} D(r) dr+\int_{s_0}^s e^{(s-r)M} E(r) dr,\quad s\ge s_0.$$
For $t\ge 0$, since $\|N\|\lesssim 1$, we get
$e^{tM}=O(e^{-\alpha t})$. Taking the norm  and then taking a supremum over all such $(\rho,\vt)\in (0,1]\times (0,2]$, we get by $\|D\|\lesssim t_1=e^{-s}$ and $\|E\|\lesssim \delta\le e^{-2\gamma s}$.

 \BGE e^{\alpha s} \mathcal N(s)\lesssim e^{ \alpha s_0 }\mathcal N(s_0)+  \int_{s_0}^s  e^{ \alpha r}(e^{-r} \mathcal N(r)  +e^{-2\gamma r}) dr  ,\quad s\ge s_0.\label{N-exp}\EDE
Let $f(s)$ be equal to the integral in (\ref{N-exp}). Then for some $C >0$ depending on $\beta$,
$$ f'(s)=e^{(\alpha-2\gamma)s}+e^{\alpha s-s} \mathcal N(s)\le e^{(\alpha-2\gamma)s} + e^{-s}( C e^{\alpha s_0}  \mathcal N(s_0)  + C f(s)),$$
which implies
$$\frac{d}{ds}\Big(\exp(C e^{-s}) f(s)\Big)\le \exp(C e^{-s})\Big(e^{(\alpha-2\gamma)s} + C e^{\alpha s_0}  \mathcal N(s_0) e^{-s}\Big)\lesssim e^{(\alpha-2\gamma)s},$$
where we used $\alpha-2\gamma\ge -1$ and the finiteness of $\mathcal N(s_0)$. Since $f(s_0)=0$ and $\exp(C e^{-s})\asymp 1$, the above inequality implies that $f(s)\lesssim \int_{s_0}^s  e^{(\alpha-2\gamma)r} dr$. By (\ref{N-exp}), we   get
\BGE \mathcal N(s)\lesssim e^{-\alpha s}\int_{s_0}^s  e^{(\alpha-2\gamma)r} dr. \label{N-upper}\EDE
By considering cases $\alpha>2\gamma$ ($\alpha>2$), $\alpha=2\gamma$ ($\alpha=2$) and $\alpha<2\gamma$ ($\alpha<2$) separately, we find that $\mathcal N(s)\lesssim e^{-\gamma s}$.
 Since $t_1=e^{-s}$,  $|X_1|+|X_2|+|P|+|Z|+|R|\lesssim t_1^\gamma$.

Since $P=t_1t_2(F_3-\HF)$ and  $t_1t_2\HF=O(\delta)$, we get $t_1 t_2 F_3=O(t_1^\gamma)$, which implies $t_3 F_3=O(t_1^\gamma)$. For $j\in [2]$, since $X_j=t_j(F_j-\F_j+t_3\HF_j)$,   $t_j\F_j=O(t_j^\gamma)$, and $t_j t_3\HF_j =O(\delta)$, we get $t_j F_j=O(t_1^\gamma)$. Since $F-\F=R-t_3\HF$, $|R| \lesssim t_1^\gamma$, and $t_3|\HF|\lesssim t_1t_2 |\HF|\lesssim \delta$, we get $|F(\u r)-\F(r_1,r_2)|=O(t_1^\gamma)$, which together with (\ref{F-11}) implies that $|F(\u r)- \F(1,1)|=O(t_1^\gamma)$. Finally, by (\ref{GZ}) and $t_3\le 2 t_1 t_2$, we get $t_1t_2 t_3 |G(\u r)|= O(t_1^\gamma)$.

Now we assume $\beta=2/3$. Then $\alpha=\gamma=1/3$. Let $y$ be as in (\ref{y=vY}), and let $L(t)$ and $\A$ be as in Lemma \ref{critial-est}. We use $\A_j$ to denote  $\pa_{r_j}\A$ for $j=1,2$. Then $\A_j=\beta t_j^{-1}\A$.
Set $L_3=L(t_3)=\log(1/t_3)$.
Define $R:=F-\F+t_3(1+L_3)\mathcal A$ and then define $X_1,X_2,P,Z$ by (\ref{RXQZ}), with the present definition of $R$. Then (\ref{FR}) is replaced by
\BGE F=R+\F-t_3(1+L_3)\mathcal A,\quad F_j=R_j+\F_j-t_3(1+L_3)\mathcal A_j,\quad j\in [2],\quad F_3=R_3+L_3\mathcal A.\label{FR*}\EDE
We restrict the above functions to $\mathcal O$ and view them as functions of $(\rho,\vt,s)$ for $\rho\in (0,1]$, $\vt\in (0,2]$, and $s\ge s_0=\log(2)$,   using the correspondence (\ref{corresp}).


We  define $\mathcal N(s)$ using (\ref{SM}) for $s\ge s_0$, with the present definitions of $X_1,X_2,P,Z$. By Lemma \ref{critial-est}, $R\to 0$ as $t_3\to 0^+$,  hence (\ref{eq:65-rough-normal}) still holds in the present setting. We now show that $X_1,X_2,P,Z$ are bounded on $\mathcal O(s)$ for any $s\ge s_0$, which implies that $\mathcal N(s)<\infty$. We fix $s\ge s_0$ and restrict the estimates in this paragraph to $\mathcal O(s)$. Applying Proposition \ref{prop-non-corner} to ${\ell}=1$, $j=2$,  $L=1-e^{-s}$, and using $\tau_{23}\asymp t_2+t_3\asymp t_2$, we get $|F_1|\lesssim_s 1$ and $t_2|F_2|\lesssim_s t_2^\gamma\lesssim 1$. By (\ref{paF}), $t_j|\F_j|\lesssim 1$ for $j\in [2]$. Since $t_3\le t_2$, $ t_3(1+L_3)|\mathcal A|\lesssim_s  t_3(1+L_3) t_2^{-2/3}\le t_3^{1/3} (1+L_3)\lesssim 1$. For $j\in [2]$, since   $X_j=t_j F_j-t_j\F_j+\beta t_3(1+L_3)\mathcal A$, it is bounded on $\mathcal O(s)$ by these estimates. The boundedness of $P=t_1t_2(F_3-L_3\A)$  on  $\mathcal O(s)$ follows from the second inequality in (\ref{y-A}). The boundedness of $Z$ on  $\mathcal O(s)$ follows from the first  inequality in (\ref{y-A}) because
$$|Z|\le t_1t_2| y-\A|+t_1t_2|\A|\lesssim_s   t_2^{-2/3} t_3(1+L_3)+t_2^{1/3}\le  t_3^{1/3}(1+L_3)+t_2^{1/3}\lesssim 1.$$

Pick $\gamma'\in (\gamma/2,\gamma)=(1/6,1/3)$, e.g., $\gamma'=1/4$, and define $\delta$ by $\delta=(t_1t_2)^{\gamma'}$ instead of   $(t_1t_2)^{\gamma}$. Since $t_3\le 2t_1t_2$ and $|\A|\lesssim (t_1t_2)^{\gamma-1}$, \BGE t_3 L_3 |\A|\lesssim   (t_1t_2/t_3)^{1-\gamma+\gamma'} t_3 L_3 (t_1t_2)^{\gamma-1}\lesssim \delta t_3^{\gamma-\gamma'} L_3 \lesssim \delta. \label{tLA}\EDE
However, $t_1 t_2  L_3 \A$ cannot be absorbed into $O(\delta)$. Also note that $t_1 t_2 A=O(t_1^\gamma t_2^\gamma)$ can be absorbed into $O(\delta)$.
We still define $\err$ to be the combined error $O(t_1\mathcal N(s)) +O(\delta)$.

The estimates in (\ref{err-F}) remain valid in the current setting except for $t_1t_2 F_3$, which should now be replaced by
\BGE t_1t_2 F_3=P+t_1t_2 L_3\A.\label{t1t2F3}\EDE
The estimate in (\ref{GZ}) for $t_1t_2t_3G$ still holds here, but the estimate for $t_1^2t_2^2 G$ should be replaced by
\BGE t_1^2 t_2^2 G= Z-2(1-\beta) P-2(1-\beta)t_1t_2 L_3\A +\err, \label{GZ*}\EDE
The estimates for $t_1t_2t_3G$ and $t_1^2t_2^2 G$ together with (\ref{K23}) imply that
\BGE t_1t_2 K_3 G=(1+2\vt)(2(1-\beta) P-Z) +\beta t_1t_2 L_3\A  +\err;
\label{K3G}\EDE
\BGE t_1 t_2  t_j K_{3-j} G=4(1-\beta) P-2Z+2 (1-\beta)(2-t_j)t_1t_2 L_3\A   +\err,\quad j\in [2].\label{K2G}\EDE

We still have (\ref{pasX1}) for the current setting since the terms containing $\A$ are $O(\delta)$.  Formulas (\ref{F11-pre}), (\ref{K23}), (\ref{F12-pre}) and (\ref{F31-pre}) still hold here since the argument for them does not depend on $\beta$. Since formula (\ref{F11}) does not involve $t_1t_2 F_3$ or $t_1^2 t_2^2 G$, it still holds here. By (\ref{K3G})-(\ref{K2G}), we find that formula (\ref{F12}) still holds here, and $t_1t_2 t_j F_{3j}$, $j\in [2]$, should now be estimated not by (\ref{F31}) and its $j=2$ analogue but by
\BGE t_1t_2 t_j F_{3j}=2(1-\beta) P- Z+ \beta t_1t_2 L_3\A+\err,\quad j\in [2]. \label{F31*}\EDE
where  the extra terms involving $t_1t_2 L_3\A$ are contributed by $-\frac{t_1 t_2 t_j}{2r_3 r_j}(\beta r_3 F_3-K_{3-j} G)$, and the equality $(1-\beta)(2-t_j)-\frac\beta 2t_j=\beta r_j$ is used in the last step since $\beta=2/3$. We have replaced $r_3$ by $1$ since the resulting difference can be absorbed into $\err$.

Multiplying (\ref{F31*}) for $j=1$ by $2\vt$, we see that (\ref{F13}) still holds in the current setting. Since (\ref{F11}), (\ref{F12}) and (\ref{F13}) remain unchanged,  we get (\ref{pasX1*}) in this setting. A similar argument establishes (\ref{pasX2*}) in the current setting.

As for $\pa_s P$, we have the following formula to replace (\ref{pasQ-pre}):
\BGE \pa_sP=-2P+t_1t_2(t_1 F_{31}+t_2 F_{32}+2t_3 F_{33})-2\beta t_1t_2 L_3\A+\err,\label{pasQ-pre*} \EDE
where the extra logarithmic term comes from summing two constributions $t_1t_2t_j \pa_{r_j} (- \L_3\A)$, $j\in [2]$. The estimate for $2t_1t_2t_3 F_{33}$ in (\ref{F33}) still holds here because the extra $t_1t_2 L_3\A$ terms come from $t_1t_2 F_3$ and $t_1^2t_2^2 G$ cancel each other. Substituting (\ref{F33}) and (\ref{F31*}) into (\ref{pasQ-pre*}), we get (\ref{pasQ*}) in the current setting.

As for $\pa_s Z$, we now still have (\ref{pasZ-pre}). We now prove (\ref{pasZ}) for the current setting. At this moment, we can only say that (\ref{pasZ}) holds here with a possible additional  multiple of $t_1t_2 L_3\A$. Thus, we need to carefully evaluate the contribution to the coefficient of $t_1t_2 L_3\A$ from each term in (\ref{pasZ-pre}). First, by (\ref{GZ*}), the coefficient contributed by the term $-2 t_1^2t_2^2 G$ in (\ref{pasZ-pre}) is $4(1-\beta)$. Second, by (\ref{err-F}), all terms $t_1t_2 t_j F_j$ can be absorbed into $\err$. Third, by (\ref{F33}) and (\ref{F31*}), the coefficient contributed by
$t_1t_2\sum_{j\in [3]} 2(1-\beta) \lambda_j t_j F_{3j}$ is $4\beta(1-\beta)$.
 Fourth, we use (\ref{G3=2@N})-(\ref{CnbQ}) to estimate $t_1^2 t_2^2 (t_1 G_1+t_2 G_2+2t_3 G_3)$. Equations (\ref{t13G-pre}) and (\ref{t31G-pre}) still hold here, where only the last two terms of each formula contribute  coefficients of $t_1t_2 L_3\A$, which may be replaced respectively by the expressions in (\ref{t13G-pre-alt}) and (\ref{t31G-pre-alt}) below because (\ref{err-F}), (\ref{K23}), and (\ref{err-DQ}) ensure that the resulting difference can be absorbed into the error $\err$:
\BGE -\beta(1-\beta) \frac{ 1}{r_1 (1+4\vt)}  t_1 (-2t_2+t_1t_2) F_3 -\Big(\frac{\beta}{r_1} +\frac{ -3-6\vt+2t_1 }{r_1 (1+4\vt)}\Big) t_1^2 t_2^2 G,\label{t13G-pre-alt}\EDE
\BGE -4\beta(1-\beta) \frac{1}{ 1+4\vt }   t_1 t_2 F_3 -2\Big(\beta+\frac{   -6\vt }{ 1+4\vt }\Big) t_1^2 t_2^2 G .\label{t31G-pre-alt}\EDE
By (\ref{t1t2F3}) and (\ref{GZ*}), the contributions from (\ref{t13G-pre-alt}) and (\ref{t31G-pre-alt})  are, respectively
$$-\beta(1-\beta) \frac{  -2+t_1  }{r_1 (1+4\vt)} +2(1-\beta)\Big(\frac{\beta}{r_1} +\frac{ (-3-6\vt+2t_1)}{r_1 (1+4\vt)}\Big),$$
$$ -4\beta(1-\beta) \frac{1}{  1+4\vt  } +4(1-\beta) \Big(\beta+\frac{-6\vt}{1+4\vt}\Big).$$
We may further replace $\vt$ by $0$ since the resulting difference is some bounded quantity times $\vt t_1t_2 L_3\A=t_3 L_3\A$, which can also be absorbed into $\err$ by (\ref{tLA}). Using $\beta=2/3$, we see that the contributions to the coefficients of $t_1 t_2 L_3\A$ from $t_1^2 t_2^2 t_1 G_1$ and $2t_1^2 t_2^2 t_3 G_3$ are respectively $-\frac{10}9$ and $0$. A similar argument shows that the contribution by $t_1^2 t_2^2 t_2 G_2$ is also $-\frac{10}9$.
Thus the total coefficient of \(t_1t_2L_3\mathcal A\) in the current version of (\ref{pasZ}) is
\(
4(1-\beta)+4\beta(1-\beta)-\frac{10}{9}-\frac{10}{9}=0
\).
Hence all additional \(t_1t_2L_3\mathcal A\) terms cancel, and (\ref{pasZ}) remains valid in the case $\beta=2/3$.

Define a new Pfaff vector $Y$ using  (\ref{Pfaff-XPZ}) but for the functions $X_1,X_2,P,Z$  defined here. Combining (\ref{pasX1*}), (\ref{pasX2*}), (\ref{pasQ*}) and (\ref{pasZ}), we see that $Y$ satisfies the same Pfaff system (\ref{YMDE}) with the same matrix $M$ as in (\ref{MN}) and the same type of estimates for $D$ and $E$:  $\|D\|\lesssim t_1$ and $\|E\|\lesssim \delta$. Since the $\delta$ here equals $(t_1t_2)^{\gamma'}=O(t_1^{2\gamma'})$,  (\ref{N-upper})   holds in the current setting with $\gamma$  replaced by $\gamma'$. Since $\alpha=\gamma<2\gamma'$, we get $|\mathcal N(s)|\lesssim  e^{-\gamma s}$. Consequently, $|X_1|+|X_2|+|P|+|Z|+|R|\lesssim t_1^{\gamma}$. Since $t_2\le t_1$ and $2\gamma'>\gamma$, we get $\delta\le t_1^{2\gamma'}\le t_1^\gamma$, and so $\err=O(t_1^\gamma)$.

Since $\vt P=t_3(F_3-L_3\A)$  and $t_3 L_3|\A|\lesssim \delta\le t_1^\gamma$, we get $t_3 F_3=O(t_1^{\gamma})$. For $j\in [2]$, since $X_j=t_j(F_j-\F_j+t_3(1+L_3)\A_j)$,   $t_j\F_j=O(t_j^\gamma)$ by (\ref{paF}), and $t_j t_3(1+L_3)\A_j=\beta t_3(1+L_3)\A =O(t_1^\gamma)$, we get $t_j F_j=O(t_1^{\gamma})$. Since $F-\F=R-t_3(1+L_3)\A$,  we get $|F(\u r)-\F(r_1,r_2)|=O(t_1^{\gamma})$, which together with (\ref{F-11}) implies that $|F(\u r)- \F(1,1)|=O(t_1^{\gamma})$. Finally, by the current version of (\ref{GZ}), we get $t_1t_2 t_3 |G(\u r)|= O(t_1^{\gamma})$.
\end{proof}

\begin{proof}[Proof of Theorem \ref{Thm-coninuity-neighbor}]
Given the continuity and estimates away from the corner point $(1,1,1)$ by Proposition \ref{prop-non-corner}, we now only need to focus on estimates near the corner point.

 {We record that the radial Pfaff-flow argument leading to
(\ref{Y<t1}) will be used again in Section~\ref{Section-6.4}.
The argument uses the present parameter range only through
$\alpha>0$ and the initial estimates on the hypersurface
$\chi=1/2$.  In the range $1/2<\beta<2/3$, those initial estimates
will be supplied by Proposition~\ref{prop:subcritical-inner-core} in place of Proposition~\ref{lem:direct-inner-core}.}

Fix $\u r^0\in\Omega$ such that $\chi_0:=\chi(\u r^0)=1/2$. Then $\u r^0\in \mathcal O$. In the following, we evaluate all the relevant functions at $\u r(\zeta):=(r_1(\zeta),r_2(\zeta),r_3(\zeta))$, where  $r_j(\zeta):=1-t_j(\zeta)$ and $t_j(\zeta)=t_j^0 e^{-\zeta}$, $j\in [3]$, and thus view them as functions of $\zeta\in [0,\infty)$. By Proposition \ref{lem:direct-inner-core}, $|F|_{\zeta=0}=|F(\u r^0)|\lesssim 1$. We will frequently use the following basic estimates. For all $\zeta\in [0,\infty)$,   $t_3\le t_2\le t_1\le 1/2$, and so $r_j\asymp 1$ and $r_j^{-1}=1+O(t_j)=1+O(t_1)$ for $j\in [3]$; $\chi(\zeta)=\chi_0 e^{-\zeta}\le 1/2$; and for any $j,k,{\ell}\in [3]$ such that $j\ne k$, $\frac{t_jt_k}{t_{\ell}}\le \chi$. By (\ref{Delta-tj}), (\ref{K-tj}), (\ref{Q-tj}),
\BGE \Delta=4 t_1t_2t_3 (1+O(\chi)),\quad K_{\ell}=-2t_{\ell}(1+O(\chi)),\quad t_{\ell} Q^{({\ell})}=-6 t_1t_2 t_3(1+O(\chi)),\quad {\ell}\in[3].\label{D-K-Q-est}\EDE

Consider a Pfaff vector $${Y}:=[\begin{array}{ccccc} t_1F_1 & t_2F_2 & t_3F_3 & t_1t_2t_3 G
\end{array}]^{\text T},$$
as a function of $\zeta\in [0,\infty)$.
Since $\u r^0\in\mathcal O$, by Proposition \ref{lem:direct-inner-core}, $\|Y(0)\|\lesssim (t_1^0)^\gamma$.
We now evaluate $\pa_{\zeta} Y$ using $\pa_{\zeta}=\sum_{j\in [3]} t_j \pa_{r_j}$. First,
$$\pa_{\zeta} (t_{\ell} F_{\ell})=-t_{\ell} F_{\ell}+t_{\ell}^2 F_{\ell\ell}+ \sum_{j\in [3]\sem \{{\ell}\}} t_{\ell} t_j F_{\ell j},\quad {\ell}\in [3].$$
Let ${\ell}\in [3]$ and pick distinct $j,k\in [3]\sem \{{\ell}\}$. By (\ref{rainbow1}),
$$t_{\ell}^2 F_{\ell\ell}=\frac{t_{\ell}}{r_{\ell}}(t_jt_kG+(2r_{\ell}-2\beta) F_{\ell}+\beta(1-\beta) F)$$
$$=t_1t_2t_3G+(2-2\beta)t_lF_{\ell}+ O(t_1)|t_1t_2t_3G|+ O(t_1)|t_lF_{\ell}|+ O(t_1)|F|.$$
By (\ref{neighbor2}) and (\ref{D-K-Q-est}),
$$t_lt_j F_{j\ell}=\frac{t_lt_j}{2r_{\ell} r_j} (K_k G-\beta r_1 F_1-\beta r_2 F_2-\beta r_3 F_3-\beta^2 F)$$
$$=-t_1t_2t_3 G+O(\chi)|t_1t_2t_3G|+O(\chi)|t_1 F_1|+O(\chi)|t_2F_2|+O(\chi)|t_3 F_3|+O(t_1)|F| .$$
Combining the above displayed formulas, we get
\BGE \pa_{\zeta} (t_{\ell} F_{\ell})=-\alpha  t_lF_{\ell} - t_1t_2t_3 G + \mathbf{O}(\chi) ^{\text{T}}Y+O(t_1)|F|,\quad {\ell}\in [3]. \label{paellFj}\EDE
where $\mathbf{ O}(\chi)$ stands for a four-component vector, whose norm is of the size $O(\chi)$.

We use (\ref{G3=2@N})  to evaluate $\pa_{\zeta} (t_1t_2t_3G)$ and get
$$\pa_{\zeta}(t_1t_2t_3G)=-3 t_1t_2t_3G +t_1t_2 t_3\sum_{{\ell}\in [3]}  t_{\ell} \Big(C^{({\ell})}_{F_{\ell}} F_{\ell} +\sum_{j\in [3]\sem\{{\ell}\}} C^{({\ell})}_{F_j} F_j +C^{({\ell})}_{F} F +C^{({\ell})}_G G\Big).$$
Let ${\ell}\in [3]$ and take distinct $j,k\in [3]\sem \{{\ell}\}$. Using (\ref{CnbF1})-(\ref{CnbG}) and (\ref{D-K-Q-est}), we get
$$t_1t_2t_3 t_{\ell} C^{({\ell})}_{F_{\ell}} F_{\ell}=-2\beta(1-\beta) \frac{r_{\ell} t_j t_k t_{\ell}}{\Delta}\cdot\frac{t_j t_k}{t_{\ell}}\cdot t_{\ell} F_{\ell}=O(\chi)| t_{\ell} F_{\ell}|; $$
$$t_1t_2t_3 t_{\ell} C^{({\ell})}_{F_j} F_j=-\beta(1-\beta)\frac{r_j t_1t_2t_3}{r_{\ell}\Delta} t_{\ell} K_k F_j =O(\chi)| t_j F_j|; $$
$$t_1t_2t_3 t_{\ell} C^{({\ell})}_{F } F = \beta^2(1-\beta) \frac{t_1t_2t_3}{r_{\ell} \Delta}\cdot {K_{\ell}^+} F=O(t_1)| F|; $$
$$t_1t_2t_3 t_{\ell} C^{({\ell})}_{G } G= t_1t_2t_3\Big(-\frac\beta{r_\ell}-\frac{t_{\ell} Q^{({\ell})}}{\Delta}\Big) G=\Big(\frac 32-\beta\Big) t_1t_2 t_3 G+O(\chi)| t_1t_2t_3 G|. $$
Combining the above estimates, we get
\BGE \pa_{\zeta}(t_1t_2t_3G)=-\frac 32 \alpha   t_1t_2t_3G +\mathbf{ O}(\chi)^{\text{T}}Y+O(t_1)|F|.\label{paellG}\EDE
Combining (\ref{paellFj}) and (\ref{paellG}), and using $\chi=\frac 12 e^{-\zeta}$ and $t_1=t_1^0e^{-\zeta}$, we get a Pfaff system:
\BGE \pa_{\zeta} Y=MY+e^{-\zeta} DY+e^{-\zeta}FE,\label{Pfaff-3}\EDE
where
$$M:=\left[ \begin{array}{cccc} -\alpha & 0 & 0 & -1 \\ 0 & -\alpha & 0 & -1 \\  0 & 0  & -\alpha & -1 \\ 0 & 0 & 0 & -\frac 32\alpha
\end{array}
\right],$$
$D$ is a $4\times 4$ matrix function with $\|D\|\lesssim 1$, and $E$ is a $4$-component vector function with $\|E\|\lesssim t_1^0$. The variation-of-constants formula gives
$$Y(\zeta)=e^{\zeta M} Y(0)+\int_0^\zeta e^{-s} e^{(\zeta-s)M} (D(s) Y(s)+F(s) E(s))ds,\quad \zeta\ge 0.$$
Since $M$ has eigenvalues: $-\alpha,-\alpha,-\alpha,-\frac 32\alpha$ and corresponding linearly independent eigenvectors: $e_1$, $e_2$, $e_3$, $e_1+e_2+e_3+\frac\alpha 2 e_4$, for any $s\ge 0$, $\|e^{sM}\|\lesssim e^{-\alpha s}$. Thus,
$$\|Y(\zeta)\|\lesssim e^{-\alpha\zeta} \|Y(0)\|+\int_0^\zeta e^{-s} e^{-\alpha(\zeta-s)} ( \| Y(s)\|+t_1^0|F(s)|)ds,\quad \zeta\ge 0.$$
On the other hand, we have $\pa_{\zeta} F=\sum_{{\ell}\in [3]} t_{\ell} F_{\ell}$, and so $|F(s)|\le |F(0)|+\int_0^s \sqrt 3 \|Y(u)\|du$. Plugging this inequality into the above displayed formula, and using $|F(0)|\lesssim 1$,  $\|Y(0)\|\lesssim (t_1^0)^\gamma$, $e^{-s}\le e^{-\gamma s}$, and $e^{-\alpha(\zeta-s)}\le e^{-\gamma(\zeta-s)}$ when $0\le s\le \zeta$ (because $\gamma=1\wedge \alpha$),  we get
$$\|Y(\zeta)\|\lesssim e^{-\alpha\zeta} \|Y(0)\|+ \int_0^\zeta e^{-s} e^{-\alpha(\zeta-s)} \| Y(s)\|ds+t_1^0|F(0)|\int_0^\zeta e^{-s} e^{-\alpha(\zeta-s)}ds$$
$$+t_1^0\int_0^\zeta e^{-s} e^{-\alpha(\zeta-s)} \int_0^s \sqrt 3 \|Y(u)\|du ds$$
$$\lesssim  e^{-\alpha\zeta} (t_1^0)^\gamma+  e^{-\gamma \zeta}(1+t_1^0\zeta) \int_0^\zeta   \| Y(s)\|ds +t_1^0\zeta e^{-\gamma\zeta}  .$$
Pick any  $\gamma'\in (0,\gamma)$. Then there is a constant $C$ depending only on $\beta,\gamma'$ such that
$$ \|Y(\zeta)\|\le C e^{-\gamma' \zeta} \Big( \int_0^\zeta   \| Y(s)\|ds +  (t_1^0)^\gamma   \Big)  .$$
Using Gr\"onwall's inequality and $t_1=t_1^0 e^{-\zeta}$, we get
\BGE  \|Y(\zeta)\|\le C e^{-\gamma' \zeta} (t_1^0)^\gamma  \exp ( (C/{\gamma'}) (1-e^{-\gamma' \zeta}) )
\lesssim e^{-\gamma' \zeta}  (t_1^0)^{\gamma'}=t_1^{\gamma'}.\label{Y<t1}\EDE

Suppose $\u r\in \Omega\sem \mathcal O$. Then $\chi(\u r)=t_1t_2/t_3<1/2$. Define $\u r^0$ such that
$$t_1^0=\frac{t_3}{2t_2},\quad t_2^0=\frac{t_3}{2t_1},\quad t_3^0= \frac{t_3^2}{2t_1t_2}.$$
Then $\u r^0\in\Omega$ and $\chi(\u r^0)= 1/2$, and the Pfaff flow $Y(\zeta)$, $\zeta\ge 0$, starting from such $\u r^0$ reaches $\u r$ at $\zeta=\log(\frac{t_3}{2t_1t_2})>0$. By (\ref{Y<t1}), we get $t_j |F_j(\u r)|\lesssim t_1^{\gamma'}$, $j\in [3]$, and $t_1t_2t_3 |G|\lesssim  t_1^{\gamma'}$ on $\Omega\sem \mathcal O$. Combining this estimate with Proposition \ref{prop-non-corner} (applied to $L=1/2$), Proposition \ref{lem:direct-inner-core}, and the symmetry of $F$ and $G$, we get
$\sum_{j=1}^3 t_j |F_j | +t_1t_2t_3|G| \lesssim (t_1\vee t_2\vee t_3)^{\gamma'}$ on $[0,1)^3$.

We now prove that $F(\u r)\to \F(1,1)$ as $\u r\to (1,1,1)$. Since $\F(r_1,r_2)\to\F(1,1)$ as $(r_1,r_2)\to (1,1)$, by the symmetry of $F$, it suffices to show that, when $\u r\to (1,1,1)$ within the region $\Omega$, we have $|F(\u r)-\F(r_1,r_2)|\to 0$. By Proposition \ref{lem:direct-inner-core}, this holds if $\Omega$ is replaced by $\mathcal O$.
Define $\phi:\Omega\sem \mathcal O\to \mathcal O$ by $\phi(\u r)=(r_1,r_2,1-2 t_1 t_2)$. Then $\phi(\u r)\to (1,1,1)$ as $\u r\to (1,1,1)$ within $\Omega\sem \mathcal O$. Thus, it suffices to show that $|F(\u r)-F(\phi(\u r))|\to 0$ as  $\u r\to (1,1,1)$, which holds because
$$|F(\u r)-F(\phi(\u r))|\le \int^{1-2t_1t_2}_{r_3} |F_3(r_1,r_2,s)|ds\lesssim t_1^{\gamma'} \int_{2t_1t_2}^{t_3} u^{-1} du=  t_1^{\gamma'}\log \frac{t_3}{2t_1t_2}\le t_1^{\gamma'}\log \frac{1}{ t_1 }\to 0,$$
where the second inequality holds because for any $s\in (r_3,1-2t_1t_2)$, $(r_1,r_2,s)\in \Omega\sem \mathcal O$, and so $(1-s) |F_3(r_1,r_2,s)|\lesssim t_1^{\gamma'}$.

Combining the result in the last paragraph with the continuous extension of $F$ to $[0,1]^3\sem \{(1,1,1)\}$ given by Proposition \ref{prop-non-corner}, we conclude that $F$ extends continuously to $[0,1]^3$. Finally,  (\ref{F-boundary}) follows from (\ref{F1&N*}) and the symmetry of $F$.
\end{proof}

\subsection{The remaining range $1/2<\beta<2/3$} \label{Section-6.4}

We now treat the remaining range $\beta\in (1/2,2/3)$, so that the $\alpha,\gamma$ in (\ref{gamma}) satisfy $\gamma=\alpha=2\beta-1\in(0,1/3)$.
By Proposition~\ref{Extension1} (ii), $F$, $r_j F_j$, $j\in [2]$, $r_1r_2 F_{12}$, and $\beta F_3+t_1t_2 G$ extend continuously to $[0,1)^2\times [0,1]$, and  $\F:=F|_{r_3=1}$ is given by (\ref{F1&N*}). By the fundamental theorem of calculus,
$$(r_jF_j)|_{r_3=1}=r_j\F_j,\quad j\in [2];\quad (r_1r_2 F_{12})|_{r_3=1} =r_1r_2\F_{12}. $$ 
We rewrite (\ref{neighbor2}) with $\ell=3$ as
$$\beta r_3 F_3+r_3 t_1t_2 G=-2r_1r_2F_{12} -\beta r_1 F_1-\beta r_2 F_2-\beta^2 F-t_3 (1+r_1r_2) G.$$
The proof of Proposition~\ref{Extension1} (ii) also shows that
$t_3G\to0$ locally uniformly on $[0,1)^2$ as $r_3\to1^-$. Therefore, sending $r_3\to 1^-$ in the above displayed formula, we get
$$ (\beta F_3+t_1t_2 G)|_{r_3=1}=-2 r_1r_2\F_{12}-\beta r_1\F_1-\beta r_2\F_2-\beta^2\F. $$ 
By (\ref{F1&N*}), (\ref{f'-bd}), and the above formulas, we get, for $j\in [2]$,
\BGE F|_{r_3=1}=O(1);\quad (r_jF_j)|_{r_3=1}=O(t_j^{2\beta-2});\quad (\beta F_3+t_1t_2 G)|_{r_3=1}=O((t_1t_2)^{2\beta-2}).\label{F3G}\EDE

We use the notation $Y,u,v,N$ from (\ref{Pfaff1}), (\ref{uv}) and (\ref{uvM}),   and the vector \(
\widetilde Y=t_3^{uv^{\rm T}}Y
\) and matrix $\til N=t_3^{uv^{\rm T}}Nt_3^{-uv^{\rm T}}$ defined in the proof of Proposition~\ref{Extension1}(ii). Then
\(
\partial_{r_3}\widetilde Y=\til N\widetilde Y
\). 
Set $\nu:=3\beta-2\in (-1/2,0)$. Then $v^{\rm T}u=-\nu$, and so for any $t\in(0,\infty)$,
\BGE
\|t^{u v^{\rm T}}\|=\|I_5+\frac{t^{-\nu}-1}{-\nu} u v^{\rm T}\|\lesssim (t\vee 1)^{-\nu}. \label{Epm*}
\EDE

\begin{Lemma}
Throughout $[0,1)^2\times (0,1)$,
\BGE
\|N\|\lesssim r_3^{-1}(t_1t_2)^{-1};
\label{subcritical-N}
\EDE
and for any $m_1,m_2\in\N_0$,
\BGE
\|\pa_{r_1}^{m_1}\pa_{r_2}^{m_2} N\|\lesssim_{m_1,m_2} r_3^{-1}(t_1t_2)^{-2(m_1+m_2+1)}.
\label{subcritical-N-m}
\EDE
\label{lem:N-bdd}
\end{Lemma}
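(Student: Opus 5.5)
The plan is to bound $N=M-uv^{\rm T}/t_3$ entrywise, using the explicit form (\ref{Paff1-M}) of $M$ together with the coefficient formulas (\ref{CnbF1})--(\ref{CnbG}). The proof of Proposition~\ref{Extension1} already records the needed algebraic identities, so the work is mainly careful bookkeeping of denominators near the corner. The first four rows are direct. Their entries are $-\beta/(2r_3)$, $-\beta^2/(2r_3)$, $K_{3-j}/(2t_1t_2r_3)$ with $|K_j|\le 3$ on $[0,1]^3$, and the third row minus $uv^{\rm T}/t_3$. Subtracting the rank-one term from row~3 leaves
\[
\frac{2(r_3-\beta)}{r_3t_3}-\frac{2(1-\beta)}{t_3}=-\frac{2\beta}{r_3},\qquad \frac{\beta(1-\beta)}{r_3t_3}-\frac{\beta(1-\beta)}{t_3}=\frac{\beta(1-\beta)}{r_3},\qquad \frac{1}{r_3t_3}-\frac1{t_3}=\frac1{r_3}.
\]
So rows $1$--$4$ of $N$ are $O(r_3^{-1}(t_1t_2)^{-1})$.

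The fifth row is the main obstacle, because every entry carries $\Delta^{-1}$ and $\Delta$ vanishes at the corner. The key input is the lower bound $\Delta\ge t_1^2t_2^2$ on $[0,1]^3$, which follows from (\ref{Delta-tj}) since all the added terms are nonnegative. Using this bound, I would handle the entries in turn.
\begin{itemize}
\item The entries $t_1t_2r_j^{-1}C^{(3)}_{F_j}=-\beta(1-\beta)t_1t_2K_{3-j}/(r_3\Delta)$ are bounded by $r_3^{-1}(t_1t_2)^{-1}$.
\item As shown earlier, $N_{5,3}=P_{5,3}(t)/\Delta$ and $N_{5,4}=P_{5,4}(t)/(r_3\Delta)$ with bounded polynomials. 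These are $\lesssim r_3^{-1}(t_1t_2)^{-2}$ at first sight.
\item $N_{5,5}=-\beta/r_3-Q^{(3)}/(r_3\Delta)$.
\end{itemize}
The naive bound here gives $(t_1t_2)^{-2}$ rather than the claimed $(t_1t_2)^{-1}$. To gain one factor, I would check divisibility. From (\ref{Q-tj}) and (\ref{Delta-tj}), $P_{5,3}$, $P_{5,4}$ and $Q^{(3)}$ should each be bounded by $O(t_1t_2+t_3\cdot\text{stuff})$. I would then use the bound $\Delta\gtrsim t_1t_2(t_1t_2+t_3)$, which follows from (\ref{Delta-tj}) via $t_1^2t_2^2+(t_1^2+t_2^2)t_3^2+2t_1t_2t_3(2-\sum t_j)$ after also handling $\sum t_j$ near $3$ (away from the corner $\Delta$ is bounded below anyway).
\begin{itemize}
\item For $Q^{(3)}$, the expression (\ref{Q-tj}) gives $|Q^{(3)}|\lesssim t_3(t_1-t_2)^2+t_1t_2$, and $t_3(t_1-t_2)^2\lesssim \Delta$ directly via the $t_3^2t_j^2$ terms. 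I expect some case splitting here.
\item The entries $N_{5,3}$ and $N_{5,4}$ are treated the same way, by writing the numerators explicitly.
\end{itemize}
If this sharp bound fails in some regime, I would fall back to checking that only $(t_1t_2)^{-1}$ is really needed downstream. I would still aim for the stated bound.

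For (\ref{subcritical-N-m}), differentiate in $r_1,r_2$; $r_3$ enters only as the harmless factor $r_3^{-1}$. Each entry has the form (polynomial in $\u r$)$/\big(t_1^at_2^b\Delta^c\big)$ with $a,b,c\le1$, up to the factor $r_3^{-1}$. Each derivative $\partial_{r_j}$, $j\in[2]$, either hits the polynomial, or produces an extra factor $t_j^{-1}$, or produces $\partial_j\Delta/\Delta$ with $|\partial_j\Delta|\lesssim1$ and $\Delta^{-1}\le (t_1t_2)^{-2}$. Therefore each derivative costs at most $(t_1t_2)^{-2}$, and induction on $m_1+m_2$ starting from $\|N\|\lesssim r_3^{-1}(t_1t_2)^{-2}$ (the crude bound suffices here) yields the stated exponent $-2(m_1+m_2+1)$.
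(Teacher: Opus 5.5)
Your plan has the same skeleton as the paper's proof. Both bound $N$ entrywise and use $\Delta\ge t_1^2t_2^2$, best seen from the rewriting $\Delta=t_1^2t_2^2+t_1t_2t_3(4-2t_1-2t_2)+t_3^2(t_1-t_2)^2$ of (\ref{Delta-tj}). This handles rows $1$--$4$ and $N_{5,1},N_{5,2}$. For the derivative bound, both use that each entry is, up to a factor $r_3^{-1}$, a polynomial possibly divided by $t_1t_2$ or by $\Delta$. Hence $m_1+m_2$ tangential derivatives give a polynomial over $t_1^{1+m_1}t_2^{1+m_2}$ or over $\Delta^{m_1+m_2+1}$. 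Your row-$3$ computation and your treatment of $N_{5,1},N_{5,2}$ are correct.

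The one genuine difference is the sharp $(t_1t_2)^{-1}$ bound for $N_{5,3},N_{5,4},N_{5,5}$. The paper writes their numerators explicitly as $O(t_1t_2)+O\big(t_3(t_1-t_2)^2\big)$. It then combines $\Delta\ge t_1^2t_2^2+t_3^2(t_1-t_2)^2$ with AM--GM, $t_1t_2t_3(t_1-t_2)^2\le\tfrac12\big(t_1^2t_2^2+t_3^2(t_1-t_2)^4\big)$. You instead exploit the middle term through $\Delta\gtrsim t_1t_2(t_1t_2+t_3)$. This only needs each numerator to be $O(t_1t_2+t_3)$, i.e.\ its restriction to $t_3=0$ to be divisible by $t_1t_2$. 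That is true, as the paper's explicit formulas show, and then $(t_1t_2+t_3)/\Delta\lesssim(t_1t_2)^{-1}$ finishes in one line. This is a valid and slightly cleaner alternative.

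Two of your side remarks are wrong and should be fixed.

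First, the claim $t_3(t_1-t_2)^2\lesssim\Delta$ is false. At $t_1=\tfrac12$, $t_2=t_3=\varepsilon$ the left side is $\asymp\varepsilon$ while $\Delta\asymp\varepsilon^2$. You do not need it: $\Delta\gtrsim t_1t_2t_3$ already gives $t_3(t_1-t_2)^2/\Delta\lesssim(t_1t_2)^{-1}$.

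Second, $\Delta$ is not bounded below away from the corner. It vanishes on the edges $\{t_j=t_k=0\}$; for example $t_1=t_2=\varepsilon$, $t_3=\tfrac12$ gives $\Delta\asymp\varepsilon^2$. Also, the problematic region for the cross term in (\ref{Delta-tj}) is $\sum_j t_j\ge 2$, not $\sum_j t_j$ near $3$. A correct split for your lower bound is:
\begin{itemize}
\item if $t_1+t_2\le\tfrac32$, then $4-2t_1-2t_2\ge1$, so $\Delta\ge t_1t_2(t_1t_2+t_3)$;
\item otherwise $t_1,t_2>\tfrac12$, so $\Delta\ge t_1^2t_2^2>\tfrac1{16}\ge\tfrac1{32}\,t_1t_2(t_1t_2+t_3)$.
\end{itemize}
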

\begin{proof}
  From (\ref{Delta-tj}) we have
\BGE
\Delta=t_1^2t_2^2+t_1t_2t_3(4-2t_1-2t_2)+t_3^2(t_1-t_2)^2 \ge t_1^2t_2^2 +t_3^2(t_1-t_2)^2 \ge t_1^2t_2^2. \label{subcritical-Delta} \EDE
For (\ref{subcritical-N}) and (\ref{subcritical-N-m}) we need to bound $|N_{i,j}|$ and $|\pa_{r_1}^{m_1}\pa_{r_2}^{m_2} N_{i,j}|$ for each $(i,j)\in [5]^2$. The computations for $(i,j)\in [4]\times [5]$ are simple  because  $N_{1,5}$ and $N_{2,5}$ have the form of a polynomial divided by $r_3 t_1t_2$, and all other entries of the first four rows have the form of some constant or some constant divided by $r_3$.  For the fifth row, by (\ref{CnbF1}), $N_{5,1}$ and $N_{5,2}$ have the form of $\frac{t_1t_2}{r_3\Delta}$ times a polynomial, and so $\pa_{r_1}^{m_1}\pa_{r_2}^{m_2} N_{5,j}$, $j\in [2]$, have the form of ${r_3^{-1}\Delta^{-m_1-m_2-1}}$ times a polynomial. Since $\Delta\ge t_1^2t_2^2$ by (\ref{subcritical-Delta}), we get the desired upper bounds for the $(5,1)$- and $(5,2)$-entries. The proof of Proposition~\ref{Extension1} gives
\[
\frac{N_{5,3}}{-C_{5,3}}= \frac{\Delta-r_3t_1^2t_2^2}{t_3\Delta} =\frac{t_1 t_2 (2-t_1)(2-t_2)+ t_3(t_1-t_2)^2}{ \Delta} ;\]
\[
\frac{N_{5,4}}{-C_{5,4}}=\frac{t_1t_2K_3^++r_3\Delta}{r_3t_3\Delta} = \frac{t_1 t_2 (t_1+t_2-t_1t_2+2r_3(2-t_1-t_2))+r_3  t_3(t_1-t_2)^2}{r_3  \Delta};
\]
\[N_{5,5}+\frac{\beta}{r_3}=-\frac{Q^{(3)}}{r_3\Delta}=\frac{t_1^2t_2^2+t_1t_2(3-t_3)(2-t_1-t_2) +t_3(3-2t_3)(t_1-t_2)^2}{r_3\Delta}.
\]
Thus, for $j\in \{3,4,5\}$, $\pa_{r_1}^{m_1}\pa_{r_2}^{m_2} N_{5,j}$  has the form of $r_3^{-1}\Delta^{-m_1-m_2-1}$ times a polynomial, which together with (\ref{subcritical-Delta}) implies that $|\pa_{r_1}^{m_1}\pa_{r_2}^{m_2} N_{5,j}|\lesssim r_3^{-1}(t_1t_2)^{-2(m_1+m_2+1)}$. By the above displayed formulas, Cauchy-Schwarz inequality, and (\ref{subcritical-Delta}), we also conclude that, for $j\in \{3,4\}$,
$$|N_{5,j}|\lesssim \frac{t_1t_2+t_3(t_1-t_2)^2}{r_3\Delta} =\frac{t_1^2t_2^2+t_1t_2t_3(t_1-t_2)^2}{t_1t_2r_3\Delta} \le \frac{2t_1^2t_2^2+ t_3^2(t_1-t_2)^4}{t_1t_2r_3\Delta}\lesssim r_3^{-1}(t_1t_2)^{-1},$$
and similarly, $|N_{5,5}|\lesssim r_3^{-1}+ r_3^{-1}(t_1t_2)^{-1}\lesssim  r_3^{-1}(t_1t_2)^{-1}$.
This completes the proof.
\end{proof}

We use $P_3$ to denote the map $(r_1,r_2,r_3)\mapsto (r_1,r_2,1)$ from $[0,1)^3$ to $[0,1)^2\times \{1\}$. For any function $f$ on $[0,1)^3$, which extends continuously to $[0,1)^2\times [0,1]$, we define $\delta_3 f=f-f\circ P_3$.

\begin{Lemma}
By the proof of Proposition~\ref{Extension1}(ii), $\til Y$ extends continuously to $[0,1)^2\times [0,1]$. Let $S=v^{\rm T} \til Y|_{r_3=1}$. Then
\BGE
S(r_1,r_2) =s_\beta t_1^{-\beta}t_2^{-\beta},
\qquad
s_\beta:=
\frac{\Gamma(2\beta)\Gamma(3-3\beta)}
{\Gamma(\beta)\Gamma(1-2\beta)},
\label{subcritical-amplitude}
\EDE
and as $r_3\to 1^-$, for any $L\in (0,1)$, uniformly in $(r_1,r_2)\in [0,L]^2$,  we have
\BGE
F_3=-\frac{S}{\nu}t_3^\nu+O_L(1),\qquad
t_1t_2G=\frac{\beta S}{\nu}t_3^\nu+O_L(1),\qquad
\delta_3 F=\frac{S}{\nu(\nu+1)}t_3^{\nu+1}+O_L(t_3).
\label{subcritical-normal-exp}
\EDE
The same asymptotic
estimates remain valid after applying tangential derivatives of order at most two, namely,
$\partial_{r_1}^{m_1}\partial_{r_2}^{m_2}$  for
$m_1+m_2\le2$.
\label{lem:subcritical-amplitude}
\end{Lemma}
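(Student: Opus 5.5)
Two things have to be shown: the explicit value of the amplitude $S$, and the normal expansions of $F_3$, $t_1t_2G$ and $\delta_3F$ in terms of $S$. Both come out of the scalar quantity $y=v^{\rm T}Y$.

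\textbf{Identifying $S$.} Since $v^{\rm T}u=-\nu$, we have
\[
v^{\rm T}t_3^{uv^{\rm T}}=t_3^{\nu}v^{\rm T},
\]
so $v^{\rm T}\widetilde Y=t_3^{\nu}y$ with $y=2(1-\beta)F_3+\beta(1-\beta)F+t_1t_2G$. Hence $S=\lim_{r_3\to1}t_3^{\nu}y$.

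To compute this limit, multiply (\ref{rainbow1}) with ${\ell}=3$ by $t_3$. This gives $r_3t_3F_{33}+2t_3F_3=y-2\beta t_3F_3\cdot(\ldots)$; more precisely it expresses $y$ through $\partial_{r_3}$-derivatives of $F_3$. The second identity I will use is (\ref{neighbor2}) with ${\ell}=3$, evaluated at $r_3=1$:
\[
\beta F_3+t_1t_2G\big|_{r_3=1}=-2r_1r_2\mathcal F_{12}-\beta r_1\mathcal F_1-\beta r_2\mathcal F_2-\beta^2\mathcal F.
\]
Now $y=(2-3\beta)F_3+(\beta F_3+t_1t_2G)+\beta(1-\beta)F$, and the last two pieces have finite limits at $r_3=1$. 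Since $t_3^{\nu}\to\infty$, the amplitude is carried entirely by the divergent $F_3$ part.

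I therefore plan to take the limit directly from the right-hand side above, using the factorization (\ref{F1&N*}):
\[
S=-2C_\beta\,g(r_1)g(r_2)\cdot(\text{normalization}),\qquad g(r)=rf_\beta'(r)+\tfrac{\beta}{2}f_\beta(r),
\]
exactly as in Lemma~\ref{critial-est}. The claim $S=s_\beta t_1^{-\beta}t_2^{-\beta}$ then reduces to a one-variable hypergeometric identity: the leading singular part of $g$ at $r=1$ should be a multiple of $(1-r)^{-\beta}$. To pin down the remainder I will use $\mathcal L_rf_\beta=0$, which makes $g$ satisfy a first-order relation. The constant comes from the connection formula for ${}_2F_1(1-\beta,\beta;2\beta;r)$ at $r=1$; the coefficient of its $(1-r)^{2\beta-1}$ branch produces $\Gamma(1-2\beta)$. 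I expect to need to check that the smooth parts cancel rather than just being dominated.

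An alternative route is to rewrite $\partial_{r_3}\widetilde Y=\widetilde N\widetilde Y$ as $\partial_{r_3}(t_3^{\nu}y)=t_3^{\nu}v^{\rm T}NY$. Then I would transport $S$ from a region where it is computable, such as $r_1=r_2=0$, using the fact that $S$ satisfies hypergeometric-type equations in $r_1$ and $r_2$ obtained by passing (\ref{rainbow1}) to the limit. Its separated form $h(r_1)h(r_2)$ is forced by the same Frobenius argument as in Proposition~\ref{Extension1}. The normalization $h(0)$ is then obtained by matching with the $r_3\to1$ asymptotics of the Horn face, namely the limit of $t_3^{\nu}\,\partial_{r_3}F$ on the face $r_1=0$, with the constant again from (\ref{F1@N}).

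\textbf{Normal expansions.} With $L$ fixed, (\ref{subcritical-N}) gives $\|\widetilde N\|\lesssim_L t_3^{\nu}$, using (\ref{Epm*}) on both sides. Then $\widetilde Y(r_3)-\widetilde Y(1)=O_L(t_3^{1+\nu})$, so
\[
y=t_3^{-\nu}\bigl(S+O(t_3^{1+\nu})\bigr)=St_3^{-\nu}+O(t_3).
\]
I need the sign convention $t_3^{\nu}$ versus $t_3^{-\nu}$ to match the stated form $-\frac{S}{\nu}t_3^{\nu}$; this will be reconciled by integrating the ODE $\partial_{r_3}(\cdots F_3)\approx y/t_3$ coming from (\ref{rainbow1}) with ${\ell}=3$. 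Integrating $t_3^{-\nu-1}$-type terms produces the factor $1/\nu$.

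Next, the fourth component of $Y$ pairs against a vector orthogonal to $u$, so $\beta F_3+t_1t_2G$ stays bounded. This gives $t_1t_2G=-\beta F_3+O(1)$, which is the second formula. Integrating $F_3$ once more in $r_3$, with $\delta_3F\to0$, gives the third.

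\textbf{Tangential derivatives and the main obstacle.} For $\partial_{r_1}^{m_1}\partial_{r_2}^{m_2}$ with $m_1+m_2\le2$, I will differentiate the ODE $\partial_{r_3}\widetilde Y=\widetilde N\widetilde Y$ in $(r_1,r_2)$ and use (\ref{subcritical-N-m}) together with Gronwall. Bounds on the tangential derivatives of $\widetilde Y$ at $r_3=J$ come from analyticity on $[0,1)^3$ (Theorem~\ref{extension}). The error terms keep their $O_L$ form because all $t_1t_2$ powers are harmless on $[0,L]^2$.

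The main obstacle is the exact evaluation of the constant $s_\beta$ and confirming the pure power form $t_1^{-\beta}t_2^{-\beta}$. This hinges on the hypergeometric identity for $g$ and requires careful use of connection coefficients.
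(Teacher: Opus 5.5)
Your argument for the normal expansions and the tangential derivatives is essentially the paper's: Gronwall for $\partial_{r_3}\tilde Y=\tilde N\tilde Y$ with $\|\partial_{r_1}^{m_1}\partial_{r_2}^{m_2}\tilde N\|\lesssim_L t_3^{\nu}$, boundedness of $(\beta e_3+e_5)^{\rm T}Y=\beta F_3+t_1t_2G$, then integration in $r_3$. However, there is a sign slip you did not resolve. Since $v^{\rm T}(uv^{\rm T})^k=(-\nu)^kv^{\rm T}$, one has $v^{\rm T}t_3^{uv^{\rm T}}=t_3^{-\nu}v^{\rm T}$. Hence $S=\lim_{r_3\to1^-}t_3^{-\nu}y$ and $y=St_3^{\nu}+O_L(t_3^{1+2\nu})$. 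This is not a convention to be reconciled later. The exact form of (\ref{rainbow1}) with ${\ell}=3$ is $r_3t_3F_{33}+2t_3F_3=y$, i.e.\ $\partial_{r_3}(r_3^2F_3)=\frac{r_3}{t_3}y$. With your $y=St_3^{-\nu}+O(t_3)$, integrating this would give a \emph{bounded} $F_3$. With the correct exponent it gives $F_3=-\frac{S}{\nu}t_3^{\nu}+O_L(1)$, which is also what the paper reads off from the third component of $Y=t_3^{-uv^{\rm T}}\tilde Y$.

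The genuine gap is the identification of $S$, both its form and its constant. Your main route cannot work. The right-hand side of (\ref{neighbor2}) at $r_3=1$ is the finite limit of $\beta F_3+t_1t_2G$. It therefore contributes $t_3^{-\nu}\cdot O(1)\to0$ to $S=-\nu\lim t_3^{-\nu}F_3$, so face data of this kind never see the divergent amplitude. Lemma~\ref{critial-est} succeeds only because at $\beta=2/3$ one has $v^{\rm T}u=0$, i.e.\ $2(1-\beta)=\beta$, so that $y=(\beta F_3+t_1t_2G)+\beta(1-\beta)F$ itself has a finite limit. Likewise $g=rf_\beta'+\frac\beta2f_\beta$ equals $\frac\beta2(1-r)^{-\beta}$ only at $\beta=2/3$; for $\beta<2/3$ it blows up like $(1-r)^{2\beta-2}$. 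In any case, no leading-singularity argument can produce the exact identity $S=s_\beta t_1^{-\beta}t_2^{-\beta}$ on $[0,1)^2$.

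Your alternative misses the key step: passing (\ref{rainbow1}) to the limit only yields $\mathcal L_{r_j}\mathcal F=0$. The equation for $S$ comes from the next order in $t_3$. It requires the tangential-derivative expansions to be proved \emph{first}, with $S$ unknown. By (\ref{F-bdry-hyper}),
\[
t_{3-j}t_3G=\mathcal L_{r_j}F=\mathcal L_{r_j}\delta_3F=\tfrac{t_3^{\nu+1}}{\nu(\nu+1)}\mathcal L_{r_j}S+O_L(t_3),
\]
while the expansion of $t_1t_2G$ gives
\[
t_{3-j}t_3G=\tfrac{t_3}{t_j}\,t_1t_2G=\tfrac{\beta S}{\nu t_j}t_3^{\nu+1}+O_L(t_3).
\]
Hence $\mathcal L_{r_j}S=\frac{\beta(\nu+1)}{1-r_j}S$. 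This is not the hypergeometric equation, and its solutions continuous at $r_j=0$ are the multiples of $(1-r_j)^{-\beta}$.

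Finally, the constant is available neither from (\ref{F1@N}), which only gives the regular coefficient $C_\beta$, nor from the connection formula for $f_\beta$. Note that $\Gamma(1-2\beta)$ sits in the \emph{denominator} of $s_\beta$, not the numerator. The constant is the coefficient $\frac{\Gamma(2\beta)\Gamma(1-3\beta)}{\Gamma(\beta)\Gamma(1-2\beta)}=\frac{s_\beta}{\nu(\nu+1)}$ of the $(1-r)^{3\beta-1}=t_3^{\nu+1}$ branch of $F(0,0,r)={}_2F_1(\beta,1-2\beta;2\beta;r)$ (by (\ref{ABn00@N})). It is obtained by matching against the third expansion at $r_1=r_2=0$.
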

\begin{proof}
Fix $L\in (0,1)$ and let $R_{L}=R_{L,1/2}=[0,L]^2\times [1/2,1)$. Throughout the proof, all estimates are restricted to $R_{L}$. By (\ref{subcritical-N-m}) and (\ref{Epm*}), for any $m_1,m_2\in\N_0$ with $m_1+m_2\le 2$,  $\|\pa_{r_1}^{m_1}\pa_{r_2}^{m_2} \til N\|\lesssim_{L} t_3^\nu$.

We now briefly review the proof that $\til Y$ extends continuously to $\lin{R_{L}}$, and give an estimate of $\delta_3\til Y$.
Fix $(r_1,r_2)\in [0,L]^2$ and view $\til Y$ and $\til N$ as functions in $r_3$. Convert \(
\partial_{r_3}\widetilde Y=\til N\widetilde Y
\) into the integral equation: $\til Y(r_3)=\til Y(1/2)+\int_{1/2}^{r_3} \til N(s)\til Y(s) ds$. From $\|\til N\|\lesssim_{L} (1-r_3)^\nu$, we get
$$\|\til Y(r_3)\|\le \|\til Y(1/2)\|+C_{L}\int_{1/2}^{r_3}  (1-s)^\nu \|\til Y(s)\|ds,\quad 1/2\le r_3<1.$$
By Gronwall's inequality, for $1/2\le r_3<1$,
$$\|\til Y(r_3)\|\le \|\til Y(1/2)\|\exp\Big(C_{L} \int_{1/2}^{r_3}(1-s)^\nu ds\Big)\lesssim_{L} \|\til Y(1/2)\|\lesssim_{L} 1,$$
where the first ``$\lesssim_{L}$'' holds because $\nu >-1$, and the second ``$\lesssim_{L}$'' holds because $\til Y$ is bounded on $[0,L]^2\times \{1/2\}$. Thus, $\|\til Y\|\lesssim_{L} 1$. Combining the bounds of $\til Y$ and $\til N$ and the equation  \(
\partial_{r_3}\widetilde Y=\til N\widetilde Y
\), we get $\|\til Y(r_3')-\til Y(r_3)\|\lesssim_{L} \int_{r_3}^{r_3'}  (1-s)^\nu ds$ for $r_3\le r_3'\in [1/2,1)$. Since $\nu>-1$, the RHS tends to $0$ as $r_3,r_3'\to 1^-$. Thus, $\til Y$ converges as $r_3\to 1^-$, uniformly on $[0,L]^2$, and so extends continuously to $\lin{R_{L}}$. In addition, the integral equation implies $\|\delta_3\til Y\|\lesssim_{L} t_3^{\nu +1}$.

For $j\in[2]$, differentiating \(
\partial_{r_3}\widetilde Y=\til N\widetilde Y
\) with
respect to $r_j$ gives
\[
\partial_{r_3}\partial_{r_j}\widetilde Y
=
\widetilde N\,\partial_{r_j}\widetilde Y
+(\partial_{r_j}\widetilde N)\widetilde Y.
\]
Since $\|\partial_{r_j}\widetilde N\|\lesssim_{L}t_3^\nu$,
the same integral-equation and Gronwall argument shows that
$\partial_{r_j}\widetilde Y$ is uniformly bounded,
 converges uniformly as $r_3\to1^-$, and satisfies $\|\delta_3\partial_{r_j}\til Y\|\lesssim_{L} t_3^{\nu+1}$ on $R_{L}$.  Repeating the argument
after further tangential differentiation gives the same conclusion
for $\partial_{r_j}^2\widetilde Y$, $j\in[2]$, and
$\partial_{r_1}\partial_{r_2}\widetilde Y$.

The fundamental theorem of calculus, followed by passage to the
uniform limit as $r_3\to1^-$, shows that the limits of $\partial_{r_1}^{m_1}\partial_{r_2}^{m_2}\widetilde Y$ as $r_3\to 1^-$ are the
corresponding tangential derivatives of
$\widetilde Y|_{r_3=1}$, which is therefore $C^2$ on
$[0,L]^2$. Since $L\in (0,1)$ is arbitrary, $\til Y|_{r_3=1}$ is $C^2$ on $[0,1)^2$. Thus, $S=v^{\rm T} \til Y|_{r_3=1}$ is also $C^2$.  Since $Y=t_3^{-uv^{\rm T}} \til Y$,  and
\(
t_3^{-uv^{\rm T}}=I_5+\frac{1-t_3^\nu}{\nu}uv^{\rm T}
\), we get
\BGE Y=\til Y+\frac{1-t_3^{\nu}}\nu u v^{\rm T} \til Y =-\frac{t_3^\nu} \nu S u+\til Y + \frac{v^{\rm T} \til Y -t_3^\nu v^{\rm T} \delta_3 \til Y  }\nu  u=-\frac{t_3^\nu}\nu S u+ O_{L}(1), \label{Y=S+}
\EDE
where the last equality holds because $\|\til Y\|\lesssim_{L} 1$, $| \delta_3 \til Y |\lesssim_{L} t_3^{\nu+1}$, and $2\nu+1>0$.
Reading the third and fifth components of this equation gives the first two formulas in (\ref{subcritical-normal-exp}), and integration in the $r_3$ direction gives the third.

Applying $D:=\pa_{r_1}^{m_1}\pa_{r_2}^{m_2}$ with $m_1+m_2\le 2$ to the third component of (\ref{Y=S+}), we get
$$\pa_{r_3}D F=-\frac{t_3^\nu}\nu D S +D\til Y_3 + \frac{ v^{\rm T} D  \til Y  -t_3^\nu v^{\rm T}\delta_3( D \til Y) }\nu=-\frac{t_3^\nu}\nu D S +O_{L}(1),$$
where the last estimate holds because $\|D \til Y\|\lesssim_{L} 1$, $|  \delta_3(D\til Y)|\lesssim_{L} t_3^{\nu +1}$, and $2\nu+1>0$. Integration in the $r_3$ direction gives
$$\delta_3 DF=\int_{r_3}^1 \frac{(1-s)^\nu} \nu ds\cdot D S+ O_{L}(t_3)=\frac{t_3^{\nu+1}DS}{\nu(\nu+1)} + O_{L}(t_3),$$
which together with (\ref{rainbow1}) with ${\ell}=j\in[2]$ and (\ref{F-bdry-hyper}) implies that
$$t_{3-j}t_3G=\L_{r_j} F=\L_{r_j}F-\L_{r_j}\F=\delta_3 \L_{r_j}F=\frac{t_3^{\nu+1}\L_{r_j}S}{\nu(\nu+1)} + O_{L}(t_3),\quad j\in [2].$$
Dividing this identity by $t_3^{\nu+1}$, letting $r_3\to 1^-$, and using the asymptotic formula for $t_1t_2 G$ in (\ref{subcritical-normal-exp}),  we obtain
\(
\L_{r_j} S=\frac{\beta(\nu+1)}{1-r_j} S\), $j\in [2]$. For the one-variable equation, $r_j=0$ is a regular singular point. The two Frobenius exponents at $0$ are $0$ and $1-2\beta<0$.
A direct computation shows that $(1-r_j)^{-\beta}$ solves the one-variable equation.  Since $S$ is continuous on $[0,1)^2$, the singular branch is excluded in each variable.  Hence
\(
S(r_1,r_2)= s_\beta t_1^{-\beta}t_2^{-\beta}
\)
for a constant $s_\beta$.

Finally, by (\ref{ABn00@N}),
\(
F(0,0,r)={}_2F_1(\beta,1-2\beta;2\beta;r)
\).
Applying \cite[Formula 15.10.21]{NIST:DLMF} with  $a=\beta$, $b=1-2\beta$ and $c=2\beta$, we obtain
\[F(0,0,r)=\frac{\Gamma(2\beta)\Gamma(3\beta-1)}{\Gamma(\beta)\Gamma(4\beta-1)} {}_2F_1(\beta,1-2\beta;2-3\beta;1-r)\]
\[+\frac{\Gamma(2\beta)\Gamma(1-3\beta)}{\Gamma(\beta)\Gamma(1-2\beta)} (1-r)^{3\beta-1} {}_2F_1(\beta,4\beta-1;3\beta;1-r).\]
Since $\nu+1=3\beta-1\in(0,1)$,
comparing this with the third formula in (\ref{subcritical-normal-exp}) at $r_1=r_2=0$ gives $\frac{s_\beta}{\nu(\nu+1)}=\frac{\Gamma(2\beta)\Gamma(1-3\beta)}{\Gamma(\beta)\Gamma(1-2\beta)}  $, which implies the formula of $s_\beta$ in (\ref{subcritical-amplitude}).
\end{proof}


\begin{Lemma}
Define $\varpi_3$ on $[0,1)^3$ by $\varpi_3(\u r)=\frac{t_3}{t_1t_2}$.
Fix $J\in (0,\infty)$. Let $W_{J}$ denote the region $\{\u r\in [0,1)^3: \varpi_3(\u r)\le J,r_3\ge 1/2\}$. Then, on  $W_{J}$,
we have
\BGE
\|Y\|\lesssim_{J}(t_1t_2)^{2\beta-2}
 \varpi_3^\nu  ,
\label{subcritical-Y-layer}
\EDE
and
\BGE
|\delta_3 F|
\lesssim_{J}(t_1t_2)^\alpha.
\label{subcritical-F-layer}
\EDE
\label{lem:subcritical-layer}
\end{Lemma}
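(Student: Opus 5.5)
Fix $(r_1,r_2)$ and regard everything as a function of $r_3$ on the segment $\{r_3\ge 1/2,\ t_3\le J t_1t_2\}$. This segment is the normal ray ending at the side face $r_3=1$. The plan is to integrate the renormalized system $\partial_{r_3}\widetilde Y=\widetilde N\widetilde Y$ from $r_3=1$ inward, using the boundary values of $\widetilde Y$ there as initial data. We then convert back to $Y$ by $Y=t_3^{-uv^{\rm T}}\widetilde Y$. The constants must be uniform as $(r_1,r_2)\to(1,1)$, so the $L$-dependent bounds of Lemma~\ref{lem:subcritical-amplitude} cannot be used directly. Only the explicit amplitude (\ref{subcritical-amplitude}) and the bound (\ref{F3G}) on the face values will be used.

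\textbf{Step 1 (boundary data).} At $r_3=1$ we need $\|\widetilde Y|_{r_3=1}\|\lesssim (t_1t_2)^{2\beta-2}$. The components $r_jF_j$, $F$ and $\beta F_3+t_1t_2G$ are exactly the combinations $w_s^{\rm T}\widetilde Y=w_s^{\rm T}Y$ that are unaffected by the renormalization. Their boundary values are bounded by (\ref{F3G}) by $(t_1t_2)^{2\beta-2}$. The remaining direction is $v^{\rm T}\widetilde Y|_{r_3=1}=S=s_\beta(t_1t_2)^{-\beta}$ by (\ref{subcritical-amplitude}). Since $-\beta\ge 2\beta-2$ when $\beta\le 2/3$, the whole boundary vector satisfies the claimed bound.

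\textbf{Step 2 (Gronwall along the layer).} We have $\|\widetilde N\|\le\|t_3^{uv^{\rm T}}\|\,\|N\|\,\|t_3^{-uv^{\rm T}}\|$. By (\ref{Epm*}) the two conjugating factors have norms $\lesssim t_3^{-\nu}$ and $\lesssim 1$, since $t_3\le1$. Combined with (\ref{subcritical-N}), this gives $\|\widetilde N\|\lesssim t_3^{\nu}(t_1t_2)^{-1}$. Integrating over $t_3\in(0,Jt_1t_2]$ yields
\[
\int\|\widetilde N\|\,dr_3\lesssim (t_1t_2)^{-1}(Jt_1t_2)^{\nu+1}\lesssim_J (t_1t_2)^{\nu}.
\]
This is \emph{not} bounded, because $\nu<0$. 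This is the main obstacle: the crude Gronwall bound fails because the layer thickness $t_1t_2$ is too small compared with $\|N\|\sim (t_1t_2)^{-1}$ only by a factor $t_3^\nu$.

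I would attack it by rescaling the normal variable, $t_3=t_1t_2\varpi$ with $\varpi\in(0,J]$. In this variable $N$ becomes $(t_1t_2)^{-1}$ times an $O(1)$ matrix, since the dangerous entries depend on $t_3$ only through $\varpi$ to leading order (cf.\ (\ref{subcritical-Delta})). I would then redo the renormalization with $\varpi^{uv^{\rm T}}$ in place of $t_3^{uv^{\rm T}}$. This is legitimate because $uv^{\rm T}/t_3\,dr_3=-uv^{\rm T}\,d\varpi/\varpi$. The system then reads $\partial_\varpi\widehat Y=\widehat N\widehat Y$ with $\|\widehat N\|\lesssim_J\varpi^{\nu}$, which is integrable on $(0,J]$ uniformly in $(r_1,r_2)$. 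Gronwall then gives $\|\widehat Y\|\lesssim_J\|\widehat Y|_{\varpi=0}\|\lesssim(t_1t_2)^{2\beta-2}$. The boundary value at $\varpi=0$ differs from Step 1 only by the constant factor $(t_1t_2)^{\pm uv^{\rm T}}$ acting on the $v$-direction, which is accounted for using the explicit $S$.

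\textbf{Step 3 (back to $Y$ and $F$).} Undo the renormalization: $Y=\varpi^{-uv^{\rm T}}\widehat Y$. By (\ref{Epm*}), $\|\varpi^{-uv^{\rm T}}\|\lesssim_J\varpi^{\nu}$, since $\nu<0$ and $\varpi\le J$. This gives (\ref{subcritical-Y-layer}). For (\ref{subcritical-F-layer}), write
\[
\delta_3F=-\int_{r_3}^1 F_3\,ds .
\]
The component $F_3$ is bounded by (\ref{subcritical-Y-layer}), so
\[
|\delta_3F|\lesssim(t_1t_2)^{2\beta-2}\int_0^{t_3}(s/(t_1t_2))^{\nu}ds\lesssim_J (t_1t_2)^{2\beta-2}\,t_1t_2=(t_1t_2)^{\alpha}.
\]
Here we used $\nu>-1$ and $t_3\le Jt_1t_2$.
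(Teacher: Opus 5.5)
Your proposal is correct and follows essentially the same route as the paper. You switch to the normal variable $\varpi_3$ and renormalize by $\varpi_3^{uv^{\rm T}}$, so that $\|\widehat N\|\lesssim_J\varpi_3^{\nu}$; this follows directly from (\ref{subcritical-N}) with $r_3\ge 1/2$ and (\ref{Epm*}), with no leading-order analysis of the entries needed. You then bound $\widehat Y(0)=(t_1t_2)^{-uv^{\rm T}}\widetilde Y|_{r_3=1}$ by $(t_1t_2)^{2\beta-2}$, using (\ref{F3G}) on the directions annihilating $u$ and $v^{\rm T}\widehat Y(0)=(t_1t_2)^{\nu}S=s_\beta(t_1t_2)^{2\beta-2}$, before applying Gronwall and integrating $F_3$. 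The only cosmetic difference is that you first bound $\widetilde Y|_{r_3=1}$ and then correct along $u$, whereas the paper reads off the components of $\widehat Y(0)$ in the basis $f_1,\dots,f_5$ directly.
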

\begin{proof}
Throughout this proof, all estimates are restricted to $W_{J}$.
Keeping $(r_1,r_2)\in [0,1)^2$ fixed and using $\varpi_3$ as the normal variable, which ranges in $(0,J\wedge \frac{1/2}{t_1t_2})$, the equation $\pa_{r_3} Y=MY$ and (\ref{uvM}) imply
\[
\partial_{\varpi_3}Y=-t_1t_2 MY=-\frac{uv^{\rm T}}{\varpi_3}Y-(t_1t_2)NY.
\]
Set $\widehat Y=\varpi_3^{uv^{\rm T}}Y$ and $\ha N=\varpi_3^{uv^{\rm T}}(t_1t_2)N\varpi_3^{-uv^{\rm T}}$. Then $\pa_{\varpi_3} \ha Y=-\ha N\ha Y$.  By (\ref{subcritical-N}) and (\ref{Epm*}),
\(
 \|\ha N\|
\lesssim_{J} \varpi_3^\nu
\).

Since $\til Y= t_3^{uv^{\rm T}}Y$, and $\til Y$ (as a function of $(r_1,r_2,r_3)$) extends continuously to $[0,1)^2\times \{1\}$, we have
$\ha Y=(t_1t_2)^{-uv^{\rm T}}\til Y$, and $\ha Y$ (as a function of $(t_1,t_2,\varpi_3)$) extends continuously to $(0,1]^2\times \{0\}$. In particular, $\ha Y(0)$ exists for any fixed $(t_1,t_2)\in (0,1]^2$.

Take $f_1=e_1$, $f_2=e_2$, $f_3=e_4$, $f_4=\beta e_3+e_5$, and $f_5=v$. For $j\in [4]$, since $f_j^{\rm T} u=0$, we obtain by (\ref{F3G})
$$f_j^{\rm T} \ha Y(0)=\lim_{\varpi\to 0} f_j^{\rm T} \varpi_3^{uv^{\rm T}} Y(\varpi)=\lim_{\varpi\to 0} f_j^{\rm T} Y( {\varpi })=O((t_1t_2)^{2\beta-2}),\quad j\in [4].$$
By Lemma \ref{lem:subcritical-amplitude} and that $v^{\rm T}u=-\nu$,
$$f_5^{\rm T} \ha Y(0)=v^{\rm T} (t_1t_2)^{-uv^{\rm T}}\til Y(0)=(t_1t_2)^\nu v^{\rm T} \til Y(0)=s_\beta (t_1t_2)^{\nu-\beta}=s_\beta (t_1t_2)^{2\beta-2}.$$

The vectors $f_1,f_2,f_3,f_4,f_5$ are linearly independent since the determinant of the square matrix $[\begin{array}{ccccc} f_1 & f_2 & f_3 & f_4 & f_5
\end{array}]$ equals $2-3\beta>0$. Thus, by the above two displayed formulas, $\|\ha Y(0)\|\lesssim (t_1t_2)^{2\beta-2}$. Since $\pa_{\varpi_3} \ha Y=-\ha N\ha Y$ and \(
 \|\ha N\|
\lesssim_{J} \varpi_3^\nu
\), Gronwall's inequality now gives
$$\|\ha Y(\varpi)\|\le \|\ha Y(0)\|\exp\Big(\int_0^\varpi \|\ha N(s)\|ds\Big)\lesssim_{J} (t_1t_2)^{2\beta-2}\exp(C_{J} \varpi_3^{\nu+1})\lesssim_{J} (t_1t_2)^{2\beta-2}.$$
From $Y= \varpi_3^{-uv^{\rm T}}\ha Y$ and (\ref{Epm*}), we then get (\ref{subcritical-Y-layer}).   Since the third component of $Y$ is $F_3$, and
\(
\partial_{\varpi_3}F=-(t_1t_2)F_3
\),
 integration of (\ref{subcritical-Y-layer}) gives (\ref{subcritical-F-layer}).
\end{proof}

We reuse the notation $\Omega$ and $\mathcal O$  defined right before Proposition~\ref{lem:direct-inner-core}. In terms of the $W_{J}$ in the previous lemma, $\mathcal O=\{\u r\in W_{2}:t_2\le t_1\le 1/2\}$.

\begin{Proposition}
If $1/2<\beta<2/3$, then on $\mathcal O$,
\BGE
|F(\u r)-\F(1,1)|+\sum_{j=1}^3t_j|F_j(\u r)|
+t_1t_2t_3|G(\u r)|\lesssim t_1^\alpha.
\label{subcritical-inner-core}
\EDE
\label{prop:subcritical-inner-core}
\end{Proposition}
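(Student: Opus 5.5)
The plan is to read off every term in (\ref{subcritical-inner-core}) from Lemma~\ref{lem:subcritical-layer} with $J=2$, together with the side-face information (\ref{F3G}), (\ref{F1&N*}) and (\ref{F-11}). This replaces the ODE bootstrap of Proposition~\ref{lem:direct-inner-core}, which was designed for $\beta\ge 2/3$. On $\mathcal O$ we have $t_3\le t_2\le t_1\le 1/2$ and $\varpi_3=t_3/(t_1t_2)\le 2$, so $\mathcal O\subset W_2$. Moreover $t_3\le t_2$ gives $\varpi_3\le 1/t_1$.

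\textbf{Step 1: the value of $F$.} We split
\[
F(\u r)-\F(1,1)=\delta_3F(\u r)+\bigl(\F(r_1,r_2)-\F(1,1)\bigr).
\]
By (\ref{subcritical-F-layer}), $|\delta_3F|\lesssim (t_1t_2)^\alpha\le t_1^\alpha$. By (\ref{F-11}), the second term is $\lesssim t_1^\alpha+t_2^\alpha\lesssim t_1^\alpha$.

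\textbf{Step 2: the derivatives $t_jF_j$ for $j\in[2]$.} Write $r_jF_j=\delta_3(r_jF_j)+r_j\F_j$. By (\ref{f'-bd}) and (\ref{F1&N*}), $t_j|\F_j|\lesssim t_j^\alpha\le t_1^\alpha$. For the normal difference, the first two components of $Y$ are $r_1F_1$ and $r_2F_2$, and these are annihilated by the projection against $u$ (that is, $e_j^{\rm T}u=0$). So in $\widehat Y=\varpi_3^{uv^{\rm T}}Y$ they are unchanged and obey $\partial_{\varpi_3}\widehat Y_j=-(\widehat N\widehat Y)_j$. In the proof of Lemma~\ref{lem:subcritical-layer} we have $\|\widehat N\|\lesssim\varpi_3^\nu$ and $\|\widehat Y\|\lesssim(t_1t_2)^{2\beta-2}$. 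Integrating from $0$ to $\varpi_3$ gives
\[
|\delta_3(r_jF_j)|\lesssim (t_1t_2)^{2\beta-2}\varpi_3^{\nu+1}=(t_1t_2)^{2\beta-2-\nu-1}t_3^{\nu+1}=(t_1t_2)^{-\beta}t_3^{3\beta-1}.
\]
Using $t_3\le 2t_1t_2$, this is $\lesssim (t_1t_2)^{2\beta-1}$. Multiplying by $t_j\le 1$ and $r_j^{-1}\lesssim1$ yields $t_j|F_j|\lesssim t_1^\alpha$.

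\textbf{Step 3: $t_3F_3$ and $t_1t_2t_3G$.} Bound (\ref{subcritical-Y-layer}) gives $|F_3|+|t_1t_2G|\lesssim (t_1t_2)^{2\beta-2}\varpi_3^\nu$. Multiplying by $t_3=\varpi_3t_1t_2$,
\[
t_3|F_3|+t_1t_2t_3|G|\lesssim (t_1t_2)^{2\beta-1}\varpi_3^{\nu+1}\lesssim (t_1t_2)^\alpha\le t_1^\alpha,
\]
since $\nu+1>0$ and $\varpi_3\le2$.

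\textbf{Main obstacle.} I expect the difficulty to be Step 2. It requires checking that the componentwise integration of $\partial_{\varpi_3}\widehat Y=-\widehat N\widehat Y$ is legitimate down to $\varpi_3=0$, and that the boundary value of $\widehat Y_j$ at $\varpi_3=0$ is exactly $r_j\F_j$. Both should follow from the continuity of $r_jF_j$ up to $r_3=1$ established in Proposition~\ref{Extension1}(ii). If the factor $r_3^{-1}$ or the exponent bookkeeping fails there, the fallback is to use (\ref{neighbor2}) instead. That identity expresses $r_1F_1+r_2F_2$ via $r_3F_3$, $G$ and $F$, whose $W_2$ bounds are already in hand, and $r_1F_1-r_2F_2$ can be handled by the $U$-type equation (\ref{U-row}) integrated in $r_3$.
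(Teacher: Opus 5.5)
Steps 1 and 3 are exactly the paper's argument and are fine. Step 2 does not go through as written. Bounding $(\widehat N\widehat Y)_j$ by $\|\widehat N\|\,\|\widehat Y\|\lesssim \varpi_3^\nu (t_1t_2)^{2\beta-2}$ and integrating gives only $|\delta_3(r_jF_j)|\lesssim (t_1t_2)^{2\beta-2}\varpi_3^{\nu+1}\lesssim (t_1t_2)^{2\beta-2}$. Your simplification contains an exponent slip: $2\beta-2-(\nu+1)=-\beta-1$, not $-\beta$, so in fact $(t_1t_2)^{2\beta-2}\varpi_3^{\nu+1}=(t_1t_2)^{-\beta-1}t_3^{3\beta-1}$. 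After multiplying by $t_j$ you are left with $t_j^{2\beta-1}t_{3-j}^{2\beta-2}$. This is unbounded on $\mathcal O$ because $2\beta-2<0$; for example, take $j=1$, keep $t_1$ fixed and let $t_2\to0$. So the operator-norm bound loses exactly one factor of $t_{3-j}$, and recovering that factor is the whole point of this step. The obstacle you anticipated, namely whether one may integrate down to $\varpi_3=0$, is not the real issue.

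The missing idea is to use the specific structure of the first two rows of the Pfaff matrix rather than $\|\widehat N\|$. Since $e_j^{\rm T}u=0$, you have $e_j^{\rm T}N=e_j^{\rm T}M$, and hence $(\widehat N\widehat Y)_j=t_1t_2(MY)_j$. In (\ref{Paff1-M}), row $j\in[2]$ of $t_1t_2M$ has entries $O(t_1t_2)\le O(t_{3-j})$ (using $r_3\ge1/2$), except the $G$-slot entry $K_{3-j}/(2r_3)$. On $\mathcal O$, (\ref{K-tj}) together with $t_3\le 2t_1t_2$ gives $|K_{3-j}|\lesssim t_{3-j}$. Therefore $|\partial_{\varpi_3}(r_jF_j)|\lesssim t_{3-j}\|Y\|\lesssim t_{3-j}(t_1t_2)^{2\beta-2}\varpi_3^{\nu}$ by (\ref{subcritical-Y-layer}). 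Integrating from $\varpi_3=0$ yields $|\delta_3(r_jF_j)|\lesssim t_{3-j}(t_1t_2)^{2\beta-2}$. Since $t_j/r_j$ does not depend on $r_3$, this gives $|\delta_3(t_jF_j)|\lesssim (t_1t_2)^{\alpha}$. Combined with $t_j|\F_j|\lesssim t_j^\alpha$, this completes Step 2, and it is precisely the paper's argument. Your fallback via (\ref{neighbor2}) points at the right source, since rows $1$ and $2$ of $M$ come from (\ref{neighbor2}) with $\ell=2$ and $\ell=1$. As stated, though, it is not carried out, and it would need this same smallness of $K_{3-j}$.
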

\begin{proof}
All estimates in this proof are restricted to $\mathcal O$, on which Lemma~\ref{lem:subcritical-layer} applies with $J=2$. Since $Y_3=F_3$ and $Y_5=t_1t_2 G$, from (\ref{subcritical-Y-layer}) we get
\[
t_3|F_3|+t_1t_2t_3|G|\lesssim t_3(t_1t_2)^{2\beta-2}\varpi_3^\nu=(t_1t_2)^{2\beta-1}\varpi_3^{\nu+1} \lesssim (t_1t_2)^\alpha\le t_1^\alpha.
\]
Let $j\in [2]$. From (\ref{K-tj}) and $t_3\le2t_1t_2$, we obtain $|K_j|\lesssim t_j$. Since $\pa_{\varpi_3} Y= -t_1t_2 MY$, and $Y_j=r_j F_j$,   from the first two rows of (\ref{Paff1-M}), we obtain
\(
|\partial_{\varpi_3}(r_jF_j)|\lesssim t_{3-j}\|Y\|\).
Integrating from $\varpi_3=0$ and using (\ref{subcritical-Y-layer}) and $r_j\ge 1/2$ gives
\(
 |\delta_3 (t_j F_j)|\lesssim t_j |\delta_3 (r_j F_j)|\lesssim t_j t_{3-j}(t_1t_2)^{2\beta-2}= (t_1t_2)^{\alpha}\). By (\ref{f'-bd}) and (\ref{F1&N*}), $|t_j\F_j|\lesssim t_j^\gamma=t_j^\alpha\le t_1^\alpha$, and so $|t_j F_j|\lesssim  t_1^\alpha$.

Finally, (\ref{subcritical-F-layer})  and (\ref{F-11}) give
\(
|F-\F(1,1)|\lesssim t_1^\alpha
\),
and the proposition follows.
\end{proof}

\begin{Theorem}
The conclusion of Theorem~\ref{Thm-coninuity-neighbor} also holds for $1/2<\beta<2/3$.
\label{Thm-continuity-neighbor-full}
\end{Theorem}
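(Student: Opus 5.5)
The plan mirrors the proof of Theorem~\ref{Thm-coninuity-neighbor}. The only changes are to replace Proposition~\ref{lem:direct-inner-core} by Proposition~\ref{prop:subcritical-inner-core} as the source of initial data on the hypersurface $\chi=1/2$, and to redo the non-corner continuity for $1/2<\beta<2/3$.

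\emph{Step 1: continuity on $[0,1]^3\setminus\{(1,1,1)\}$.} Proposition~\ref{prop-non-corner} was stated only for $\beta\ge 2/3$, so this needs a substitute. Fix $L\in(0,1)$ and work on $\mathcal D^{(3)}_L$. On the closed face $r_3=1$ with $r_1,r_2\le L$, Proposition~\ref{Extension1}(ii) already gives continuity of $F$ on $[0,1)^2\times[0,1]$. It therefore remains to control the approach to the edges $r_j=1$ with $r_3\le L$. There I would redo the argument of Proposition~\ref{prop-non-corner} with the Pfaff system (\ref{Pfaff-RE}). The matrix $M$ there has eigenvalues $0,0,0,-2\beta,1-2\beta$, all nonpositive for every $\beta>1/2$, and the eigenvector matrix $\Phi$ has $|\det\Phi|=(2\beta-1)/\beta$. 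Hence the only $\beta$-dependence enters through Lemma~\ref{1.G}, i.e.\ bounds on $\partial_jF_0$ for the Horn function.

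For $1/2<\beta<2/3$, I would redo Lemma~\ref{1.G} with the weight $w$ replaced by a stronger power $\tau^{\gamma'-1}$, where $\gamma'\le 3\beta-2+\ldots$ is chosen so that the integrals in (\ref{Ij-2/3}) and (\ref{Ij'}) remain controlled by (\ref{elementary}). Weaker bounds of the form $|F_j|\lesssim t_j^{\theta-1}$ with some $\theta>0$ still suffice, because $t^{\theta-1}$ is integrable; this gives the uniform continuity argument at the end of the proof of Proposition~\ref{prop-non-corner}. Near the edges, the face values of $F$ match via (\ref{F1&N*}) and symmetry.

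\emph{Step 2: the corner region $\Omega$.} On $\mathcal O$, Proposition~\ref{prop:subcritical-inner-core} gives $|F-\F(1,1)|+\sum t_j|F_j|+t_1t_2t_3|G|\lesssim t_1^\alpha$. On $\Omega\setminus\mathcal O$, I run the radial Pfaff flow from $\u r^0$ with $\chi(\u r^0)=1/2$, exactly as in the proof of Theorem~\ref{Thm-coninuity-neighbor}. As noted there, the derivation of (\ref{Pfaff-3}) used only (\ref{D-K-Q-est}), (\ref{rainbow1}), (\ref{neighbor2}), (\ref{G3=2@N}) and $\alpha>0$. The matrix $M$ has eigenvalues $-\alpha$ and $-\tfrac32\alpha$ with $\alpha=\gamma=2\beta-1>0$, so the variation-of-constants formula followed by Gr\"onwall yields (\ref{Y<t1}) with initial data $\|Y(0)\|\lesssim (t_1^0)^\alpha$ and $|F(0)|\lesssim 1$.

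\emph{Step 3: stitching.} Using $\phi(\u r)=(r_1,r_2,1-2t_1t_2)$, we have $|F(\u r)-F(\phi(\u r))|\lesssim t_1^{\gamma'}\log(1/t_1)\to0$. Together with (\ref{F-11}) and symmetry, this gives $F\to\F(1,1)$ at the corner. Formula (\ref{F-boundary}) follows from (\ref{F1&N*}), which Proposition~\ref{Extension1}(ii) provides for $\beta\in[1/2,2/3]$.

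I expect Step 1, the non-corner estimates near the edges $\{r_j=1, r_k=1\}$ away from the corner, to be the main obstacle. The first thing I would try is to bypass Lemma~\ref{1.G} entirely: apply the renormalized system $\partial_{r_3}\widetilde Y=\widetilde N\widetilde Y$ with the roles of the coordinates permuted, using (\ref{subcritical-N}) to get quantitative control on compact sets of the remaining variables.
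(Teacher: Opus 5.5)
Your Steps 2 and 3 coincide with the paper's proof. There, the radial Pfaff flow starts from the hypersurface $\chi=1/2$, with Proposition~\ref{prop:subcritical-inner-core} replacing Proposition~\ref{lem:direct-inner-core} as initial data, and the stitching map $\phi(r)=(r_1,r_2,1-2t_1t_2)$ finishes the corner.

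The gap is Step 1. You flag it as the main obstacle but do not resolve it, and the concrete route you propose cannot work as stated. A version of Lemma~\ref{1.G} with $w$ replaced by any function of $\tau$ alone is false for $1/2<\beta<2/3$. On the edge $r_2=0$ of the face $r_3=0$ one has $\tau=1$. But $F_0(r_1,0)={}_2F_1(\beta,1-2\beta;2\beta;r_1)$ has exponent $3\beta-1\in(1/2,1)$ at $r_1=1$, with nonzero connection coefficient $\Gamma(2\beta)\Gamma(1-3\beta)/(\Gamma(\beta)\Gamma(1-2\beta))$. Hence $|\partial_1F_0(r_1,0)|\asymp t_1^{3\beta-2}\to\infty$, and $S_0$ cannot be bounded. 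A workable weight would have to depend on $t_1,t_2$ separately. The individual (not just symmetrized) bounds on $F_1,F_2$ needed for the uniform-continuity argument would then require a case analysis you have not supplied.

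Your fallback does not close the gap either. With the remaining variables confined to compact subsets of $[0,1)^2$, you only recover Proposition~\ref{Extension1}(ii). At an edge, two coordinates tend to $1$ simultaneously. The bound $\|\widetilde N\|\lesssim t_3^{\nu}(t_1t_2)^{-1}$ from (\ref{subcritical-N}) then has $r_3$-integral of order $(t_1t_2)^{-1}$, so the Gr\"onwall factor is not uniform as $t_2\to0$.

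The missing idea is that Proposition~\ref{prop-non-corner} need not be revisited at all. Boundary points with exactly one coordinate equal to $1$ are already handled by Proposition~\ref{Extension1}(ii) and symmetry, so only the edges remain.
\begin{itemize}
\item Near $(p,1,1)$ with $p<1$, one has $t_1\ge c>0$ and $r_2,r_3\ge 1/2$.
\item By the $r_2\leftrightarrow r_3$ symmetry you may assume $t_3\le t_2$, so $\varpi_3=t_3/(t_1t_2)\le 1/c$.
\item Lemma~\ref{lem:subcritical-layer} then gives $|F(r_1,r_2,r_3)-F(r_1,r_2,1)|\lesssim_c (t_1t_2)^\alpha\to 0$.
\item Meanwhile $F(\cdot,\cdot,1)=\mathcal F$ extends continuously to $[0,1]^2$ by (\ref{F1&N*}).
\end{itemize}
What makes this work is the rescaled normal variable $\varpi_3$, together with the explicit face data (\ref{F3G}) and (\ref{subcritical-amplitude}) of size $(t_1t_2)^{2\beta-2}$. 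These make the Gr\"onwall argument uniform up to the edge, which is exactly what your ``compact sets'' version lacks.
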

\begin{proof}
We first prove that $F$ extends continuously to
$[0,1]^3\sem\{(1,1,1)\}$. By
Proposition~\ref{Extension1}(ii) and symmetry, $F$ extends
continuously to the relative interiors of the three side faces: $\{r_j=1\}$, $j\in [3]$.
It remains to consider intersections of two side faces.

Fix, for example, a point $(p,1,1)$ with $p<1$, and choose
$c\in(0,1-p)$. In a sufficiently small neighborhood of this point
we have $t_1\ge c$ and $r_2,r_3\ge1/2$. By symmetry, we may restrict
to the region $t_3\le t_2$. Then
\(
\varpi_3=\frac{t_3}{t_1t_2}\le\frac1c
\).
Lemma~\ref{lem:subcritical-layer} therefore gives
\[
|F(r_1,r_2,r_3)-F(r_1,r_2,1)|
\lesssim_c (t_1t_2)^\alpha
\to 0
\qquad\text{as }\u r\to(p,1,1).
\]
Since
\(
F(r_1,r_2,1)=\F(r_1,r_2),
\)
and $\F$ extends continuously to $[0,1]^2$ by
(\ref{F1&N*}), we also have
\[
F(r_1,r_2,1)\to\F(p,1) \qquad\text{as }\u r\to(p,1,1).
\]
Thus $F$ has a continuous extension at $(p,1,1)$.
By symmetry, the same holds at every intersection of two side
faces. Hence $F$ extends continuously to
$[0,1]^3\sem\{(1,1,1)\}$.

It remains to prove that
\(
F(\u r)\to\F(1,1)\) as $\u r\to (1,1,1)$.
By symmetry, it suffices to consider
\(
\Omega
=
\{\u r\in[0,1)^3:t_3\le t_2\le t_1\le1/2\}
\).
As noted in the proof of Theorem~\ref{Thm-coninuity-neighbor},
the radial Pfaff-flow argument leading to (\ref{Y<t1}) applies
here with Proposition~\ref{prop:subcritical-inner-core} in place
of Proposition~\ref{lem:direct-inner-core}. Since
$\gamma=\alpha=2\beta-1>0$, for every $\gamma'\in(0,\alpha)$
we obtain
\BGE
t_1|F_1|+t_2|F_2|+t_3|F_3|+t_1t_2t_3|G|
\lesssim_{\gamma'} t_1^{\gamma'}
\qquad\text{on }\Omega\setminus\mathcal O. \label{Y<t1'}
\EDE

For $\u r\in\Omega\sem\mathcal O$, set
\(
\phi(\u r):=(r_1,r_2,1-2t_1t_2)\in\mathcal O
\).
Then $\phi(\u r)\to(1,1,1)$ as $\u r\to(1,1,1)$.
For $r_3\le s<1-2t_1t_2$, writing $u=1-s$ gives
\(
2t_1t_2<u\le t_3\le t_2\le t_1\) and
\(
\frac{t_1t_2}{u}<\frac12
\).
Hence the interior of the line segment connecting $\u r$ with
$\phi(\u r)$ lies in $\Omega\sem\mathcal O$, and
(\ref{Y<t1'}) gives
\[
|F(\u r)-F(\phi(\u r))|
 \le
\int_{r_3}^{1-2t_1t_2}|F_3(r_1,r_2,s)|\,ds \lesssim_{\gamma'}
t_1^{\gamma'}\int_{2t_1t_2}^{t_3}\frac{du}{u} \le
t_1^{\gamma'}\log\frac1{t_1}
\to 0.
\]

On the other hand, Proposition~\ref{prop:subcritical-inner-core}
and $\phi(\u r)\in\mathcal O$ imply that
\(
F(\phi(\u r))\to\F(1,1)
\).
Therefore
\(
F(\u r)\to \F(1,1)\)
as $\u r\to(1,1,1)$ within $\Omega$.
This proves continuity at $(1,1,1)$, and hence $F$ extends
continuously to $[0,1]^3$. Formula~(\ref{F-boundary}) follows
from (\ref{F1&N*}) and symmetry.
\end{proof}

\begin{Corollary}
  When $\beta>1/2$, the $F$ in Theorem \ref{extension}  satisfies the hypotheses of Theorem \ref{Prop1} (ii), (iii), and (iv) with $c_F=\frac{\Gamma(2\beta)\Gamma(3\beta-1)}{\Gamma(\beta)\Gamma(4\beta-1)} f_\beta(1)$. Thus, the ${\mathcal Z}_\alpha$ defined by (\ref{Z3}) with
  $$C=c_F^{-1}f_\beta(1)^{-1} =\frac{\Gamma(\beta)\Gamma(4\beta-1)} {\Gamma(2\beta)\Gamma(3\beta-1)}\cdot\Big(\frac{\Gamma(\beta)\Gamma(3\beta-1)} {\Gamma(2\beta)\Gamma(2\beta-1)}\Big)^2   $$
  satisfies (PDE) (\ref{PDE}), (COV) (\ref{COV}), (ASY) (\ref{ASY}), and positivity for a neighbor pattern $\alpha$. In particular, $\mathcal{Z}_\alpha$ is the corresponding pure partition function for $\kappa=4/\beta\in (0,8)$.
  \label{victory@N-full}
\end{Corollary}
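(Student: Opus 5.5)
The corollary should follow by assembling already-proved results, in the same way as Corollary~\ref{victory@R} for the rainbow case. I would verify the hypotheses of Theorem~\ref{Prop1} (ii), (iii), (iv) in turn, and then match the normalization constant.

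\emph{Hypothesis (ii).} Theorem~\ref{extension} gives that $F$ and $G$ are analytic on a complex neighborhood of $[0,1)^3$, in particular on $(0,1)^3$, and that they satisfy (\ref{rainbow1}) and (\ref{neighbor2}) there. Hence Theorem~\ref{Prop1}(ii) shows that ${\mathcal Z}_\alpha$ satisfies (PDE) and (COV) for any choice of $c_\alpha>0$.

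\emph{Hypothesis (iii).} By Theorem~\ref{Thm-coninuity-neighbor} (for $\beta\ge 2/3$) and Theorem~\ref{Thm-continuity-neighbor-full} (for $1/2<\beta<2/3$), $F$ extends continuously to $[0,1]^3$. Moreover (\ref{F-boundary}) gives
\[
F(r,1,1)=\frac{\Gamma(2\beta)\Gamma(3\beta-1)}{\Gamma(\beta)\Gamma(4\beta-1)}\, f_\beta(r)f_\beta(1).
\]
By symmetry the same holds for $F(1,r,1)$ and $F(1,1,r)$. This is exactly (\ref{boundary-F}) with the stated $c_F$. Then $c_\alpha=c_F^{-1}f_\beta(1)^{-1}$, and inserting Gauss's identity $f_\beta(1)=\frac{\Gamma(2\beta)\Gamma(2\beta-1)}{\Gamma(\beta)\Gamma(3\beta-1)}$ produces the displayed closed form for $C$. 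This is routine Gamma-function bookkeeping.

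\emph{Hypothesis (iv).} This is the only step needing a little care: we must show $F>0$ on $\partial[0,1]^3$.
\begin{itemize}
\item On the faces $r_\ell=1$, formula (\ref{F-boundary}) expresses $F$ as a positive constant times a product of values of $f_\beta$ on $[0,1]$. These are positive by (\ref{2F1>0}) on $[0,1)$ and by Gauss's identity at $1$, since $\beta>1/2$.
\item On the faces $r_\ell=0$, by symmetry take $r_3=0$. There $F$ restricts to the Horn function $F_0$: its power series has coefficients $A_{(n_1,n_2,0)}$, and it agrees with the continued $F$ by analyticity and continuity. Proposition~\ref{Horn}(ii) gives $F_0>0$ on $[0,1]^2$. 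I would check that the continuous extension of $F$ to the edges $\{r_3=0,\,r_j=1\}$ coincides with that of $F_0$. This holds because both are limits of the same function on $[0,1)^2\times\{0\}$.
\end{itemize}
Theorem~\ref{Prop1}(iv) then yields positivity of ${\mathcal Z}_\alpha$, and together with (PDE), (COV) and (ASY) this identifies ${\mathcal Z}_\alpha$ as the pure partition function.
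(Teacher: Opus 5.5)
Your proposal is correct and follows essentially the same route as the paper. Both arguments use Theorem~\ref{extension} for hypothesis (ii), the two continuity theorems together with (\ref{F-boundary}) for (iii), and (\ref{2F1>0}) with Gauss's identity on the faces $r_\ell=1$ and Proposition~\ref{Horn}(ii) on the faces $r_\ell=0$ for (iv), with Gauss's identity also giving the value of $C$. Your extra check that $F|_{r_\ell=0}$ coincides with the Horn function $F_0$ up to the edges is a detail the paper leaves implicit.
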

\begin{proof}
By Theorem \ref{extension}, $F$ together with another function $G$ satisfies (\ref{rainbow1}) and (\ref{neighbor2}), and so satisfies the hypotheses of Theorem \ref{Prop1} (ii). By Theorem \ref{Thm-coninuity-neighbor} when $\beta\ge 2/3$ and Theorem \ref{Thm-continuity-neighbor-full} when $1/2<\beta<2/3$, $F$ extends continuously to $[0,1]^3$ and satisfies (\ref{F-boundary}).  Hence, $F$ satisfies
the conditions in Theorem \ref{Prop1} (iii). By (\ref{2F1>0}), $F>0$  on $\{r_{\ell}=1\}$, ${\ell}\in [3]$. By Proposition \ref{Horn}, $F>0$  on $\{r_{\ell}=0\}$, ${\ell}\in [3]$. Hence, $F$ satisfies the conditions in Theorem \ref{Prop1} (iv). With the normalization constant \(C\) chosen above, Theorem~\ref{Prop1} shows that the corresponding $\mathcal Z_\alpha$ satisfies (PDE) (\ref{PDE}), (COV) (\ref{COV}), (ASY) (\ref{ASY}), and positivity, and Gauss's identity yields the stated value of $C$.
\end{proof}

\section{Integer Values of $\beta$ in the Neighbor Case} \label{section:integer}
In this section, we aim to find explicit formulas for the function $F$ from Theorem \ref{extension}  in the neighbor case when $\beta\in\N$. Let $A_{\u n}$, $\u n\in\N_0^3$, be from Theorem \ref{Theorem-AB@N}.

For any fixed $\beta\in\N$, we define $\til f_+(n) =n(n+1-2\beta)$ and $\til f_-(n)= (n+1-\beta)(n+2-3\beta)$. For each $j\in [3]$, we define $\til {\mathcal H}_j :\R^{\N_0^3}\to \R^{\N_0^3}$ such that
\BGE (\til {\mathcal H}_j A)_{\u n}=\til f_+(n_j+1) A_{\u n+e_j}-(\til f_+(n_j)+\til f_-(n_j))A_{\u n} +\til f_-(n_j-1) A_{\u n-e_j},\quad \u n \in\Z^3,\label{difference}\EDE
where $A$ is extended by zero outside $\N_0^3$.
Since by Proposition \ref{convergence@N}, $\sum_{\u n\in\N_0^3} A_{\u n} \u r^{\u n}$ converges to  $F$  in a neighborhood of $\u 0$, $\sum_{\u n\in\N_0^3} (\til {\mathcal H}_jA)_{\u n} \u r^{\u n}$ converges to $\til {\mathcal H}_j F$ in the same neighborhood, where $\til {\mathcal H}_j$ now denotes the differential operator:
\BGE \til {\mathcal H}_j= (1-r_j)[r_j(1-r_j)\pa_j^2+(2-2\beta-(4-4\beta)r_j)\pa_j-(1-\beta)(2-3\beta)].\label{differential}\EDE
Define   $\sigma_3(\u r)=r_1r_2r_3$. The relation between $\til{\mathcal H}_j$ and the ${\mathcal H}_j$ in Remark \ref{Remark-Hj}  is
\BGE \til{\mathcal H}_j M_{\sigma_3^{2\beta-1}} =M_{\sigma_3^{2\beta-1}} {\mathcal H}_j,\quad j\in [3], \label{HFFH}\EDE
where $M_{\sigma_3^{2\beta-1}}$ is the multiplication operator: $f\mapsto \sigma_3^{2\beta-1} f$.
Thus, for an analytic function $F$   on $[0,1)^3$, if $\til F=\sigma_3^{2\beta-1} F$, then $F$ lies in the intersection of kernels of ${\mathcal H}_j-{\mathcal H}_k$, $j,k\in[3]$, if and only if $\til F$ lies in the intersection of the kernels of $\til {\mathcal H}_j-\til {\mathcal H}_k$, $j,k\in[3]$.


For $\kappa=2$ and $\kappa=4$, which correspond respectively to $\beta=2$ and $\beta=1$ in our notation, explicit formulas or constructions for multiple-SLE partition functions are
available in the literature for general \(N\) in these two cases
(\cite{LERW-partition,Global}). These formulas, however, are not written in the form of the function $F$ considered here. Indeed, $F$ is obtained only after normalizing the partition functions and applying the transformations in Theorem \ref{Prop1}. Rewriting the known formulas in our framework for $N=3$ then requires a further nontrivial sequence of algebraic manipulations. We omit these details and present the resulting formulas directly.

Recall the polynomial $\Delta$ as defined in (\ref{Delta-nb}).  By (\ref{Delta-tj}) there is a symmetric domain $\Omega\subset\C^3$ satisfying $\Omega\supset [0,1)^3$ and $\Omega\cap\{\u r\in \C^3:r_j=0 \text{ and } r_k=1\}=\emptyset$ for any distinct $j,k\in [3]$, such that $\Ree \Delta>0$ on $\Omega$. Since \(\Delta(0)=1\), we choose the analytic branch of \(\sqrt{\Delta}\)
on \(\Omega\) satisfying
\(
\sqrt{\Delta(0)}=1
\).

Consider the symmetric function
\BGE F_{\beta=1}  (\u r) :=\frac{\sqrt{\Delta(\u r)}-(1-r_1r_2-r_2r_3-r_3r_1)}{4\sigma_3 (\u r)},\quad \u r\in \Omega \sem\sigma_3^{-1}\{0\} .\label{GFF-neighbor}\EDE
Rationalizing the numerator in (\ref{GFF-neighbor}) and using  (\ref{Delta-nb}), we obtain, wherever the denominator is nonzero,
 \BGE F_{\beta=1} (\u r) = \frac{2-r_1-r_2-r_3}{\sqrt{\Delta(\u r)}+(1-r_1r_2-r_2r_3-r_3r_1)}.\label{GFF-neighbor*}\EDE  By (\ref{Delta-nb-boundary}), for any $j\in [3]$, when $r_j=0$,   $\sqrt{\Delta(\u r)}+(1-r_1r_2-r_2r_3-r_3r_1)= {2(1-r_{j-1}r_{j+1})}$, which is not zero if $(r_{j+1},r_{j-1})\in [0,1]^2\sem\{ (1,1)\}$.
Thus, $F_{\beta=1}$ can be extended analytically to $\Omega$ such that $F_{\beta=1}|_{r_j=0}=\frac{2-r_{j-1}-r_{j+1}}{2(1-r_{j-1}r_{j+1})}$. Setting $j=3$ and $r_2=0$, we get $F_{\beta=1}(r_1,0,0)=1-\frac{r_1}2$.

Let $\beta=1$. Then $\til{\mathcal H}_j$ reduces to $r_j(1-r_j)^2\pa_j^2$. Since
$$\pa_j^2\sqrt\Delta=\frac 14\Delta^{-3/2}(2\Delta\pa_j^2\Delta-(\pa_j \Delta)^2)=-4\Delta^{-3/2}  r_{j+1}r_{j-1} (1-r_{j-1})^2(1-r_{j+1})^2,$$
we see that $\sqrt\Delta$ lies in the  kernel  of $\til{\mathcal H}_j-\til{\mathcal H}_k$ for any $j,k\in [3]$, which implies that $\sqrt{\Delta(\u r)}-(1-r_1r_2-r_2r_3-r_3r_1)$ lies in the same kernels since $(1-r_1 r_2-r_2 r_3-r_3 r_1)$ lies in the kernel of  $\til {\mathcal H}_j$ for any $j\in [3]$. This further implies that $F_{\beta=1}$ lies in the kernel  of $ {\mathcal H}_j- {\mathcal H}_k$ for any $j,k\in [3]$, which means that the coefficients $(A_{\u n})$ of the Taylor expansion of $F_{\beta=1}$ at the origin, extended by zero outside $\N_0^3$, satisfy (\ref{Ljk=}). Since $F_{\beta=1}(r_1,0,0)=1-\frac{r_1}2$,  the values of $A$ at $(n_1,0,0)$, $n_1\in\N_0$, agree with those of the $A$ from Theorem \ref{Theorem-AB@N} when $\beta=1$. By Lemma \ref{construction}, the $A$ here agrees with the one in Theorem \ref{Theorem-AB@N} when $\beta=1$. So the $F_{\beta=1}$ here agrees with the $F$ from Theorem \ref{extension} when $\beta=1$.

\begin{Remark}
  For $\beta=1$, the power series $\sum A_{\u n} \u r^{\u n}$ from Proposition \ref{convergence@N} does not converge in $B(0,R)^3$ for any $R>1/3$. Indeed, if this power series converged in \(B(0,R)^3\) for some \(R>1/3\),
then by the identity theorem it would give an analytic extension of
\(F_{\beta=1}\) to a neighborhood of
\(
\u s:=\left(-\frac13,-\frac13,-\frac13\right)
\), and so would $\sqrt\Delta$ since $\sqrt\Delta=4\sigma_3(\u r) F_1+1- (r_1r_2+r_2r_3+r_3r_1)$. Direct computation shows $\Delta(\u s)=0$ and $\pa_3 \Delta(\u s)=\frac{64}{27}\ne 0$. From $\Delta=(\sqrt{\Delta})^2$, we would then obtain
\(
\partial_3\Delta(s)=2\sqrt{\Delta}(s)\,\partial_3\sqrt{\Delta}(s)=0\),
a contradiction.\label{Rem:extension}
\end{Remark}

Now let $\beta=2$. Define a symmetric function
\BGE F_{\beta=2}(\u r)=\frac{P(\u r)\sqrt{\Delta(\u r)}-Q(\u r)}{40 \sigma_3(\u r)^3},\quad \u r\in \Omega \sem\sigma_3^{-1}\{0\}, \label{F2}\EDE
where
{\small
$$P (\u r):=  1- (r_1 r_2 + r_2 r_3 + r_3 r_1) + 2 r_1 r_2 r_3- (r_1^2 r_2^2 +
   r_2^2 r_3^2 + r_3^2 r_1^2)+
 2 (r_1^2 r_2^2 r_3 + r_2^2 r_3^2 r_1 + r_3^2 r_1^2 r_2)  $$\BGE - (r_1^3 r_2^2 r_3 + r_1^3 r_3^2 r_2 +
   r_2^3 r_3^2 r_1 + r_2^3 r_1^2 r_3 + r_3^3 r_1^2 r_2 + r_3^3 r_2^2 r_1)+(r_1^3 r_2^3 + r_2^3 r_3^3 + r_3^3 r_1^3) ;\label{P2}\EDE
$$Q (\u r):=  1-
 2 (r_1 r_2 + r_2 r_3 + r_3 r_1)+6 r_1 r_2 r_3 +
 2 (r_1^3 r_2^3 + r_2^3 r_3^3 + r_3^3 r_1^3) -
 6 (r_1^3 r_2^3 r_3 + r_2^3 r_3^3 r_1 + r_3^3 r_1^3 r_2)  $$ \BGE +
 2 (r_1^4 r_2^3 r_3 + r_1^4 r_3^3 r_2 + r_2^4 r_3^3 r_1 + r_2^4 r_1^3 r_3 +
    r_3^4 r_1^3 r_2 + r_3^4 r_2^3 r_1)-(r_1^4 r_2^4 + r_2^4 r_3^4 + r_3^4 r_1^4).\label{Q2}\EDE
}

A direct symbolic computation verifies that \(F_{\beta=2}\) satisfies (\ref{Ljk-PDE})
and has the initial coefficients prescribed by Theorem~\ref{Theorem-AB@N} for \(\beta=2\);
hence, by Lemma~\ref{construction} and Theorem~\ref{extension}, it agrees with the \(F\) with $\beta=2$ (see Remark \ref{CAS}). Together with the case $\beta=1$, this suggests
that for positive integer values of $\beta$ the function $F$ in Theorem~\ref{extension} should admit a representation
of the form
\[
C_\beta \sigma_3(\u r)^{2\beta-1}F(r)=P(\u r)\sqrt{\Delta(\u r)}-Q(\u r),
\]
where $P$ and $Q$ are symmetric polynomials with $P(0)=Q(0)=1$, and $C_\beta$ is a positive constant. We now formulate a more precise version of this
pattern.

Motivated by the cases $\beta=1$ and $\beta=2$, we now describe a more precise conjectural
form for the function $F$ in Theorem~\ref{extension}. The following construction gives a candidate for
the polynomial $Q$ and leads to the conjectural formula stated below.

\begin{Construction}
  Let $\beta\in\N$. Define $E^0$ on $\N_0^2\times\{0\}$ such that $E^0_{(m,m,0)}= \frac{(1-\beta)_m (2-3\beta)_m}{(1)_m (2-2\beta)_m}$ for $0\le m\le \beta-1$ and $E^0_{\u n}$ vanishes elsewhere. A direct calculation gives $\til {\mathcal H}_1 E^0=\til {\mathcal H}_2 E^0$. \footnote{In fact, we only need to check the equality at those $(n_1,n_2,0)$, which have distance $\le 1$ from the support of $E^0$: $\{(m,m,0):0\le m\le \beta-1\}$. By the symmetry of $E^0$, we only need to show that $\til {\mathcal H}_1 E^0=\til {\mathcal H}_2 E^0$ at $(n,n+1,0)$ for $-1\le n\le \beta-1$, which holds because $\til f_+(n+1) E_{(n+1,n+1,0)}=\til f_-(n) E_{(n,n,0)}$.}
  Modify $\til f_\pm$ to define $\ha f_\pm$ such that $\ha f_-=\til f_-$,  $\ha f_+(n)=\til f_+(n)$ if $n\ne 2\beta-1$, and $\ha f_+(2\beta-1)=1$. This modification removes the zero of \(\widetilde f_+\) at \(2\beta-1\),
which is needed for the propagation argument below. Then we define $\ha{\mathcal H}_j$ using (\ref{difference}) with $\ha f_\pm$ in place of $\til f_\pm$. Since $E_{(n_1,n_2,0)}=0$ if $n_1$ or $n_2$ equals $2\beta-1$ (because $2\beta-1>\beta-1$), we see that $E^0$ lies in the kernels of  $\ha{\mathcal H}_1-\ha{\mathcal H}_2$. By the proof of Lemma \ref{construction} (ii), $E^0$ extends uniquely to a symmetric function $E$ on $\N_0^3$, which lies in the kernels of $\ha{\mathcal H}_j -\ha{\mathcal H}_k$, $j,k\in [3]$. \footnote{Here we use $\ha{\mathcal H}_j$ instead of $\til{\mathcal H}_j$ because $\ha f_+(n)\ne 0$ for any $n\in\N$ while $\til f_+(2\beta-1)=0$.}
  Since $E$ vanishes on $\Z_{\ge \beta}\times \N_0\times \{0\}$, from $\ha{\mathcal H}_3 E=\ha{\mathcal H}_2 E$ we see that $E$ vanishes on $\Z_{\ge \beta}\times \N_0^2$. Indeed, applying the recursion given by
\(\widehat{\mathcal H}_3E=\widehat{\mathcal H}_2E\) inductively in the
third coordinate propagates the vanishing from the face \(n_3=0\) to all
\(n_3\ge0\). Symmetrically, $E$ also vanishes on $\N_0^2\times \Z_{\ge \beta}$ and $\N_0\times \Z_{\ge \beta}\times \N_0$. So the support of $E$ is contained in $\Z_{[0,\beta-1]}^3$. Since $\ha f_\pm$ and $\til f_\pm$ only differ at $2\beta-1$, and $E_{\u n}=0$ when $n_j=2\beta-1$, we get $\til {\mathcal H}_jE=\ha{\mathcal H}_jE$  for any $j\in [3]$. Thus, $E$ lies in the kernels of $\til{\mathcal H}_j -\til{\mathcal H}_k$, $j,k\in [3]$.

  For $m\in\N_0$ and $j\in [3]$, define the involution $T^{(m)}_j:\Z^3\to \Z^3$ such that
\BGE T^{(m)}_j(n_1,n_2,n_3)=n_je_j+\sum_{k\in [3]\sem\{j\}} (m-n_k)e_k.\label{Tmj}\EDE
For each $j\in [3]$, define $E^{(j)}\in\R^{\Z^3}$ by $E_{\u n}^{(j)}=E_{T^{(3\beta-2)}_j(\u n)}$. Then the support of $E^{(j)}$ is contained in $T^{(3\beta-2)}_j(\Z_{[0,\beta-1]}^3)\subset\N_0^3$.  Note that the supports of $E$ and $E^{(j)}$, $j\in [3]$, are mutually disjoint.
Since $E$ lies in the kernels of ${\til {\mathcal H}}_j-{\til {\mathcal H}}_k$, $j,k\in[3]$, and $\til f_\pm (3\beta-2-n)=\til f_\mp (n)$,   each $E^{(j)}$ also lies in these kernels.  Define
$ Q=E-\sum_{j\in [3]} E^{(j)}$. 
Then $Q$ has a finite support in ${\N_0^3}$, is symmetric and lies in these kernels. 
Define the symmetric polynomial $Q(\u r):=\sum_{\u n\in \N_0^3} Q_{\u n} \u r^{\u n}$. 
We observe that $Q(\u 0)=Q_{\u 0}=E_{\u 0}=1$.

Let $P^*=Q /\sqrt{\Delta}$ and let $(P^*_{\u n})_{\u n\in\N_0^3}$ be the coefficients of the Taylor expansion of $P^*$ at $\u 0$. Define $P\in\R^{\N_0^3}$ by
$P =\ind_{\min\{n_1,n_2,n_3\}\le 2\beta-2}  P^* $
and let $P(\u r)=\sum_{\u n\in\N_0^3} P_{\u n}\u r^{\u n}$. Since $Q(\u 0)=\sqrt\Delta(\u 0)=1$, $P(\u 0)=P_{\u 0}=P^*_{\u 0}=P^*(\u 0)=1$. Since $Q$ and $\Delta$ are symmetric in $r_1,r_2,r_3$, so are $P^*$ and $P$.
\end{Construction}

\begin{Conjecture}
The coefficient array $P$ has finite support, equivalently, $P(\u r)$  is a  polynomial. Moreover, the $F $ from Theorem \ref{extension} is given by
\BGE F  (\u r)=\frac{P  (\u r)\sqrt{\Delta(\u r)}-Q  (\u r)}{C_\beta (\sigma_3(\u r))  ^{2\beta-1}},\quad C_\beta:=\frac{4(3\beta-2)!(4\beta-3)!((\beta-1)!)^3}{((2\beta-1)!)^2((2\beta-2)!)^3}.\label{Cbeta}\EDE
\label{Conj1}
\end{Conjecture}

The formula in Conjecture \ref{Conj1} recovers the above two examples when $\beta\in\{1,2\}$, with $C_1=4$ and $C_2=40$; $P_{\beta=1}(\u r)=1$ and $Q_{\beta=1}(\u r)=1-r_1r_2-r_2r_3-r_3r_1$; $P_{\beta=2}$ and $Q_{\beta=2}$ are given by (\ref{P2}) and (\ref{Q2}). Moreover, if we define  $E(\u r)=\sum_{\u n\in \N_0^3} E_{\u n} \u r^{\u n}$, then $E_{\beta=1}(\u r)=1$ and $E_{\beta=2}(\u r)=1- 2 (r_1 r_2 + r_2 r_3 + r_3 r_1)+6 r_1 r_2 r_3$.

Now we explain how to use CAS to rigorously verify Conjecture \ref{Conj1} for any specific $\beta\in\N$. We define $\til P\in \R^{\N_0^3}$ by
$$ \til P_{\u n}=\ind_{\max\{n_1,n_2,n_3\}\le 3\beta-3}(\u n) P_{\u n}=\ind_{\max\{n_1,n_2,n_3\}\le 3\beta-3}(\u n)\ind_{\min\{n_1,n_2,n_3\}\le 2\beta-2}(\u n) P^*_{\u n},$$ 
and let $\til P(\u r)=\sum_{\u n\in\N_0^3} \til P_{\u n}\u r^{\u n}$. Since $Q(\u 0) =\Delta(\u 0)=1$, we get $P^*(\u 0)=1$, and so $P^*_{\u 0}=P_{\u 0}=1$, which implies $P(\u 0)=1$.  Let $U_{\u n}$, $\u n\in\N_0^3$, be the coefficients of the Taylor expansion of $\til P\sqrt\Delta-Q$ about $\u 0$. For each $j\in[3]$, define
\BGE  \til {\mathcal H}^\Delta_j : =\Delta^{3/2} \til{\mathcal H}_j ( \sqrt\Delta\cdot)\label{H-Delta}\EDE
We use CAS to check three criteria:
\begin{enumerate} [label=(C\arabic*), ref=C\arabic*]
  \item \label{C1*} $\til P^2\Delta-Q^2$ is divisible by $\sigma_3^{2\beta-1}$;
  \item \label{C2*}  $\til P$  lies in the kernels of $ \til {\mathcal H}^\Delta_j- \til {\mathcal H}^\Delta_k$ for any $j,k\in [3]$.
  \item \label{C3*} $U_{(2\beta-1+n,2\beta-1,2\beta-1)}=C_\beta\frac{(\beta)_{n }(1-2\beta)_{n }}{(2\beta)_{n }(1)_{n }}$ for $0\le n\le 2\beta-1$, where $C_\beta$ is given by (\ref{Cbeta}).
\end{enumerate}

These criteria can be checked by CAS because (i) we only need to compute finitely many terms to get $\til P$, and $\til P^2\Delta-Q^2$ is a polynomial; (ii) $\til P$ is a polynomial, and each $ \til {\mathcal H}^\Delta_j$ is a second-order differential operator with polynomial coefficients; (iii) the coefficients $U_{\u n}$ are computable, and we only need to check finitely many terms.

With a straightforward sparse implementation, the computational cost of checking (\ref{C1*})-(\ref{C3*}) for a fixed $\beta$ grows polynomially in $\beta$. The dominant step is (\ref{C1*}): the number of potentially relevant coefficients of $\widetilde P$ is at most
\(
(3\beta-2)^3-(\beta-1)^3 = O(\beta^3),
\)
so forming $\widetilde P^2\Delta-Q^2$ and checking its divisibility by $\sigma_3^{2\beta-1}$ requires $O(\beta^6)$ arithmetic operations. The checks of (\ref{C2*}) and (\ref{C3*}) are of lower order.
Theorem \ref{check} below shows that the three criteria are enough to verify Conjecture \ref{Conj1} for a specific $\beta\in \N$.

We expand $\Delta$ w.r.t.\ $r_3$:
\BGE \Delta(\u r)=a_0(r_1,r_2)^2+2a_1(r_1,r_2) r_3+a_2(r_1,r_2) r_3^2,\label{Delta-a12}\EDE
where $$a_0(r_1,r_2):=1-r_1r_2,\quad a_1(r_1,r_2):=4 r_1 r_2 -(r_1+r_2)(1+r_1 r_2),\quad a_2(r_1,r_2):=(r_1-r_2)^2.$$ Suppose $\Delta(\u r)^{\pm 1/2}=\sum_{m=0}^\infty d_m^\pm(r_1,r_2) r_3^m$.  From (\ref{Delta-a12}) we get
  $$1=(a_0^2+2 a_1 r_3+a_2 r_3^2)\cdot \Big(\sum_{m=0}^\infty d_m^-  r_3^m\Big)^2 = (a_0^2+2 a_1 r_3+a_2 r_3^2)\cdot \Big(\sum_{m=0}^\infty \Big( \sum_{k=0}^m d^-_k  d^-_{m-k} \Big) r_3^m \Big) $$
  $$=  \sum_{m=0}^\infty \Big(a_0^2 \sum_{k=0}^m d^-_k d^-_{m-k}+2 a_1  \sum_{k=0}^{m-1} d^-_k d^-_{m-1-k} + a_2  \sum_{k=0}^{m-2} d^-_k d^-_{m-2-k}\Big) r_3^m,$$
where $d^-_k:=0$ if $k\le -1$. By the above formula,
\BGE d^-_0=\frac 1{a_0},\quad d^-_m=-\frac{a_0}2 \sum_{k=1}^{m-1} d^-_k d^-_{m-k}-\frac{a_1}{a_0} \sum_{k=0}^{m-1} d^-_k d^-_{m-1-k} -\frac{a_2}{2a_0} \sum_{k=0}^{m-2} d^-_k d^-_{m-2-k},\quad m\ge 1.\label{dm}\EDE

For $k\in\N_0$, a two-dimensional coefficient array $A\in \R^{\N_0^2}$ is called $k$-diagonal if $A_{(n_1,n_2)}=0$ whenever $|n_1-n_2|>k$. A function $f$ in $r_1,r_2$ that is analytic at $(0,0)$ is called $k$-diagonal if the coefficients of the power series expansion of $f$ at $\u 0$ are $k$-diagonal. The product of a $k_1$-diagonal function and a $k_2$-diagonal function is $(k_1+k_2)$-diagonal. We observe that $a_j$ is $j$-diagonal for $j=0,1,2$. Thus, by (\ref{dm}), every $d^-_k$ is $k$-diagonal. Since $1=(\sum_m d^+_m r_3^m)(\sum_m d^-_m r_3^m)$, every $d^+_k$ is also $k$-diagonal.

We expand $Q$ in terms of $r_3$: $Q(\u r)=\sum_{m\in\N_0} Q_m(r_1,r_2) r_3^m$, where $Q_m$, $m\in\N_0$, are two-variable symmetric functions. We similarly define $P^*_m$, $P_m$,  and $U_m$, $m\in\N_0$.

\begin{Lemma}
For any $m\in\N_0$,   $Q_m$, $P^*_m$,   $P_m$, and $U_m$ are $m$-diagonal. \label{Q-diagonal}
\end{Lemma}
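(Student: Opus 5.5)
The plan is to reduce everything to three facts about $k$-diagonality: it is preserved by products with additive index, by sums, and by restriction of the coefficient array to a set of indices defined only by conditions on the third coordinate and on permutation-invariant quantities. The already established fact that each $d^\pm_k$ is $k$-diagonal will be used throughout.

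First, $Q_m$. By construction $Q=E-\sum_{j}E^{(j)}$. I will show the support of $E$ lies in $\{(n,n,\cdot)\}$-type diagonal bands, namely $|n_1-n_2|\le n_3$. On the face $n_3=0$ the support of $E^0$ is $\{(m,m,0)\}$, which is $0$-diagonal. I will then induct on $n_3$ using the recursion from $\widehat{\mathcal H}_3E=\widehat{\mathcal H}_1E$ at $(n_1,n_2,N)$. This recursion expresses $E_{(n_1,n_2,N+1)}$ through $E$ at $(n_1,n_2,N)$, $(n_1\pm1,n_2,N)$ and $(n_1,n_2,N-1)$, with the coefficient $\hat f_+(N+1)\neq0$. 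Each of these points has $|n_1-n_2|$ changed by at most $1$. Hence, if $E$ restricted to level $N$ is $N$-diagonal and level $N-1$ is $(N-1)$-diagonal, then level $N+1$ is $(N+1)$-diagonal.

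For $E^{(j)}$ I use the involution $T_j^{(3\beta-2)}$ directly.

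- For $j=3$: the indices become $(3\beta-2-n_1,\,3\beta-2-n_2,\,n_3)$. The difference $|n_1-n_2|$ is unchanged, and so is $n_3$.
- For $j=1$ (and $j=2$ by symmetry): the indices become $(n_1,\,m-n_2,\,m-n_3)$ with $m=3\beta-2$, so the new $|n_1-n_2|$ is $|n_1+n_2-m|$ and the new third coordinate is $m-n_3$. I must check that $|n_1+n_2-m|\le m-n_3$ on the support $[0,\beta-1]^3$ of $E$. This holds because $n_1+n_2\le 2\beta-2\le m$, so the left side is $m-n_1-n_2$, and $n_3\le\beta-1$. The needed inequality is then $n_1+n_2\ge n_3$ on the support of $E$.

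This is the step I expect to be the main obstacle. I plan to prove a stronger support statement in the same induction: the support of $E$ satisfies the triangle inequalities $n_k\le n_j+n_\ell$, i.e.\ the face data $(m,m,0)$ propagate inside the cone where each coordinate is at most the sum of the others. The recursion's locality makes this plausible, but the boundary cases need to be checked from the explicit coefficients.

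Next, $P^*_m$. Since $P^*=Q\cdot\Delta^{-1/2}$, I have $P^*_m=\sum_{k}Q_kd^-_{m-k}$, which is $m$-diagonal by additivity. Then $P_m$ is obtained from $P^*_m$ by multiplying coefficients by $\mathbb 1_{\min(n_1,n_2,m)\le 2\beta-2}$. This cutoff only deletes coefficients and never enlarges the support, so $P_m$ remains $m$-diagonal; the same holds for $\widetilde P$, which is the object actually used.

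Finally, $U=\widetilde P\sqrt\Delta-Q$ gives $U_m=\sum_k\widetilde P_kd^+_{m-k}-Q_m$. This is $m$-diagonal by the product and sum rules.
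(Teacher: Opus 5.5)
Your proof is correct, and it handles $Q_m$ differently from the paper.

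\textbf{The step you flag.} The step you call the main obstacle is immediate. The Construction asserts that $E$ is symmetric. So your band property $|n_1-n_2|\le n_3$ on $\mathrm{supp}\,E$, applied to permuted coordinates, gives $|n_1-n_3|\le n_2$, i.e.\ $n_3\le n_1+n_2$. That is all the $E^{(1)}$ and $E^{(2)}$ checks need. No explicit coefficients are required. By contrast, a direct induction on $n_3$ for the cone condition, as you sketch it, would need a genuine cancellation. For example, when $\beta=2$, the value $E_{(1,0,1)}=-2$ lies on the boundary $n_3=n_1+n_2$ and enters the recursion for $E_{(0,0,2)}$, which vanishes only because two nonzero terms cancel. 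Symmetry is the right way to close this step.

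\textbf{Comparison with the paper.} The paper inducts on $Q$ itself via $\widetilde{\mathcal H}_3Q=\widetilde{\mathcal H}_1Q$. Since $\widetilde f_+(m+1)=0$ when $m+1=2\beta-1$, that level needs a separate argument. A point with $n_3=2\beta-1$ and $n_1-n_2\ge 2\beta$ can only lie in $\mathrm{supp}\,E^{(2)}$. Reflecting it to a point at level $\beta-1$, and using the symmetry of $Q$ with the induction hypothesis, shows that it vanishes.

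You instead run the induction on $E$. Its $\widehat{\mathcal H}$-recursion has leading coefficient $\widehat f_+(N+1)$, which never vanishes, so there is no exceptional level. You then transport the band through the involutions $T^{(3\beta-2)}_j$. This costs a small geometric check for each $E^{(j)}$. In return you get a uniform argument and the extra structural fact that $\mathrm{supp}\,E$ lies in the triangle cone.

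Your treatment of $P^*_m$, $P_m$ and $U_m$ is the same as the paper's. Using $\widetilde P$ in $U$ matches the actual definition of $U$. The paper's proof writes $P\sqrt\Delta$ instead, which is harmless because $\widetilde P$ is a further truncation of $P$.
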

\begin{proof}
  From the construction of $Q$ we know that $Q_0$ is $0$-diagonal. Assume that $Q_k$ is $k$-diagonal for $0\le k\le m$. Let $\u n=(n_1,n_2,m+1)\in\N_0^3$ satisfy $|n_1-n_2|\ge m+2$. We need to show $Q_{\u n}=0$. By symmetry we may assume $n_1\ge n_2$. Let $\u n'=(n_1,n_2,m)$. Since $Q_m$ and $Q_{m-1}$ are $m$-diagonal,  $Q$ vanishes at $\u n'$, $\u n'-e_3$ and $\u n'\pm e_1$. Using $(\til H_3 Q)_{\u n'}=(\til H_1 Q)_{\u n'}$   we get $\til f_+(m+1)Q_{\u n}=0$. If $m+1\ne 2\beta-1$, then $\til f_+(m+1)\ne 0$, and so $Q_{\u n}=0$.

Assume now $m+1= 2\beta-1$. Then $\u n=(n_1,n_2,2\beta-1)$ and $n_1-n_2\ge 2\beta$.  Recall that $Q=E-\sum_{j\in [3]} E^{(j)}$, the support of $E$ is contained in $\Z_{[0,\beta-1]}^3$, and the support of $E^{(j)}$ is contained in $T_j^{3\beta-2}(\Z_{[0,\beta-1]}^3)$, $j\in [3]$. If $\u n$ does not lie in any of these supports, then $Q_{\u n}=0$. Now we assume that $\u n$ lies in one of these supports. Since $n_3=2\beta-1>\beta-1$, $\u n$ does not lie in the support of $E$ or $E^{(3)}$. If $\u n$ lies in the support of $E^{(1)}$, then $n_2\ge (3\beta-2)-(\beta-1)=2\beta-1>\beta-1\ge n_1$, which contradicts that $n_1>n_2$. Thus, $\u n$ lies in the support of $E^{(2)}$. So $Q_{\u n}=-Q_{\lin{\u n}}$, where $\lin{\u n}=(\lin n_1,\lin n_2,\lin n_3)\in\Z_{[0,\beta-1]}^3$, $\lin n_1=3\beta-2-n_1 $, $\lin n_2=n_2$, and $\lin n_3=3\beta-2-n_3=\beta-1$. From $n_1-n_2\ge 2\beta $, we get $\lin n_3-\lin n_1> \lin n_2$. Since $\lin n_2\le \beta-1$, by the symmetry of $Q$ and the fact that $Q_m$ is $m$-diagonal for $m\le \beta-1$, we get $Q_{\lin{\u n}}=0$, which again implies that $Q_{\u n}=0$. The conclusion follows from induction.

Since $P^*=Q \Delta^{-1/2}$, we get
\BGE P_m^*=\sum_{k=0}^{m} Q_k d^-_{m-k},\quad m\in\N_0.\label{Km-sum*}\EDE
Since $d^-_k$ is $k$-diagonal for any $k\in\N_0$, by Lemma \ref{Q-diagonal} and (\ref{Km-sum*}), each $P_k^*$ is $k$-diagonal, and so is $P_k$ because $P^*_{\u n}=0$ implies that $P_{\u n}=0$.

Let $Q^*=P\sqrt\Delta$ and $Q^*(\u r)=\sum_m Q^*_m(r_1,r_2) r_3^m$. Since $\sqrt\Delta(\u r)=\sum_m d^+_m(r_1,r_2)$, and for each $k$, $P_k$ and $d^+_k$ are $k$-diagonal, each $Q^*_m$ is also $m$-diagonal. Thus,  $U_m=Q^*_m-Q_m$ is $m$-diagonal for all $m\in\N_0$.
\end{proof}

\begin{Proposition}
  If (\ref{C1*}), (\ref{C2*}) and (\ref{C3*}) all hold for some $\beta\in\N$, then Conjecture \ref{Conj1} holds for this $\beta$. \label{check}
\end{Proposition}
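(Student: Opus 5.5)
Using (\ref{C1*})--(\ref{C3*}) together with Lemma~\ref{Q-diagonal}, the plan is to show three things: $\til P=P$, the function $U:=\til P\sqrt\Delta-Q$ is divisible by $\sigma_3^{2\beta-1}$, and $U/\sigma_3^{2\beta-1}$ has exactly the Taylor coefficients $C_\beta A_{\u n}$ of Theorem~\ref{Theorem-AB@N}.

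\emph{Step 1: divisibility.} First I will show that $U=\til P\sqrt\Delta-Q$ is divisible by $\sigma_3^{2\beta-1}$ as an analytic function near $\u 0$. By (\ref{C1*}), $U\cdot(\til P\sqrt\Delta+Q)=\til P^2\Delta-Q^2$ is divisible by $\sigma_3^{2\beta-1}$. Moreover, $\til P\sqrt\Delta+Q$ takes the value $2$ at $\u 0$, so it is a unit near $\u 0$. Hence $U=\sigma_3^{2\beta-1}\til F$ with $\til F$ analytic at $\u 0$.

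\emph{Step 2: the recursion.} Next I will show that $\til F$ satisfies (\ref{Ljk-PDE}). Since $Q$ lies in the kernels of $\til{\mathcal H}_j-\til{\mathcal H}_k$ by the Construction, it is enough to see that $\til P\sqrt\Delta$ does too. This is exactly (\ref{C2*}), because of the definition (\ref{H-Delta}). Hence $U$ lies in these kernels, and by (\ref{HFFH}) $\til F$ lies in the kernels of $\mathcal H_j-\mathcal H_k$. By Remark~\ref{Remark-Hj}, the coefficients of $\til F$ therefore satisfy (\ref{Ljk=}).

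\emph{Step 3: identification.} By Lemma~\ref{construction}, it remains to match the boundary line $\til F(r_1,0,0)$, that is, the coefficients $U_{(2\beta-1+n,2\beta-1,2\beta-1)}$, with $C_\beta A_{(n,0,0)}$ from (\ref{ABn00@N}). Criterion (\ref{C3*}) does this only for $0\le n\le 2\beta-1$. For $n\ge 2\beta$ I plan to use Lemma~\ref{Q-diagonal}: $U_{2\beta-1}$ is $(2\beta-1)$-diagonal, so $U_{(2\beta-1+n,2\beta-1,2\beta-1)}=0$ once $n>2\beta-1$. On the other side, $(1-2\beta)_n=0$ for $n\ge 2\beta$, so $A_{(n,0,0)}=0$ as well, and the two agree. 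Lemma~\ref{construction} then gives $\til F=C_\beta F$ near $\u 0$. By Theorem~\ref{extension} and analytic continuation, this extends to the whole domain.

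\emph{Step 4: replacing $\til P$ by $P$.} The delicate point is to show $P=\til P$, which gives both the finiteness of $P$ and formula (\ref{Cbeta}). In the region where $\min n_j\le 2\beta-2$, divisibility gives $U_{\u n}=0$. There the coefficients of $\til P\sqrt\Delta$ equal those of $Q$, so $\til P$ and $P^*$ agree on all indices with $\min n_j\le 2\beta-2$, provided the truncation in $\til P$ (namely $\max n_j\le 3\beta-3$) does not interfere. To control this I will argue by induction on $n_3$ through the expansion (\ref{Km-sum*}), using the $m$-diagonality of Lemma~\ref{Q-diagonal}: it forces $|n_1-n_2|\le n_3$, which bounds the indices of $P$ when $n_3\le 2\beta-2$. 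I expect the main obstacle to be showing that $P_{\u n}=0$ whenever $\max n_j>3\beta-3$ and $\min n_j\le2\beta-2$. For that I would compare $P^*\sqrt\Delta=Q$, whose support lies in $[0,3\beta-2]^3$, with $\til P\sqrt\Delta=Q+U$, and use the vanishing of $U$ on the thin region to deduce $(P^*-\til P)\sqrt\Delta\equiv0$ there. Granted this, $P=\til P$ and the conjectured formula follows.
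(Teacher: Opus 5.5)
Your proposal is correct and follows essentially the paper's argument: the unit trick modulo $\sigma_3^{2\beta-1}$ using (\ref{C1*}), then (\ref{C2*}) with (\ref{HFFH}), then (\ref{C3*}) plus $(2\beta-1)$-diagonality and Lemma~\ref{construction}. The only difference is the order of the first two steps: you prove $\sigma_3^{2\beta-1}\mid U$ first and then derive $P=\til P$, whereas the paper first proves $\til P\equiv P$ from $\til P^2\Delta\equiv Q^2\equiv P^2\Delta$ and the unit $\til P+P$, and only afterwards gets $\sigma_3^{2\beta-1}\mid U$.

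Your Step~4 is not delicate at all. The series $(P^*-\til P)\sqrt\Delta=-U$ is divisible by $\sigma_3^{2\beta-1}$ by Step~1, and multiplying by the analytic function $\Delta^{-1/2}$ preserves this divisibility. Hence $\til P_{\u n}=P^*_{\u n}=P_{\u n}$ whenever $\min_j n_j\le 2\beta-2$, and both vanish otherwise. No induction on $n_3$ is needed; indeed, the diagonality bound $|n_1-n_2|\le n_3$ could not control $\max_j n_j$ anyway.
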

\begin{proof}
We write $f\equiv g$ $(\text{mod }\sigma_3^{2\beta-1})$ if $f-g$ equals $\sigma_3^{2\beta-1}$ times an analytic function in a neighborhood of $\u 0$.
As elements in $\R^{\N_0^3}$, we have $P^*-P=\ind_{\min\{n_1,n_2,n_3\}\ge 2\beta-1} P^*$. This implies that, as  analytic functions, $P^*\equiv P$ $(\text{mod }\sigma_3^{2\beta-1})$, and so $P^2\Delta\equiv (P^*)^2\Delta=Q^2$ $(\text{mod }\sigma_3^{2\beta-1})$. Since (\ref{C1*}) is true,  $(\til P)^2\Delta\equiv Q^2\equiv P^2\Delta$ $(\text{mod }\sigma_3^{2\beta-1})$.    Since $\Delta(\u 0)=P(\u 0)=1$, this implies that $\til P^2\equiv P^2$ $(\text{mod }\sigma_3^{2\beta-1})$.
 Since \(P(0)=\widetilde P(0)=1\), the analytic function
\(\widetilde P+P\) is a unit near the origin. Hence
$\til P^2\equiv P^2$ $(\text{mod }\sigma_3^{2\beta-1})$
implies
$\til P-P\equiv 0$ $(\text{mod }\sigma_3^{2\beta-1})$,
 Thus, $\til P_{\u n}-P_{\u n}=0$ if $\min\{n_1,n_2,n_3\}\le 2\beta-2$. By the definitions of $P$ and $\til P$, if $\min\{n_1,n_2,n_3\}> 2\beta-2$,  $P_{\u n}=\til P_{\u n}=0$, and so $\til P_{\u n}-P_{\u n}=0$. Thus, $\til P_{\u n}=P_{\u n}$ holds for all $\u n\in\N_0^3$, which implies that $P$ equals the polynomial $\til P$.

Since $\til P=P$, we get $\til P^2\Delta\equiv  Q^2$ $(\text{mod }\sigma_3^{2\beta-1})$.
Since \(\til P(0)\sqrt{\Delta(0)}+Q(0)=2\), the function
\(\til P\sqrt\Delta+Q\) is a unit near the origin. Thus
$\til P^2\Delta-  Q^2=0$ $(\text{mod }\sigma_3^{2\beta-1})$
implies
$\til P\sqrt\Delta-  Q=0$ $(\text{mod }\sigma_3^{2\beta-1})$.
So  there is an analytic function $V$ in $(-\eps,\eps)^3$ for some $\eps>0$ such that $\til P\sqrt\Delta-  Q =\sigma_3^{2\beta-1}V$ there. Since (\ref{C2*}) is true, $\til P\sqrt\Delta$ lies in the kernels of $ \til {\mathcal H} _j- \til {\mathcal H} _k$ for any $j,k\in [3]$. Since $Q$ lies in the same kernels, so does $\til P\sqrt\Delta-Q$. Thus, by (\ref{HFFH}), $V$ lies in the kernels of $ {\mathcal H} _j-  {\mathcal H} _k$ for any $j,k\in [3]$.

Let $V_{\u n}$, $\u n\in\N_0^3$, be the coefficients of the Taylor expansion of $V$ at $\u 0$. Then $V_{\u n}=U_{(n_1+2\beta-1,n_2+2\beta-1,n_3+2\beta-1)}$ for any $\u n\in\N_0^3$. Since (\ref{C3*}) holds, for any $0\le n\le 2\beta-1$,
\BGE V_{(n,0,0)}=C_\beta\frac{(\beta)_{n }(1-2\beta)_{n }}{(2\beta)_{n }(1)_{n }}=C_\beta A_{(n,0,0)}.\label{VCbeta}\EDE

Since $U=\sigma_3^{2\beta-1} V$,  $U_{2\beta-1}=(r_1r_2)^{2\beta-1}V(\cdot,\cdot,0)$. Since $U_{2\beta-1}$  is $(2\beta-1)$-diagonal by Lemma \ref{Q-diagonal}, the same holds for $V(\cdot,\cdot,0)$. So $V_{(n,0,0)}=0$ for $n\ge 2\beta$. Thus, (\ref{VCbeta}) holds for any $n\in\N_0$. By Lemma \ref{construction}, $V_{\u n}=C_\beta A_{\u n}$ for all $\u n\in\N_0^3$. Thus, $V\equiv C_\beta F$ in a neighborhood of $\u 0$, which implies that
$$P\sqrt\Delta-Q= \til P\sqrt\Delta-Q=U=\sigma_3^{2\beta-1} V=C_\beta \sigma_3^{2\beta-1} F$$
in a neighborhood of $\u 0$. By the identity theorem, the equality holds in $[0,1)^3$.
\end{proof}

\begin{Remark}
Using exact computer algebra, we have verified \textup{(C1)}--\textup{(C3)}, and hence Conjecture~\ref{Conj1}, for all integers $1\le \beta\le 16$. In particular, for $\beta=2$ we checked that the explicit formulas (\ref{F2})-(\ref{Q2}) indeed reproduce the function $F$ from Theorem~\ref{extension}. We also computed the polynomials $Q$ and $\widetilde P$ for $\beta=17,18,19,20$; under the conclusion of Conjecture \ref{Conj1}, $\til P$ agrees with $P$, although in these four cases we did not check \textup{(C1)}--\textup{(C3)}. Looking at the first twenty positive integers, we found that the number of nonzero coefficients of $Q$ appears to be given by $2\beta(\beta^2+1)$, with a single exception at $\beta=20$, where the actual number is smaller by $24$ (hence by less than one percent). For $\widetilde P$, the number of nonzero coefficients appears to be given by
\[
N_{\widetilde P}(\beta)=
\begin{cases}
9\beta^3-18\beta^2+2\beta+16, & \beta \text{ even},\\[2mm]
9\beta^3-18\beta^2+14\beta-4, & \beta \text{ odd},
\end{cases}
\]
again with a single exception at $\beta=9$, where the actual number is smaller by $48$ (also by less than one percent). These data suggest that the conjectural algebraic structure is remarkably stable, with only rare extra cancellations at special integer values of $\beta$.
\label{CAS}
\end{Remark}

\begin{Remark}
Write
\(
P(r)=\sum_{m\ge0}P_m(r_1,r_2)r_3^m
\).
A partial coefficient analysis, which we do not include here, suggests that
the slices \(P_m\) are polynomials in \((r_1,r_2)\) for
\(0\le m\le \beta-1\), while for \(\beta\le m\le 2\beta-2\) the same
analysis only gives that
\(
(1-r_1r_2)^{2(m+1-\beta)}P_m(r_1,r_2)
\)
is a polynomial.  The remaining obstruction is to prove that \(P_m\) is a polynomial for
every \(m\) in this range; once this is done, the symmetry of \(P\)  would imply that \(P\) is a polynomial in
\((r_1,r_2,r_3)\).
\end{Remark}

\begin{appendices}
\section{A Linear ODE with Analytic Parameters} \label{section:appendix}
\begin{Proposition}
  Let $m,n\in\N$. Let $U$ be an open set in $\C^m$. Let $V\subset\C$ be a simply connected domain, and let $z_0\in V$. Let $A:U\times V\to \C^{n\times n}$ be an analytic matrix-valued function.  Let $\phi :U\to \C^n$ be   analytic. Then there is a unique $\C^n$-valued analytic function $f$ on $U\times V$ such that ($\pa_{m+1}$ stands for the partial complex derivative with respect to the $(m+1)$-th variable)
  \BGE \begin{cases}
    \pa_{{m+1}} f= Af&\text{on }U\times V;\\
    f(\cdot,z_0) = \phi &\text{on }U.
  \end{cases}\label{ODE-complex}\EDE
\end{Proposition}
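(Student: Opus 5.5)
The plan is to construct $f$ first locally by a Picard iteration that is uniform in the parameters, and then to continue it along paths in $V$. Uniqueness will come from the corresponding uniqueness for linear ODEs. Fix $u_0\in U$ and a closed disc $\overline{B(z_1,r)}\subset V$. Choose a compact polydisc neighborhood $K\subset U$ of $u_0$ with $\|A\|\le C$ on $K\times\overline{B(z_1,r)}$.

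For initial data $\psi$ analytic on a neighborhood of $K$, prescribed at $z_1$, I will define the iterates
\[
f_0=\psi,\qquad f_{k+1}(u,z)=\psi(u)+\int_{z_1}^{z}A(u,\zeta)f_k(u,\zeta)\,d\zeta ,
\]
integrating along the straight segment. Each $f_k$ is analytic in $(u,z)$, since we are differentiating under the integral of a jointly analytic integrand. The standard bound $\|f_{k+1}-f_k\|\le \sup\|\psi\|\,(C|z-z_1|)^{k+1}/(k+1)!$ gives uniform convergence on $K\times B(z_1,r)$. The limit is therefore analytic, by Weierstrass' theorem applied in several variables through Hartogs/Osgood, and it satisfies $\partial_{m+1}f=Af$ together with $f(\cdot,z_1)=\psi$.

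For uniqueness, fix $u$. Any two solutions solve the same linear holomorphic ODE in $z$ on the connected domain $V$ and agree at one point. Hence they agree on a disc by the Gronwall-type bound above, and then everywhere by the identity theorem. This gives uniqueness of $f$ on $U\times V$ pointwise in $u$.

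For existence on all of $U\times V$, fix $u_0$ and $z\in V$ and take a path $\gamma$ from $z_0$ to $z$ in $V$. Cover $\gamma$ by finitely many discs $D_0,\dots,D_N$ with consecutive overlaps, and shrink the parameter neighborhood $K$ so that it works for all of them, which is possible by compactness of $\gamma$. Next I will solve successively on $K\times D_i$, using as initial data the previous solution at a point of $D_{i-1}\cap D_i$; uniqueness makes the pieces agree on overlaps. The main obstacle is independence of the path. Two paths with the same endpoints are homotopic because $V$ is simply connected. The standard monodromy argument (a subdivided homotopy square, with uniqueness on overlapping discs) shows that the continuation is unchanged under small deformations of the path, hence along the whole homotopy. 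The resulting $f$ is well defined, analytic near every point of $U\times V$, and satisfies (\ref{ODE-complex}).

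Alternatively, since for each fixed $u$ the classical theorem already gives a global solution on the simply connected domain $V$, I would define $f(u,z)$ this way. I would then only use the local parametric construction above to show joint analyticity, namely that near $(u_0,z)$ the function agrees with the analytically continued local solution.
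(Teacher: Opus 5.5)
Your proposal is correct, and your closing ``alternative'' is essentially the paper's proof; your main route (continuation along chains of discs plus monodromy) is genuinely different. The paper never continues along paths. For each fixed $u$ it takes the global solution $f(u,\cdot)$ on $V$ from the classical one-variable theorem, which is the only place simple connectivity enters. It then proves joint analyticity on $U_1\times V$, for each $U_1\Subset U$, by a connectedness argument.

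Concretely, the set $S_1$ of $z\in V$ such that $f$ is bounded and analytic on $U_1\times B(z,r)$ for some $r>0$ is shown to be nonempty, open and relatively closed. This rests on a uniform-radius claim: for every compact $K\subset V$ there is $r>0$ such that, whenever $z_1\in K$ and $f(\cdot,z_1)$ is bounded analytic on $U_1$, the parametric Picard iterates from $z_1$ converge on $U_1\times B(z_1,r)$. By pointwise uniqueness they then reproduce $f$ there.

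The paper's iteration uses a contraction estimate, which forces $r\le 1/(2M)$ with $M=\sup\|A\|$ over a compact set; hence the need for a uniform radius. Your factorial bound gives convergence on the whole disc, which is what lets you fix one parameter neighborhood for an entire chain of discs.

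Your main route re-proves global existence with parameters, at the cost of the homotopy bookkeeping. That bookkeeping is superfluous once you grant the fixed-$u$ theorem. The continuation along any chain of discs agrees, for each fixed $u$, with the unique global solution $f(u,\cdot)$, so path independence is automatic. The paper's argument buys a proof with no paths or homotopies; yours buys self-containedness.

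Two small points on your chain construction. When you restart the iteration at a point of $D_{i-1}\cap D_i$ that is not the center of $D_i$, integrate along segments from that point; this works because discs are convex. Also run the whole chain over one fixed open polydisc $W\Subset U$, so that the successive initial data stay bounded and analytic.
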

\begin{proof}
  For each fixed $\u u$, by \cite[Theorem 15.3]{IY}, there exists a unique $\C^n$-valued analytic function $f(\u u,\cdot)$ on $V$, which satisfies $\pa_{m+1} f(\u u,\cdot)=A(\u u,\cdot) f(\u u,\cdot)$ and $f(\u u, z_0)=\phi(\u u)$. Putting these solutions together we get a function $f$ which solves (\ref{ODE-complex}) pointwise in the parameter $u$. \cite[Theorem 15.3]{IY} also implies the uniqueness of the solution.

  It remains to prove the (joint) analyticity of $f$ in $U\times V$. Fix an open set $U_1\Subset U$. It suffices to show that $f$ is analytic on $U_1\times V$. Let $S_1$ be the set of $z\in V$ such that for some $r>0$, $B(z,r)\Subset V$, and $f$ is bounded and analytic on $U_1\times B(z,r)$. Let $S_2$ be the set of $z\in V$ such that $f(\cdot,z)$ is bounded and analytic on $U_1$. Then $S_1\subset S_2\subset V$, and $z_0\in S_2$. For the analyticity of $f$ in $U\times V$, it suffices to show $S_1=V$.

  We claim   that
  \BGE \forall K\Subset V\quad\exists r>0\quad \forall z_1\in S_2\cap K:\quad B(z_1,r)\subset S_1.\label{KVrS}\EDE
  Suppose (\ref{KVrS}) has been proved. Then (i) $S_2\subset S_1$, which combined with $S_1\subset S_2$ implies $S_1=S_2$; (ii) $S_1$ is open; and (iii) $z_0\in S_1$, and so $S_1\ne\emptyset$. Let $(w_n)_{n\in\N}$ be a sequence in $S_1$ that converges to $w_0\in V$. Let $K= \{w_n:n\in\N_0\}$. Then $K\Subset V$. We pick $r>0$ as in (\ref{KVrS}). Since $w_n\to w_0$, there is $n_0\in\N$ such that $w_0\in B(w_{n_0},r)$. Since $w_{n_0}\in S_1\cap K=S_2\cap K$, by (\ref{KVrS}), $w_0\in B(w_{n_0},r)\subset S_1$, i.e., $w_0\in S_1$. This shows that $S_1$ is relatively closed in $V$. Since $V$ is connected, we get $S_1=V$, as desired.

  It remains to prove (\ref{KVrS}). Let $K\Subset V$. There is $R>0$ such that $V_1:=B(K,R)\Subset V$. Then $U_1\times V_1\Subset U\times V$. Since $A$ is continuous in $U\times V$, there is $M\in (0,\infty)$ such that $\|A\|\le M$ on $U_1\times V_1$. Let $r=R\wedge \frac 1{2M}$, $z_1\in S_2\cap K$, and $\phi_1=f(\cdot,z_1)$. Since $z_1\in S_2$, $\phi_1$ is bounded and analytic on $U_1$. Since $z_1\in K$, $B(z_1,r)\subset B(z_1,R)\subset V_1$, we have $\|A\|\le M$ on $U_1\times B(z_1,r)$.

  Define a sequence of $\C^n$-valued functions $(f_k)_{k\in\N_0}$ on $U_1\times B(z_1,r)$ by $f_0(\u u,z)=\phi_1(\u u)$, and
\BGE f_{k+1} (\u u,z)=\phi_1(\u u)+\int_{[z_1,z]} A(\u u, w) f_k(\u u, w) dw, \quad\text{for any }(\u u, z)\in U_1\times B(z_1,r),\label{induction}\EDE
where $[z_1,z]$ is the line segment from $z_1$ to $z$ parametrized by $\gamma(t)=z_1+t(z-z_1)$, $0\le t\le 1$.
Since $\phi_1$ is analytic and bounded on $U_1$, and $A$ is analytic and bounded on $U_1\times B(z_1,r)$,  all $f_k$'s are analytic and bounded on $U_1\times B(z_1,r)$. We use the norm $\|f\|_r:=\sup\{|f(\u u,z)|:(\u u,z)\in U_1\times B(z_1,r)\}$. Since for any $z\in B(z_1,r)$,
$$|f_{k+2} (\u u,z)-f_{k+1} (\u u,z)|=\Big|\int_{[z_1,z]}  A(\u u, w) (f_{k+1}(\u u, w)-f_k(\u u, w)) dw\Big|$$
$$\le M r \|f_{k+1}-f_k\|_r\le \frac 12 \|f_{k+1}-f_k\|_r,$$
we get $\|f_{k+2}-f_{k+1}\|_r\le \frac 12 \|f_{k+1}-f_k\|_r$. Thus, $(f_k)$ converges uniformly on $U_1\times B(z_1,r)$. Let $f_{\phi_1,z_1}$ be the limit.
By the Weierstrass theorem, \(f_{\phi_1,z_1}\) is analytic on \(U_1\times B(z_1,r)\), and it is bounded
there as a uniform limit of bounded functions.   From (\ref{induction}) we know that
$$ f_{\phi_1,z_1}(\u u,z)=\phi_1(\u u)+\int_{[z_1,z]} A(\u u,w) f_{\phi_1,z_1}(\u u, w) dw, \quad\text{for any }(\u u, z)\in U_1\times B(z_1,r). $$ 
This shows that $f_{\phi_1,z_1}$ solves $\pa_{m+1} f_{\phi_1,z_1} = Af_{\phi_1,z_1}$ on $U_1\times B(z_1,r)$ with  $f_{\phi_1,z_1}(\cdot,z_1)=\phi_1$. For each fixed \(u\in U_1\), uniqueness for the linear ODE system in the
\(z\)-variable, see \cite[Theorem 15.3]{IY}, implies that, $f(u,\cdot)$ and $f_{\phi,z_1}(u,\cdot)$ coincide in $B(z_1,r)$ since they solve the same linear system on $B(z_1,r)$ with the same initial value at $z_1$,
Thus \(f\) is bounded and analytic on \(U_1\times B(z_1,r)\). Hence every
\(z\in B(z_1,r)\) has a smaller disk \(B(z,\rho)\subset B(z_1,r)\) on which
the defining condition of \(S_1\) holds. Therefore \(B(z_1,r)\subset S_1\).  The proof of (\ref{KVrS}) is now complete.
\end{proof}

\end{appendices}

\section*{Declaration on the use of AI tools}

In the preparation of this paper, the author used AI tools as an aid in discussion, brainstorming, and checking
possible approaches to some proofs. The final mathematical arguments and proofs presented in the paper were
written by the author, who carefully checked them line by line.

\end{document}